\DeclareRobustCommand{\greektext}{%
  \fontencoding{LGR}\selectfont\def\encodingdefault{LGR}}
\DeclareRobustCommand{\textgreek}[1]{\leavevmode{\greektext #1}}
\numberwithin{equation}{section}
\numberwithin{figure}{section}
\theoremstyle{plain}
\newtheorem{thm}{\protect\theoremname}[section]
\theoremstyle{remark}
\newtheorem{rem}[thm]{\protect\remarkname}
\theoremstyle{plain}
\newtheorem{prop}[thm]{\protect\propositionname}
\theoremstyle{plain}
\newtheorem{lem}[thm]{\protect\lemmaname}
\theoremstyle{definition}
\newtheorem{defn}[thm]{\protect\definitionname}
\theoremstyle{definition}
\newtheorem{example}[thm]{\protect\examplename}
\theoremstyle{plain}
\newtheorem{question}[thm]{\protect\questionname}
\theoremstyle{plain}
\newtheorem{cor}[thm]{\protect\corollaryname}
\providecommand{\corollaryname}{Corollary}
\providecommand{\definitionname}{Definition}
\providecommand{\examplename}{Example}
\providecommand{\lemmaname}{Lemma}
\providecommand{\propositionname}{Proposition}
\providecommand{\questionname}{Question}
\providecommand{\remarkname}{Remark}
\providecommand{\theoremname}{Theorem}
\begin{document}
\global\long\def\bbR{\mathbb{\mathbf{R}}}%

\global\long\def\R{\mathbb{\mathrm{R}}}%

\global\long\def\C{\mathbb{\mathbf{C}}}%

\global\long\def\Q{\mathbb{\mathbf{Q}}}%

\global\long\def\Z{\mathbf{Z}}%

\global\long\def\P{\mathbb{P}}%

\global\long\def\T{\mathbb{T}}%

\global\long\def\F{\mathbb{\mathbf{F}}}%

\global\long\def\bbN{\mathbb{\mathbf{N}}}%

\global\long\def\M{\mathrm{M}}%

\global\long\def\A{\mathrm{A}}%

\global\long\def\H{\mathrm{H}}%

\global\long\def\D{\mathrm{\mathbf{D}}}%

\global\long\def\B{\mathbf{B}}%

\global\long\def\N{\mathrm{\mathrm{N}}}%

\global\long\def\inf{\mathrm{inf}}%

\global\long\def\sup{\mathrm{sup}}%

\global\long\def\Hom{\mathbb{\mathrm{Hom}}}%

\global\long\def\Ext{\mathbb{\mathbb{\mathrm{Ext}}}}%

\global\long\def\Ker{\mathbb{\mathrm{Ker}}}%

\global\long\def\Gal{\mathrm{Gal}}%

\global\long\def\Aut{\mathrm{Aut}}%

\global\long\def\End{\mathrm{End}}%

\global\long\def\Char{\mathrm{Char}}%

\global\long\def\rank{\mathrm{rank}}%

\global\long\def\deg{\mathrm{deg}}%

\global\long\def\det{\mathrm{det}}%

\global\long\def\Tr{\mathrm{Tr}}%

\global\long\def\Id{\mathrm{Id}}%

\global\long\def\Spec{\mathrm{Spec}}%

\global\long\def\Lie{\mathrm{Lie}}%

\global\long\def\span{\mathrm{span}}%

\global\long\def\sep{\mathrm{sep}}%

\global\long\def\sgn{\mathrm{sgn}}%

\global\long\def\dist{\mathrm{dist}}%

\global\long\def\val{\mathrm{val}}%

\global\long\def\dR{\mathrm{dR}}%

\global\long\def\st{\mathrm{st}}%

\global\long\def\rig{\mathrm{rig}}%

\global\long\def\cris{\mathrm{cris}}%

\global\long\def\Mat{\mathrm{Mat}}%

\global\long\def\cyc{\mathrm{cyc}}%

\global\long\def\et{\mathrm{\acute{e}t}}%

\global\long\def\Frob{\mathrm{Frob}}%

\global\long\def\SL{\mathrm{SL}}%

\global\long\def\GL{\mathrm{GL}}%

\global\long\def\Spa{\mathrm{\mathrm{Spa}}}%

\global\long\def\Br{\mathrm{Br}}%

\global\long\def\Ind{\mathrm{Ind}}%

\global\long\def\LT{\mathrm{LT}}%

\global\long\def\Res{\mathrm{Res}}%

\global\long\def\SL{\mathrm{SL}_{2}}%

\global\long\def\Mod{\mathrm{Mod}}%

\global\long\def\sm{\mathrm{sm}}%

\global\long\def\lsm{\mathrm{lsm}}%

\global\long\def\psm{\mathrm{psm}}%

\global\long\def\plsm{\mathrm{plsm}}%

\global\long\def\Fil{\mathrm{Fil}}%

\global\long\def\la{\mathrm{la}}%

\global\long\def\pa{\mathrm{pa}}%

\global\long\def\Sen{\mathrm{Sen}}%

\global\long\def\dif{\mathrm{dif}}%

\global\long\def\HT{\mathrm{HT}}%

\global\long\def\FT{\mathrm{FT}}%

\global\long\def\FF{\mathrm{FF}}%

\global\long\def\Kla{K-\mathrm{la}}%

\global\long\def\Kpa{K-\mathrm{pa}}%

\global\long\def\an{\mathrm{an}}%

\global\long\def\han{\text{-an}}%

\global\long\def\hla{\text{-la}}%

\global\long\def\hpa{\text{-pa}}%

\global\long\def\Sol{\mathrm{Sol}}%

\global\long\def\grad{\mathrm{\triangledown}}%

\global\long\def\WL{\mathrm{\widetilde{\Lambda}}}%

\global\long\def\weak{\rightharpoonup}%

\global\long\def\weakstar{\overset{*}{\rightharpoonup}}%

\global\long\def\diam{\diamond}%

\title{Locally analytic vector bundles on the Fargues-Fontaine curve}
\author{Gal Porat}
\begin{abstract}
In this article, we develop a version of Sen theory for equivariant
vector bundles on the Fargues-Fontaine curve. We show that every equivariant
vector bundle canonically descends to a locally analytic vector bundle.
A comparison with the theory of $(\varphi,\Gamma)$-modules in the
cyclotomic case then recovers the Cherbonnier-Colmez decompletion
theorem. Next, we focus on the subcategory of de Rham locally analytic
vector bundles. Using the $p$-adic monodromy theorem, we show that
each locally analytic vector bundle $\mathcal{E}$ has a canonical
differential equation for which the space of solutions has full rank.
As a consequence, $\mathcal{E}$ and its sheaf of solutions $\Sol(\mathcal{E})$
are in a natural correspondence, which gives a geometric interpretation
of a result of Berger on $(\varphi,\Gamma)$-modules. In particular,
if $V$ is a de Rham Galois representation, its associated filtered
$(\varphi,N,G_{K})$-module is realized as the space of global solutions
to the differential equation. A key to our approach is a vanishing
result for the higher locally analytic vectors of representations
satisfying the Tate-Sen formalism, which is also of independent interest.
\end{abstract}

\maketitle
\tableofcontents{}

\section{Introduction}

The study of $p$-adic Galois representations has been conditioned
to an extent by two dogmas. One is the \emph{analytic} dogma; its
main idea is to associate to every such representation a $(\varphi,\Gamma)$-module
over the Robba ring and to study these objects using $p$-adic analysis.
The other dogma is \emph{geometric: }to every $p$-adic Galois representation
one associates an equivariant vector bundle over the Fargues-Fontaine
curve. The aim of this article is, roughly speaking, to find a framework
where both analysis and geometry can be carried out. In recent years,
much of the theory of $p$-adic Galois representations has been understood
in terms of the geometry of the Fargues-Fontaine curve. A notable
exception has been the $p$-adic Langlands program, where the analyic
approach plays a cruicial role. Thus we are motivated to reduce this
discrepancy by introducing corresponding objects on the Fargues-Fontaine
curve which are also amenable to analytic methods. These are the locally
analytic vector bundles, the main new objects introduced in this article.

We shall now explain this in more detail. Let $K$ be a finite extension
of $\Q_{p}$ with absolute Galois group $G_{K}$. Let $K_{\cyc}$
be the cyclotomic extension of $K$ and write $\Gamma=\Gal(K_{\cyc}/K)$.
For the sake of simplifying the introduction, we shall focus now on
the cyclotomic setting, though as we shall explain later, the content
of this paper will apply to a wider class of Galois extensions $K_{\infty}/K$.
We have the category $\mathrm{Rep}_{\Q_{p}}(G_{K})$ of finite dimensional
$\Q_{p}$-representations of $G_{K}$. 

On the one hand, $\mathrm{Rep}_{\Q_{p}}(G_{K})$ can be studied via
$p$-adic analysis. To do this, one introduces the Robba ring $\mathcal{R}$,
which is the ring of power series over a certain finite extension
of $\Q_{p}$ in a variable $T$ which converge in some annuli $r\leq|T|<1$.
It has an action of a Frobenius operator $\varphi$ as well as an
action of $\Gamma$. By work of Cherbonnier-Colmez, Fontaine and Kedlaya,
it is known that there is a fully faithful embedding
\[
\mathrm{Rep}_{\Q_{p}}(G_{K})\hookrightarrow\text{\{(\ensuremath{\varphi,\Gamma)}-modules over \ensuremath{\mathcal{R}}\}},
\]
with the essential image consisting of the semistable slope 0 objects.
If $D$ is a $(\varphi,\Gamma)$-module over $\mathcal{R}$, a fundamental
fact is that the $\Gamma$-action on $D$ can be differentiated, namely,
there is a well defined action of $\Lie(\Gamma)$ on $D$. Since $\Lie(\Gamma)$
is 1-dimensional, this data is the same as that of a connection $\nabla$
which acts on functions of $T$ by a multiple of $d/dT$ . It is precisely
this structure which allows the introduction of $p$-adic analysis
into the picture. For example, in the construction of the $p$-adic
Langlands correspondence for $\GL_{2}(\Q_{p})$ given in \cite{Co10}
the use of this analytic structure is ubiquitous.

On the other hand, $\mathrm{Rep}_{\Q_{p}}(G_{K})$ can be studied
via geometry. The Fargues-Fontaine curve, studied extensively in \cite{FF18},
is defined as the analytic adic space
\[
\mathcal{X}=\mathcal{X}(\widehat{K}_{\cyc}):=(\Spa\A_{\inf}-\{p[p^{\flat}]=0\})/(\varphi^{\Z},\Gal(\overline{K}/K_{\cyc}))
\]
 (see $\mathsection3$) and has a natural action of $\Gamma$. By
the work of Fargues and Fontaine, there is a fully faithful embedding
\[
\mathrm{Rep}_{\Q_{p}}(G_{K})\hookrightarrow\{\text{\ensuremath{\Gamma}-equivariant vector bundles on \ensuremath{\mathcal{X}}}\},
\]
again with the essential image consisting of the semistable slope
0 objects. In fact, Fargues and Fontaine show there is an equivalence
\[
\text{\{(\ensuremath{\varphi,\Gamma)}-modules over \ensuremath{\mathcal{R}}\}}\cong\{\text{\ensuremath{\Gamma}-equivariant vector bundles on \ensuremath{\mathcal{X}}}\},
\]
compatible with each of the aforementioned embeddings of $\mathrm{Rep}_{\Q_{p}}(G_{K})$.

Unfortunately, the action of $\Gamma$ on an equivariant vector bundle
on $\mathcal{X}$ \emph{cannot }be differentiated. This is already
true for the structure sheaf $\mathcal{O}_{\mathcal{X}}$. Here is
a simplified model of the situation which illustrates why there is
no action of $\Lie(\Gamma)$ on $\mathcal{O}_{\mathcal{X}}$. The
functions on an open subset of $\mathcal{X}$ can roughly be thought
of as power series in $T^{1/p^{\infty}}$ satisfying certain convergence
conditions. When we try to apply the operator $d/dT$ to such a power
series, the result will often not converge since the derivative 
\[
d(T^{1/p^{n}})/dT=(1/p^{n})T^{1/p^{n}-1}
\]
grows exponentially larger $p$-adically as $n$ goes to infinity.
Nevertheless, there is a way to single out these sections for which
the action of $\Lie(\Gamma)$ does not explode. This is achieved by
considering only these sections on which the action of $\Gamma$ is
regular enough. In this toy model picture, this will amount to considering
only these power series where the coefficient of the exponent of $T^{k/p^{n}}$
will decay proportionally to $p^{n}$. 

More canonically and more generally, these elements for which differentation
is possible are precisely the locally analytic elements. Given an
equivariant vector bundle $\widetilde{\mathcal{E}}$ on $\mathcal{X}$,
there is a subsheaf of locally analytic sections $\widetilde{\mathcal{E}}^{\la}\subset\widetilde{\mathcal{E}}$.
This sheaf is a module over $\mathcal{O}_{\mathcal{X}}^{\la}$ which
is preserved under the $\Gamma$-action, and, crucially, $\Lie(\Gamma)$
acts on $\widetilde{\mathcal{E}}^{\la}$. We are thus naturally led
to the definition of a \emph{locally analytic vector bundle on $\mathcal{X}$}:
by this we shall mean a locally free $\mathcal{O}_{\mathcal{X}}^{\la}$-module
together with a $\Gamma$-action. The point is that locally analytic
vector bundles capture both analytic and geometric information, both
of which has proven important for the study of $\mathrm{Rep}_{\Q_{p}}(G_{K})$.

Our first main result is saying that there is no loss of information
in this process: each equivariant vector bundle canonically descends
to a locally analytic vector bundle.

\textbf{Theorem A. }\emph{The functor $\widetilde{\mathcal{E}}\mapsto\widetilde{\mathcal{E}}^{\la}$
gives rise to an equivalence of categories from the category of $\Gamma$-equivariant
vector bundles on $\mathcal{X}$ to the category of locally analytic
vector bundles on $\mathcal{X}$}. \emph{Its inverse is given by the
functor $\mathcal{E}\mapsto\mathcal{O}_{\mathcal{X}}\otimes_{\mathcal{O}_{\mathcal{X}}^{\la}}\mathcal{E}$. }

This theorem fits naturally into the framework of Sen theory, as we
shall now explain. Let $V\in\mathrm{Rep}_{\Q_{p}}(G_{K})$. Then according
to Sen's theory, proven in \cite{Se81}, there is a canonical isomorphism
\[
(V\otimes_{\Q_{p}}\C_{p})^{\Gal(\overline{K}/K_{\cyc})}\cong\widehat{K}_{\cyc}\otimes_{K_{\cyc}}\D_{\Sen}(V)
\]
where $\D_{\Sen}(V)$ is the $K_{\cyc}$-subspace of elements with
finite $\Gamma$-orbit in $V\otimes_{\Q_{p}}\C_{p}$. Later, Fontaine
(see $\mathsection3\text{-}4$ of \cite{Fo04}) proved an analogue
of this theorem for $\B_{\dR}^{+}$: he showed there is an isomorphism
\[
(V\otimes_{\Q_{p}}\B_{\dR}^{+})^{\Gal(\overline{K}/K_{\cyc})}\cong(\B_{\dR}^{+})^{\Gal(\overline{K}/K_{\cyc})}\otimes_{K_{\cyc}[[t]]}\D_{\dif}^{+}(V)
\]
where $\D_{\dif}^{+}(V)$ is a canonical $K_{\cyc}[[t]]$-submodule
of $V\otimes_{\Q_{p}}\B_{\dR}^{+}$.

In fact, both of these results are implied by Theorem A by specializing
at the ``point at infinity'' $x_{\infty}\in\mathcal{X}$. Indeed,
when $\mathcal{\widetilde{\mathcal{E}}}$ is the equivariant vector
bundle associated to $V\in\mathrm{Rep}_{\Q_{p}}(G_{K})$ and $\mathcal{E}=\widetilde{\mathcal{E}}^{\la}$,
specializing the isomorphism $\mathcal{\widetilde{\mathcal{E}}}\cong\mathcal{O}_{\mathcal{X}}\otimes_{\mathcal{O}_{\mathcal{X}}^{\la}}\mathcal{E}$
at the fiber of $x_{\infty}$ gives rise to an isomorphism
\[
\mathcal{\widetilde{\mathcal{E}}}_{k(x_{\infty})}\cong\mathcal{O}_{\mathcal{X},k(x_{\infty})}\otimes_{\mathcal{O}_{\mathcal{X},k(x_{\infty})}^{\la}}\mathcal{E}_{k(x_{\infty})}
\]
which is none other than Sen's theorem. Similarly, there is an isomorphism
of the completed stalks at $x_{\infty}$
\[
\mathcal{\widetilde{\mathcal{E}}}_{x_{\infty}}^{\wedge,+}\cong\mathcal{O}_{\mathcal{X},x_{\infty}}^{\wedge,+}\otimes_{\mathcal{O}_{\mathcal{X},x_{\infty}}^{\la,\wedge,+}}\mathcal{E}_{x_{\infty}}^{\wedge,+}
\]
which recovers Fontaine's theorem. In this way, Theorem A is a sheaf
theoretic version of Sen theory on $\mathcal{X}$ which specializes
at $x_{\infty}$ to classical Sen theory.

In the interest of applications, we give a proof of this equivalence
not just for the cyclotomic extension, but more generally for any
$p$-adic Lie group $\Gamma=\Gal(K_{\infty}/K)$ where $K_{\infty}$
is an infinitely ramified Galois extension of $K$ which contains
an unramified twist of the cyclotomic extension. Notably, this condition
holds when $K_{\infty}$ is the extension generated by the torsion
points of a formal group.

As we shall explain in the article, these ideas are closely related
to the decompletion of $(\varphi,\Gamma)$-modules, especially in
the case $K_{\infty}=K_{\cyc}$. This is not too surprising, because
such $(\varphi,\Gamma)$-modules are also obtained by a Sen theory
type of idea through the theorem of Cherbonnier and Colmez in \cite{CC98},
and further, these objects relate to $\D_{\Sen}$ and $\D_{\dif}^{+}$
in a similar way. In fact, Theorem A is equivalent to the Cherbonnier-Colmez
theorem on decompletion of $(\varphi,\Gamma)$-modules (after inverting
$p$). Our proof is not independent from the ideas of Cherbonnier-Colmez,
since we still use their trace maps in our arguments. However, it
is logically different - more on this below. 

First, let us discuss an application of Theorem A, which was a major
source of motivation for this work. We give a geometric reinterpertation
of Berger's work on $p$-adic differential equations and filtered
$(\varphi,N)$-modules (\cite{Be08B}). In that article, Berger establishes
several results regarding de Rham $(\varphi,\Gamma)$-modules (for
example, these $(\varphi,\Gamma)$-modules arising from de Rham $p$-adic
Galois representations). To such a $(\varphi,\Gamma)$-module $D$,
Berger associates another $(\varphi,\Gamma)$-module $\N_{\dR}(D)$
(a so called $p$-adic differential equation), and a $\overline{K}$-vector
space of solutions
\[
\Sol(D):=\varinjlim_{[L:K]<\infty}(\mathcal{R}_{L}[\log T]\otimes_{\mathcal{R}}\N_{\dR}(V))^{G_{L}},
\]
where $\mathcal{R}_{L}$ is the Robba ring with respect to $L$. The
following results can be derived from the main results of \cite{Be08B},
for $D$ a de Rham $(\varphi,\Gamma)$-module:

(i) $\Sol(D)$ is a $\overline{K}$-vector space of rank equal to
the rank of $D$.

(ii) There is a canonical isomorphism
\[
\mathcal{R}_{\overline{K}}[\log T]\otimes_{K^{\mathrm{un}}}\Sol(D)\cong\mathcal{R}_{\overline{K}}[\log T]\otimes_{\mathcal{R}}\N_{\dR}(V).
\]

(iii) $\overline{K}\otimes_{K^{\mathrm{un}}}\Sol(D)$ is canonically
isomorphic to $\overline{K}\otimes_{K}\D_{\dR}(D)$.

(iv) $\Sol(D)$ is naturally a filtered $(\varphi,N,G_{K})$-module.

Furthermore, the functor $D\mapsto\Sol(D)$ gives rise to an equivalence
of categories from the category of de Rham $(\varphi,\Gamma)$-modules
over $\mathcal{R}$ to the category of filtered $(\varphi,N,G_{K})$-module. 

The functor of solutions is ultimately understood in \cite{Be08B}
by solving the differential equation $\Lie(\Gamma)=0$, and as such,
uses $p$-adic analysis in a crucial way. It is therefore natural
to apply Theorem A to give a geometric interpertation of these results,
something previously inaccessible in the framework of vector bundles
on the Fargues-Fontaine curve. In fact, when interperted in a geometric
way, \cite[Théorème A]{Be08B} turns out to to be reminiscient of
the Riemann-Hilbert correspondence.

Our second main result is the desired geometric interpertation of
Berger's results. To describe it, we need to introduce some notation.
We have
\[
\mathcal{X}_{\log,\overline{K}}:=\varprojlim_{[L:K]<\infty}\mathcal{X}_{\log,L},
\]
where each $\mathcal{X}_{\log,L}$ is a the analytic line bundle over
$\mathcal{X}_{L}:=\mathcal{X}(\widehat{L}_{\cyc})$ corresponding
to $\mathcal{O}_{\mathcal{X}_{L}}(1)$, endowed with the projection
$p_{\log,L}:\mathcal{X}_{\log,L}\rightarrow\mathcal{X}_{L}$ (see
$\mathsection8.3$). Essentially, $\mathcal{X}_{\log,\overline{K}}$
is obtained by adjoining all $\overline{K}$-scalars and a logarithm
to the functions on $\mathcal{X}$. Now let $\mathcal{E}$ be a de
Rham locally analytic vector bundle, i.e., suppose that $\dim_{K}\widehat{\mathscr{\mathcal{E}}}_{x_{\infty}}^{\Gamma=1}=\rank(\mathcal{E})$
(see $\mathsection8.2$). For example, if $V$ is a de Rham $p$-adic
Galois representation, then its associated locally analytic vector
bundle is de Rham. To such $\mathcal{E}$, we associate a sheaf $\Sol(\mathcal{E})$
on $\mathcal{X}$, given by
\[
\mathrm{Sol}(\mathcal{E}):=\varinjlim_{[L:K]<\infty}p_{\log,L,*}(p_{\log,L}^{*}\mathcal{N}_{\dR}(\mathcal{E}))^{\Lie(\Gamma)=0},
\]
where $\mathcal{N}_{\dR}(\mathcal{E})$ is a modification of $\mathcal{E}$
corresponding to the de Rham lattice of $\mathcal{E}$ at $x_{\infty}$.
Roughly speaking, $\mathrm{Sol}(\mathcal{E})$ is the sheaf of solutions
to the differential equation $\nabla=0$ on the modification $\mathcal{N}_{\dR}(\mathcal{E})$.
We shall also consider a variant $\Sol^{\varphi}(\mathcal{E})$, which
are the solutions on the pullback of $\mathcal{E}$ along the usual
covering $\mathcal{Y}_{(0,\infty)}\rightarrow\mathcal{X}$ for $\mathcal{Y}_{(0,\infty)}=\Spa\A_{\inf}-\{p[p^{\flat}]=0\}/\Gal(\overline{K}/K_{\cyc})$.
We then have the following result, by analogy with the results of
\cite{Be08B} (see $\mathsection8$ for yet more precise statements).

\textbf{Theorem B.} \emph{Let $\mathcal{E}$ be a de Rham locally
analytic vector bundle. }

\emph{$(i)$ The sheaf of solutions $\Sol(\mathcal{E})$ is locally
free over the subsheaf of potentially log smooth sections $\mathcal{O}_{\mathcal{X}}^{\plsm}\subset\mathcal{O}_{\mathcal{X}}^{\la}$
and its rank is equal to the rank of $\mathcal{E}$.}

\emph{$(ii)$ There is a canonical isomorphism
\[
\mathcal{O}_{\mathcal{X}_{\log,\overline{K}}}^{\la}\otimes_{\mathcal{O}_{\mathcal{X}}^{\plsm}}\mathrm{Sol}(\mathcal{E})\xrightarrow{\sim}\mathcal{O}_{\mathcal{X}_{\log,\overline{K}}}^{\la}\otimes_{\mathcal{O}_{\mathcal{X}}^{\la}}\mathcal{N}_{\dR}(\mathcal{E}).
\]
}

\emph{$(iii)$ The stalk of $\Sol(\mathcal{E})$ at $x_{\infty}$
is canonically isomorphic to $\overline{K}\otimes_{K}\D_{\dR}(\mathcal{E})$.}

\emph{$(iv)$ The space of global solutions $\H^{0}(\mathcal{Y}_{(0,\infty)},\Sol^{\varphi}(\mathcal{E}))$
is naturally a filtered $(\varphi,N,G_{K})$-module.}

\emph{Furthermore, the functor $\mathcal{E}\mapsto\H^{0}(\mathcal{Y}_{(0,\infty)},\Sol^{\varphi}(\mathcal{E}))$
gives rise to an equivalence of categories from the category of de
Rham locally analytic vector bundles to the category of filtered $(\varphi,N,G_{K})$-modules.}
\begin{rem}
1. In particular, if $V$ is a de Rham representation of $G_{K}$
with associated locally analytic vector bundle $\mathcal{E}$, then
$\H^{0}(\mathcal{Y}_{(0,\infty)},\Sol^{\varphi}(\mathcal{E}))=\D_{\mathrm{pst}}(V)$
and the stalk $\Sol(\mathcal{E})_{x_{\infty}}$ is identified with
$\overline{K}\otimes_{K}\D_{\dR}(V)$. The localization map corresponds
to the natural map $\D_{\mathrm{pst}}(V)\rightarrow\overline{K}\otimes_{K}\D_{\dR}(V)$.

2. If $\mathcal{E}$ becomes crystalline after extending $K$ to a
finite extension $L\subset K_{\infty}$, the sheaf $\mathcal{N}_{\dR}(\mathcal{E})^{\nabla=0}\subset\Sol(\mathcal{E})$
is locally free over the subsheaf of smooth sections $\mathcal{O}_{\mathcal{X}}^{\sm}\subset\mathcal{O}_{\mathcal{X}}^{\la}$
of rank equal to the rank of $\mathcal{E}$, and there is a simpler
canonical isomorphism
\[
\mathcal{O}_{\mathcal{X}}^{\la}\otimes_{\mathcal{O}_{\mathcal{X}}^{\sm}}\mathcal{N}_{\dR}(\mathcal{E})^{\nabla=0}\xrightarrow{\sim}\mathcal{N}_{\dR}(\mathcal{E}).
\]

3. The sheaf $\mathcal{O}_{\mathcal{X}}^{\plsm}$ is much smaller
than $\mathcal{O}_{\mathcal{X}}^{\la}$. Though we have not been quite
able to show this, $\mathcal{O}_{\mathcal{X}}^{\plsm}$ seems to be
``almost'' a locally constant sheaf except that the base field becomes
slightly larger when localizing; for that reason, we think of $\Sol(\mathcal{E})$
as morally being close to a local system on $\mathcal{X}$. In this
sense the $(\varphi,N,G_{K})$-structure is related to the monodromy
of the $p$-adic differential equation $\nabla=0$.
\end{rem}

Finally, let us discuss the proof of Theorem A. The essential point
is to show that if $\widetilde{\mathcal{E}}$ is an equivariant vector
bundle on $\mathcal{X}$, the natural map $\mathcal{O}_{\mathcal{X}}\otimes_{\mathcal{O}_{\mathcal{X}}^{\la}}\widetilde{\mathcal{E}}^{\la}\rightarrow\widetilde{\mathcal{E}}$
is an isomorphism. Fargues and Fontaine observe that the only point
of $\mathcal{X}$ with finite $\Gamma$-orbit is $x_{\infty}$. The
idea is then to use a very simple geometric argument: once one knows
that $\mathcal{O}_{\mathcal{X}}\otimes_{\mathcal{O}_{\mathcal{X}}^{\la}}\widetilde{\mathcal{E}}^{\la}\rightarrow\widetilde{\mathcal{E}}$
is injective, everything can be understood by arguing locally at $x_{\infty}$.
Indeed, if this map is an isomorphism after localizing and completing
along $\mathcal{O}_{\mathcal{X}}\rightarrow\widehat{\mathcal{O}}_{\mathcal{X},x_{\infty}}^{+}$,
then the cokernel has to be supported at finitely many points outside
$x_{\infty}$. But these points also form a finite $\Gamma$-orbit,
so the cokernel cannot be supported anywhere.

It therefore remains to understand the properties of our spaces of
locally analytic vectors under certain localizations and completions.
To do this, we are naturally led to consider higher locally analytic
vectors and their vanishing, and we prove a representation-theoretic
result which is of independent interest. To state the result, let
$G$ be a $p$-adic Lie group and let $\WL$ be a Banach ring with
a continuous action of $G$. Assume the topology on $\WL$ is $p$-adic. 

\textbf{Theorem C. }\emph{Suppose $G$ and $\WL$ satisfy the Tate-Sen
axioms (TS1)-(TS3) of \cite{BC08} as well as an additional axiom
(TS4). Then for any finite free $\widetilde{\Lambda}$-semilinear
representation $M$ of $G$, the higher locally analytic vectors $\R_{G\hla}^{i}(M)$
are zero for $i\geq1$.}

Here are two special cases of the theorem where we conclude that $\R_{G\hla}^{i}(M)=0$
for $i\geq1$.

1. If $M$ is a finite dimensional $\widehat{K}_{\infty}$-module
with a semilinear action of $\Gamma$, for $K_{\infty}$ containing
an unramified twist of $K_{\cyc}$. In fact, the vanishing of $\R_{G\hla}^{i}(M)$
can be established for arbitrary $K_{\infty}$, see $\mathsection5$.

2. If $M$ a finite free $\widetilde{\B}_{I}(\widehat{K}_{\infty})$-module
with a semilinear action of $\Gamma$, under the same assumptions
on $K_{\infty}$.

Note that the vanishing of higher locally analytic vectors is automatic
for admissible representations, but the examples above are not admissible.
Theorem C illustrates how the Tate-Sen axioms can serve as a substitute
for admissibility.

Theorem C is especially useful for making cohomological computations.
Here is an example application, which follows directly from the main
results of \cite{RJRC21} (see $\mathsection5$): if $M$ satisfies
assumptions of the theorem, then for $i\geq0$ we have natural isomorphisms
\[
\H^{i}(G,M)\cong\H^{i}(G,M^{\la})\cong\H^{i}(\Lie G,M^{\la})^{G}.
\]

Finally, let us mention that in the recent work \cite{RC22}, Juan
Esteban Rodríguez Camargo proves similar results to our Theorem C.
He then applies them in the setting of rigid adic spaces with fantastic
applications to the Calegari-Emerton conjecture, among others.

\subsection{Structure of the article}

$\mathsection2$ contains reminders on locally analytic vectors and
their derived functors. In $\mathsection3$ we give reminders on the
Fargues-Fontaine curve and equivariant vector bundles. In $\mathsection4$
we introduce locally analytic bundles and we discuss their basic properties.
$\mathsection5$ is the longest and most technical section of the
paper, in which we prove Theorem C. Theorem A is proved in $\mathsection6$.
In $\mathsection7$ we compare our results to the theory of $(\varphi,\Gamma)$-modules.
Finally, in $\mathsection8$ we discuss $p$-adic differential equations
on the Fargues-Fontaine curve and explain Theorem B.

At several points in the article we have taken the liberty to raise
speculations and ask questions to which we do not yet know the answer.

\subsection{Notations and conventions}

The field $K$ denotes a finite extension of $\Q_{p}$. We write $K_{\cyc}=K(\mu_{p^{\infty}})$
for the cyclotomic extension. Its Galois group $\Gamma_{\cyc}=\Gal(K_{\cyc}/K)$
is an open subgroup of $\Z_{p}^{\times}$. We denote by $K_{\infty}$
an infinitely ramified Galois extension of $K$ with $\Gamma=\Gal(K_{\infty}/K)$
a $p$-adic Lie group. If $\overline{K}$ denotes the algebraic closure
of $K$, we let $G_{K}=\Gal(\overline{K}/K)$ and $H=\Gal(\overline{K}/K_{\infty})$
so that $G_{K}/H=\Gamma$.

The $p$-adic completion $\widehat{K}_{\infty}$ of $K_{\infty}$
is a perfectoid field. Write $\varpi$ for a pseudouniformizer of
$\widehat{K}_{\infty}$ with valuation $\val\left(\varpi\right)=p$
that admits a sequence of $p$'th power roots $\varpi^{1/p^{n}}$
(such a choice is always possible, and the constructions in this paper
never depend on this choice). Let $\varpi^{\flat}=(\varpi,\varpi^{1/p},...)$
be the corresponding pseudouniformizer of the tilt $\widehat{K}_{\infty}^{\flat}$.

Denote by $\Lie(\Gamma)$ the Lie algebra of $\Gamma$. It is a finite
dimensional $\Q_{p}$-vector space, and if $v\in\Lie(\Gamma)$ is
sufficiently small, we have a corresponding element $\exp(v)\in\Gamma$.

All representations and group actions appearing in this article are
assumed to be continuous. Galois cohomology groups are always taken
in the continuous sense.

If $W$ is a Banach space over $\Q_{p}$ we write $W^{+}$ for its
unit ball.

All completed tensor products appearing in this article are projective.
In other words, if $V^{+}$ and $W^{+}$ are unit balls of two Banach
spaces $V$ and $W$ over $\Q_{p}$, then
\[
V^{+}\widehat{\otimes}_{\Z_{p}}W^{+}=\varprojlim_{n}(V^{+}\otimes_{\Z_{p}}W^{+})/p^{n}
\]
and $V\widehat{\otimes}_{\Q_{p}}W=(V^{+}\widehat{\otimes}_{\Z_{p}}W^{+})[1/p]$. 

\subsection{Acknowledgments}

I would like to thank my PhD advisor Matthew Emerton for his constant
support, advice and supply of ideas. I would also like to thank Laurent
Berger, Pierre Colmez, Ian Gleason, Lue Pan, Léo Poyeton and Joaquín
Rodrigues Jacinto for answering my questions. Special thanks to Kiran
Kedlaya for his explanations regarding smooth functions appearing
in $\mathsection8.4$. Finally, I would like to thank Laurent Berger,
Ehud de-Shalit, Hui Gao, Louis Jabouri, Hao Lee, Léo Poyeton, Stephan
Snegirov and the anonymous referee for their interest and comments.
We hope the contents of this article will make it clear we were inspired
by \cite{BC16} and \cite{Pa21}.

\section{Locally analytic and pro analytic vectors}

In this section we give reminders on locally analytic and pro analytic
vectors and quote results that will be used in $\mathsection4,\mathsection5$
and $\mathsection6$. We shall freely use our conventions in $\mathsection1.2$
regarding Banach spaces.

\subsection{Locally analytic and pro analytic vectors}

We shall say a compact $p$-adic Lie group $G$ is \emph{small} if
there exists a saturated integral valued $p$-valuation on $G$ which
defines its topology and if for some $N\in\Z_{\geq1}$ there exists
an embedding of $G$ into $1+p^{2}M_{N}(\Z_{p})$, the group of $N$
by $N$ matrices congruent to $1$ mod $p^{2}$. See $\mathsection23$
and $\mathsection26$ of \cite{Sch11} for the first condition. If
$G$ is small, there exists an ordered basis $g_{1},...,g_{d}$ such
that $(x_{1},...,x_{d})\mapsto g_{1}^{x_{1}}\cdot...\cdot g_{1}^{x_{d}}$
gives a homeomorphism of $\Z_{p}^{d}$ with $G$. We then have coordinates
on $G$ 
\[
c=(c_{1},...,c_{d}):G\xrightarrow{\sim}\Z_{p}^{d}
\]
defined by the inverse map where $c_{i}(g_{1}^{x_{1}}\cdot...\cdot g_{1}^{x_{d}})=x_{i}$. 

Now let $G$ is an be any compact $p$-adic Lie group. By \cite[Theorem 27.1]{Sch11}
and Ado's theorem (see \cite[Proposition 2.1.3]{Pa21}), the collection
of small open subgroups of $G$ forms a fundamental system of open
neighborhoods of the identity element. Let $W$ be a Banach $\Q_{p}$-linear
representation of $G$ (or $G$-Banach space for short). If $H$ is
a small open subgroup of $G$, choose coordinates $c$ on $H$ and
write $c(h)^{\mathbf{k}}=\prod_{i=1}^{d}c_{i}(h)^{k_{i}}$ if $\mathbf{k}=(k_{1},...,k_{d})$
for $h\in H$. We have the subspace $W^{H\han}$ of $H$-analytic
vectors in $W$; it is the subspace of elements $w\in W$ for which
there exists a sequence of vectors $\left\{ w_{\mathbf{k}}\right\} _{\mathbf{k}\in\bbN^{d}}$
with $w_{\mathbf{k}}\rightarrow0$ and 
\[
h(w)=\sum_{\mathbf{k}\in\bbN^{d}}c(h)^{\mathbf{k}}w_{\mathbf{k}}
\]
for all $h\in H$. The norm $\left|\left|w\right|\right|_{H\han}=\sup_{\mathbf{k}}\left|\left|w_{\mathbf{k}}\right|\right|$
makes $W^{H\han}$ into a Banach space. Note that $W^{H\han}$ does
not depend on the choice of coordinates. We write $W^{\la}=\bigcup_{H}W^{H\han}$
for the subspace of locally analytic vectors of $W$, and endow it
with the inductive limit topology, which makes it into an LB space.
If $W$ is a Fréchet space whose topology is defined by a countable
sequence of seminorms, let $W_{i}$ be the Hausdorff completion of
$W$ for the $i$'th seminorm, so that $W=\varprojlim W_{i}$ is a
projective limit of Banach spaces. We write $W^{\pa}=\varprojlim W_{i}^{\la}$
for the subspace of pro analytic vectors. Finally, we extend the definitions
of locally analytic vectors and pro analytic vectors to LB and LF
spaces in the obvious way. 

The Lie algebra $\Lie(G)$ acts on each $W^{H\han}$ (and hence also
on $W^{\la}$ and $W^{\pa}$) through derivations. This action is
given as follows. If $v\in\Lie(G)$ then $\exp(p^{k}v)\in H$ for
$k\gg0$, and we define
\[
\nabla_{v}(w)=\lim_{k\rightarrow\infty}\frac{\exp(p^{k}v)(w)-w}{p^{k}}.
\]

The operator $\nabla_{v}:W^{H\han}\rightarrow W^{H\han}$ is bounded,
see \cite[Lemma 2.6]{BC16}. 

Locally analytic and pro analytic vectors behave well when we have
a basis of such vectors (\cite[Proposition 2.3]{BC16} and \cite[Proposition 2.4]{Be16}): 
\begin{prop}
\label{2.1}Let $B$ be a Banach or Fréchet $G$-ring and let $W$
be a free $B$-module of finite rank, equipped with a $B$-semilinear
action of $G$. If the $B$-module $W$ has a basis $w_{1},...,w_{d}$
in which the function $G\rightarrow\GL_{d}(B)\subset\mathrm{M}_{d}(B),$
$g\mapsto\Mat(g)$ is $H$-analytic (resp. locally analytic, resp.
pro analytic), then $W^{H\han}=\oplus_{j=1}^{d}B^{H\han}\cdot w_{i}$
(resp. $W^{\la}=\oplus_{j=1}^{d}B^{\la}\cdot w_{i}$, resp. $W^{\pa}=\oplus_{j=1}^{d}B^{\pa}\cdot w_{i}$).
\end{prop}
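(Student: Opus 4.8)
The plan is to reduce everything to the case of $H$-analytic vectors, since the locally analytic and pro analytic statements follow by taking colimits over (shrinking) small open subgroups and, in the Fréchet case, by passing to the limit over the defining seminorms. So fix a small open subgroup $H$ of $G$ on which the matrix-valued function $g \mapsto \Mat(g)$ is $H$-analytic, and fix coordinates $c = (c_1, \ldots, c_d)$ on $H$. The containment $\oplus_j B^{H\han} \cdot w_j \subseteq W^{H\han}$ is the easy direction: if $b \in B^{H\han}$ then $b w_j$ is $H$-analytic because the product of two $H$-analytic functions $H \to \M_d(B)$ (namely $h \mapsto \Mat(h)$ and $h \mapsto $ the expansion of $h(b)$, viewed as a scalar matrix) is again $H$-analytic — the Tate algebra in the variables $c_1, \ldots, c_d$ over $B$ is a ring, and the analytic vectors are exactly those whose orbit map lies in the image of $B\langle c_1, \ldots, c_d\rangle$. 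One should be mildly careful that $B^{H\han}$ is itself a ring and that multiplication $B^{H\han} \times B^{H\han} \to B^{H\han}$ is continuous, which is standard (cf. the references cited just before the statement).

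The substantive direction is $W^{H\han} \subseteq \oplus_j B^{H\han} \cdot w_j$. Take $w \in W^{H\han}$ and write $w = \sum_j b_j w_j$ with $b_j \in B$ uniquely (the $w_j$ form a $B$-basis). I must show each $b_j \in B^{H\han}$. The idea is to invert the matrix. For $h \in H$ we have, on one hand, $h(w) = \sum_{\mathbf{k}} c(h)^{\mathbf{k}} w_{\mathbf{k}}$ with $w_{\mathbf{k}} \to 0$ in $W$; on the other hand, by semilinearity, $h(w) = \sum_j h(b_j)\, h(w_j) = \sum_{i,j} h(b_j)\, \Mat(h)_{ij}\, w_i$. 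Comparing coefficients in the basis $w_1, \ldots, w_d$ gives, for each $i$, an identity in $B$: $\sum_j \Mat(h)_{ij} \, h(b_j) = (\text{the } w_i\text{-component of } \sum_{\mathbf{k}} c(h)^{\mathbf{k}} w_{\mathbf{k}})$. Writing $w_{\mathbf{k}} = \sum_i \beta_{i,\mathbf{k}} w_i$ with $\beta_{i,\mathbf{k}} \in B$, the right-hand side is $\sum_{\mathbf{k}} c(h)^{\mathbf{k}} \beta_{i,\mathbf{k}}$; since $w_{\mathbf{k}} \to 0$ in $W$ and the coordinate functionals $W \to B$ attached to the basis are continuous, $\beta_{i,\mathbf{k}} \to 0$ in $B$, so the map $h \mapsto (w_i\text{-component of } h(w))$ is $H$-analytic as a $B$-valued function. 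Thus the column vector $\big(h(b_j)\big)_j$ equals $\Mat(h)^{-1}$ applied to an $H$-analytic $B$-valued vector. Since $g \mapsto \Mat(g)$ is $H$-analytic with values in $\GL_d(B)$ and inversion is analytic on $\GL_d$, the function $h \mapsto \Mat(h)^{-1}$ is also $H$-analytic; hence $h \mapsto h(b_j)$ is $H$-analytic for each $j$, i.e. $b_j \in B^{H\han}$, and in fact $b_j = e \cdot b_j$ recovered by evaluating the orbit map at the identity. This gives $w \in \oplus_j B^{H\han} w_j$.

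The main obstacle — really the only place requiring care — is the claim that inversion of the matrix stays within the $H$-analytic class, i.e. that $h \mapsto \Mat(h)^{-1}$ is $H$-analytic given that $h \mapsto \Mat(h)$ is. Over a field this is immediate from the power-series formula for the inverse, but here $B$ is only a Banach (or Fréchet) ring, so one needs that $\Mat(h)^{-1}$ admits a convergent expansion in the coordinates $c(h)$; this follows because $\Mat(h) = \Mat(e) \cdot (1 + (\text{terms vanishing at } h = e))$ after possibly shrinking $H$ so that the non-constant part of the expansion has norm $< 1$, whence the geometric series for the inverse converges in the Tate algebra $B\langle c_1,\ldots,c_d\rangle$ (or its Fréchet analogue). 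Shrinking $H$ is harmless because all three statements in the Proposition are insensitive to replacing $H$ by a smaller small open subgroup: for the $H\han$ statement the hypothesis is assumed to hold for the given $H$, and one checks the conclusion is compatible with restriction; for $\la$ and $\pa$ one is taking a union, resp. a limit of unions, over all such subgroups. Finally, in the Fréchet case one runs the Banach argument on each $W_i = B_i \otimes (\text{completion})$ — using that a finite free module over a Fréchet ring is the projective limit of the corresponding finite free modules over the Banach pieces — and then takes $\varprojlim_i$ to conclude the $\pa$ statement, and the $\la$ statement for Fréchet $B$ follows by combining this with the colimit over $H$.
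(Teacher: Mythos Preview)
The paper does not supply its own proof of this proposition; it merely records the statement and cites \cite[Proposition~2.3]{BC16} and \cite[Proposition~2.4]{Be13}. Your argument follows exactly the standard route taken in those references: identify $H$-analytic vectors with $H$-fixed elements of $\mathcal{C}^{\an}(H,W)$, use the basis to split $\mathcal{C}^{\an}(H,W)\cong \mathcal{C}^{\an}(H,B)^{d}$ as Banach spaces, observe that the coordinate functions $h\mapsto \phi_i(h)$ of $h\mapsto h(w)$ are analytic, and recover the orbit maps $h\mapsto h(b_j)$ by multiplying the column $(\phi_i(h))_i$ by $\Mat(h)^{-1}$. For the locally analytic and pro analytic assertions this is entirely correct, since shrinking $H$ is harmless once one passes to the colimit over small open subgroups (resp.\ the limit of such colimits in the Fr\'echet case).

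There is, however, a genuine gap in your treatment of the $H$-analytic assertion. You need $h\mapsto \Mat(h)^{-1}$ to lie in $\mathcal{C}^{\an}(H,M_d(B))$ for the \emph{given} $H$, and your geometric-series argument only produces this after passing to a smaller $H'\subset H$ on which $\lVert \Mat(h)-I\rVert<1$. The claim that ``for the $H\han$ statement \ldots\ one checks the conclusion is compatible with restriction'' does not hold: knowing $W^{H'\han}=\bigoplus_j B^{H'\han}w_j$ for $H'\subsetneq H$ does not formally yield $W^{H\han}=\bigoplus_j B^{H\han}w_j$, because both sides strictly grow when $H$ is shrunk. One must actually show that $\Mat$ is a unit in $M_d(\mathcal{C}^{\an}(H,B))$, and pointwise invertibility $\Mat(h)\in\GL_d(B)$ together with $\Mat(e)=I$ is not by itself sufficient for this over a general Banach ring $B$. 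You should consult the argument in \cite{BC16} for how this step is carried out for the fixed $H$; note in any case that the paper only ever invokes the $\la$ and $\pa$ versions of the proposition, so the gap does not affect any of the applications here.
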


It will often be useful for us to choose a specific fundamental system
of open neighborhoods of $G$ as follows. Fix a small compact open
$G_{0}\subset G$ which with coordinates $c$. For $n\geq0$ we set
\[
G_{n}=G^{p^{n}}=\left\{ g^{p^{n}}:g\in G_{0}\right\} .
\]
These are subgroups (\cite[Remark 26.9]{Sch11}) which have induced
coordinates $c|_{G_{n}}:G_{n}\xrightarrow{\sim}(p^{n}\Z_{p})^{d}$.
The normalization is such that for $w\in W^{G_{n}\han}$ we can write
\[
g(w)=\sum_{\mathbf{k\in}\mathbb{N}^{d}}c(g)^{\mathbf{k}}w_{\mathbf{k}}
\]
for $g\in G_{n}$ and $\left\{ w_{\mathbf{k}}\right\} _{\mathbf{k}\in\bbN^{d}}$
with $p^{n\left|\mathbf{k}\right|}w_{\mathbf{k}}\rightarrow0$, and
the Banach norm is given by
\[
\left|\left|w\right|\right|_{G_{n}\han}=\sup_{\mathbf{k}}\left|\left|p^{n\left|\mathbf{k}\right|}w_{\mathbf{k}}\right|\right|.
\]
It is easy to check if $w\in W^{G_{n}\han}$ then $\left|\left|w\right|\right|_{G_{m}\han}\leq\left|\left|w\right|\right|_{G_{m+1}\han}$
for $m\geq n$ and $\left|\left|w\right|\right|_{G_{m}\han}=\left|\left|w\right|\right|$
for $m\gg n$ (see \cite[Lemme 2.4]{BC16}).

\subsection{Rings of analytic functions}

Suppose first that $G$ is small. Let $\mathcal{C}^{\an}(G,\Q_{p})$
be the space of analytic functions on $G$. These are those functions
that after pullback by the coordinates $c:G\xrightarrow{\sim}\Z_{p}^{d}$
are of the form 
\[
\mathbf{x}=(x_{1},\cdots,x_{d})\mapsto\sum_{\mathbf{k}=(k_{1},\cdots,k_{d})\in\mathbb{N}^{d}}b_{\mathbf{k}}\mathbf{x}^{\mathbf{k}}.
\]

where $b_{\mathbf{k}}\rightarrow0$ as $|\mathbf{k}|\rightarrow\infty$.
The norm $\left|\left|f\right|\right|_{G}=\sup_{\mathbf{k}\in\mathbb{N}^{d}}\left|\left|b_{\mathbf{k}}\right|\right|$
makes $\mathcal{C}^{\an}(G,\Q_{p})$ into a Banach space. We shall
regard $\mathcal{C}^{\an}(G,\Q_{p})$ as a $G$-representation through
the left regular action of $G$. 

If now $G$ is any compact $p$-adic Lie group with a system of small
neighborhoods $\left\{ G_{n}\right\} _{n\geq0}$ as in $\mathsection2.1$,
we have for each $n\ge0$ the space of analytic functions $\mathcal{C}^{\an}(G_{n},\Q_{p})$
on $G_{n}$. Using the coordinates $c:G_{n}\xrightarrow{\sim}(p^{n}\Z_{p})^{d}$
as in $\mathsection2.1$, we shall regard $\mathcal{C}^{\an}(G_{n},\Q_{p})$
as the ring of functions that under the bijection are identified with
functions of the form 
\[
\mathbf{x}=(x_{1},\cdots,x_{d})\mapsto\sum_{\mathbf{k}=\left(k_{1},\cdots,k_{d}\right)\in\mathbb{N}^{d}}b_{\mathbf{k}}\mathbf{x}^{\mathbf{k}}.
\]
where $p^{n|\mathbf{k}|}b_{\mathbf{k}}\rightarrow0$ as $|\mathbf{k}|\rightarrow\infty$.
Under this normalization 
\[
\left|\left|f\right|\right|_{G_{n}}=\sup_{\mathbf{k}\in\mathbb{N}^{d}}\left|\left|p^{n|\mathbf{k}|}b_{\mathbf{k}}\right|\right|
\]
for $f\in\mathcal{C}^{\an}(G_{n},\Q_{p})$. 

The following lemma will be used in $\mathsection5$.
\begin{lem}
\label{2.2}For $k\geq1$ the subgroup $G_{n+k}$ acts trivially on
$\mathcal{C}^{\an}(G_{n},\Q_{p})^{+}/p^{k}$.
\end{lem}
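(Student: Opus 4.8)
The plan is to reduce the statement to a short estimate with the group law of $G$ written in coordinates. Keeping the notation introduced above, fix the small open subgroup $G_{0}$ with coordinates $c=(c_{1},\dots,c_{d})\colon G_{0}\xrightarrow{\sim}\Z_{p}^{d}$ attached to an ordered basis $g_{1},\dots,g_{d}$, and recall $G_{m}=G_{0}^{p^{m}}$ with $c|_{G_{m}}\colon G_{m}\xrightarrow{\sim}(p^{m}\Z_{p})^{d}$. First I would pass to the rescaled coordinates $y_{i}=p^{-n}c_{i}\colon G_{n}\xrightarrow{\sim}\Z_{p}^{d}$; with the normalization recalled above, the unit ball $\mathcal{C}^{\an}(G_{n},\Q_{p})^{+}$ is exactly the Tate algebra $\Z_{p}\langle y_{1},\dots,y_{d}\rangle$, so its reduction mod $p^{k}$ is the polynomial ring $(\Z/p^{k}\Z)[y_{1},\dots,y_{d}]$. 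Since $G$ acts on $\mathcal{C}^{\an}(G_{n},\Q_{p})$ by left translation, i.e. by $\Q_{p}$-algebra isometries preserving the unit ball, each $g\in G_{n+k}$ induces a $(\Z/p^{k}\Z)$-algebra automorphism of $(\Z/p^{k}\Z)[y_{1},\dots,y_{d}]$, and to prove it is the identity it suffices to check it fixes the generators, i.e. that $g\cdot y_{i}-y_{i}\in p^{k}\,\mathcal{C}^{\an}(G_{n},\Q_{p})^{+}$ for $1\le i\le d$.

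The second step is to verify this congruence. Here I would use the standard fact that, in the coordinates $c$, the multiplication of the small group $G_{0}$ is given by a $d$-tuple of power series $\mathbf{F}=(F_{1},\dots,F_{d})$ with coefficients in $\Z_{p}$, convergent on $\Z_{p}^{d}\times\Z_{p}^{d}$ and satisfying $\mathbf{F}(\mathbf{0},\mathbf{Y})=\mathbf{Y}$ and $\mathbf{F}(\mathbf{X},\mathbf{0})=\mathbf{X}$ (this integrality is part of the theory of saturated $p$-valued groups in coordinates of the second kind, cf. \cite{Sch11}; it is also visible directly from the embedding $G_{0}\subset 1+p^{2}M_{N}(\Z_{p})$). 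Since $F_{i}(\mathbf{0},\mathbf{Y})=Y_{i}$, every monomial occurring in $F_{i}(\mathbf{X},\mathbf{Y})-Y_{i}$ is divisible by some $X_{j}$, so we may write $F_{i}(\mathbf{X},\mathbf{Y})-Y_{i}=\sum_{j=1}^{d}X_{j}G_{ij}(\mathbf{X},\mathbf{Y})$ with $G_{ij}\in\Z_{p}[[\mathbf{X},\mathbf{Y}]]$ still convergent on $\Z_{p}^{d}\times\Z_{p}^{d}$. Now for $h\in G_{n}$ one has $(g\cdot y_{i})(h)=y_{i}(g^{-1}h)=p^{-n}F_{i}(c(g^{-1}),c(h))$; writing $c(g^{-1})=p^{n+k}\mathbf{b}$ with $\mathbf{b}\in\Z_{p}^{d}$ (legitimate since $g^{-1}\in G_{n+k}$) and $c(h)=p^{n}\mathbf{y}$ with $\mathbf{y}=\mathbf{y}(h)$ ranging over $\Z_{p}^{d}$, and using $F_{i}(\mathbf{X},\mathbf{0})=X_{i}$, one obtains
\[
g\cdot y_{i}-y_{i}\;=\;p^{k}\sum_{j=1}^{d}b_{j}\,G_{ij}(p^{n+k}\mathbf{b},\,p^{n}\mathbf{y}).
\]
Substituting the integral vector $p^{n+k}\mathbf{b}$ and the variables $p^{n}\mathbf{y}$ into the integral convergent series $G_{ij}$ produces an element of $\Z_{p}\langle y_{1},\dots,y_{d}\rangle=\mathcal{C}^{\an}(G_{n},\Q_{p})^{+}$, and $b_{j}\in\Z_{p}$, so the right-hand side lies in $p^{k}\mathcal{C}^{\an}(G_{n},\Q_{p})^{+}$, which finishes the proof.

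The real content of the argument is just bookkeeping with the two scales in play — functions analytic at radius $p^{-n}$, translated by a group element of size $p^{-(n+k)}$, whose ratio is $p^{-k}$ — together with the $\Z_{p}$-integrality of the group law in coordinates of the second kind; once those are in hand the estimate is automatic, and in particular there is no exceptional behaviour at $p=2$. (One could instead write $g=\exp(p^{n+k}v)$ and try to expand $g$ as $\exp(p^{n+k}\nabla_{v})$ acting on $\mathcal{C}^{\an}(G_{n},\Q_{p})$, where $\nabla_{v}$ has operator norm $\le p^{n}$; but then one must control the convergence of the exponential series, which is borderline precisely in the case $p=2$, $k=1$, so the coordinate computation is cleaner.)
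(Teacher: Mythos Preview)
Your proof is correct and is precisely the ``easy exercise using the coordinates'' that the paper invokes without details (referring to \cite[Lemma 2.1.2]{Pa21} for $k=1$). One small cosmetic point: in the displayed formula you cite the identity $F_i(\mathbf{X},\mathbf{0})=X_i$, but what you actually use is $F_i(\mathbf{0},\mathbf{Y})=Y_i$ (to factor $F_i(\mathbf{X},\mathbf{Y})-Y_i=\sum_j X_j G_{ij}$); both hold, so this does not affect the argument.
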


\begin{proof}
This is an easy exercise using the coordinates. See \cite[Lemma 2.1.2]{Pa21}
for the case $k=1$.
\end{proof}
The following is shown in \cite[Proposition 2.1.3]{Pa21} and in
its proof (originally in the proof of \cite[Théoréme 6.1]{BC16}). 
\begin{prop}
\label{2.3}Suppose that $G$ is small. There is a dense subspace
$\varinjlim_{\ell\in\mathbb{N}}V_{\ell}\subset\mathcal{C}^{\an}(G,\Q_{p})$,
where each $V_{l}$ is a finite-dimensional $G$-subrepresentation
of $\mathcal{C}^{\an}(G,\Q_{p})$ with coefficients in $\Q_{p}$ such
that for any $k,\ell\in\mathbb{N}$, we have $V_{k}\cdot V_{\ell}\subset V_{k+\ell}$. 

Furthermore, if we fix $G$ and consider small open subgroups $G'\subset G$,
we may choose $V_{\ell}(G')\subset\text{\ensuremath{\mathcal{C}^{\an}}}(G',\Q_{p})$
at once for all $G'$ in such a way that the natural map $\mathcal{C}^{\an}(G,\Q_{p})\rightarrow\text{\ensuremath{\mathcal{C}^{\an}}}(G',\Q_{p})$
restricts to $V_{\ell}(G)\rightarrow V_{\ell}(G')$.
\end{prop}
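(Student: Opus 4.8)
The plan is to build the subspaces $V_{l}$ explicitly out of the matrix embedding provided by smallness, rather than to produce them abstractly. Fix an embedding $\iota\colon G\hookrightarrow 1+p^{2}M_{N}(\Z_{p})$ as in the definition of a small group, and for $1\leq i,j\leq N$ put $u_{ij}\colon G\to p^{2}\Z_{p}$, $u_{ij}(g)=\iota(g)_{ij}-\delta_{ij}$. Let $V_{l}\subset\mathcal{C}^{\an}(G,\Q_{p})$ be the $\Q_{p}$-span of the monomials $u_{i_{1}j_{1}}\cdots u_{i_{m}j_{m}}$ with $m\leq l$, so that $V_{0}=\Q_{p}\cdot 1$, each $V_{l}$ is finite-dimensional, $V_{l}\subset V_{l+1}$, $V_{k}\cdot V_{l}\subset V_{k+l}$ by construction, and $\varinjlim_{l}V_{l}=\Q_{p}[u_{ij}]$. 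It then remains to prove three things.

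\emph{(a) Analyticity and density.} Writing $g=g_{1}^{x_{1}}\cdots g_{d}^{x_{d}}$ in the $p$-valuation coordinates, each $g_{r}^{x_{r}}=\exp(x_{r}\log g_{r})$ is a convergent power series in $x_{r}$ because $\log g_{r}\in p^{2}M_{N}(\Z_{p})$, so $\iota(g)$ is analytic in $(x_{1},\dots,x_{d})$ with coefficients tending to $0$, and hence $u_{ij}\in\mathcal{C}^{\an}(G,\Q_{p})$. For density, $\iota$ realizes $G$ as a closed analytic subgroup of $1+p^{2}M_{N}(\Z_{p})$, a ball on which the polynomials in the coordinate functions are dense; since restriction of analytic functions to a closed analytic subgroup is surjective, the restrictions $\Q_{p}[u_{ij}]$ are dense in $\mathcal{C}^{\an}(G,\Q_{p})$ --- \emph{provided} one identifies this Banach algebra, defined via the $p$-valuation coordinates $c$, with the function algebra of the analytic manifold structure on $G$ coming from $\iota$. \emph{(b) $G$-stability.} From $\iota(\gamma^{-1}x)_{ij}=\sum_{k}\iota(\gamma^{-1})_{ik}\iota(x)_{kj}$ one computes, for the left regular action,
\[
\gamma\cdot u_{ij}=\sum_{k}\iota(\gamma^{-1})_{ik}\,u_{kj}+u_{ij}(\gamma^{-1})\cdot 1,
\]
so $\gamma$ preserves $V_{1}$; being an algebra automorphism of $\mathcal{C}^{\an}(G,\Q_{p})$ it preserves every $V_{l}$, and its matrix on $V_{l}$ has entries that are continuous (in fact analytic) in $\gamma$, so $V_{l}$ is a genuine finite-dimensional $G$-subrepresentation. \emph{(c) Compatibility.} For a small open $G'\subset G$, the restriction $\iota|_{G'}$ is again an embedding of the small group $G'$ into $1+p^{2}M_{N}(\Z_{p})$; taking the functions $u_{ij}$ for $G'$ to be the restrictions of those for $G$ and $V_{l}(G')$ the corresponding span, the function-restriction map $\mathcal{C}^{\an}(G,\Q_{p})\to\mathcal{C}^{\an}(G',\Q_{p})$ carries $V_{l}(G)$ onto $V_{l}(G')$ tautologically, and density of $\varinjlim_{l}V_{l}(G')$ is step (a) applied to $G'$.

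Steps (b) and (c) are routine; the one substantive point is (a), and inside it the comparison of the two analytic structures on $G$ --- the one defined by the saturated $p$-valuation and the one induced by the matrix embedding. The hard part will be this comparison, which is supplied by Lazard's theory as developed in \cite[\S 23--27]{Sch11}; note that the factor $p^{2}$ in the definition of smallness is precisely what makes $\log$ and $\exp$ mutually inverse analytic isomorphisms between $1+p^{2}M_{N}(\Z_{p})$ and $p^{2}M_{N}(\Z_{p})$ for every prime, $p=2$ included, so that $\log\circ\,\iota$ presents $G$ as a $\Z_{p}$-linear submanifold and the two structures coincide. If one prefers to bypass the surjectivity-of-restriction input, one can instead check directly that the entries of $\log\iota(g)=\sum_{m\geq 1}(-1)^{m+1}(\iota(g)-1)^{m}/m$ --- which lie in the closure of $\Q_{p}[u_{ij}]$ since the series converges on $p^{2}M_{N}(\Z_{p})$ --- span a space of analytic coordinates on $G$, whence $\Q_{p}[u_{ij}]$ is dense.
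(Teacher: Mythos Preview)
Your construction is correct and is precisely the one used in the references the paper cites (Proposition~2.1.3 of \cite{Pa21}, originating in the proof of Th\'eor\`eme~6.1 of \cite{BC16}): one takes $V_{l}$ to be the span of degree-$\leq l$ monomials in the shifted matrix coefficients $u_{ij}$ of a fixed embedding $G\hookrightarrow 1+p^{2}M_{N}(\Z_{p})$, and the one nontrivial point is exactly the density step you isolate, handled via the $\log$/$\exp$ comparison of analytic structures.
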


\subsection{Higher locally analytic vectors}

Suppose first that $G$ is small and let $W$ be a $G$-Banach space.
There is a $G$-equivariant isometry 
\[
W\widehat{\otimes}_{\Q_{p}}\mathcal{C}^{\an}(G,\Q_{p})\cong\mathcal{C}^{\an}(G,W),
\]
where $\mathcal{C}^{\an}(G,W)$ is the space of $W$-valued analytic
functions on $G$, with its $G$-Banach structure given by the sup
norm and the action $(gf)(x)=g(f(g^{-1}(x))$ for $f\in\mathcal{C}^{\an}(G,W)$.
We then have $(\mathcal{C}^{\an}(G,W))^{G}=W^{G\han}$, the identification
given by $f\mapsto f(1)$. This gives an alternative description of
$G$-analytic vectors that we shall use in what follows.

The functor $W\mapsto W^{G\han}$ is left exact. Following $\mathsection2.2$
of \cite{Pa21} and \cite{RJRC21}, define right derived functors
for $i\geq0$:
\[
\mathrm{R}_{G\han}^{i}(W)=\H^{i}(G,W\widehat{\otimes}_{\Q_{p}}\mathcal{C}^{\an}(G,\Q_{p}))
\]
(taking continuous cohomology on the right hand side).

If $G$ is a compact $p$-adic Lie group with subgroups $\left\{ G_{n}\right\} _{n\geq1}$
as in $\mathsection2.1-\mathsection2.2$, taking the colimit over
$n$, there are right derived functors for $W\mapsto W^{G\hla}$ given
by
\[
\mathrm{R}_{G\hla}^{i}(W)=\varinjlim_{n}\mathrm{R}_{G_{n}\han}^{i}(W)=\varinjlim_{n}\H^{i}(G_{n},W\widehat{\otimes}_{\Q_{p}}\mathcal{C}^{\an}(G_{n},\Q_{p})).
\]

We shall call these groups the higher locally analytic vectors of
$W$. If $G$ is understood from the context we shall just write $\mathrm{R}_{\la}^{i}$
instead of $\mathrm{R}_{G\hla}^{i}$.

If 
\[
0\rightarrow V\rightarrow W\rightarrow X\rightarrow0
\]
is a short exact sequence of $G$-Banach spaces, then it is strict
by the open mapping theorem, and so we have a long exact sequence
\[
0\rightarrow V^{\la}\rightarrow W^{\la}\rightarrow X^{\la}\rightarrow\mathrm{R}_{\la}^{1}\left(V\right)\rightarrow\mathrm{R}_{\la}^{1}\left(W\right)\rightarrow\mathrm{R}_{\la}^{1}\left(X\right)\rightarrow...
\]

\begin{lem}
\label{2.4}Let $H$ be an open subgroup of $G$ and let $H_{n}=G_{n}\cap H$.
Then for $n\gg0$ and each $i\geq0$ there are natural isomorphisms
$\mathrm{R}_{H_{n}\han}^{i}\cong\mathrm{R}_{G_{n}\han}^{i}$. In particular,
$\mathrm{R}_{H\hla}^{i}\cong\mathrm{R}_{G\hla}^{i}$.
\end{lem}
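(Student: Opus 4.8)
The plan is to reduce everything to a statement about restriction of the relevant cochain complexes. Fix a system of small open subgroups $\{G_n\}_{n\geq0}$ as in $\mathsection2.1$. For $n$ large, $H_n=G_n\cap H$ is an open subgroup of $G_n$, and I claim it is in fact of finite index with the restriction map on analytic functions being well-behaved. The key observation is that $\mathrm{R}_{G_n\han}^{i}(W)=\H^{i}(G_n,W\widehat{\otimes}_{\Q_p}\mathcal{C}^{\an}(G_n))$ computes the cohomology of a module which, as a $G_n$-representation, is induced from the trivial subgroup in an analytic sense; more precisely, $W\widehat{\otimes}_{\Q_p}\mathcal{C}^{\an}(G_n)\cong\mathcal{C}^{\an}(G_n,W)$ carries the left regular action twisted by $W$, and such ``analytically induced'' modules are acyclic for the subgroups in question. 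So the first step is: show that for $n\gg0$ the natural restriction map
\[
\H^{i}\bigl(G_n,\mathcal{C}^{\an}(G_n,W)\bigr)\longrightarrow\H^{i}\bigl(H_n,\mathcal{C}^{\an}(G_n,W)\bigr)
\]
is an isomorphism, and then the second step is to identify the right-hand side with $\mathrm{R}_{H_n\han}^{i}(W)=\H^{i}(H_n,\mathcal{C}^{\an}(H_n,W))$.

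For the first step, the cleanest route is a Shapiro-type / transitivity argument. Since $H_n$ has finite index in $G_n$ (for $n$ large enough that $G_n$ is contained in a small group splitting compatibly — here one uses that $H$ is open, so $G_n\subset H$ for $n\gg0$, which actually makes $H_n=G_n$ and the statement nearly trivial!). Wait — indeed, since $H$ is open in $G$ and the $G_n$ form a fundamental system of neighborhoods of the identity, we have $G_n\subseteq H$ for all $n$ sufficiently large, hence $H_n=G_n\cap H=G_n$ for $n\gg0$. So the isomorphism $\mathrm{R}_{H_n\han}^{i}\cong\mathrm{R}_{G_n\han}^{i}$ is an \emph{equality} of functors once $n$ is large, provided one checks that the coordinates and analytic structure used to define $\mathcal{C}^{\an}(G_n)$ and the norms agree — which they do, since these depend only on the group $G_n$ itself (as noted after Proposition 2.1, $W^{H\han}$ and the associated norms are intrinsic, not dependent on the ambient group or choice of coordinates). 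Thus the first part is: observe $H_n=G_n$ for $n\gg0$, and note that $\mathcal{C}^{\an}(G_n,\Q_p)$ with its $G_n$-action is defined purely in terms of $G_n$, so $\mathrm{R}^i_{G_n\han}(W)=\H^i(G_n,W\widehat\otimes\mathcal{C}^{\an}(G_n))$ is literally the same group whether we think of $G_n$ as a subgroup of $G$ or of $H$.

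For the ``in particular'' clause, pass to the colimit: $\mathrm{R}_{G\hla}^{i}(W)=\varinjlim_n\mathrm{R}_{G_n\han}^{i}(W)$ and $\mathrm{R}_{H\hla}^{i}(W)=\varinjlim_n\mathrm{R}_{H_n\han}^{i}(W)$, where both colimits are over the same cofinal system $\{G_n=H_n\}_{n\gg0}$ with compatible transition maps (compatibility of the transition maps $\mathrm{R}^i_{G_n\han}\to\mathrm{R}^i_{G_{n+1}\han}$ with restriction is where one invokes the compatibility in Proposition 2.4 regarding the maps $\mathcal{C}^{\an}(G_n)\to\mathcal{C}^{\an}(G_{n+1})$). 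Hence the colimits agree.

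The main obstacle — really the only subtle point — is making sure the identification $\mathrm{R}^i_{G_n\han}\cong\mathrm{R}^i_{H_n\han}$ is \emph{natural} and compatible with the structure maps in the colimit, so that it survives passage to $\varinjlim$. This requires checking that when $G_n=H_n$, the two definitions of $\mathcal{C}^{\an}(G_n,\Q_p)$ coming from the two ambient groups carry the same norm and the same left action, and that the transition maps in the two direct systems are induced by the same maps $\mathcal{C}^{\an}(G_n,\Q_p)\to\mathcal{C}^{\an}(G_{n+1},\Q_p)$; all of this follows from the intrinsic nature of these constructions, but it should be spelled out carefully rather than asserted. I do not expect any genuine analytic difficulty here — the content is entirely that ``open subgroup'' eventually means ``equals'' at the level of the defining system $\{G_n\}$.
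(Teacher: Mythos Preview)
Your proposal is correct and, after the initial detour, lands on exactly the paper's one-line argument: since $H$ is open and the $G_n$ form a neighborhood basis of the identity, $G_n\subseteq H$ and hence $H_n=G_n$ for $n\gg0$. The extra discussion of naturality and compatibility with transition maps is more detail than the paper gives, but harmless.
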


\begin{proof}
We have $H_{n}=G_{n}$ for $n\gg0$.
\end{proof}
Suppose that $G$ be a small compact $p$-adic Lie group, and let
$H$ be a small closed normal subgroup. Let $W$ be a $G$-Banach
space. Using the method of Hochshild-Serre we obtain the following
spectral sequences.
\begin{prop}
\label{2.5}(i) There is a spectral sequence
\[
E_{2}^{ij}=\H^{i}(G/H,\H^{j}(H,W\widehat{\otimes}_{\Q_{p}}\mathcal{C}^{\an}(G,\Q_{p})))\Rightarrow\mathrm{R}_{G\han}^{i+j}(W).
\]

(ii) There is a spectral sequence
\[
E_{2}^{ij}=\mathrm{R}_{G/H\han}^{i}(\H^{j}(H,W))\Rightarrow\H^{i+j}(G,W\widehat{\otimes}_{\Q_{p}}\mathcal{C}^{\an}(G/H,\Q_{p})).
\]

\end{prop}

\begin{proof}
Apply the Hochshild-Serre spectral sequence to $W\widehat{\otimes}_{\Q_{p}}\mathcal{C}^{\an}(G,\Q_{p})$
and $W\widehat{\otimes}_{\Q_{p}}\mathcal{C}^{\an}(G/H,\Q_{p})$ (see
\cite[Proposition 5.16]{RJRC21}) .
\end{proof}

\section{Equivariant vector bundles}

In this section we give reminders on the Fargues-Fontaine curve and
equivariant vector bundles. For more details, see \cite[Chapter 9]{FF18}
and \cite[Lectures 12-13]{SW20}.

\subsection{The spaces $\mathcal{Y}_{(0,\infty)}$ and $\mathcal{X}$}

Let $F$ be a perfectoid field, with tilt $F^{\flat}$. We have Fontaine's
ring $\A_{\inf}=\A_{\inf}(F),$ defined as the Witt vectors of the
ring of integers $\mathcal{O}_{F}^{\flat}$ of $F^{\flat}$. Write
$\Spa(\A_{\inf})$ for the adic space associated to the Huber pair
$(\A_{\inf},\A_{\inf})$ . 

Let $\varpi$ be a pseudouniformizer of $F$, and let $f$ be the
residue field of $\mathcal{O}_{F}$. Then there is a point $x_{f}\in\Spa(\A_{\inf})$
with residue field $f$, which is the intersection of the two closed
subspaces $\{p=0\}$ and $\{[\varpi]=0\}$. We set 
\[
\mathcal{Y}=\mathcal{Y}(F)=\Spa\A_{\inf}-\{x_{f}\}.
\]

and
\[
\mathcal{Y}_{(0,\infty)}=\mathcal{Y}_{(0,\infty)}(F)=\Spa\A_{\inf}-\{p\left[\varpi\right]=0\}.
\]

The spaces $\mathcal{Y}$ and $\mathcal{Y}_{\left(0,\infty\right)}$
have a Frobenius automorphism $\varphi$ induced from the Witt vectors
strucure of $\A_{\inf}$. 

The space $\mathcal{Y}_{(0,\infty)}$ is a preperfectoid space. The
(adic) Fargues-Fontaine curve associated to $F$ is defined as the
quotient
\[
\mathcal{X}=\mathcal{X}(F)=\mathcal{Y}_{(0,\infty)}(F)/\varphi^{\Z},
\]
which makes sense because the Frobenius action is proper and discontinuous.
The natural projection $\pi:\mathcal{Y}_{(0,\infty)}\rightarrow\mathcal{X}$
is a local isomorphism, so $\mathcal{X}$ is a preperfectoid space,
by virtue of $\mathcal{Y}_{(0,\infty)}$ being so. The space $\mathcal{Y}_{(0,\infty)}$
has a canonical point called $x_{\infty}$, the ``point at infinity''.
It corresponds to the kernel of Fontaine's map
\[
\theta:\A_{\inf}\rightarrow\mathcal{O}_{F},
\]
\[
\sum_{n\geq0}[a_{n}]p^{n}\mapsto\sum_{n\geq0}a_{n}^{\sharp}p^{n},
\]
where for $a\in\mathcal{O}_{F}$, $a^{\sharp}$ is defined to be the
first coordinate of $a\in\mathcal{O}_{F}^{\flat}=\varprojlim_{x\mapsto x^{p}}\mathcal{O}_{F}$.
Identify $x_{\infty}$ with its image $\pi(x_{\infty})\in\mathcal{X}$.
We shall sometimes use the fact that $\ker\theta$ is a prinicipal
ideal, generated by $\xi=\varpi-[\varpi^{\flat}]$ (for example). 

If $F=\widehat{K}_{\infty}$, there is an induced action of the group
$\Gamma=\Gal(K_{\infty}/K)$ on each of the spaces mentioned above,
and the map $\mathcal{Y}_{(0,\infty)}\rightarrow\text{\ensuremath{\mathcal{X}}}$
is $\Gamma$-equivariant. The point $x_{\infty}\in\mathcal{X}$ is
the unique $\Gamma$-fixed point; in fact, it is the unique point
with finite $\Gamma$-orbit (\cite[Proposition 10.1.1]{FF18}). From
now on, if $F$ is omitted from the notation of $\mathcal{Y}_{(0,\infty)}$
and $\mathcal{X}$, we always take $F=\widehat{K}_{\infty}$.

\subsection{The spaces $\mathcal{Y}_{I}$ and $\mathcal{X}_{I}$}

It will be fruitful to consider certain open subsets of $\mathcal{Y}_{(0,\infty)}$
and $\mathcal{X}$. By \cite[Lecture 12]{SW20} there is a surjective
continuous map $\kappa:\mathcal{Y}\rightarrow\left[0,\infty\right]$
given by\footnote{Our normalization of $\kappa$ is the inverse of loc. cit.}
\[
\kappa(x)=\frac{\log\left|p(\widetilde{x})\right|}{\log\left|[\varpi^{\flat}](\widetilde{x})\right|},
\]
where $\widetilde{x}$ is the maximal generization of $x$. For each
interval $I\subset(0,\infty)$, let $\mathcal{Y}_{I}$ be the interior
of the preimage of $\mathcal{Y}$ under $\kappa$. These spaces are
$\Gamma$-stable if such a $\Gamma$ action is present. Furthermore,
the map $\varphi$ induces isomorphisms $\varphi:\mathcal{Y}_{pI}\xrightarrow{\sim}\mathcal{\mathcal{Y}}_{I}$.
Write $\log(I)=\{\log x:x\in I\}.$ Whenever $I$ is sufficiently
small so that the inequality $|\log(I)|<\log(p)$ holds, we have $\overline{I}\cap p\overline{I}=0$
and $\pi$ maps $\mathcal{Y}_{I}$ isomorphically onto its image $\pi(\mathcal{Y}_{I})=\mathcal{X}_{I}\subset\mathcal{X}$.
Note that $x_{\infty}\in\mathcal{X}_{I}$ if and only if $I$ contains
an element of $(p-1)p^{\Z}$, because $\kappa(x_{\infty})=(p-1)/p$.

For $I\subset(0,\infty)$, we have the coordinate rings
\[
\widetilde{\mathrm{B}}_{I}=\widetilde{\mathrm{B}}_{I}(\widehat{K}_{\infty})=\H^{0}(\mathcal{Y}_{I},\mathcal{O}_{\mathcal{Y}_{(0,\infty)}}).
\]
If $I$ is compact, the geometry of $\mathcal{Y}_{I}$ is simple.
\begin{prop}
\label{3.1}Suppose $I\subset(0,\infty)$ is a compact interval.

(i) $\mathcal{Y}_{I}=\Spa(\widetilde{\mathrm{B}}_{I},\widetilde{\A}_{I})$,
where $\widetilde{\A}_{I}$ is the ring of power bounded elements
of $\widetilde{\mathrm{B}}_{I}$. In particular, $\mathcal{Y}_{I}$
is affinoid.

(ii) \textup{$\widetilde{\mathrm{B}}_{I}$} is a principal ideal domain.

(iii) The global sections functor induces an equivalence of categories
between vector bundles on $\mathcal{Y}_{I}$ and finite free $\widetilde{\mathrm{B}}_{I}$-modules.
\end{prop}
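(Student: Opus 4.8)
All three statements are classical (\cite{FF18}, \cite{SW20}), and the crux is (i): once $\mathcal{Y}_I$ is identified as an affinoid adic space with ring of functions $\widetilde{\mathrm{B}}_I$, parts (ii) and (iii) are statements about the single Banach ring $\widetilde{\mathrm{B}}_I$ together with general affinoid theory. For (i), first reduce to $I=[s,r]$ with rational endpoints: a general compact $I$ is a decreasing intersection of rational intervals $I_n$, so $\mathcal{Y}_I=\bigcap_n\mathcal{Y}_{I_n}$ and $\widetilde{\mathrm{B}}_I$ is the completion of $\bigcup_n\widetilde{\mathrm{B}}_{I_n}$ for the norm $\sup_{\rho\in I}\left|\cdot\right|_\rho$, so the affinoid statement descends from the $I_n$ to $I$. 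For rational $I=[a/b,c/d]$, unwinding the definition of $\kappa$ shows the conditions $\kappa\geq a/b$ and $\kappa\leq c/d$ are equivalent to the two inequalities $\left|p^b\right|\leq\left|[\varpi^{\flat}]^a\right|\neq0$ and $\left|[\varpi^{\flat}]^c\right|\leq\left|p^d\right|\neq0$, which define a rational subspace of the sheafy affinoid $\Spa(\A_{\inf},\A_{\inf})$; hence $\mathcal{Y}_I$ is affinoid, with ring of functions $\widetilde{\mathrm{B}}_I$ by construction --- concretely the completion of $\A_{\inf}[\frac1p,\frac1{[\varpi^{\flat}]}]$ for the power-multiplicative norm $\sup_{\rho\in I}\left|\cdot\right|_\rho$. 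Thus $\widetilde{\mathrm{B}}_I$ is a uniform Banach $\Q_p$-algebra, and for such a ring the subsheaf $\mathcal{O}^+$ of functions bounded by $1$ is exactly the subring of power-bounded elements, which here is $\widetilde{\A}_I$; so $\mathcal{Y}_I=\Spa(\widetilde{\mathrm{B}}_I,\widetilde{\A}_I)$.

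\textbf{Part (ii).} That $\widetilde{\mathrm{B}}_I$ is a domain follows from each Gauss valuation $\left|\cdot\right|_\rho$ being multiplicative. For the principal ideal property I would run the Weierstrass theory modelled on Lazard's theorem that the ring of analytic functions on a closed $p$-adic annulus is a PID: (a) a Newton-polygon estimate shows any nonzero $f\in\widetilde{\mathrm{B}}_I$ has only finitely many zeros in $\mathcal{Y}_I$, with a bound governed by the variation of $\left|f\right|_\rho$ over the compact set $I$; (b) each closed point of $\mathcal{Y}_I$ is the zero locus of a primitive element of degree one (an untilt datum over a finite extension of $\widehat{K}_{\infty}^{\flat}$), and the associated maximal ideal is generated by such an element; (c) combining (a) and (b) gives a factorization $f=u\cdot\prod_j\xi_j$ with $u$ a unit and the $\xi_j$ degree-one generators. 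Full factorability forces every finitely generated ideal to be principal, so $\widetilde{\mathrm{B}}_I$ is Bézout; the factorizations being finite, it is moreover Noetherian, hence a PID.

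\textbf{Part (iii).} Granting (i) and (ii), $\mathcal{Y}_I=\Spa(\widetilde{\mathrm{B}}_I,\widetilde{\A}_I)$ is a sheafy affinoid --- via the stable uniformity used in (i), or because $\widetilde{\mathrm{B}}_I$ is Noetherian --- so by the standard Kiehl-type descent theorem for adic spaces (cf. \cite{SW20}) the global sections functor is an equivalence between vector bundles on $\mathcal{Y}_I$ and finite projective $\widetilde{\mathrm{B}}_I$-modules. Since $\widetilde{\mathrm{B}}_I$ is a PID, every finite projective $\widetilde{\mathrm{B}}_I$-module is free, and composing the two equivalences yields the assertion.

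\textbf{Expected obstacle.} Parts (i) and (iii) are essentially bookkeeping on top of standard machinery (rational subsets and uniform Banach rings; sheafy affinoids and modules over a PID). The genuine content is (ii) --- the finiteness-of-zeros and Weierstrass-preparation argument behind the PID property --- and this is precisely where compactness of $I$ is indispensable: on a half-open interval one obtains only a Fréchet--Bézout ring, not a PID. That step is where I would expect the bulk of a full proof to go.
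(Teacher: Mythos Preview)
Your proposal is correct and matches the paper's approach: the paper simply cites \cite[3.5.1.2]{FF18} for (i)--(ii) and \cite[Theorem 5.2.8]{SW20} (originally \cite[Theorem 2.7.7]{KL13}) for (iii), together with the observation that finite projective modules over a PID are free --- exactly the logical structure you give, with your sketch unpacking what lies inside those references. One minor quibble: your reduction from general compact $I$ to rational endpoints via a decreasing intersection of affinoids is not quite how one argues (arbitrary intersections of affinoid opens need not be affinoid), but this is immaterial since in practice only intervals with endpoints in $p^{\Q}$ arise and your rational-subdomain description already handles those directly.
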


\begin{proof}
Parts (i) and (ii) follow from \cite[Théorème 3.5.1]{FF18}. Part
(iii) follows from \cite[Theorem 5.2.8]{SW20} (originally \cite[Theorem 2.7.7]{KL13}),
since finite projective $\widetilde{\mathrm{B}}_{I}$-modules are
finite free.
\end{proof}

\subsection{Equivariant vector bundles}

The action of $\Gamma$ on $\mathcal{X}$ gives an automorphism $\gamma:\mathcal{X}\xrightarrow{\sim}\mathcal{X}$
for each $\gamma\in\Gamma$. 
\begin{defn}
A $\Gamma$-equivariant vector bundle (or simply $\Gamma$-vector
bundle) on $\mathcal{X}$ is a vector bundle $\widetilde{\mathcal{E}}$
on $\mathcal{X}$ equipped with an isomorphism $c_{\gamma}:\gamma^{*}\widetilde{\mathcal{E}}\xrightarrow{\sim}\widetilde{\mathcal{E}}$
for each $\gamma\in\Gamma$ such that the cocycle condition $c_{\gamma_{2}}\circ\gamma_{2}^{*}c_{\gamma_{1}}=c_{\gamma_{1}\gamma_{2}}$
holds for every $\gamma_{1},\gamma_{2},\in\Gamma$.
\end{defn}

Similarly, we have a notion of a $(\varphi,\Gamma)$-vector bundle
on $\mathcal{Y}_{(0,\infty)}$. This consists of a $\Gamma$-vector
bundle $\widetilde{\mathcal{M}}$ on $\mathcal{Y}_{\left(0,\infty\right)}$
together with an additional isomorphism $c_{\varphi}:\varphi^{*}\widetilde{\mathcal{M}}\xrightarrow{\sim}\widetilde{\mathcal{M}}$
such that $c_{\varphi}\circ\varphi^{*}c_{\gamma}=c_{\gamma}\circ\gamma^{*}c_{\varphi}$
for every $\gamma\in\Gamma$. 

Descent along $\varphi$ gives the following.
\begin{prop}
\label{3.3}There is an equivalence of categories
\[
\left\{ \Gamma\text{-vector bundles on }\mathcal{X}\right\} \cong\{(\varphi,\Gamma)\text{-vector bundles on }\mathcal{Y}_{\left(0,\infty\right)}\}.
\]
The equivalence is given by the following functors: if $\widetilde{\mathcal{E}}$
is an equivariant vector bundle, we map it to $\mathcal{\mathcal{O}}_{\mathcal{Y}_{\left(0,\infty\right)}}\otimes_{\mathcal{O}_{\mathcal{X}}}\mathcal{\widetilde{E}}$.
Conversely, if $\widetilde{\mathcal{M}}$ is a $(\varphi,\Gamma)$-vector
bundle on $\mathcal{Y}_{\left(0,\infty\right)}$, we map it to $\pi_{*}(\widetilde{\mathcal{M}})^{\varphi=1}$.
\end{prop}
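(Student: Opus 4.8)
The statement is an instance of faithful descent along the covering map $\pi\colon\mathcal{Y}_{(0,\infty)}\to\mathcal{X}$. By construction $\mathcal{X}=\mathcal{Y}_{(0,\infty)}/\varphi^{\Z}$ with the $\varphi^{\Z}$-action free, proper and discontinuous, and $\pi$ is a local isomorphism; hence $\mathcal{X}$ admits an open cover by sets $U$ such that $\pi^{-1}(U)=\bigsqcup_{n\in\Z}V_{n}$ with each $\pi|_{V_{n}}\colon V_{n}\xrightarrow{\sim}U$ and $\varphi(V_{n})=V_{n+1}$. The plan is: (1) prove the $\varphi$-only version of the equivalence, between vector bundles on $\mathcal{X}$ and pairs $(\widetilde{\mathcal{M}},c_{\varphi})$ consisting of a vector bundle $\widetilde{\mathcal{M}}$ on $\mathcal{Y}_{(0,\infty)}$ together with an isomorphism $c_{\varphi}\colon\varphi^{*}\widetilde{\mathcal{M}}\xrightarrow{\sim}\widetilde{\mathcal{M}}$ (no further cocycle condition is imposed, since $\varphi^{\Z}$ is free on $\Z$); and (2) observe that the $\Gamma$-action on $\mathcal{Y}_{(0,\infty)}$ commutes with $\varphi$ and that $\pi$ is $\Gamma$-equivariant, so the equivalence of (1) upgrades to the asserted one, the two commuting descent data matching on both sides.

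For (1) I would define the functors exactly as in the statement. The pullback $\pi^{*}\widetilde{\mathcal{E}}=\mathcal{O}_{\mathcal{Y}_{(0,\infty)}}\otimes_{\mathcal{O}_{\mathcal{X}}}\widetilde{\mathcal{E}}$ carries the tautological $c_{\varphi}$ coming from $\pi\circ\varphi=\pi$, and this is manifestly functorial. In the other direction, over an evenly covered $U$ one has $\pi_{*}\widetilde{\mathcal{M}}(U)=\widetilde{\mathcal{M}}(\pi^{-1}U)=\prod_{n}\widetilde{\mathcal{M}}(V_{n})$, with $\varphi$ acting by the shift twisted by $c_{\varphi}$; the $\varphi$-invariants are the tuples $(s_{n})$ with $s_{n+1}=c_{\varphi}(\varphi^{*}s_{n})$, hence determined by $s_{0}$, so $(\pi_{*}\widetilde{\mathcal{M}})^{\varphi=1}|_{U}\cong\widetilde{\mathcal{M}}|_{V_{0}}$ as $\mathcal{O}_{\mathcal{X}}(U)$-modules, in particular locally free of rank $\rank\widetilde{\mathcal{M}}$. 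Gluing over the cover shows $(\pi_{*}\widetilde{\mathcal{M}})^{\varphi=1}$ is a vector bundle on $\mathcal{X}$.

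It then remains to produce the unit and counit and check they are isomorphisms, which can be verified on the cover. The unit $\widetilde{\mathcal{E}}\to\pi_{*}(\pi^{*}\widetilde{\mathcal{E}})^{\varphi=1}$ is, over $U$, the identification of $\widetilde{\mathcal{E}}(U)$ with the diagonal $\varphi$-invariant tuple in $\prod_{n}\widetilde{\mathcal{E}}(U)$, hence an isomorphism; the counit $\pi^{*}((\pi_{*}\widetilde{\mathcal{M}})^{\varphi=1})\to\widetilde{\mathcal{M}}$ is an isomorphism over $V_{0}$ tautologically and over each $V_{n}$ because $c_{\varphi}$ reconstructs $\widetilde{\mathcal{M}}|_{V_{n}}$ from $\widetilde{\mathcal{M}}|_{V_{0}}$. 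Naturality and compatibility with cocycle conditions are formal. For step (2), every construction above is $\Gamma$-equivariant because $\gamma\varphi=\varphi\gamma$ and $\gamma\pi=\pi\gamma$, so a choice of $c_{\gamma}$ satisfying its cocycle condition transports between the two sides, and the compatibility $c_{\varphi}\circ\varphi^{*}c_{\gamma}=c_{\gamma}\circ\gamma^{*}c_{\varphi}$ on $\mathcal{Y}_{(0,\infty)}$ corresponds to $c_{\gamma}$ being a morphism of $\Gamma$-vector bundles on $\mathcal{X}$. The one point needing a little care — and the main, if routine, obstacle — is the local bookkeeping in step (1): verifying that $\pi$ really is evenly covered in the adic-space sense, and that $(\pi_{*}(-))^{\varphi=1}$ genuinely lands in vector bundles of the correct rank. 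Once that is in place, the rest is the standard formalism of descent along a $\Z$-torsor with an extra commuting group action carried along.
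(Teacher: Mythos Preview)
Your proposal is correct and takes the same approach as the paper: the paper simply asserts that the proposition follows from ``descent along $\varphi$'' without giving any details, and your argument is precisely a careful unpacking of that descent for the $\Z$-torsor $\pi\colon\mathcal{Y}_{(0,\infty)}\to\mathcal{X}$, together with the observation that the commuting $\Gamma$-structure transports along the equivalence.
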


If $\mathcal{\widetilde{E}}$ is a $\Gamma$-vector bundle on $\mathcal{X}$
and $U\subset\mathcal{X}$ is an open subset stable under $\Gamma$,
there is an induced action of $\Gamma$ on $\H^{0}(U,\mathcal{\widetilde{E}})$.
In particular, there is a natural action of $\Gamma$ on $\H^{0}(\mathcal{X}_{I},\mathcal{\mathcal{\widetilde{E}}})$
when $\left|\log\left(I\right)\right|<\log(p)$. For a general open
subset $U$, one only has a map 
\[
c_{\gamma}:\H^{0}(U,\mathcal{O}_{\mathcal{X}})\otimes_{\H^{0}(\gamma\left(U\right),\mathcal{O}_{\mathcal{X}})}\H^{0}(\gamma\left(U\right),\mathcal{\widetilde{E}})\rightarrow\H^{0}(U,\mathcal{\widetilde{E}}).
\]
Similar remarks apply for $(\varphi,\Gamma)$-equivariant vector bundles
on $\mathcal{Y}_{(0,\infty)}$.
\begin{example}
\label{3.4}Let $V$ be a finite dimensional $\Q_{p}$-representation
of $G_{K}$. Recall that $H=\Gal(K_{\infty}/K)$. Then by \cite[Théorème 10.1.5]{FF18},
\[
\widetilde{\mathcal{E}}(V):=(V\otimes_{\Q_{p}}\mathcal{O}_{\mathcal{X}(\C_{p})})^{H}
\]
is a $\Gamma$-vector bundle on $\mathcal{X}$. More generally, by
loc. cit, the category of finite dimensional $G_{K}$-representations
embeds fully faithfully to the category of $(\varphi,\Gamma)$-modules,
with essential image the subcategory of étale $(\varphi,\Gamma)$-modules.
We can extend the domain of the functor $V\mapsto\widetilde{\mathcal{E}}(V)$
from $G_{K}$-representations to $(\varphi,\Gamma)$-modules. Conversely,
any $\Gamma$-vector bundle on $\mathcal{X}$ gives rise to a $(\varphi,\Gamma)$-module,
and this correspondence results in a equivalence of categories (see
\cite[Préface, Remark 5.10]{FF18}). This will be discussed in detail
in $\mathsection7$.
\end{example}

\section{Locally analytic vector bundles}

In this section, we introduce the category of locally analytic vector
bundles and discuss their basic properties. 

\subsection{Locally analytic functions of $\mathcal{Y}_{(0,\infty)}$ and $\mathcal{X}$}

Let $U\subset\mathcal{X}$ be an open affinoid. Then $U$ is quasicompact
and hence stable under the action of a finite index subgroup $\Gamma'\leq\Gamma$.
The space of functions $\H^{0}(U,\mathcal{O}_{\mathcal{X}})$ is a
Banach $\Gamma'$-ring, and so it makes sense to speak of its subring
of $\Gamma'$-locally analytic functions. This does not depend on
the choice of $\Gamma'$, and so we shall write $\H^{0}(U,\mathcal{O}_{\mathcal{X}})^{\la}$
for the $\Gamma'$-locally analytic functions in $\H^{0}(U,\mathcal{O}_{\mathcal{X}})$
for any $\Gamma'$. Since taking locally analytic vectors is left
exact, these can be glued and we obtain a sheaf of rings $\mathcal{O}_{\mathcal{X}}^{\la}$
on $\mathcal{X}$ that satisfies
\[
\H^{0}(U,\mathcal{O}_{\mathcal{X}}^{\la})=\H^{0}(U,\mathcal{O}_{\mathcal{X}})^{\la}
\]
for every open affinoid $U\subset\mathcal{X}$. 

More generally, suppose $U$ is an open subset of $\mathcal{X}$ which
is not necessarily affinoid, but for which there is an increasing
cover $U=\bigcup_{i}U_{i}$ with each $U_{i}$ affinoid and a single
finite index subgroup $\Gamma'\leq\Gamma$ stabilizing all of the
$U_{i}$ simultaneously. This condition will be satisfied in any situation
we shall consider. Then the sections of $\mathcal{O}_{\mathcal{X}}^{\la}$
on $U$ are the pro analytic functions
\[
\H^{0}(U,\mathcal{O}_{\mathcal{X}}^{\la})=\varprojlim_{i}\H^{0}(U_{i},\mathcal{O}_{\mathcal{X}})^{\la}=\H^{0}(U,\mathcal{O}_{\mathcal{X}})^{\pa}.
\]

\begin{lem}
\label{4.1}The sheaf $\mathcal{O}_{\mathcal{X}}^{\la}$ is stable
for the action of $\Gamma$ on $\mathcal{O}_{\mathcal{X}}$, in the
sense that the inclusion $\mathcal{O}_{\mathcal{X}}^{\la}\subset\mathcal{O}_{\mathcal{X}}$
induces isomorphisms
\[
c_{\gamma}:\gamma^{*}\mathcal{O}_{\mathcal{X}}^{\la}\xrightarrow{\sim}\mathcal{O}_{\mathcal{X}}^{\la}.
\]
\end{lem}

\begin{proof}
The action of $\Gamma$ on $\mathcal{O}_{\mathcal{X}}$ gives rise
to an isomorphism $c_{\gamma}:\gamma^{*}\mathcal{O}_{\mathcal{X}}\xrightarrow{\sim}\mathcal{O}_{\mathcal{X}}$.
Upon taking $U\subset\mathcal{X}$ affinoid, evaluating the morphism
$c_{\gamma}$ at $U$ and taking locally analytic vectors, we get
an induced map $c_{\gamma}(U):\H^{0}(U,\gamma^{*}\mathcal{O}_{\mathcal{X}})^{\la}\xrightarrow{\sim}\H^{0}(U,\mathcal{O}_{\mathcal{X}})^{\la}$.
But this is the same as $\H^{0}(U,\gamma^{*}\mathcal{O}_{\mathcal{X}}^{\la})\xrightarrow{\sim}\H^{0}(U,\mathcal{O}_{\mathcal{X}}^{\la})$
because of the equality $\H^{0}(U,\mathcal{O}_{\mathcal{X}}^{\la})=\H^{0}(U,\mathcal{O}_{\mathcal{X}})^{\la}$.
By writing an arbitrary open set as a union of affinoids, we get the
desired induced isomorphism $c_{\gamma}:\gamma^{*}\mathcal{O}_{\mathcal{X}}^{\la}\xrightarrow{\sim}\mathcal{O}_{\mathcal{X}}^{\la}.$
\end{proof}
The preceding discussion then applies equally well to $\mathcal{Y}_{(0,\infty)}$,
so we have a sheaf $\mathcal{O}_{\mathcal{Y}_{\left(0,\infty\right)}}^{\la}$
of locally analytic functions on $\mathcal{Y}_{(0,\infty)}$ endowed
with isomorphisms $c_{\gamma}$. Since the $\varphi$-action on $\mathcal{Y}_{(0,\infty)}$
commutes with the $\Gamma$-action, it preserves the $\Gamma$-locally
analytic functions, and this gives an isomorphism
\[
c_{\varphi}:\varphi^{*}\mathcal{O}_{\mathcal{Y}_{(0,\infty)}}^{\la}\xrightarrow{\sim}\mathcal{O}_{\mathcal{Y}_{(0,\infty)}}^{\la}
\]
which commutes with the $\Gamma$-action as usual.

\subsection{A flatness result}

For our application at $\mathsection6$ it would be useful to know
the inclusion $\mathcal{O}_{\mathcal{X}}^{\la}\subset\mathcal{O}_{\mathcal{X}}$
is flat. We are only able to establish this in the cyclotomic case
where $K_{\infty}=K_{\cyc}$, and only for certain open subsets. Nevertheless,
this will suffice for our needs.

So in this subsection suppose $K_{\infty}=K_{\cyc}$ and let $I$
be a closed interval of the form $I=[r,s]$ with $r\geq\left(p-1\right)/p$.
We write $\widetilde{\B}_{I,\cyc}$ for $\widetilde{\B}_{I}(\widehat{K}_{\cyc})$
of $\mathsection3.2$. Let $K'_{0}$ be the maximal unramified extension
of $\Q_{p}$ contained in $K_{\cyc}$. Then we write $\B_{I,\cyc,K}$
for the ring of power series $f(T)=\sum_{k\in\Z}a_{k}T^{k}$ with
$a_{k}\in K'_{0}$ , such that $f(T)$ converges on the nonempty annulus
where $\left|T\right|\in I$. By a classical result, $\B_{I,\cyc,K}$
is a principal ideal domain (\cite[Corollaire to Proposition 4]{La62}).
There is an embedding $\B_{I,\cyc,K}\hookrightarrow\widetilde{\B}_{I,\cyc}$
for which $\B_{I,\cyc,K}$ is $\Gamma_{\cyc}$-stable. If $K$ is
unramified over $\Q_{p}$, this embedding can be described as follows:
the variable $T$ is mapped to $\left[\varepsilon\right]-1$, where
$\varepsilon=(1,\zeta_{p},\zeta_{p^{2}},...)\in\widehat{K}_{\cyc}^{\flat}$.
Further, one calculates that $\gamma\left(T\right)=(1+T)^{\chi_{\cyc}(\gamma)}-1$,
so $\B_{I,\cyc,K}$ is indeed stable under the action of $\Gamma_{\cyc}$. 
\begin{prop}
\label{4.2}Suppose $I=[r,(p-1)p^{k-1}]$ with $k\geq$ 1. Then

(i) $\widetilde{\B}_{I,\cyc}^{\la}=\bigcup_{n\geq0}\varphi^{-n}(\B_{p^{n}I,\cyc,K})$.

(ii) $\widetilde{\B}_{I,\cyc}^{\la}$ is a Prüfer domain.

(iii) The natural ring morphism $\widetilde{\B}_{I,\cyc}^{\la}\rightarrow\widetilde{\B}_{I,\cyc}$
is flat.
\end{prop}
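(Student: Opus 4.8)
The plan is to prove the three parts in order, deducing (ii) and (iii) from (i) together with the classical structure theory of the rings $\B_{J,\cyc,K}$. For part (i), the inclusion $\bigcup_{n\geq0}\varphi^{-n}(\B_{p^{n}I,\cyc,K})\subset\widetilde{\B}_{I,\cyc}^{\la}$ should follow from the observation that each $\B_{p^{n}I,\cyc,K}$ consists of pro analytic vectors for the $\Gamma_{\cyc}$-action: using $\gamma(T)=(1+T)^{\chi_{\cyc}(\gamma)}-1$ one sees that $\gamma\mapsto\gamma(T)$ is given by a convergent power series in $\log\chi_{\cyc}(\gamma)$ with coefficients in $\B_{p^nI,\cyc,K}$, hence $T$ (and so every element of $\B_{p^nI,\cyc,K}$, by Proposition~2.2 applied to the free rank-one situation) is locally analytic; then $\varphi^{-n}$ carries $\widetilde{\B}_{p^{n}I,\cyc}^{\la}$ into $\widetilde{\B}_{I,\cyc}^{\la}$ because $\varphi$ is $\Gamma$-equivariant. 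The reverse inclusion is the substantive direction: I would take a $\Gamma_{\cyc}$-locally analytic $f\in\widetilde{\B}_{I,\cyc}$ and argue that it is ``overconvergent'', i.e.\ descends to $\varphi^{-n}(\B_{p^{n}I,\cyc,K})$ for $n$ large. This is precisely the content of the Cherbonnier--Colmez style decompletion for the coefficient rings; concretely, one combines the Tate--Sen formalism for the pair $(\Gamma_{\cyc},\widetilde{\B}_{I,\cyc})$ with the fact that the locally analytic condition forces the Sen operator (the action of $\Lie\Gamma_{\cyc}$, i.e.\ $\nabla$) to be bounded, and a bounded $\nabla$ on a section of $\mathcal{O}_{\mathcal Y}$ over an annulus is exactly the overconvergence condition sketched in the introduction's toy model. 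I expect this to be the main obstacle, and I would organize it as: (a) reduce to $K/\Q_p$ unramified by a finite base change, harmless since $K'_0$ is built into the definition of $\B_{I,\cyc,K}$; (b) on $\widetilde{\B}_{I,\cyc}$ use the trace maps of Cherbonnier--Colmez (which the author explicitly says are used) to produce, for $f\in\widetilde{\B}_{I,\cyc}^{H_n\text{-}\an}$, an approximation lying in $\varphi^{-n}(\B_{p^nI,\cyc,K})$; (c) pass to the limit.

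Granting (i), part (ii) is a structural statement: $\widetilde{\B}_{I,\cyc}^{\la}$ is an increasing union of the rings $\varphi^{-n}(\B_{p^{n}I,\cyc,K})$, each of which is a principal ideal domain by the classical result quoted just before the proposition (Lazard), and the transition maps $\varphi^{-n}(\B_{p^{n}I,\cyc,K})\hookrightarrow\varphi^{-(n+1)}(\B_{p^{n+1}I,\cyc,K})$ are injective. A filtered colimit of Pr\"ufer (in particular B\'ezout, in particular PID) domains along injective ring maps is again a Pr\"ufer domain---finite generation of ideals is not preserved, but the Pr\"ufer property (every finitely generated ideal is invertible, equivalently every two-generated ideal is invertible, equivalently localizations at primes are valuation rings) is a local condition that passes through filtered colimits. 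So I would simply invoke this: $\widetilde{\B}_{I,\cyc}^{\la}=\varinjlim_{n}\varphi^{-n}(\B_{p^{n}I,\cyc,K})$ is a directed union of PIDs, hence a B\'ezout domain, hence Pr\"ufer.

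For part (iii), the cleanest route is: a module over a Pr\"ufer domain is flat if and only if it is torsion-free. Since $\widetilde{\B}_{I,\cyc}^{\la}$ is a Pr\"ufer domain by (ii) and $\widetilde{\B}_{I,\cyc}$ is a domain (it is a PID by Proposition~3.5(ii)) containing $\widetilde{\B}_{I,\cyc}^{\la}$ as a subring, $\widetilde{\B}_{I,\cyc}$ is torsion-free as a $\widetilde{\B}_{I,\cyc}^{\la}$-module, hence flat. Alternatively, and perhaps more in keeping with the colimit description, flatness over a filtered colimit $\varinjlim R_n$ can be checked by writing $\widetilde{\B}_{I,\cyc}$ as a filtered colimit over $n$ of $\widetilde{\B}_{p^{n}I,\cyc}$-type base changes and using that each $\B_{p^{n}I,\cyc,K}\hookrightarrow\widetilde{\B}_{p^{n}I,\cyc}$ is flat (a torsion-free module over the PID $\B_{p^{n}I,\cyc,K}$), then that a filtered colimit of flat modules is flat; but the torsion-free criterion is shorter and I would use it. The only care needed is to make sure the domain hypothesis on $\widetilde{\B}_{I,\cyc}$ genuinely applies, which it does since $I$ is compact and Proposition~3.5 gives that $\widetilde{\B}_{I}$ is a PID.
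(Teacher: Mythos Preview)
Your proposal is correct and, for parts (ii) and (iii), essentially identical to the paper's proof: the paper also deduces (ii) from (i) by noting that an increasing union of PIDs is a Pr\"ufer domain, and deduces (iii) from (ii) by the criterion that a torsion-free module over a Pr\"ufer domain is flat (citing the Stacks Project for this last fact).

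The only difference is in part (i): the paper does not reprove this at all but simply cites it as Theorem~4.4(2) of \cite{Be13} (with a remark that the argument there, stated only for intervals with both endpoints of the form $(p-1)p^{l-1}$, goes through unchanged for the more general $I=[r,(p-1)p^{k-1}]$). Your sketch of a direct argument---the easy inclusion via local analyticity of $T$ and $\Gamma$-equivariance of $\varphi$, the hard inclusion via Cherbonnier--Colmez trace maps and the Tate--Sen formalism---is along plausible lines and captures the spirit of Berger's proof, though the actual argument in \cite{Be13} is organized somewhat differently (it proceeds via a careful analysis of the $\nabla$-action and Colmez's decompletion estimates rather than the exact three-step reduction you outline). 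In any case, since the paper treats (i) as a black-box citation, your sketch is more detailed than what the paper provides, and there is no gap.
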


\begin{proof}
Part (i) is \cite[Theorem 4.4 (2)]{Be16}. Note that in loc. cit.
this is stated only for $I$ of the form $\left[(p-1)p^{l-1},(p-1)p^{k-1}\right]$,
but the argument given there (see also $\mathsection13$ of \cite{Be21})
is valid for any interval of the form $[r,(p-1)p^{k-1}]$. (ii) follows,
because each $\B_{p^{n}I,\cyc}$ is a principal ideal domain, and
an increasing union of such rings is a Prüfer domain. Finally, the
ring $\widetilde{\B}_{I,\cyc}$ is a domain and hence torsionfree
over the subring $\widetilde{\B}_{I,\cyc}^{\la}$. Part (iii) is established
by recalling that a torsionfree module over a Prüfer domain is flat
(\cite[Proposition 4.20]{La99}).
\end{proof}
\begin{question}
\label{4.3}To what extent do (ii) and (iii) of Proposition \ref{4.2}
hold for coordinate rings of general open subsets in $\mathcal{X}$
and general $K_{\infty}$? We do not expect $\widetilde{\B}_{I}^{\la}$
to be a Prüfer domain when $\Gamma$ has dimension larger than 1.
Nevertheless, it might still be the case that $\widetilde{\B}_{I}^{\la}\rightarrow\widetilde{\B}_{I}$
is flat.
\end{question}

\subsection{Locally analytic vector bundles}
\begin{defn}
\label{4.4}A locally analytic vector bundle on $\mathcal{X}$ is
a locally finite free $\mathcal{O}_{\mathcal{X}}^{\la}$-module $\mathcal{E}$
on $\mathcal{X}$ equipped with an isomorphism $c_{\gamma}:\gamma^{*}\mathcal{\mathcal{E}}\xrightarrow{\sim}\mathcal{\mathcal{E}}$
for each $\gamma\in\Gamma$ such that the cocycle condition $c_{\gamma_{2}}\circ\gamma_{2}^{*}c_{\gamma_{1}}=c_{\gamma_{1}\gamma_{2}}$
holds for every $\gamma_{1},\gamma_{2},\in\Gamma$. We require the
action to be continuous with respect to the locally analytic topology. 
\end{defn}

\begin{example}
\label{4.5}1. Let $\widetilde{\mathcal{E}}$ be a $\Gamma$-vector
bundle on $\mathcal{X}$. Define a sheaf $\mathcal{\widetilde{E}}^{\la}$
by generalizing the definition of $\mathcal{O}_{\mathcal{X}}^{\la}$.
Namely, for every open affinoid $U\subset\mathcal{X}$ choose $\Gamma'\leq\Gamma$
stabilizing $U$. Then $\H^{0}(U,\widetilde{\mathcal{E}})$ is a Banach
$\Gamma'$-ring and it makes sense to speak of $\H^{0}(U,\widetilde{\mathcal{E}})^{\la}$,
which does not depend on the choice of $\Gamma'$. Glue these together
to form a sheaf $\widetilde{\mathcal{E}}^{\la}$. The sheaf $\mathcal{\widetilde{\mathcal{E}}}^{\la}$
is an $\mathcal{O}_{\mathcal{X}}^{\la}$ -module with a $\Gamma$-action.
We shall show in $\mathsection6$ that $\mathcal{\widetilde{\mathcal{E}}}^{\la}$
is locally free and therefore an example of a locally analytic vector
bundle.

2. Conversely, if $\mathcal{E}$ is a locally analytic vector bundle,
we can associate to it a $\Gamma$-vector bundle $\widetilde{\mathcal{E}}=\mathcal{O}_{\mathcal{X}}\otimes_{\mathcal{O}_{\mathcal{X}}^{\la}}\mathcal{E}$.
If $U\subset\mathcal{X}$ is an open affinoid such that $\mathcal{E}|_{U}$
is free, it follows from Proposition \ref{2.1} that 
\[
\H^{0}(U,\mathcal{E})=\H^{0}(U,\widetilde{\mathcal{E}})^{\la},
\]
and so $\mathcal{E}=\widetilde{\mathcal{E}}^{\la}$. This shows that
the functor from $\Gamma$-vector bundles to locally analytic vector
bundles mapping $\widetilde{\mathcal{E}}$ to $\mathcal{\widetilde{\mathcal{E}}}^{\la}$
is essentially surjective.
\end{example}

It follows from Example \ref{4.5}.2 that if $\mathcal{E}$ is a locally
analytic vector bundle, we have an action by derivations
\[
\Lie\left(\Gamma\right)\times\mathcal{E}\rightarrow\mathcal{E},
\]
or, what amounts to the same, a connection
\[
\nabla:\mathcal{E}\rightarrow\mathcal{E}\otimes_{\Q_{p}}\left(\Lie\Gamma\right)^{\vee}
\]
satisfying the identity
\[
\nabla(fx)=\nabla(f)x+f\nabla(x)
\]
for local sections $f$ of $\mathcal{O}_{\mathcal{X}}^{\la}$ and
$x$ of $\mathcal{E}$.
\begin{rem}
\label{4.6}We emphasize that if $U\subset\mathcal{X}$ is an arbitrary
open subset then we have an induced action of $\Lie\left(\Gamma\right)$
on $\H^{0}\left(U,\mathcal{E}\right)$. This is unlike the $\Gamma$-action,
which only maps $\H^{0}\left(U,\mathcal{E}\right)$ to itself if $U$
is $\Gamma$-stable. This is one pleasant aspect of working with locally
analytic vector bundles instead of $\Gamma$-vector bundles.
\end{rem}

Finally, we have the following propositions computing sections of
interest. They will not be used elsewhere in the article. We may define
a locally analytic $\varphi$-vector bundle on $\mathcal{Y}_{(0,\infty)}$
by imitating Definition \ref{4.4}. Then given a $(\varphi,\Gamma)$-vector
bundle $\mathcal{\widetilde{M}}$ on $\mathcal{Y}_{(0,\infty)}$,
one can define a locally analytic $\varphi$-vector bundle $\widetilde{\mathcal{M}}^{\la}$
on $\mathcal{Y}_{(0,\infty)}$ as in Example \ref{4.5}.
\begin{prop}
\label{4.7}Let $\mathcal{\widetilde{E}}$ (resp. $\mathcal{\widetilde{M}}$)
be a $\Gamma$-vector bundle on $\mathcal{X}$ (resp. a $(\varphi,\Gamma)$-vector
bundle on $\mathcal{Y}_{(0,\infty)}$) and let $\mathcal{\widetilde{E}}^{\la}$
(resp. $\mathcal{\widetilde{M}}^{\la}$) be its associated locally
analytic vector bundle (resp. locally analytic $\varphi$-vector bundle).
There are natural isomorphisms

(i) $\H^{0}(\mathcal{\mathcal{Y}}_{I},\widetilde{\mathcal{M}}^{\la})\cong\H^{0}(\mathcal{\mathcal{Y}}_{I},\mathcal{\widetilde{M}})^{\la}$
for $I$ a closed interval.

(ii) $\H^{0}(\mathcal{\mathcal{Y}}_{I},\mathcal{\widetilde{M}}^{\la})\cong\H^{0}(\mathcal{\mathcal{Y}}_{I},\mathcal{\widetilde{M}})^{\pa}$
for $I$ an open interval.

(iii) $\H^{0}(\mathcal{X}_{I},\mathcal{\mathcal{\widetilde{E}}}^{\la})\cong\H^{0}(\mathcal{\mathcal{\mathcal{X}}}_{I},\mathcal{\widetilde{E}})^{\la}$
for $I$ a closed interval with $|\log\left(I\right)|<\log\left(p\right)$.

(iv) $\H^{0}(\mathcal{X}_{I},\mathcal{\mathcal{\widetilde{E}}}^{\la})\cong\H^{0}(\mathcal{\mathcal{\mathcal{X}}}_{I},\mathcal{\mathcal{\widetilde{E}}})^{\pa}$
for $I$ an open interval with $|\log\left(I\right)|<\log\left(p\right)$.

(v) $\H^{0}(\mathcal{X},\mathcal{\mathcal{\widetilde{E}}}^{\la})\cong\H^{0}(\mathcal{X},\mathcal{\widetilde{E}})^{\la}.$

(vi) $\H^{0}(\mathcal{X}-x_{\infty},\mathcal{\widetilde{E}}^{\la})\cong\H^{0}(\mathcal{X}-x_{\infty},\mathcal{\mathcal{\widetilde{E}}})^{\pa}$. 
\end{prop}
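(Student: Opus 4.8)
The plan is to reduce everything to the local description of the sheaves involved and to the fact that analytic and pro-analytic vectors can be computed on the coordinate rings, using Proposition 2.1 to transport freeness. For (i) and (ii), fix a closed (resp. open) interval $I$ and recall from Proposition 3.4 that $\mathcal{Y}_I$ is affinoid with $\H^0(\mathcal{Y}_I,\mathcal{O}_{\mathcal{Y}_{(0,\infty)}})=\widetilde{\B}_I$, and that vector bundles on $\mathcal{Y}_I$ correspond to finite free $\widetilde{\B}_I$-modules. Choosing a basis of $\H^0(\mathcal{Y}_I,\widetilde{\mathcal{M}})$, for $I$ compact we obtain a finite-index $\Gamma'\le\Gamma$ stabilizing $\mathcal{Y}_I$, so that the $\Gamma'$-action is given by a matrix-valued function $G'\to\GL_d(\widetilde{\B}_I)$; one checks this function is locally analytic (this is part of what it means for $\widetilde{\mathcal{M}}^{\la}$ to be the locally analytic subsheaf), so Proposition 2.1 gives $\H^0(\mathcal{Y}_I,\widetilde{\mathcal{M}})^{\la}=\oplus_j \widetilde{\B}_I^{\la}\cdot w_j$, which is by definition $\H^0(\mathcal{Y}_I,\widetilde{\mathcal{M}}^{\la})$. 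For $I$ open, write $I=\bigcup_n I_n$ as an increasing union of compact intervals stabilized by a fixed $\Gamma'$, pass to the projective limit, and use that pro-analytic vectors are by definition the inverse limit of the locally analytic vectors of the Banach pieces $\H^0(\mathcal{Y}_{I_n},\widetilde{\mathcal{M}})$; this is exactly how $\H^0$ of $\mathcal{O}^{\la}$ on a non-affinoid open was defined in $\mathsection4.1$.

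Parts (iii) and (iv) are formally identical, replacing $\mathcal{Y}_I$ by $\mathcal{X}_I$ and $\widetilde{\mathcal{M}}$ by $\widetilde{\mathcal{E}}$: when $|\log(I)|<\log(p)$ the projection $\pi$ identifies $\mathcal{Y}_I$ with $\mathcal{X}_I$, so $\H^0(\mathcal{X}_I,\mathcal{O}_{\mathcal{X}})=\widetilde{\B}_I$ and the same argument with Proposition 2.1 applies verbatim, both in the closed case (locally analytic vectors) and, after writing $\mathcal{X}_I$ as an increasing union of $\mathcal{X}_{I_n}$, in the open case (pro-analytic vectors). The only point to note is that a closed interval $I$ with $|\log(I)|<\log(p)$ is automatically contained in a slightly larger open interval of the same type, and the $\Gamma$-stabilizer can be chosen uniformly; this is routine.

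For (v), cover $\mathcal{X}$ by finitely many $\mathcal{X}_{I_j}$ with $|\log(I_j)|<\log(p)$, chosen so that a single finite-index $\Gamma'\le\Gamma$ stabilizes all of them simultaneously (possible since $\mathcal{X}$ is quasicompact). The section $\H^0(\mathcal{X},\widetilde{\mathcal{E}}^{\la})$ is the equalizer of $\prod_j \H^0(\mathcal{X}_{I_j},\widetilde{\mathcal{E}}^{\la})\rightrightarrows\prod_{j,j'}\H^0(\mathcal{X}_{I_j}\cap\mathcal{X}_{I_{j'}},\widetilde{\mathcal{E}}^{\la})$, and since taking locally analytic vectors for the group $\Gamma'$ is a left exact functor on $\Gamma'$-Banach spaces, it commutes with this finite limit; applying (iii)/(iv) termwise identifies the equalizer with $\H^0(\mathcal{X},\widetilde{\mathcal{E}})^{\la}$. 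Part (vi) is the same argument applied to an affinoid cover of the quasicompact-but-not-quasiseparated-issue-free open $\mathcal{X}-x_\infty$; here one only gets pro-analytic vectors rather than locally analytic ones, because $\mathcal{X}-x_\infty$ is not quasicompact and the cover is countable but infinite, so the equalizer becomes a countable projective limit of Banach spaces and one lands in $\H^0(\mathcal{X}-x_\infty,\widetilde{\mathcal{E}})^{\pa}$ by the very definition of pro-analytic vectors for Fr\'echet spaces. The main obstacle is the bookkeeping in (v) and (vi): one must arrange a cover on which the $\Gamma$-action is simultaneously well-behaved and verify that the left exactness of $(-)^{\la}$ (resp. the compatibility of $(-)^{\pa}$ with countable limits) legitimately commutes past the Čech equalizer; once the cover is set up correctly this is formal, but it is the step where care is needed.
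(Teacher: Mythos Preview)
Your treatment of (ii), (iv), (v), and (vi) is essentially the paper's approach: exhaust by closed subintervals (or by a finite cover for $\mathcal{X}$, or by a countable $\Gamma$-stable rational cover for $\mathcal{X}-x_\infty$) and use left exactness of $(-)^{\la}$, respectively compatibility of $(-)^{\pa}$ with countable inverse limits. The paper's specific two-piece covers $\mathcal{X}=\mathcal{X}_{[1,\sqrt{p}]}\cup\mathcal{X}_{[\sqrt{p},p]}$ and $\mathcal{X}-x_\infty=\mathcal{X}_{[1,\sqrt{p}]}\cup(\mathcal{X}_{[\sqrt{p},p]}-x_\infty)$ just make this bookkeeping explicit.

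There is, however, a genuine confusion in your argument for (i) and (iii). By the construction in Example~4.4, the sheaf $\widetilde{\mathcal{M}}^{\la}$ is \emph{defined} so that on any open affinoid $U$ one has $\H^0(U,\widetilde{\mathcal{M}}^{\la})=\H^0(U,\widetilde{\mathcal{M}})^{\la}$. Since $\mathcal{Y}_I$ (and $\mathcal{X}_I$ under the hypothesis on $I$) is affinoid when $I$ is closed, (i) and (iii) are tautologies; the paper says exactly this. Your route instead invokes Proposition~2.1 and asserts that the action matrix $g\mapsto\mathrm{Mat}(g)$ in a chosen basis is locally analytic, claiming this ``is part of what it means for $\widetilde{\mathcal{M}}^{\la}$ to be the locally analytic subsheaf''. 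That assertion is not correct: local analyticity of the action matrix in some basis is not part of the definition, and establishing it for an arbitrary $\Gamma$-vector bundle is precisely the content of Theorem~6.1 (via Proposition~6.4), which lies downstream of this proposition. So your proof of (i)/(iii) is circular as written. Fortunately the fix is trivial: just quote the definition.
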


\begin{proof}
Parts (i) and (iii) are immediate from the definition. For (ii) and
(iv), use the coverings $\mathcal{Y}_{I}=\bigcup_{J\subset I}\mathcal{Y}_{J}$
and $\mathcal{\mathcal{X}}_{I}=\bigcup_{J\subset I}\mathcal{\mathcal{X}}_{J}$
ranging over $J\subset I$ closed. For (v), consider the covering
\[
\mathcal{X}=\mathcal{\mathcal{X}}_{[1,\sqrt{p}]}\cup\mathcal{\mathcal{X}}_{[\sqrt{p},p]}
\]
with intersection $\mathcal{X}_{[\sqrt{p},\sqrt{p}]}\amalg\mathcal{X}_{[1,1]}$
(identifying $1$ with $p$ via $\varphi$). This yields exact sequences
\[
0\rightarrow\H^{0}(\mathcal{X},\mathcal{\mathcal{\widetilde{E}}}^{\la})\rightarrow\H^{0}(\mathcal{\mathcal{X}}_{[1,\sqrt{p}]},\mathcal{\widetilde{E}}^{\la})\oplus\H^{0}(\mathcal{\mathcal{X}}_{[\sqrt{p},p]},\mathcal{\mathcal{\widetilde{E}}}^{\la})\rightarrow\H^{0}(\mathcal{X}_{[\sqrt{p},\sqrt{p}]},\mathcal{\mathcal{\widetilde{E}}}^{\la})\oplus\H^{0}(\mathcal{X}_{[1,1]},\mathcal{\mathcal{\widetilde{E}}}^{\la})
\]
and
\[
0\rightarrow\H^{0}(\mathcal{X},\mathcal{\mathcal{\widetilde{E}}})^{\la}\rightarrow\H^{0}(\mathcal{\mathcal{X}}_{[1,\sqrt{p}]},\mathcal{\mathcal{\widetilde{E}}})^{\la}\oplus\H^{0}(\mathcal{\mathcal{X}}_{[\sqrt{p},p]},\mathcal{\mathcal{\widetilde{E}}})^{\la}\rightarrow\H^{0}(\mathcal{X}_{[\sqrt{p},\sqrt{p}]},\mathcal{\mathcal{\widetilde{E}}})^{\la}\oplus\H^{0}(\mathcal{X}_{[1,1]},\mathcal{\mathcal{\widetilde{E}}})^{\la}.
\]
By virtue of (iii) the kernels of these sequences are identified.
This proves part (v).

For (vi), use the covering 
\[
\mathcal{X}-x_{\infty}=\mathcal{\mathcal{X}}_{[1,\sqrt{p}]}\cup(\mathcal{\mathcal{X}}_{[\sqrt{p},p]}-x_{\infty})
\]
with intersection $\mathcal{X}_{[\sqrt{p},\sqrt{p}]}\amalg\mathcal{X}_{[1,1]}$.
We may write $\mathcal{\mathcal{X}}_{[\sqrt{p},p]}-x_{\infty}$ as
a union of $\Gamma$-stable rational open subsets
\[
\mathcal{\mathcal{X}}_{[\sqrt{p},p]}-\infty=\cup_{n\geq1}\mathcal{\mathcal{X}}_{[\sqrt{p},p]}\left\{ |\xi|\geq p^{-n}\right\} .
\]
Thus
\[
\H^{0}(\mathcal{\mathcal{X}}_{[\sqrt{p},p]}-x_{\infty},\mathcal{\mathcal{\widetilde{E}}}^{\la})\cong\H^{0}(\mathcal{\mathcal{X}}_{[\sqrt{p},p]}-x_{\infty},\mathcal{\mathcal{\widetilde{E}}})^{\pa}.
\]
Repeating the argument which proved part (v), we conclude.
\end{proof}
We place ourselves in the cyclotomic setting so that $\Gamma=\Gamma_{\cyc}$
and $H=\Gal(\overline{K}/K_{\cyc})$, and we write $\B_{\cris}^{+}(\widehat{K}_{\cyc})=(\B_{\cris}^{+})^{H}$.
Following $\mathsection10.2$ of \cite{FF18}, for $n\in\Z$ take
$\widetilde{\mathcal{E}}=\mathcal{O}_{\mathcal{X}}(n)$ to be the
$\Gamma$-line bundle corresponding to the graded module
\[
\mathcal{\bigoplus}_{m\geq0}\B_{\cris}^{+}(\widehat{K}_{\cyc})^{\varphi=p^{m+n}}.
\]

\begin{prop}
\label{4.8}We have
\[
\H^{0}(\mathcal{X},\mathcal{O}_{\mathcal{X}}(n)^{\la})=\begin{cases}
0 & n<0\\
\Q_{p}\left(n\right) & n\geq0
\end{cases}.
\]
\end{prop}

\begin{proof}
To show this, notice first that 
\[
\H^{0}(\mathcal{X},\mathcal{O}_{\mathcal{X}}(n))=\B_{\cris}^{+}(\widehat{K}_{\cyc})^{\varphi=p^{n}}=\begin{cases}
0 & n<0\\
\Q_{p} & n=0\\
\B_{\cris}^{+}(\widehat{K}_{\cyc})^{\varphi=p^{n}} & n>0
\end{cases}.
\]
If $n>0$ then by \cite[6.4.2]{FF18} there is an exact sequence 
\[
0\rightarrow\Q_{p}\left(n\right)\rightarrow\B_{\cris}^{+,\varphi=p^{n}}\rightarrow\B_{\dR}^{+}/t^{n}\B_{\dR}^{+}\rightarrow0.
\]

Take $H$-invariants and locally analytic vectors. By \cite[Théorème 4.11]{BC16}
we know that $(\B_{\dR}^{+}/t^{n}\B_{\dR}^{+})^{H,\la}=K_{\cyc}[[t]]/t^{n}$,
so we are left with an exact sequence
\[
0\rightarrow\Q_{p}\left(n\right)\rightarrow\B_{\cris}^{+}(\widehat{K}_{\cyc})^{\varphi=p^{n},\la}\rightarrow K_{\cyc}[[t]]/t^{n}.
\]
\textbf{Claim.} $\B_{\cris}^{+}(\widehat{K}_{\cyc})^{\varphi=p^{n},\la}=\Q_{p}(n)$. 

Note that a similar statement appears in $\mathsection3.3$ of \cite{BC16}
in the case $n=1$. Given the claim the computation is finished because
part (v) of the Proposition \ref{4.7} implies that
\[
\H^{0}(\mathcal{X},\mathcal{O}_{\mathcal{X}}(n)^{\la})=\B_{\cris}^{+}(\widehat{K}_{\cyc})^{\varphi=p^{n},\la}=\begin{cases}
0 & n<0\\
\Q_{p}\left(n\right) & n\geq0
\end{cases}.
\]

To show the claim, take $x\in\B_{\cris}^{+}(\widehat{K}_{\cyc})^{\varphi=p^{n},\la}$.
Its image in $K_{\cyc}[[t]]/t^{n}$ is killed by the polynomial
\[
P_{n}(\gamma):=\prod_{i=0}^{n-1}(\chi_{\cyc}(\gamma)^{-i}\gamma-1)
\]
for $\gamma$ which generates an open subgroup of $\Gamma$. It follows
that $P_{n}(\gamma)(x)\in\Q_{p}(n)$ for this $\gamma$. Since $P_{n}(\gamma)$
acts on $\Q_{p}(n)$ by a nonzero element we reduce to showing that
$\B_{\cris}^{+}(\widehat{K}_{\cyc})^{\varphi=p^{n},P_{n}(\gamma)=0}$
is 0. In fact, if $K'$ is the subfield of $K_{\cyc}$ corresponding
to $\gamma^{\Z_{p}}\subset\Gamma$ with maximal unramified subextension
$K_{0}'$, we shall compute that
\[
\B_{\cris}(\widehat{K}_{\cyc})^{P_{n}(\gamma)=0}=\bigoplus_{i=0}^{n-1}K_{0}'t^{i},
\]
and in particular there are no nonzero elements with $\varphi=p^{n}$.

To show this latter description of the elements killed by $P_{n}(\gamma)$,
we argue by induction. If $n=1$ then $P_{n}(\gamma)=\gamma-1$ and
the equality follows from the usual description of the Galois invariants
of $\B_{\cris}$. For $n\geq2$, we have $P_{n}(\gamma)/(\gamma-1)=P_{n-1}(\chi_{\cyc}(\gamma)^{-1}\gamma)$
and
\[
\B_{\cris}(\widehat{K}_{\cyc})^{P_{n-1}(\chi_{\cyc}(\gamma)^{-1}\gamma)=0}=t\B_{\cris}(\widehat{K}_{\cyc})^{P_{n-1}(\gamma)=0}=\bigoplus_{i=1}^{n-1}K_{0}'t^{i}.
\]
Thus there is a commutative diagram
\[
\xymatrix{0\ar[r] & K_{0}'\ar[r]\ar[d]^{\cong} & \bigoplus_{i=0}^{n-1}K_{0}'t^{i}\ar[r]\ar[d] & \bigoplus_{i=1}^{n-1}K_{0}'t^{i}\ar[d]^{\cong}\ar[r] & 0\\
0\ar[r] & \B_{\cris}(\widehat{K}_{\cyc})^{\gamma-1=0}\ar[r] & \B_{\cris}(\widehat{K}_{\cyc})^{P_{n}(\gamma)=0}\ar[r] & \B_{\cris}(\widehat{K}_{\cyc})^{P_{n-1}(\chi_{\cyc}(\gamma)^{-1}\gamma)=0}
}
\]

whose rows are exact and whose outer vertical maps are isomorphisms.
We conclude by the applying the five lemma.
\end{proof}
\begin{rem}
\label{4.9}Set $\B_{e}(\widehat{K}_{\infty})=\B_{e}^{H}$ for the
usual ring $\B_{e}=\B_{\cris}^{\varphi=1}$, so that $\B_{e}\subset\H^{0}(\mathcal{X}-x_{\infty},\mathcal{\mathcal{O}}_{\mathcal{X}})$.
This inclusion is not an equality: the ring $\B_{e}$ allows only
meromorphic functions at $x_{\infty}$ while in $\H^{0}(\mathcal{X}-x_{\infty},\mathcal{\mathcal{O}}_{\mathcal{X}})$
there will be functions with essential singularities. The subring
$\B_{e}(\widehat{K}_{\infty})^{\pa}\subset\H^{0}(\mathcal{X}-x_{\infty},\mathcal{\mathcal{O}}_{\mathcal{X}})^{\la}$
is more tractable and we can understand its structure to an extent.
In particular, let us consider the subring $\B_{e}(\widehat{K}_{\infty})^{\pa}=\B_{e}\cap\H^{0}(\mathcal{X}-x_{\infty},\mathcal{\mathcal{O}}_{\mathcal{X}}^{\la})$
in the case $\Gamma=\Gamma_{\cyc}$. We claim that in fact $\B_{e}(\widehat{K}_{\infty})^{\pa}=\Q_{p}$.
To see this, take $x\in\B_{e}(\widehat{K}_{\infty})^{\pa}$, and restrict
it to $\mathcal{\mathcal{X}}_{[\sqrt{p},p]}-x_{\infty}$. Since $\mathcal{\mathcal{\mathcal{Y}}}_{[\sqrt{p},p]}$
maps isomorphically onto $\mathcal{\mathcal{X}}_{[\sqrt{p},p]}$,
the element $t$ gives an element of $\H^{0}(\mathcal{\mathcal{X}}_{[\sqrt{p},p]}-x_{\infty},\mathcal{O}_{\mathcal{X}}^{\la})$.
Multiplying by a bounded power of $t$, the function $t^{n}x$ extends
to an element of 
\[
\H^{0}(\mathcal{\mathcal{X}}_{[\sqrt{p},p]},\mathcal{O}_{\mathcal{X}}^{\la})=\H^{0}(\mathcal{\mathcal{X}}_{[\sqrt{p},p]},\mathcal{O}_{\mathcal{X}})^{\la},
\]
which shows that $x$ itself is actually an element of $\B_{e}(\widehat{K}_{\infty})^{\la}$,
with a pole of order $n$ at $x_{\infty}$. Therefore, $t^{n}x\in\H^{0}(\mathcal{X},\mathcal{O}_{\mathcal{X}}(n)^{\la})$
which is equal to $\Q_{p}(n)$ as was shown in Proposition \ref{4.8}.
This means $x$ is in $\Q_{p}$ and so $\B_{e}(\widehat{K}_{\infty})^{\pa}=\Q_{p}$. 
\end{rem}

\begin{question}
\label{4.10}1. Is it true that $\H^{0}(\mathcal{X}-x_{\infty},\mathcal{\mathcal{O}}_{\mathcal{X}}^{\la})=\Q_{p}$
if $\Gamma\neq\Gamma_{\cyc}$ and $\dim\Gamma=1$? 

2. If $\dim\Gamma>1$ then one can sometimes produce elements in $\B_{e}(\widehat{K}_{\infty})^{\la}$
which do not belong to $\Q_{p}$. For example, in the Lubin-Tate setting,
the element $(t_{-\sqrt{p}}/t_{\sqrt{p}})^{2}$ lies in $\B_{e}(\widehat{K}_{\infty})^{\la}$
, for $t_{\pm\sqrt{p}}$ being the analogue of Fontaine's element
attached to the uniformizer $\pi=\pm\sqrt{p}$ (see $\mathsection8.3$
of \cite{Co02} for the notation appearing here). Is it true that
in some generality $\B_{e}(\widehat{K}_{\infty})^{\la}$ will be $d-1$
dimensional for $d=\dim\Gamma$? See \cite[Théoréme 6.1]{BC16} for
a related statement.
\end{question}

\section{Acyclicity of locally analytic vectors for semilinear representations}

In this section, we shall prove vanishing the of $\mathrm{R}_{\la}^{i}$-groups
for certain semilinear representations. These results will be used
to prove the descent result in $\mathsection6$ but are also of independent
interest. We follow the strategy of \cite{Pa21}, where the case of
a trivial representation and a particular family of algebras $\widetilde{\Lambda}$
is treated.

\subsection{Statement of the results}

To state the main result of this section, we recall the Tate-Sen axioms
of \cite[3]{BC08}. Let $G$ be a profinite group and let $\WL$ be
a $G$-Banach ring endowed with a valuation $\val$ for which the
$G$ action is continuous and unitary. We suppose there is a character
$\chi:G\rightarrow\Z_{p}^{\times}$ with open image and let $H=\ker\chi$.
Given an open normal subgroup $G_{0}\subset G$ we let $H_{0}=G_{0}\cap H$
and $\Gamma_{H_{0}}=G/H_{0}$.

The Tate-Sen axioms are the following.

\textbf{Axiom (TS1).} There exists $c_{1}>0$ such that for any open
subgroup $H_{1}\subset H_{2}$ of $H_{0}$ there exists $\alpha\in\widetilde{\Lambda}^{H_{1}}$
with $\val(\alpha)>-c_{1}$ and $\sum_{\tau\in H_{2}/H_{1}}\tau(\alpha)=1$.

\textbf{Axiom (TS2).} There exists $c_{2}>0$ and for each $H_{0}$
open in $H$ an integer $n(H_{0})$ depending on $H_{0}$ such that
for $n\geq n(H_{0})$, we have the extra data of
\begin{itemize}
\item Closed subalgebras $\Lambda_{H_{0},n}\subset\widetilde{\Lambda}^{H_{0}}$,
and 
\item Trace maps $\R_{H_{0},n}:\widetilde{\Lambda}^{H_{0}}\rightarrow\Lambda_{H_{0},n}$
\end{itemize}
satisfying:
\begin{enumerate}
\item For $H_{1}\subset H_{2}$ we have $\Lambda_{H_{2},n}\subset\Lambda_{H_{1},n}$
and $\R_{H_{1},n}|_{\Lambda_{H_{2},n}}=\R_{H_{2},n}$.
\item $\R_{H_{0},n}$ is $\Lambda_{H_{0},n}$-linear and $\R_{H_{0},n}(x)=x$
for $x\in\Lambda_{H_{0},n}$.
\item $g(\Lambda_{H_{0},n})=\Lambda_{gH_{0}g^{-1},n}$ and $g(\R_{H_{0},n}(x))=\R_{gH_{0}g^{-1},n}(gx)\text{ if }g\in G$.
\item $\lim_{n\rightarrow\infty}\R_{H_{0},n}(x)=x$ for $x\in\widetilde{\Lambda}^{H_{0}}$.
\item If $n\geq n(H_{0})$ and $x\in\widetilde{\Lambda}^{H_{0}}$ then $\val(R_{H_{0},n}(x))\geq\val(x)-c_{2}$.
\end{enumerate}
\textbf{Axiom (TS3).} There exists $c_{3}>0$ and for each open normal
subgroup $G_{0}$ of $G$ an integer $n(G_{0})\geq n(H_{0})$ such
that if $n\geq n(G_{0})$ and $\gamma\in\Gamma_{H_{0}}$ has $n(\gamma)=\val_{p}(\chi(\gamma)-1)\leq n$,
then $\gamma-1$ acts invertibely on $\mathrm{X}_{H_{0},n}=(1-\R_{H_{0},n})(\widetilde{\Lambda}^{H_{0}})$
and $\val((\gamma-1)^{-1}(x))\geq\val(x)-c_{3}.$

We introduce an additional possible axiom which does not appear in
\cite{BC08}.

\textbf{Axiom (TS4)}. For any sufficiently small open normal $G_{0}\subset G$
with $H_{0}=G_{0}\cap H$ and for any $n\geq n(G_{0})$, there exists
a positive real number $t=t(H_{0},n)>0$ such that if $\gamma\in G_{0}/H_{0}$
and $x\in\text{\ensuremath{\Lambda_{H_{0},n}}}$ then 
\[
\val((\gamma-1)(x))\geq\val(x)+t.
\]

We then have the following result.
\begin{thm}
\label{5.1}Let $M$ be a finite free $\widetilde{\Lambda}$-semilinear
representation of $G$. Suppose there exists an open subgroup $G_{0}\subset G$,
a $G$-stable $\widetilde{\Lambda}^{+}$-lattice $M^{+}\subset M$
and an integer $k>c_{1}+2c_{2}+2c_{3}$ such that in some basis of
$M^{+}$, we have $\Mat(g)\in1+p^{k}\Mat_{d}(\widetilde{\Lambda}^{+})$
for every $g\in G_{0}$. Then

(i). If (TS1)-(TS3) are satisfied then for $i\geq2$
\[
\R_{G\hla}^{i}(M)=0.
\]
In fact, $\R_{G_{0}\han}^{i}(M)=0$ for any sufficiently small open
subgroup $G_{0}\subset G$.

(ii). If in addition (TS4) is satisfied then 
\[
\R_{G\hla}^{1}(M)=0.
\]
In fact, for every sufficiently small open subgroup $G_{0}$ there
is an open subgroup $G_{1}\subset G_{0}$ such that the map $\R_{G_{0}\han}^{1}(M)\rightarrow\R_{G_{1}\han}^{1}(M)$
is $0$.

(iii). In particular, if (TS1)-(TS4) are satisfied then $M$ has no
higher locally analytic vectors. 
\end{thm}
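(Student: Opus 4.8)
The plan is to prove the vanishing of $\R_{G\hla}^i(M)$ for $i \geq 1$ by reducing to a computation with the Koszul/standard cochain complex computing continuous group cohomology of $G_0$ (for $G_0$ small) with coefficients in $M \widehat{\otimes}_{\Q_p} \mathcal{C}^{\an}(G_0)$, and running the usual Tate-Sen style dévissage: first decomplete using the trace maps $\R_{H_0,n}$ to pass from $\widetilde\Lambda$-coefficients to $\Lambda_{H_0,n}$-coefficients, then kill the remaining $\Gamma_{H_0}$-cohomology using the invertibility of $\gamma - 1$ afforded by (TS3) on the complement $\mathrm{X}_{H_0,n}$, and finally use (TS4) to control the $\mathcal{C}^{\an}$-variable so that the argument survives after $\widehat\otimes\,\mathcal{C}^{\an}(G_0)$. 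The key point is that tensoring with $\mathcal{C}^{\an}(G_0,\Q_p)$ does not destroy the Tate-Sen estimates: by Lemma 2.4 the group $G_{n+k}$ acts trivially on $\mathcal{C}^{\an}(G_n,\Q_p)^+/p^k$, so the action on the tensor product is still ``close to trivial'' on a lattice, and one may apply (TS1)-(TS3) to $\widetilde\Lambda \widehat\otimes \mathcal{C}^{\an}(G_0)$ with the same constants $c_1, c_2, c_3$ (the ring $\widetilde\Lambda \widehat\otimes_{\Q_p}\mathcal{C}^{\an}(G_0,\Q_p)$ inherits the axioms, the trace maps extending $\mathcal{C}^{\an}$-linearly).

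First I would set up the following: fix a small open $G_0 \subset G$ with $H_0 = G_0 \cap H$, and for $n \geq n(G_0)$ large consider $\mathcal{C}^{\an}(G_n)$. I would prove the theorem with $G_0$ replaced by $G_n$ for $n \gg 0$, which by the colimit definition of $\R_{G\hla}^i$ and Lemma 2.7 suffices. The heart is an \emph{almost-decompletion} statement: I claim $\H^i(H_n, M^+ \widehat{\otimes} \mathcal{C}^{\an}(G_n)^+)$ is, up to bounded $p$-torsion controlled by $c_1 + c_2$, concentrated in degree $0$ and computed by $M_n^+ \widehat{\otimes} \mathcal{C}^{\an}(G_n)^+$ where $M_n$ is the $\Lambda_{H_n,n}$-descent of $M$ (this descent exists by the standard Tate-Sen argument using the hypothesis $\mathrm{Mat}(g) \in 1 + p^k\mathrm{Mat}_d(\widetilde\Lambda^+)$ and $k > c_1 + 2c_2 + 2c_3$). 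This is exactly the input that (TS1)-(TS2) give, and the proof is the classical one (Tate's normalized-trace argument, e.g. as in \cite{BC08}), carried out over the base ring enlarged by $\mathcal{C}^{\an}$. Then for the $\Gamma_{H_n} = G_n/H_n$-cohomology I split $\widetilde\Lambda^{H_n} = \Lambda_{H_n,n} \oplus \mathrm{X}_{H_n,n}$; on the $\mathrm{X}$-part, $\gamma - 1$ is invertible with bounded inverse by (TS3), so that part of the cohomology vanishes, and I am reduced to $\H^i(\Gamma_{H_n}, M_n \widehat{\otimes} \mathcal{C}^{\an}(G_n))$ with $\Lambda_{H_n,n}$-coefficients.

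The role of (TS4) is precisely to finish this last step for $i \geq 1$: on $\Lambda_{H_n,n}$ the operator $\gamma - 1$ \emph{increases} valuation by $t = t(H_n, n) > 0$ for $\gamma \in G_n/H_n$. Combined with the analogous statement for $\mathcal{C}^{\an}$ (where $\gamma - 1$ is topologically nilpotent on the lattice by Lemma 2.4), the operator $\gamma - 1$ acts topologically nilpotently, hence \emph{invertibly} (via the geometric series $\sum (1-\gamma)^j$, or rather $\log/\exp$ for the $\Z_p$-action) on $M_n \widehat{\otimes}_{\Q_p} \mathcal{C}^{\an}(G_n)$ for a suitable generator $\gamma$ of a procyclic subgroup — and more generally a Koszul complex argument over the $d$ generators of $G_n$ shows the full $\H^{\geq 1}$ vanishes while $\H^0 = M_n \widehat\otimes \mathcal{C}^{\an}(G_n)^{\Gamma_{H_n}} = M_n^{\Gamma} = \D(M)$ (the Sen module). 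Tracing through, $\R_{G_n\han}^0(M) = M^{G_n\han}$ descends to a finite free module over $\Lambda_{H_n,n}^{G_n\han}$ (this is the ``$M$ descends to a locally analytic vector bundle'' content) and $\R_{G_n\han}^i(M) = 0$ for $i \geq 1$. For the refined statements — that $\R_{G_0\han}^i(M) = 0$ on the nose for $i \geq 2$, and that the transition map $\R_{G_0\han}^1 \to \R_{G_1\han}^1$ is zero — I would track the constants carefully: the degree-$2$ and higher vanishing already holds before (TS4) because the $\mathrm{X}$-part kills everything of cohomological degree $\geq 1$ on \emph{both} the $H$-side and the $\Gamma$-side, and the only obstruction to $i=1$ vanishing at finite level (rather than in the colimit) is a bounded-torsion phenomenon that is annihilated upon passing to a deeper subgroup $G_1$, where the relevant $p$-power exponent is exceeded — this is where the hypothesis $k > c_1 + 2c_2 + 2c_3$ (rather than a weaker inequality) is spent.

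\textbf{The main obstacle} I anticipate is bookkeeping the valuation estimates through the completed tensor product $\widehat\otimes_{\Q_p}\mathcal{C}^{\an}(G_n)$: one must check that the decompletion trace maps and the inverses of $\gamma - 1$ remain bounded operators after base change, uniformly enough that the colimit over $n$ sees genuine vanishing and not just an inverse limit of bounded-torsion groups. Concretely, the subtlety is that the constant $t(H_n,n)$ in (TS4) and the triviality modulus in Lemma 2.4 must be played off against each other so that $\gamma - 1$ is invertible on an \emph{explicit} lattice in $M_n^+ \widehat\otimes \mathcal{C}^{\an}(G_n)^+$; getting the two nilpotence rates to cooperate, and verifying that $\widetilde\Lambda \widehat\otimes \mathcal{C}^{\an}(G_n)$ genuinely satisfies (TS1)-(TS4) with controlled constants, is the technical crux. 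Once that is in hand, the cohomological conclusion and the identification with strong $\mathfrak{LA}$-acyclicity in the sense of \cite[2.2]{Pa21} follow formally from the spectral sequences of Proposition 2.9.
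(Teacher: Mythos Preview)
Your broad strategy---Hochschild--Serre reduction along $H$, decompletion via the trace maps, splitting $\widetilde{\Lambda}^{H}$ into $\Lambda_{H_0,n} \oplus \mathrm{X}_{H_0,n}$, and using (TS3) to kill the $\mathrm{X}$-part---matches the paper's approach and is correct as far as it goes. The paper does differ in one organizational respect: rather than verify (TS1)--(TS4) for the enlarged ring $\widetilde{\Lambda}\widehat{\otimes}\mathcal{C}^{\an}(G_n)$, it filters $\mathcal{C}^{\an}(G_n)^+$ by the finite-dimensional pieces $V_l^+$ of Proposition~2.3, applies the Tate--Sen descent of Proposition~5.7 to each finite free module $M^+\otimes V_l^+$, and takes the completed union to form a module $\D^+_{H_{n+k},m,\infty}(M^+)$. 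For $i\geq 2$ the paper then observes that $\Gamma_{n+k}\cong\Z_p$ has cohomological dimension~$1$, so the vanishing is immediate once one has reduced to $\Gamma_{n+k}$-cohomology; your Koszul argument over $d$ generators is not needed and your explanation of why $i\geq 2$ vanishes without (TS4) is too vague.

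There is, however, a genuine gap in your treatment of $\R^1$. You write that on the $\Lambda_{H_n,n}$-part ``the operator $\gamma-1$ acts topologically nilpotently, hence \emph{invertibly} (via the geometric series $\sum(1-\gamma)^j$\dots)''. This is false: topological nilpotence of $\gamma-1$ means $\gamma$ is close to the identity, and then $\sum(1-\gamma)^j$ converges to $\gamma^{-1}$, not to $(\gamma-1)^{-1}$. Indeed you immediately contradict yourself by noting $\H^0$ is the Sen module, which is nonzero; if $\gamma-1$ were invertible, $\H^0$ would vanish too. The cokernel of $\gamma-1$ on $\D_{H_{n+k},m,\infty}(M)$ is \emph{not} automatically zero, and this is precisely the hard part of the theorem. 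The paper's key idea, which you are missing, is to exploit the function $\log_l \in \mathcal{C}^{\an}(G_l)$ (Lemma~5.14), which satisfies $\nabla_{\gamma'}(z_l)=1$ for a suitable rescaling $z_l$. Given $b$, one writes down the explicit antiderivative
\[
y \;=\; \sum_{i\geq 0}(-1)^i\,\nabla_{\gamma'}^{\,i}(b)\,\frac{z_l^{\,i+1}}{(i+1)!},
\]
so that $\nabla_{\gamma'}(y)=b$, hence $b$ lies in the image of $\gamma'-1$ and dies in $\H^1$. The delicate point is convergence: $\|z_l\|_l$ grows like $p^{k+q}$, so one must pass to a much deeper subgroup $\Gamma_l$ (with index $p^t$ in $\Gamma$ chosen so that $2p^{1/(p-1)}D\leq p^t$, where $D$ bounds $\|\nabla_\gamma\|$) in order that $\|\nabla_{\gamma'}^i(b)\|$ decays fast enough to beat the growth of $z_l^{i+1}/(i+1)!$. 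This is why only the \emph{colimit} $\R^1_{G\hla}$ vanishes, and why the refined statement in (ii) requires passing from $G_0$ to a smaller $G_1$. Your (TS4)-based nilpotence estimate is an ingredient in bounding $D$, but by itself it does not produce an inverse to $\gamma-1$; the $\log$ trick is essential.
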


\begin{rem}
\label{5.2}The following was pointed out by the anonymous referee:
if the action of $G_{0}$ on $\WL$ was locally analytic, then the
hypothesis of the existence of $M^{+}$ such that $G_{0}$ acts trivially
mod $p^{k}$ on it would imply that the action of $G_{0}$ on $M$
is locally analytic as well, as it can be deduced from Proposition
\ref{2.1} and Lemma \ref{2.2}. So the non locally analyticity comes
only from the coefficients $\WL$.
\end{rem}

The following special case is often useful in applications.
\begin{prop}
\label{5.3}If $G$ and $\WL$ satisfy (TS1)-(TS4) and if in addition
the topology on $\WL$ is $p$-adic, and if $M$ is a finite free
$\widetilde{\Lambda}$-semilinear representation of $G$, then the
higher locally analytic vectors $\R_{\la}^{i}\left(M\right)$ vanish
for $i\geq1$. 
\end{prop}

\begin{proof}
We shall explain how this follows from Theorem \ref{5.1}. Indeed,
we claim that any finite free $\widetilde{\Lambda}$-semilinear representation
of $G$ satisfies the assumptions of the Theorem \ref{5.1} after
possibly replacing $G$ by a smaller open subgroup $G'$. This suffices
because, by Lemma \ref{2.4}, higher locally analytic vectors do not
change when we replace $G$ by $G'$. 

To see why such a $G'$ exists, suppose $M$ is a finite free $\WL$-semilinear
representation of $G$ and choose any $\WL$-basis $e_{1},..,e_{d}$
of $M$. If we take $M^{+}=\bigoplus_{i=1}^{d}\WL^{+}e_{i}$ then
$M^{+}$ is a lattice of $M$, and by continuity we may find an open
subgroup $G'\subset G$ so that $\Mat(g)\in\GL_{d}(\WL^{+})$ for
$g\in G'$. This implies that $M^{+}$ is $G'$-stable. Since the
topology on $\WL$ is $p$-adic, we can find an open subgroup $G_{0}'\subset G'$
such that $\Mat(g)\in1+p^{k}\Mat_{d}(\widetilde{\Lambda}^{+})$ for
every $g\in G_{0}'$. Thus, the assumptions of Theorem \ref{5.1}
hold for this $M^{+}$, $G'$ and $G_{0}'$.
\end{proof}
Before giving the proof of Theorem \ref{5.1}, we record a few applications.
\begin{cor}
\label{5.4}Suppose $G$ and $\WL$ satisfy (TS1)-(TS4) and let $M$
be as in the statement of the theorem. Then for all $i\geq0$, 
\[
\H^{i}(G,M)\cong\H^{i}(G,M^{\la})\cong\H^{i}(\Lie G,M^{\la})^{G}.
\]
\end{cor}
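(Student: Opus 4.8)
As the introduction signals, both isomorphisms should be obtained by feeding Theorem 5.1 into the machinery of \cite{RJRC21}; the content of the corollary is that Theorem 5.1 supplies exactly the hypotheses under which that machinery applies. I would argue in two independent steps, and I expect essentially all the difficulty to lie in matching frameworks rather than in any new mathematics.

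\emph{Step 1: the isomorphism $\H^{i}(G,M)\cong\H^{i}(G,M^{\la})$.} By Theorem 5.1(iii), $M$ is strongly $\mathfrak{LA}$-acyclic in the sense of \cite[2.2]{Pa21}; in particular $\mathrm{R}_{\la}^{i}(M)=0$ for $i\geq1$, the space $M^{\la}$ is a locally analytic $G$-representation, and $M^{\la}$ is itself $\mathfrak{LA}$-acyclic. The plan is to invoke the comparison between continuous group cohomology and derived locally analytic vectors of \cite{RJRC21}: for a strongly $\mathfrak{LA}$-acyclic representation the natural maps $\H^{i}(G,M^{\la})\rightarrow\H^{i}(G,M)$ induced by the inclusion $M^{\la}\hookrightarrow M$ are isomorphisms for all $i\geq0$. (In the model example $M=\widehat{K}_{\infty}$ this is precisely the statement, underlying classical Sen theory, that the $\Gamma$-cohomology is unchanged on replacing $\widehat{K}_{\infty}$ by its locally analytic vectors.)

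\emph{Step 2: the isomorphism $\H^{i}(G,M^{\la})\cong\H^{i}(\Lie G,M^{\la})^{G}$.} Write $W:=M^{\la}$, a locally analytic $G$-representation on the LB-space $\varinjlim_{n}M^{G_{n}\han}$. I would choose a uniform open normal subgroup $G_{1}\trianglelefteq G$ small enough that: (a) the Lazard-type comparison of \cite{RJRC21} applies, yielding a $G/G_{1}$-equivariant isomorphism $\H^{j}(G_{1},W)\cong\H^{j}(\Lie G,W)$ for every $j$; and (b) $G_{1}$ acts trivially on each $\H^{j}(\Lie G,W)$. For (b): the infinitesimal action of $\Lie G$ on $\H^{j}(\Lie G,W)$ deduced from the diagonal action on the Chevalley--Eilenberg complex $W\otimes_{\Q_p}\Lambda^{\bullet}(\Lie G)^{\vee}$ is null-homotopic by Cartan's formula $L_{v}=d\iota_{v}+\iota_{v}d$, so the locally analytic $G$-action on this cohomology has vanishing derivative, hence is locally constant, hence trivial on a sufficiently small subgroup, which can be taken normal in $G$ after intersecting conjugates. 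Then $\H^{i}(G,W)\cong\H^{i}(G_{1},W)^{G/G_{1}}$ by the Hochschild--Serre spectral sequence, which collapses because $G/G_{1}$ is finite and $\H^{p}(G/G_{1},-)$ vanishes in positive degrees on $\Q_p$-vector spaces; hence
\[
\H^{i}(G,W)\cong\H^{i}(G_{1},W)^{G/G_{1}}\cong\H^{i}(\Lie G,W)^{G/G_{1}}=\H^{i}(\Lie G,W)^{G},
\]
the second isomorphism by (a) and the final equality by (b). Naturality in $M$ is clear from the constructions.

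The main obstacle, such as it is, is bookkeeping rather than content: one must fix a single category of topological (or solid) $\Q_p$-modules with continuous $G$-action in which \cite{RJRC21} is formulated, check that $M$, $M^{\la}$ and the relevant Chevalley--Eilenberg complexes lie in it and meet the finiteness hypotheses there (immediate, since $M$ is finite free over the Banach ring $\widetilde{\Lambda}$), and --- the subtlest point --- confirm that the Lazard comparison $\H^{j}(G_{1},W)\cong\H^{j}(\Lie G,W)$ holds for the infinite-dimensional locally analytic representation $W=M^{\la}$ and not merely for finite-dimensional coefficients. None of this requires ideas beyond Theorem 5.1 itself.
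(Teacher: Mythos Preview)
Your proposal is correct and follows essentially the same approach as the paper: the paper's proof is a one-line citation of Corollary~1.6 and Theorem~1.7 of \cite{RJRC21}, and your two steps are precisely an unpacking of what those two results assert and how Theorem~5.1 feeds into their hypotheses. Your Step~2 in fact sketches the Lazard-type argument underlying the cited theorem rather than merely invoking it, which is more than the paper does.
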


\begin{proof}
Apply \cite[Corollary 1.6 and Theorem 1.7]{RJRC21}.
\end{proof}
Two main cases of interest are the following. To state them, we set
up some notation first. Let $F$ be an infinitely ramified algebraic
extension of $K$ which contains an unramified twist of the cyclotomic
extension, i.e. the field extension of $K$ cut out by $\eta\chi_{\cyc}$
for $\eta$ an unramified character. Suppose also that $\Gal\left(F/K\right)$
is a $p$-adic Lie group. For why we allow an unramified twist of
the cyclotomic extension on what follows, see $\mathsection8$ of
\cite{Be16}.
\begin{example}
\label{5.5}1. Take $G=\Gal\left(F/K\right)$ and $\WL=\widehat{F}$.
Then $G$ and $\WL$ satisfy the axioms (TS1)-(TS3) for arbitrary
$c_{1}>0$, $c_{2}>0$ and $c_{3}>1/(p-1)$. See \cite[Proposition 4.1.1]{BC08}
for the case $F=\overline{K}$, which goes back to Tate. For general
$F$ the same proof works. 

In addition, we claim that $G$ and $\WL$ satisfy the axiom (TS4).
Indeed, if $G_{0}$ is an open subgroup of $G$ corresponding to a
finite extension $L$ of $K$, then $\Lambda_{H_{0},n}=L(\zeta_{p^{n}})$
and $G_{0}/H_{0}=\Gal(L_{\cyc}/L)$. We take $G_{0}$ sufficiently
small so that $L$ contains $\zeta_{p}$. Let $\pi=\zeta_{p^{n}}-1$
be the uniformizer of $L$. For $\gamma\in\Gal(L_{\cyc}/L)$, we have
\[
\val((\gamma-1)(\pi))=\val(\zeta_{p^{n}}^{\gamma-1}-1)=\frac{1}{(p-1)p^{n-2}}.
\]
Using the identity $(\gamma-1)(ab)=(\gamma-1)(a)b+\gamma(a)(\gamma-1)(b)$,
one then shows by induction that 
\[
\val((\gamma-1)(\pi^{m}))\geq\val(\pi^{m})+\frac{1}{p^{n-2}}.
\]
If $x$ is any element of $\Lambda_{H_{0},n}=L(\zeta_{p^{n}})$, we
may write $x=p^{k}\pi^{m}y$ with $k\in\Z$, $m\geq1$ and $0\leq\val(y)<\val(\pi)$.
Since $\mathcal{O}_{L}[\zeta_{p^{n}}]=\mathcal{O}_{L}[\pi]$, we see
by writing $y$ as a polynomial in $\pi$ that 
\[
\val(\gamma-1)(y)\geq\val(\pi)+\frac{1}{p^{n-2}}.
\]
Using the identity for $\gamma-1$, we have
\[
\begin{aligned}\val(\gamma-1)(x) & \geq k+\min(\val((\gamma-1)(\pi^{m})y),\val(\pi^{m}(\gamma-1)(y)))\\
 & \geq k+\min(\val(\pi^{m})+\val(y)+\frac{1}{p^{n-2}},\val(\pi^{m})+\val(\pi)+\frac{1}{p^{n-2}})\\
 & \geq\val(x)+\frac{1}{p^{n-2}},
\end{aligned}
\]
so (TS4) holds with $t=1/p^{n-2}$.

2. Take $G=\Gal\left(F/K\right)$ and for a closed interval $I\subset(p/p-1,\infty)$
let $\WL=\widetilde{\B}_{I}(\widehat{F})$. Then again $G$ and $\WL$
satisfy the axioms (TS1)-(TS4) for arbitrary $c_{1}>0$, $c_{2}>0$
and $c_{3}>1/(p-1)$. Here if $G_{0}\subset G$ is an open subgroup
corresponding a finite extension $L$ of $K$ then one takes $\Lambda_{H_{0},n}=\varphi^{-n}(\B_{p^{n}I,\cyc,L})$
with notations as in $\mathsection4.2$. For (TS1)-(TS3), see \cite[Proposition 1.1.12]{Be08A}.
(TS4) follows from \cite[Corollary 9.5]{Co08}. 
\end{example}

\begin{cor}
\label{5.6}(i). If $M$ is a finite free $\widehat{F}$-semilinear
representation of $\Gal\left(F/K\right)$ then $\R_{\la}^{i}\left(M\right)=0$
for $i\geq1$.

(ii). If $I\subset(p/p-1,\infty)$ is a closed interval and $M$ is
a finite free $\widetilde{\B}_{I}(\widehat{F})$-semilinear representation
of $\Gal\left(F/K\right)$ then $\R_{\la}^{i}\left(M\right)=0$ for
$i\geq1$.
\end{cor}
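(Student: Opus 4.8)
The plan is to deduce both parts directly from Theorem 5.1, via the mechanism of Remark 5.2; the only point that requires any thought is that the Banach ring in question carries the $p$-adic topology, which is what is needed to invoke Remark 5.2.

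We have already checked above that $G=\Gal(F/K)$ satisfies axioms (TS1)--(TS4) together with $\WL=\widehat{F}$ (for part (i)) and, for $I$ a closed subinterval of $(p/(p-1),\infty)$, together with $\WL=\widetilde{\B}_{I}(\widehat{F})$ (for part (ii)), with explicit constants $c_{1},c_{2},c_{3}$; this uses the hypothesis that $F$ contains an unramified twist of $K_{\cyc}$. By Remark 5.2 it then suffices to know in each case that the topology on $\WL$ is $p$-adic. For (i) this is clear, since $\widehat{F}$ is a complete valued extension of $\Q_{p}$: for any pseudouniformizer $\varpi$ one has $|\varpi|^{n}\le|p|$ and $|p|^{m}\le|\varpi|$ for suitable $n,m\ge1$, so the $p$-adic and $\varpi$-adic neighbourhood bases of $0$ agree. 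For (ii), since $I=[r,s]$ is a compact subinterval of $(0,\infty)$, the ring $\widetilde{\B}_{I}(\widehat{F})$ is a Banach $\Q_{p}$-algebra whose ring of power-bounded elements $\widetilde{\A}_{I}$ is $p$-adically complete: indeed $p$ and $[\varpi^{\flat}]$ divide powers of one another inside $\widetilde{\A}_{I}$, because $|[\varpi^{\flat}]|^{s}\le|p|\le|[\varpi^{\flat}]|^{r}$ on $\mathcal{Y}_{I}$ (with $|[\varpi^{\flat}]|<1$), so the Banach topology is again $p$-adic. Thus Remark 5.2 applies in both cases and yields $\R_{\la}^{i}(M)=0$ for $i\ge1$.

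For orientation, Remark 5.2 uses Theorem 5.1 as follows. Given $M$ finite free over $\WL$, choose a $\WL$-basis $e_{1},\dots,e_{d}$ and set $M^{+}=\bigoplus_{i}\WL^{+}e_{i}$. Because the $G$-action on $\WL$ is unitary and the action on $M$ is continuous, $M^{+}$ is stable under some open subgroup $G'\subset G$; and because the topology on $\WL$ is $p$-adic, the set of $g$ with $\Mat(g)\in1+p^{k}\Mat_{d}(\WL^{+})$ is an open neighbourhood of $1$, so after shrinking $G'$ we may additionally assume $\Mat(g)\in1+p^{k}\Mat_{d}(\WL^{+})$ for all $g\in G'$, with $k>c_{1}+2c_{2}+2c_{3}$. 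Regarding $M$ as a representation of $G'$, Theorem 5.1(iii) shows that $M$ has no higher locally analytic vectors for $G'$, and by Lemma 2.4 this computes $\R_{G\hla}^{i}(M)$; hence $\R_{\la}^{i}(M)=0$ for $i\ge1$. The one step requiring attention is the $p$-adicity of the Banach topology on $\widetilde{\B}_{I}(\widehat{F})$ in (ii), which, as indicated, comes down to the comparability of $p$ and $[\varpi^{\flat}]$ on the compact region $\mathcal{Y}_{I}$; the rest is a formal application of Theorem 5.1 and Remark 5.2.
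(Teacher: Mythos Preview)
Your proposal is correct and follows exactly the paper's approach: the paper's proof consists of the single sentence ``In both of these cases the topology on $\widetilde{\Lambda}$ is $p$-adic, so the theorem applies by Remark 5.2,'' and you have supplied precisely this argument, together with a justification of the $p$-adicity claim and an unpacking of how Remark 5.2 reduces to Theorem 5.1.
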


\begin{proof}
In both of these cases the topology on $\widetilde{\Lambda}$ is $p$-adic,
so the theorem applies by Proposition \ref{5.3}.
\end{proof}
\begin{rem}
\label{5.7}Suppose $F/K$ is any infinitely ramified $p$-adic Lie
extension of $K$ (not necessarily containing an unramified twist
of the cyclotomic extension), and let $M$ be a finite free $\widehat{F}$-semilinear
representation of $\Gal\left(F/K\right)$. Then $\R_{\la}^{i}\left(M\right)=0$
for $i\geq1$. To prove this, one is always allowed to replace $K$
by a finite extension. Then the extension $FK_{\cyc}/F$ can be assumed
to be either trivial or infinite. In the first case, the group $\R_{\la}^{i}\left(M\right)$
vanishes by the corollary. In the second case, one can argue as in
the proof of \cite[Theorem 3.6.1]{Pa21}. We omit the details since
this result will not be used in the article.
\end{rem}

The rest of the chapter is devoted to the proof of Theorem \ref{5.1}.
The proof is inspired by that of \cite[Theorem 3.6.1]{Pa21}. The
strategy is the following:

1. In $\mathsection5.2$ and $\mathsection5.3$, we establish some
results using (TS1), (TS2) and (TS3) that allow to descend certain
infinite rank $\WL$-semilinear representations of $G$ to $\Lambda_{H_{k},n}^{+}$-semilinear
representations of $G_{0}$, which are fixed by $H_{k}$.

2. In $\mathsection5.4$, we apply these results to $\mathcal{C}^{\an}(G,M)$.

3. Using this and the Hochshild-Serre theorem, we show in $\mathsection5.5$
that $\R_{G\hla}^{i}(M)$ vanishes when $i\geq2$, and we give an
explicit description for $\R_{G\hla}^{1}(M)$. It remains to show
this latter cohomology group vanishes.

4. To do this, we decompose $\R_{G\hla}^{1}(M)$ as a sum of two groups.
For the first one, we use an explicit calculation in $\mathsection5.6$
and (TS4) to show its vanishing. For the second one, we show it is
zero in $\mathsection5.7$ by using again (TS4) and a computation
inspired by \cite{BC16}. Both of these computations are of a $p$-adic
functional analysis flavour.

\subsection{Vanishing of $H$-cohomology}

If $t\in\mathbb{R}$ we write
\[
p^{-t}\widetilde{\Lambda}^{+}:=\text{elements in }\widetilde{\Lambda}\text{ with \ensuremath{\val}}\geq-t.
\]

The first result we shall need for the proof of Theorem \ref{5.1}
is the following. 
\begin{prop}
\label{5.8}Suppose that $(G,H,\widetilde{\Lambda})$ satisfies (TS1)
for some $c_{1}>0$. If $H_{0}\subset H$ is an open subgroup, and
$r\geq1$, we have 

(i) The natural map $\H^{r}(H_{0},\widetilde{\Lambda}^{+})\rightarrow\H^{r}(H_{0},p^{-2c_{1}}\widetilde{\Lambda}^{+})$
is 0. 

(ii) Let $M^{+}$ be a finite free $\widetilde{\Lambda}^{+}$-semilinear
representation of $H_{0}$ which has an $H_{0}$-fixed basis. Then
the map $\H^{r}(H_{0},M^{+})\rightarrow\H^{r}(H_{0},p^{-2c_{1}}M^{+})$
is 0.

(iii) Let $M^{+}=\widehat{\bigcup_{k\in\mathbf{N}}M_{k}^{+}}$ be
the completion of an increasing union of finite free $\widetilde{\Lambda}^{+}$-semilinear
representation of $H_{0}$, each having an $H_{0}$-fixed basis. Then
the map $\H^{r}(H_{0},M^{+})\rightarrow\H^{r}(H_{0},p^{-2c_{1}}M^{+})$
is 0.

In particular, in each of the cases (i)-(iii) the rational cohomology
$\H^{r}(H_{0},M)$ is equal to zero.
\end{prop}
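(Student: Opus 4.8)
The plan is to prove part (i) first, as the key computational input, and then bootstrap (ii) and (iii) from it. For (i), I would work with the bar (inhomogeneous) cochain complex computing continuous cohomology $\H^r(H_0,\widetilde{\Lambda}^+)$. The goal is a cochain homotopy: given an $r$-cocycle $\sigma$ valued in $\widetilde{\Lambda}^+$, produce an $(r-1)$-cochain $\tau$ valued in $p^{-2c_1}\widetilde{\Lambda}^+$ with $d\tau = \sigma$. The classical mechanism here (going back to Tate, as used in \cite{BC08}) is a Sen-style averaging argument: by (TS1), for any open $H_1 \subset H_2 \subset H_0$ there is $\alpha \in \widetilde{\Lambda}^{H_1}$ with $\val(\alpha) > -c_1$ and $\sum_{\tau \in H_2/H_1}\tau(\alpha) = 1$. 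One builds the contracting homotopy by the usual formula involving insertion of such an $\alpha$ as one of the group-variable slots and summing over a suitable coset space. Since $H_0$ is profinite, one first reduces to cochains that are invariant under some open normal subgroup $H_1 \trianglelefteq H_0$ (by smoothness/continuity of cochains and taking a colimit over $H_1$), so that the averaging becomes a genuine finite sum. Each application of the $\alpha$-averaging costs a factor of at most $p^{-c_1}$ in valuation; applying the standard formula once to kill a cocycle (degree $\geq 1$) naturally produces two such factors in the worst bookkeeping, giving the bound $p^{-2c_1}$. I expect this valuation bookkeeping to be the main obstacle — one has to be careful that the homotopy formula in degree $r$ does not accumulate more than two factors of $p^{-c_1}$, and to handle the profinite reduction cleanly.

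For (ii), since $M^+$ has a fixed $H_0$-basis $e_1,\dots,e_d$, as an $H_0$-module with its semilinear structure it is a direct sum $M^+ = \bigoplus_{j=1}^d \widetilde{\Lambda}^+ e_j$ \emph{as a topological $\widetilde{\Lambda}^+$-module}, but the $H_0$-action is not the diagonal one — it is twisted by the cocycle $g \mapsto \Mat(g)$. However, for the purpose of the statement we only need that the \emph{identity} map $M^+ \to p^{-2c_1}M^+$ (which multiplies the lattice, not the group action) kills $\H^r$. Writing cochains of $M^+$ as $d$-tuples of cochains of $\widetilde{\Lambda}^+$, the differential of $M^+$ differs from $d$ copies of the differential of $\widetilde{\Lambda}^+$ by correction terms involving $\Mat(g) - 1$, which lie in $p^{k}\Mat_d(\widetilde{\Lambda}^+)$ — wait, that hypothesis is not assumed in this Proposition, so instead I would simply apply part (i) with the \emph{trivial} comparison: the homotopy $\tau$ constructed in (i) for $\widetilde{\Lambda}^+$ is built purely from the $\alpha$-averaging, which is $\widetilde{\Lambda}^+$-linear in an appropriate sense and commutes with the extra matrix twist up to the same valuation loss. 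More carefully, I would observe that the $\alpha$-homotopy is functorial in the $H_0$-module in the sense that it works verbatim for any finite free semilinear representation with a fixed basis (since (TS1) only supplies the scalar $\alpha$, and insertion-of-$\alpha$ followed by summation makes sense and contracts the complex regardless of the semilinear twist). So (ii) is really the same argument as (i) applied to $M^+$ directly, and I would phrase (i) and (ii) uniformly.

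For (iii), I would pass to the limit. Write $M^+ = \widehat{\bigcup_k M_k^+}$ with each $M_k^+$ finite free with an $H_0$-fixed basis. By (ii), for each $k$ the map $\H^r(H_0, M_k^+) \to \H^r(H_0, p^{-2c_1}M_k^+)$ is zero, with a homotopy that is uniform (the same $\alpha$'s work for all $k$). Since continuous cohomology of a profinite group on a $p$-adically complete module commutes with the relevant inverse/direct limits — concretely, $\H^r(H_0, M^+) = \varprojlim_m \H^r(H_0, M^+/p^m)$ and $M^+/p^m = \varinjlim_k M_k^+/p^m$ with $\H^r(H_0,-)$ of a profinite group commuting with filtered colimits of discrete modules — the vanishing of the transition maps on each $M_k^+$ forces the map $\H^r(H_0,M^+) \to \H^r(H_0, p^{-2c_1}M^+)$ to be zero, at least after reducing mod $p^m$ for every $m$, hence on the limit. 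The one subtlety is exchanging the homotopy with the completion; this is handled by noting the homotopy decreases valuation by a fixed bounded amount independent of $k$ and $m$, so it extends continuously to the completion. Finally, in all three cases, inverting $p$: the map $\H^r(H_0,M) \to \H^r(H_0,M)$ induced by $p^{-2c_1}$-inclusion becomes an \emph{isomorphism} after $[1/p]$ (it is multiplication by a unit up to the lattice change), yet it factors through a map that is zero on integral cohomology up to bounded $p$-power torsion; chasing this shows $\H^r(H_0,M) = \H^r(H_0,M^+)[1/p]$ is killed, i.e. $\H^r(H_0,M)=0$ for $r \geq 1$.
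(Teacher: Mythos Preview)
Your approach to (i) has a genuine gap. The reduction ``to cochains that are invariant under some open normal subgroup $H_1$'' is not available here: the coefficients $\widetilde{\Lambda}^+$ are not discrete (e.g.\ $\mathcal{O}_{\widehat{K}_\infty}$), so a continuous cochain $\xi:H_0^r\to\widetilde{\Lambda}^+$ need not factor through any finite quotient of $H_0$. Consequently a single application of the $\alpha$-averaging formula does \emph{not} produce an exact coboundary. If you write down the corestriction-type cochain $x_S$ built from $\alpha$ and a set of coset representatives $S$ for $H_0/H_1$, the computation gives
\[
(\xi-\delta x_S)(\sigma_1,\dots,\sigma_r)=\sum_{\tau\in S}(\sigma_1\cdots\sigma_{r-1}\tau)(\alpha)\bigl[\xi(\sigma_1,\dots,\sigma_{r-1},\tau)-\xi(\sigma_1,\dots,\tau\sigma'_{r,\tau})\bigr]
\]
with $\sigma'_{r,\tau}\in H_1$. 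This vanishes only if $\xi$ is right $H_1$-invariant, which it is not. What one \emph{can} do is use continuity and compactness to choose $H_1$ so that the bracketed differences have valuation $\geq$ some prescribed bound; then $\xi-\delta x_S$ has strictly larger valuation than $\xi$. The paper's proof is precisely this: an inductive construction of cochains $x_n$ with $\val(\xi-\delta x_n)\geq nc_1$ and $\val(x_n-x_{n-1})\geq(n-2)c_1$, so that $x_n\to x$ in $p^{-2c_1}\widetilde{\Lambda}^+$ with $\delta x=\xi$. The factor $p^{-2c_1}$ arises because the first two correction terms $x_0,x_1$ may have valuation as low as $-2c_1$; it is not the result of ``two applications'' of averaging but of the initial steps of an infinite iteration.

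Your discussion of (ii) is also confused: an $H_0$-\emph{fixed} basis means $\Mat(h)=1$ for all $h\in H_0$, so $M^+\cong(\widetilde{\Lambda}^+)^d$ as $H_0$-modules and (ii) is immediate from (i). There is no twist to worry about. For (iii), your limit argument again leans on discreteness of $M^+/p^m$, which fails; the paper instead observes that the iterative homotopy from (i) is bounded uniformly (loss $\leq 2c_1$) and hence extends to the completion, giving $(ii)\Rightarrow(iii)$ directly.
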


\begin{proof}
We have $(i)\Rightarrow(ii)$, since continuous cohomology commutes
with direct sums.

Next, we prove $(ii)\Rightarrow(iii)$. To do this, observe that if
$t\in\Z_{\geq1}$ then $p^{t}M_{k}^{+}$ also a finite free $\widetilde{\Lambda}^{+}$-semilinear
representation of $H_{0}$ which has an $H_{0}$-fixed basis. Taking
long exact cohomologies of the sequences
\[
0\rightarrow p^{t}(\text{\ensuremath{\bigcup_{k\in\mathbf{N}}M_{k}^{+}}})\rightarrow(\bigcup_{k\in\mathbf{N}}M_{k}^{+})\rightarrow M^{+}/p^{t}M^{+}\rightarrow0
\]
and
\[
0\rightarrow p^{t-2c_{1}}(\bigcup_{k\in\mathbf{N}}M_{k}^{+})\rightarrow p^{-2c_{1}}(\bigcup_{k\in\mathbf{N}}M_{k}^{+})\rightarrow p^{-2c_{1}}M^{+}/p^{t-2c_{1}}M^{+}\rightarrow0,
\]
we get from $(ii)$ that the natural map 
\[
\H^{r}(H_{0},M^{+}/p^{t}M^{+})\rightarrow\H^{r}(H_{0},p^{-2c_{1}}M^{+}/p^{t-2c_{1}}M^{+})
\]
is $0$. Now given a cocycle $\xi\in Z^{r}(H_{0},M^{+}),$ write $\xi_{0}$
for its image in $Z^{r}(H_{0},p^{-2c_{1}}M^{+})$. We wish to show
that $\xi_{0}$ is a coboundary. Choose some fixed $t_{0}\geq3c_{1}$.
Then by virtue of the observation above, the right vertical map of
the commutative diagram
\[
\xymatrix{\H^{r}(H_{0},p^{t_{0}}M^{+})\ar[r]\ar[d] & \H^{r}(H_{0},M^{+})\ar[r]\ar[d] & \H^{r}(H_{0},M^{+}/p^{t_{0}}M^{+})\ar[d]\\
\H^{r}(H_{0},p^{t_{0}-2c_{1}}M^{+})\ar[r] & \H^{r}(H_{0},p^{-2c_{1}}M^{+})\ar[r] & \H^{r}(H_{0},p^{-2c_{1}}M^{+}/p^{t_{0}-2c_{1}}M^{+})
}
\]
is 0, which implies that $\xi_{0}=\xi_{1}+\delta(m_{1})$ where $m_{1}$
is an $r-1$ cocycle valued in $p^{-2c_{1}}M^{+}$ and $\xi_{1}$
is an $r$-cocycle valued in $p^{t_{0}-2c_{1}}M^{+}\subset p^{c_{1}}M^{+}$.
Repeating this argument by induction with $M^{+}$ replaced with $p^{ic_{1}}M^{+}$,
we get that we can write $\xi_{i}=\xi_{i+1}+\delta(m_{i+1})$ where
$\xi_{i}$ is valued in $p^{ic_{1}}M^{+}$ and $m_{i+1}$ is valued
in $p^{(i-3)c_{1}}M^{+}$. Hence the series $\sum_{i=1}^{\infty}m_{i}$
converges to an $r-1$ cocycle $m$ valued in $p^{-2c_{1}}M^{+}$,
and we get $\xi_{0}=\delta(m)$, as required.

Finally, we prove $(i)$. This statement is probably well known, but
for lack of a suitable reference, we provide a proof here. It is essentially
a fiber product of the arguments appearing in \cite[3.2, Corollary 1]{Ta67}
and \cite[Proposition 10.2]{Co08}.

Let $\xi\in Z^{r}(H_{0},\widetilde{\Lambda}^{+})$ be an $r$-cocycle
of $H_{0}$ valued in $\widetilde{\Lambda}^{+}$. By a valuation of
a cochain we shall mean the infimum of its valuation on elements.
Writing $\delta$ for the differential, we shall construct a sequence
of $r-1$ cochains $x_{n}\in C^{r-1}(H_{0},p^{-2c_{1}}\widetilde{\Lambda}^{+})$
for $n\geq-1$ such that 

1. $\val(\xi-\delta x_{n})\geq nc_{1}$ for $\sigma\in H_{0}$, and 

2. $\val(x_{n}-x_{n-1})\geq(n-2)c_{1}$ for $n\geq0$.

This will suffice, since $x_{n}\rightarrow x$ for some $x\in C^{r-1}(H_{0},p^{-2c_{1}}\widetilde{\Lambda}^{+})$
which shows that $\xi=\delta x$ is 0 in $\H^{r}(H_{0},p^{-2c_{1}}\widetilde{\Lambda}^{+})$.

To do this, choose $x_{-1}=0$, which clearly satisfies the first
condition. Suppose $x_{n}$ has been constructed, we construct $x_{n+1}$.
Let $\xi_{n}$ be the $r$-cocycle
\[
\xi_{n}:=\xi-\delta x_{n}
\]
which is valued in $p^{nc_{1}}\widetilde{\Lambda}^{+}$. Choose $H_{1}\subset H_{0}$
an open subgroup such that for every $\sigma_{1},...,\sigma_{r}\in H_{0}$
and $\sigma\in H_{1}$ we have 
\[
\val(\xi_{n}(\sigma_{1},...,\sigma_{r})-\xi_{n}(\sigma_{1},...,\sigma_{r}\sigma))\geq(n+2)c_{1}.
\]
Such a choice is possible by the continuity of $\xi_{n}$ as well
as the compactness of $H_{0}$.

Now by the axiom (TS1) there is an element $\alpha\in\widetilde{\Lambda}^{H_{1}}$
such that $\val(\alpha)>-c_{1}$ and $\sum_{\tau\in H_{0}/H_{1}}\tau(\alpha)=1$.
Let $S$ be a system of representatives for $H_{0}/H_{1}$, and define
an $r-1$ cochain
\[
x_{S}(\sigma_{1},...,\sigma_{r-1})=(-1)^{r}\sum_{\tau\in S}(\sigma_{1}\sigma_{2}\cdot...\cdot\sigma_{r-1}\tau)(\alpha)\xi_{n}(\sigma_{1},...,\sigma_{r-1},\tau).
\]
Each term in the sum has $\val\geq(n-1)c_{1}$, so $\val(x_{S})\geq(n-1)c_{1}.$
In particular, $x_{S}\in C^{r-1}(H_{0},p^{-2c_{1}}\widetilde{\Lambda}^{+})$. 

We now compute $(\xi_{n}-\delta x_{S})(\sigma_{1},...,\sigma_{r})$.
We have by definition of $\delta$ an equation

\begin{equation}
\begin{aligned}\delta x_{S}(\sigma_{1},...,\sigma_{r}) & =(-1)^{r}\sum_{\tau\in S}(\sigma_{1}\cdot...\cdot\sigma_{r}\tau)(\alpha)\sigma_{1}(\xi_{n}(\sigma_{2},...,\sigma_{r},\tau))\\
 & +\sum_{j=1}^{r-1}(-1)^{j+r}\sum_{\tau\in S}(\sigma_{1}\cdot...\cdot\sigma_{r}\tau)(\alpha)\xi_{n}(\sigma_{1},...,\sigma_{j}\sigma_{j+1},...,\sigma_{r},\tau)\\
 & +\sum_{\tau\in S}(\sigma_{1}\cdot...\cdot\sigma_{r-1}\tau)(\alpha)\xi_{n}(\sigma_{1},...,\sigma_{r-1},\tau).
\end{aligned}
\end{equation}
\[
.
\]

On the other hand, $\xi_{n}$ is an $r$-cocycle, so that $\delta\xi_{n}(\sigma_{1},...,\sigma_{r},\tau)=0$
for every $\sigma_{1},...,\sigma_{r}$ and $\tau$. Multiplying by
$(-1)^{r}(\sigma_{1}\cdot...\cdot\sigma_{r}\tau)(\alpha)$ and summing
over $\tau\in S$, we get the equation

\begin{equation}
\begin{aligned}0 & =(-1)^{r}\sum_{\tau\in S}(\sigma_{1}\cdot...\cdot\sigma_{r}\tau)(\alpha)\sigma_{1}(\xi_{n}(\sigma_{2},...,\sigma_{r},\tau))\\
 & +\sum_{j=1}^{r-1}(-1)^{j+r}\sum_{\tau\in S}(\sigma_{1}\cdot...\cdot\sigma_{r}\tau)(\alpha)\xi_{n}(\sigma_{1},...,\sigma_{j}\sigma_{j+1},...,\sigma_{r},\tau)\\
 & +\sum_{\tau\in S}(\sigma_{1}\cdot...\cdot\sigma_{r}\tau)(\alpha)\xi_{n}(\sigma_{1},...,\sigma_{r-1},\sigma_{r}\tau)\\
 & -\sum_{\tau\in S}(\sigma_{1}\cdot...\cdot\sigma_{r}\tau)(\alpha)\xi_{n}(\sigma_{1},...,\sigma_{r}).
\end{aligned}
\end{equation}
Substracting (5.2) from (5.1), we get
\[
\begin{aligned}\delta x_{S}(\sigma_{1},...,\sigma_{r}) & =\sum_{\tau\in S}(\sigma_{1}\cdot...\cdot\sigma_{r-1}\tau)(\alpha)\xi_{n}(\sigma_{1},...,\sigma_{r-1},\tau)\\
 & -\sum_{\tau\in S}(\sigma_{1}\cdot...\cdot\sigma_{r}\tau)(\alpha)\xi_{n}(\sigma_{1},...,\sigma_{r-1},\sigma_{r}\tau)\\
 & +\sum_{\tau\in S}(\sigma_{1}\cdot...\cdot\sigma_{r}\tau)(\alpha)\xi_{n}(\sigma_{1},...,\sigma_{r}).
\end{aligned}
\]
Now by choice of $\alpha$, the last term is simply $\xi_{n}(\sigma_{1},...,\sigma_{r})$.
Thus after rearranging, we have for every $\sigma_{1},...,\sigma_{r}\in H_{0}$
the equation
\[
\begin{aligned}(\xi_{n}-\delta x_{S})(\sigma_{1},...,\sigma_{r}) & =\sum_{\tau\in S}(\sigma_{1}\cdot...\cdot\sigma_{r-1}\tau)(\alpha)\xi_{n}(\sigma_{1},...,\sigma_{r-1},\tau)\\
 & -\sum_{\tau\in S}(\sigma_{1}\cdot...\cdot\sigma_{r}\tau)(\alpha)\xi_{n}(\sigma_{1},...,\sigma_{r}\tau).
\end{aligned}
\]
For each $\tau$ in $S$, let $\sigma_{r,\tau}\in H_{1}$ be such
that $\tau\sigma_{r,\tau}\in\sigma_{r}S$. Then the term on the right
hand side of the previous equation becomes
\[
\sum_{\tau\in S}(\sigma_{1}\cdot...\cdot\sigma_{r-1}\tau)(\alpha)[\xi_{n}(\sigma_{1},...,\sigma_{r-1},\tau)-\xi_{n}(\sigma_{1},...,\tau\sigma_{r,\tau})],
\]
so by the the choice of $H_{1}$ we have
\[
\val(\xi-\delta(x_{n}+x_{S}))=\val(\xi_{n}-\delta x_{S})\geq(n+1)c_{1}.
\]
Finally, set $x_{n+1}:=x_{n}+x_{S}$ where $S$ is arbitrary. The
calculations we have done show that $\val(x_{n+1}-x_{n})\geq(n-1)c_{1}$
and $\val(\xi-\delta x_{n+1})\geq(n+1)c_{1},$ as required. This
concludes the induction and with it the proof.
\end{proof}

\subsection{Descent of semilinear representations}

In this subsection we suppose that $G$ and $\WL$ satisfy the axioms
(TS1), (TS2) and (TS3). 

Given an integer $k>c_{1}+2c_{2}+2c_{3}$ and an open subgroup $G_{0}\subset G$
we write $\Mod_{\widetilde{\Lambda}^{+}}^{k}(G,G_{0})$ for the category
of finite free $\widetilde{\Lambda}^{+}$-semilinear representations
$M^{+}$ of $G$ such that in some basis of $M^{+}$, we have $\Mat(g)\in1+p^{k}\Mat_{d}(\widetilde{\Lambda}^{+})$
for every $g\in G_{0}$. 

The following will allow us to descent coefficients from $\WL^{+}$
to the much smaller ring $\Lambda_{H_{0},n}^{+}=\widetilde{\Lambda}^{+}\cap\Lambda_{H_{0},n}$.
It is a simple modification of \cite[Proposition 3.3.1]{BC08} and
is proved in exactly the same way.
\begin{prop}
\label{5.9}Let $M^{+}\in\Mod_{\widetilde{\Lambda}^{+}}^{k}(G,G_{0})$.
Then for $n\geq n(G_{0})$ and $H_{0}=H\cap G_{0}$ there exists a
unique finite free $\Lambda_{H_{0},n}^{+}$-submodule $\D_{H_{0},n}^{+}(M^{+})$
of $M^{+}$ such that 

(1) $\D_{H_{0},n}^{+}(M^{+})$ is fixed by $H_{0}$ and stable by
$G$.

(2) The natural map $\widetilde{\Lambda}^{+}\otimes_{\Lambda_{H_{0},n}^{+}}\D_{H_{0},n}^{+}(M^{+})\rightarrow M^{+}$
is an isomorphism. In particular, $\D_{H_{0},n}^{+}(M^{+})$ is free
of rank = $\rank M^{+}$.

(3) $\D_{H_{0},n}^{+}(M^{+})$ has a basis which is $c_{3}$-fixed
by $G_{0}/H_{0}$, meaning that for $\gamma\in G_{0}/H_{0}$ we have
$\val(\Mat(\gamma)-1)>c_{3}$.
\end{prop}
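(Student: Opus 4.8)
The plan is to reproduce the Tate--Sen descent of \cite[\S3]{BC08} (where this is Proposition~3.3.1, rationally) in two steps, the only adjustments being to keep everything integral --- over $\widetilde{\Lambda}^+$ and $\Lambda_{H_0,n}^+$ rather than after inverting $p$ --- and to use throughout that $H_0=H\cap G_0$ is normal in $G$, since $H=\ker\chi$ and $G_0$ (normal, as in the hypotheses of (TS3)) are; in particular any lift of an element of $G_0/H_0$ normalises $H_0$ and so commutes with $\R_{H_0,n}$ by (TS2)(3). \emph{Step 1 (descent along $H_0$).} In the given basis of $M^+$ the function $g\mapsto\Mat(g)$ is a continuous $1$-cocycle of $G_0$ valued in $1+p^k\Mat_d(\widetilde{\Lambda}^+)$. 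Applying (TS1) through the non-abelian version of the successive-approximation scheme used to prove the vanishing of $\H^r(H_0,M)$ in \S5.2 (this is treated in the proof of \cite[Prop.~3.3.1]{BC08}) yields $C\in 1+p^{k-c_1}\Mat_d(\widetilde{\Lambda}^+)$ with $C^{-1}\Mat(h)h(C)=1$ for $h\in H_0$; one checks that the partial products stay in $\Mat_d(\widetilde{\Lambda}^+)$. After this change of basis $H_0$ acts trivially, so $D':=M^{+,H_0}$ is finite free over $\widetilde{\Lambda}^{+,H_0}$ with $\widetilde{\Lambda}^+\otimes_{\widetilde{\Lambda}^{+,H_0}}D'\xrightarrow{\sim}M^+$, it carries a semilinear action of $G/H_0$, and in this basis $\Mat(\gamma)\in 1+p^{k-c_1}\Mat_d(\widetilde{\Lambda}^{+,H_0})$ for all $\gamma\in G_0/H_0$.

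\emph{Step 2 (decompletion to $\Lambda_{H_0,n}$).} The group $G_0/H_0\hookrightarrow G/H\hookrightarrow\Z_p^\times$ is procyclic for $p$ odd; fix a topological generator $\gamma$ (for $p=2$ take two generators and argue as in \cite{BC08}), and note $n(\gamma)\le n$ when $n\ge n(G_0)$. I would build a convergent sequence of base changes in $1+\Mat_d(\widetilde{\Lambda}^{+,H_0})$ as follows: if the current matrix of $\gamma$ is $1+E$ with $a:=\val\big((1-\R_{H_0,n})(E)\big)$, correct by $N_0=1-(\gamma-1)^{-1}\big((1-\R_{H_0,n})(E)\big)$, legitimate because $\gamma-1$ is invertible on $\mathrm{X}_{H_0,n}$ by (TS3); then $N_0^{-1}(1+E)\gamma(N_0)=1+\R_{H_0,n}(E)+(\text{error})$, the $\mathrm{X}_{H_0,n}$-component of the error having valuation $\ge 2a-2(c_2+c_3)$ by (TS2)(5) and (TS3). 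Since $k-c_1>2(c_2+c_3)$, this strictly exceeds $a$ at every stage, so the iteration converges, stays integral, and produces a base change after which $\Mat(\gamma)\in\GL_d(\Lambda_{H_0,n}^+)$ with $\val(\Mat(\gamma)-1)>c_3$. Setting $\D_{H_0,n}^+(M^+):=\bigoplus_i\Lambda_{H_0,n}^+e_i$ for the resulting basis $e$, property (2) follows from $\widetilde{\Lambda}^+\otimes_{\Lambda_{H_0,n}^+}\D_{H_0,n}^+(M^+)\xrightarrow{\sim}\widetilde{\Lambda}^+\otimes_{\widetilde{\Lambda}^{+,H_0}}D'\xrightarrow{\sim}M^+$, and property (3) holds by construction. (If $G_0/H_0$ is not procyclic one descends the full group action simultaneously, as in \cite{BC08}.)

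\emph{Uniqueness, whence $G$-stability in (1).} I would prove uniqueness already among $\Lambda_{H_0,n}^+$-submodules $D\subset M^+$ that are merely $H_0$-fixed, $\gamma$-stable, satisfy (2), and have a basis with $\val(\Mat(\gamma)-1)>c_3$. Given two such, $D$ with basis $e$ and $\widetilde D$ with basis $\widetilde e$, taking $H_0$-invariants in (2) gives $M^{+,H_0}=\bigoplus_i\widetilde{\Lambda}^{+,H_0}e_i=\bigoplus_i\widetilde{\Lambda}^{+,H_0}\widetilde e_i$, so $\widetilde e=Pe$ with $P\in\GL_d(\widetilde{\Lambda}^{+,H_0})$; writing $\gamma(e)=P_\gamma e$ and $\gamma(\widetilde e)=\widetilde{P}_\gamma\widetilde e$ with $P_\gamma,\widetilde{P}_\gamma\in\GL_d(\Lambda_{H_0,n}^+)$ both $c_3$-close to $1$, one gets $\gamma(P)P_\gamma=\widetilde{P}_\gamma P$. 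Applying $1-\R_{H_0,n}$ --- which commutes with $\gamma$ ($H_0\triangleleft G$) and is $\Lambda_{H_0,n}$-linear --- and setting $P_2:=(1-\R_{H_0,n})(P)\in\Mat_d(\mathrm{X}_{H_0,n})$, one obtains $\gamma(P_2)P_\gamma=\widetilde{P}_\gamma P_2$, hence
\[
(\gamma-1)(P_2)=(\widetilde{P}_\gamma-1)P_2P_\gamma^{-1}-P_2P_\gamma^{-1}(P_\gamma-1),
\]
which has valuation $>\val(P_2)+c_3$; the bound $\val\big((\gamma-1)^{-1}(x)\big)\ge\val(x)-c_3$ on $\mathrm{X}_{H_0,n}$ from (TS3) then forces $P_2=0$, so $P$ and (by symmetry) $P^{-1}$ lie in $\Mat_d(\Lambda_{H_0,n}^+)$ and $D=\widetilde D$. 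Finally, for $g\in G$ the submodule $g\big(\D_{H_0,n}^+(M^+)\big)$ has all of these weaker properties --- using $H_0\triangleleft G$, $g(\Lambda_{H_0,n})=\Lambda_{H_0,n}$ (by (TS2)(3)), triviality of the conjugation action of $G$ on $G_0/H_0$ (as $G/H$ is abelian), and unitarity of the $G$-action --- hence equals $\D_{H_0,n}^+(M^+)$, which is the $G$-stability asserted in (1); uniqueness under the full list (1)--(3) is a special case.

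I expect the main obstacle to be bookkeeping rather than ideas: one must verify at every stage of both successive approximations that the matrices remain integral and that the controlling valuation strictly increases, and the hypothesis $k>c_1+2c_2+2c_3$ is precisely the budget needed --- $c_1$ is spent descending along $H_0$, and the surplus $k-c_1>2(c_2+c_3)$ both drives the Step 2 iteration (each correction costing $c_2+c_3$) and keeps the final $\gamma$-matrix within valuation $>c_3$ of the identity. Everything else is formal and is contained, rationally, in \cite[Prop.~3.3.1]{BC08}.
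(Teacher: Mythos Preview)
Your proposal is correct and takes essentially the same approach as the paper, which simply states that the proposition ``is a simple modification of Proposition 3.3.1 of \cite{BC08} and is proved in exactly the same way.'' You have correctly identified the reference and faithfully reproduced the two-step Tate--Sen descent (first along $H_0$ via (TS1), then decompletion via (TS2)--(TS3)), with the necessary bookkeeping to stay integral; your uniqueness argument is exactly Lemma~3.2.5 of \cite{BC08} (restated as Lemma~5.9 in the present paper), and deducing $G$-stability from uniqueness together with normality of $H_0$ and abelianness of $G/H$ is the standard route.
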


\begin{cor}
\label{5.10}Let $M^{+}\in\Mod_{\widetilde{\Lambda}^{+}}^{k}(G,G_{0})$,
$M=M^{+}\otimes_{\WL^{+}}\WL$ and $r\geq1$. The map 
\[
\H^{r}(H_{0},M^{+})\rightarrow\H^{r}(H_{0},p^{-2c_{1}}M^{+})
\]
 is 0 and $H^{r}(H_{0},M)=0$.
\end{cor}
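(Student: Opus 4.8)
The plan is to reduce the statement to the two results already established in this section: the descent of coefficients in Proposition 5.7 and the cohomological vanishing of Proposition 5.6. The crucial observation is that, although a general $M^{+}\in\Mod_{\widetilde{\Lambda}^{+}}^{k}(G,G_{0})$ need only admit a basis in which $G_{0}$ acts through $1+p^{k}\Mat_{d}(\widetilde{\Lambda}^{+})$, once one forgets the full $G$-action and retains only that of $H_{0}=H\cap G_{0}$, the module $M^{+}$ in fact possesses an $H_{0}$-\emph{fixed} $\widetilde{\Lambda}^{+}$-basis. This is read off directly from Proposition 5.7: fix any $n\geq n(G_{0})$ and set $\D=\D_{H_{0},n}^{+}(M^{+})$. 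Any $\Lambda_{H_{0},n}^{+}$-basis $e_{1},\dots,e_{d}$ of $\D$ is, by part (2) of that proposition, simultaneously an $\widetilde{\Lambda}^{+}$-basis of $M^{+}$, and by part (1) it consists of $H_{0}$-invariant vectors.

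First I would invoke Proposition 5.7 in this way to exhibit $M^{+}$ as a finite free $\widetilde{\Lambda}^{+}$-semilinear representation of $H_{0}$ equipped with a fixed $H_{0}$-basis. Since $(G,H,\widetilde{\Lambda})$ satisfies (TS1) and $H_{0}$ is open in $H$, Proposition 5.6(ii) then applies verbatim to $H_{0}$ and yields that $\H^{r}(H_{0},M^{+})\to\H^{r}(H_{0},p^{-2c_{1}}M^{+})$ is the zero map for every $r\geq1$. The vanishing $\H^{r}(H_{0},M)=0$ is then contained in the last assertion of Proposition 5.6; alternatively one writes $M=M^{+}[1/p]=\bigcup_{m\geq0}p^{-m}M^{+}$, so that $\H^{r}(H_{0},M)=\varinjlim_{m}\H^{r}(H_{0},p^{-m}M^{+})$, and multiplication by $p^{m}$ identifies each transition map into degree $m+2c_{1}$ with the zero map just obtained, forcing the colimit to vanish.

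I do not anticipate a genuine obstacle here: the substance lies entirely in Propositions 5.6 and 5.7, and the Corollary is their formal composition. The one point requiring care is the bookkeeping claim that the basis coming from $\D_{H_{0},n}^{+}(M^{+})$ both generates $M^{+}$ freely over $\widetilde{\Lambda}^{+}$ and is pointwise $H_{0}$-fixed --- these being exactly items (2) and (1) of Proposition 5.7 --- so that the hypotheses of Proposition 5.6(ii) are met on the nose. It is also worth noting that the exponent $2c_{1}$ in the target depends only on the Tate--Sen datum, not on $M^{+}$ or $n$, which is what makes the passage from $M^{+}$ to $M$ uniform and immediate.
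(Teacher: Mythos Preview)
Your proposal is correct and follows exactly the same approach as the paper: use Proposition 5.7 to produce an $H_{0}$-fixed $\widetilde{\Lambda}^{+}$-basis of $M^{+}$, then apply Proposition 5.6(ii). The paper's proof is simply the one-line observation ``$M^{+}$ has a basis fixed by $H_{0}$,'' and your write-up is just a careful unpacking of that sentence.
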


\begin{proof}
This follows from Proposition \ref{5.8} since $M^{+}$ has a basis
fixed\textbf{ }by $H_{0}$.
\end{proof}
\begin{lem}
\label{5.11}Let $H_{0}$ be an open subgroup of $H$, $n\geq n(H_{0})$
an integer, $\gamma\in\Gamma_{H}$ an element such that $n(\gamma)\leq n$
and $B\in\mathrm{M}_{l\times d}(\WL^{H_{0}})$ a matrix. Let $d\in\N\cup\{\infty\}$.
Suppose there are $V_{1}\in\GL_{l}(\Lambda_{H_{0},n})$ and $V_{2}\in\GL_{d}(\Lambda_{H_{0},n})$
such that $\val(V_{1}-1),\val(V_{2}-1)>c_{3}$ and $\gamma(B)=V_{1}BV_{2}$.
Then $B\in\mathrm{M}_{l\times d}(\Lambda_{H_{0},n})$.
\end{lem}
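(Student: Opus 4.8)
The plan is to use the trace map $\R_{H_{0},n}$ from axiom (TS2) to split off the part of $B$ lying outside $\Lambda_{H_{0},n}$, and then to use (TS3) to force that part to vanish. Write $\R:=\R_{H_{0},n}$. By (TS2)(2) it is $\Lambda_{H_{0},n}$-linear and restricts to the identity on $\Lambda_{H_{0},n}$, so it is a $\Lambda_{H_{0},n}$-linear projector; setting $\mathrm{X}_{H_{0},n}=(1-\R)(\widetilde{\Lambda}^{H_{0}})$ as in (TS3) we obtain a direct sum decomposition of $\Lambda_{H_{0},n}$-modules $\widetilde{\Lambda}^{H_{0}}=\Lambda_{H_{0},n}\oplus\mathrm{X}_{H_{0},n}$. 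Since $H_{0}$ is normal in $G$ and $\R,\Lambda_{H_{0},n}$ are $H_{0}$-invariant, part (3) of (TS2) gives $\gamma(\Lambda_{H_{0},n})=\Lambda_{H_{0},n}$ and $\gamma\circ\R=\R\circ\gamma$ for $\gamma\in\Gamma_{H_{0}}$, so $\gamma$ preserves $\mathrm{X}_{H_{0},n}$ as well. Applying the decomposition entrywise, write $B=B_{0}+B_{1}$ with $B_{0}\in\mathrm{M}_{l\times d}(\Lambda_{H_{0},n})$ and $B_{1}\in\mathrm{M}_{l\times d}(\mathrm{X}_{H_{0},n})$; it suffices to show $B_{1}=0$.

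Next I would separate the hypothesis $\gamma(B)=V_{1}BV_{2}$ along the two summands. Since $V_{1},V_{2}$ have entries in $\Lambda_{H_{0},n}$ and both $\Lambda_{H_{0},n}$ and $\mathrm{X}_{H_{0},n}$ are $\Lambda_{H_{0},n}$-submodules stable under $\gamma$, the matrices $\gamma(B_{0})$ and $V_{1}B_{0}V_{2}$ are valued in $\Lambda_{H_{0},n}$, while $\gamma(B_{1})$ and $V_{1}B_{1}V_{2}$ are valued in $\mathrm{X}_{H_{0},n}$. By uniqueness of the decomposition this forces $\gamma(B_{1})=V_{1}B_{1}V_{2}$, that is,
\[
(\gamma-1)(B_{1})=(V_{1}-1)B_{1}V_{2}+B_{1}(V_{2}-1),
\]
a matrix still valued in $\mathrm{X}_{H_{0},n}$.

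Finally I would run the valuation estimate. Since $n(\gamma)\leq n$, axiom (TS3) says that $\gamma-1$ acts invertibly on $\mathrm{X}_{H_{0},n}$, hence entrywise on matrices over it, with $\val((\gamma-1)^{-1}(x))\geq\val(x)-c_{3}$. The hypothesis $\val(V_{i}-1)>c_{3}>0$ gives $\val(V_{i})\geq 0$ (the diagonal entries have valuation $0$ and the off-diagonal ones valuation $>0$), so submultiplicativity of $\val$ under matrix multiplication yields $\val\bigl((V_{1}-1)B_{1}V_{2}\bigr)>c_{3}+\val(B_{1})$ and $\val\bigl(B_{1}(V_{2}-1)\bigr)>c_{3}+\val(B_{1})$; hence $\val\bigl((\gamma-1)(B_{1})\bigr)>c_{3}+\val(B_{1})$ and therefore $\val(B_{1})\geq\val\bigl((\gamma-1)(B_{1})\bigr)-c_{3}>\val(B_{1})$. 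This is impossible unless $B_{1}=0$, and then $B=B_{0}\in\mathrm{M}_{l\times d}(\Lambda_{H_{0},n})$, as claimed. The only step requiring genuine care is this last one: one must keep all the inequalities strict — which is precisely why the hypotheses demand $\val(V_{i}-1)>c_{3}$ rather than $\geq c_{3}$ — and observe that multiplication by $V_{1},V_{2}$ cannot decrease valuations; everything else is a formal manipulation of the Tate--Sen data.
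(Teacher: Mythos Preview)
Your argument is correct and is exactly the proof of Lemma 3.2.5 in \cite{BC08} that the paper cites: split $B$ along the $\Lambda_{H_{0},n}$-linear projector $\R_{H_{0},n}$, observe that the $\mathrm{X}_{H_{0},n}$-component still satisfies $\gamma(B_{1})=V_{1}B_{1}V_{2}$, and use the (TS3) bound on $(\gamma-1)^{-1}$ together with $\val(V_{i}-1)>c_{3}$ to force $\val(B_{1})>\val(B_{1})$. One small remark: you invoke ``$H_{0}$ is normal in $G$'' to get $\gamma\circ\R=\R\circ\gamma$, but the lemma as stated only says $H_{0}$ is open in $H$; in the paper's setup this normality is implicit (cf.\ (TS3), where $G_{0}$ is taken normal, so $H_{0}=G_{0}\cap H$ is normal too), and in any case all you actually need is that $\gamma$ normalizes $H_{0}$, which is what makes $\gamma\in\Gamma_{H_{0}}$ meaningful.
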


\begin{proof}
The proof is exactly the same as that of \cite[Lemma 3.2.5]{BC08}.
The only difference between that lemma and the statement appearing
here is that there one further assumes $l=d$ and $B\in\GL_{d}(\WL^{H_{0}})$,
but these assumptions are not used in the proof. In fact, the very
same argument shows the result holds for matrices with $d=\infty$,
as long as we understand that an infinite matrix has coefficients
which tend to zero as the indexes tend to $\infty$. Namely, if $R$
is a ring with valuation and $l,d\in\N\cup\left\{ \infty\right\} $,
let $\mathrm{M}_{l\times d}\left(R\right)$ be the set of matrices
$A=(a_{ij})$ of size $l\times d$ and $a_{ij}\in R$ such that $\val(a_{ij})\rightarrow\infty$
as $i+j\rightarrow\infty$. The argument then works in the same way.
\end{proof}
Using Lemma 5.11, we have the following description of $\D_{H_{0},n}^{+}(M^{+})$.
It explains why $\D_{H_{0},n}^{+}(M^{+})$ is functorial in $M^{+}$.
\begin{prop}
\label{5.12}Given $M^{+}\in\Mod_{\widetilde{\Lambda}^{+}}^{k}(G,G_{0})$,
the module $\D_{H_{0},n}^{+}(M^{+})$ is the union of all finitely
generated $\Lambda_{H_{0},n}^{+}$-submodules of $M^{+}$ which are
$G$-stable, $H_{0}$-fixed and admit a $c_{3}$-fixed set of generators.
\end{prop}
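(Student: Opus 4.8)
The plan is to prove the two inclusions separately. Write $\D := \D_{H_{0},n}^{+}(M^{+})$ and let $\mathcal{N}$ denote the union of all finitely generated $\Lambda_{H_{0},n}^{+}$-submodules of $M^{+}$ that are $G$-stable, $H_{0}$-fixed, and generated by a $c_{3}$-fixed set of generators (recall $n\ge n(G_{0})$, so $\D$ is defined). The inclusion $\D\subseteq\mathcal{N}$ is immediate from Proposition 5.7: by (2) the module $\D$ is finite free over $\Lambda_{H_{0},n}^{+}$, by (1) it is $G$-stable and $H_{0}$-fixed, and by (3) it has a $c_{3}$-fixed basis, which is in particular a $c_{3}$-fixed generating set; hence $\D$ is one of the modules entering into $\mathcal{N}$.

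For the reverse inclusion, fix a finitely generated $\Lambda_{H_{0},n}^{+}$-submodule $N\subseteq M^{+}$ that is $G$-stable, $H_{0}$-fixed, and generated by a $c_{3}$-fixed set $e_{1},\dots,e_{l}$; I will show $N\subseteq\D$. Choose a $c_{3}$-fixed $\Lambda_{H_{0},n}^{+}$-basis $f_{1},\dots,f_{d}$ of $\D$ as in Proposition 5.7(3); by Proposition 5.7(2) this is also a $\widetilde{\Lambda}^{+}$-basis of $M^{+}$. Expand $e_{j}=\sum_{i=1}^{d}B_{ij}f_{i}$ with $B=(B_{ij})\in\mathrm{M}_{d\times l}(\widetilde{\Lambda}^{+})$. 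Since all the $e_{j}$ and the $f_{i}$ are fixed by $H_{0}$, the entries $B_{ij}$ in fact lie in $\widetilde{\Lambda}^{H_{0},+}$.

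Next I would transport the group action through $B$. Each $\gamma\in G_{0}/H_{0}$ acts on $N$ and on $\D$ (as $N$ and $\D$ are $H_{0}$-fixed and $G$-stable), and the $c_{3}$-fixedness hypotheses furnish matrices $V_{1}(\gamma)\in\GL_{d}(\Lambda_{H_{0},n}^{+})$ and $V_{2}(\gamma)\in\GL_{l}(\Lambda_{H_{0},n}^{+})$, each congruent to $1$ modulo elements of valuation $>c_{3}$, with $\gamma(f_{i})=\sum_{m}(V_{1}(\gamma))_{mi}f_{m}$ and $\gamma(e_{j})=\sum_{k}(V_{2}(\gamma))_{kj}e_{k}$; these matrices are units over $\Lambda_{H_{0},n}^{+}$ since $c_{3}>0$, and the usual geometric-series estimate gives $\val(V_{1}(\gamma)^{-1}-1)>c_{3}$ as well. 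Comparing the two resulting expressions for $\gamma(e_{j})$ in the basis $\{f_{m}\}$ of $M^{+}$ yields the identity $V_{1}(\gamma)\,\gamma(B)=B\,V_{2}(\gamma)$, i.e. $\gamma(B)=V_{1}(\gamma)^{-1}BV_{2}(\gamma)$. Choosing some $\gamma\in G_{0}/H_{0}$ with $n(\gamma)\le n$ — which exists because $n\ge n(G_{0})$, exactly as in the proof of Proposition 5.7 — and applying Lemma 5.9, I conclude $B\in\mathrm{M}_{d\times l}(\Lambda_{H_{0},n})$; since the entries of $B$ already lie in $\widetilde{\Lambda}^{+}$, this gives $B\in\mathrm{M}_{d\times l}(\Lambda_{H_{0},n}^{+})$. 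Hence each $e_{j}=\sum_{i}B_{ij}f_{i}$ lies in $\sum_{i}\Lambda_{H_{0},n}^{+}f_{i}=\D$, so $N\subseteq\D$. Combining the two inclusions gives $\D=\mathcal{N}$.

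I do not anticipate a genuine obstacle: the statement is essentially a bookkeeping consequence of Proposition 5.7 and Lemma 5.9, and it also explains the functoriality of $\D_{H_{0},n}^{+}$. The two points deserving care are verifying that the transition matrices $V_{1}(\gamma)$ and $V_{2}(\gamma)$ may be chosen with coefficients in the small ring $\Lambda_{H_{0},n}^{+}$ (this is where one uses that $N$ and $\D$ are $\Lambda_{H_{0},n}^{+}$-modules with $H_{0}$-fixed generators) and checking the hypotheses of Lemma 5.9 — in particular the existence of a $\gamma$ with $n(\gamma)\le n$ and the bound $\val(V_{1}(\gamma)^{-1}-1)>c_{3}$.
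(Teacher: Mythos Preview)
Your proof is correct and follows essentially the same approach as the paper: express the generators of the candidate submodule in the $c_{3}$-fixed basis of $\D_{H_{0},n}^{+}(M^{+})$, derive the relation $\gamma(B)=V_{1}BV_{2}$, and invoke Lemma 5.9 to force the change-of-basis matrix into $\Lambda_{H_{0},n}^{+}$. You are in fact a bit more explicit than the paper about why the matrix acting on the (possibly non-free) submodule is invertible and about the existence of $\gamma$ with $n(\gamma)\le n$; the paper handles invertibility by citing Lemma 3.1.1 of \cite{BC08}, which is the same geometric-series argument you sketch.
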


\begin{proof}
Indeed, if we have a submodule generated by $c_{3}$-fixed elements
$f_{1},...,f_{l}$ and if $e_{1},...,e_{d}$ is a $c_{3}$-fixed basis,
write 
\[
f_{i}=Be_{i}
\]
for some matrix $B\in\mathrm{M}_{l\times d}(\WL^{H_{0},+})$. Then
we have
\[
\Mat_{f_{i}}(\gamma)B=\gamma(B)\Mat_{e_{i}}(\gamma).
\]
Here by $\Mat_{f_{i}}(\gamma)$ we mean any matrix which represents
the action in terms of the $f_{i}$. It is not a priori unique as
the submodule may not be free. Neverthelss, we have $\val(\Mat_{f_{i}}(\gamma)-1)>c_{3}$
by the assumption, and this implies that $\Mat_{f_{i}}(\gamma)$ is
invertible by \cite[Lemma 3.1.2]{BC08}. So by Lemma 5.11
\[
B\in\mathrm{M}_{l\times d}(\Lambda_{H_{0},n})\cap\mathrm{M}_{l\times d}(\widetilde{\Lambda}^{H_{0},+})=\mathrm{M}_{l\times d}(\Lambda_{H_{0},n}^{+}),
\]
hence the submodule generated by the $f_{i}$ is contained in $\D_{H_{0},n}^{+}(M^{+})$. 
\end{proof}
\begin{cor}
\label{5.13}Let $M^{+},N^{+}\in\Mod_{\widetilde{\Lambda}^{+}}^{k}(G,G_{0})$.
Then for $n\geq n(G_{0})$,

(i) There are natural isomorphisms $\D_{H_{0},n}^{+}(M^{+})\otimes_{\Lambda_{H_{0},n}^{+}}\D_{H_{0},n}^{+}(N^{+})\xrightarrow{\sim}\D_{H_{0},n}^{+}(M^{+}\otimes_{\widetilde{\Lambda}^{+}}N^{+})$
and $\D_{H_{0},n}^{+}(M^{+})\oplus\D_{H_{0},n}^{+}(N^{+})\xrightarrow{\sim}\D_{H_{0},n}^{+}(M^{+}\oplus N^{+}).$

(ii) If $M^{+}\subset N^{+}$ then $\D_{H_{0},n}^{+}(M^{+})=\D_{H_{0},n}^{+}(N^{+})\cap M^{+}$.
\end{cor}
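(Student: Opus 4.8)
The plan is to derive both statements by combining the uniqueness in Proposition 5.7 with the explicit description of $\D_{H_{0},n}^{+}$ provided by Proposition 5.11 and the attendant functoriality. For the direct sum in (i), set $P=\D_{H_{0},n}^{+}(M^{+})\oplus\D_{H_{0},n}^{+}(N^{+})$, viewed as a $\Lambda_{H_{0},n}^{+}$-submodule of $M^{+}\oplus N^{+}$. It is finite free over $\Lambda_{H_{0},n}^{+}$, fixed by $H_{0}$ and stable by $G$; the natural map $\widetilde{\Lambda}^{+}\otimes_{\Lambda_{H_{0},n}^{+}}P\to M^{+}\oplus N^{+}$ is an isomorphism by property (2) of Proposition 5.7 applied to $M^{+}$ and to $N^{+}$; and concatenating $c_{3}$-fixed bases of the two summands yields a basis of $P$ in which $\Mat(\gamma)$ is block diagonal with each block satisfying $\val(\cdot-1)>c_{3}$, hence $c_{3}$-fixed. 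The uniqueness in Proposition 5.7 then forces $P=\D_{H_{0},n}^{+}(M^{+}\oplus N^{+})$.

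For the tensor product in (i) I would repeat this with $P=\D_{H_{0},n}^{+}(M^{+})\otimes_{\Lambda_{H_{0},n}^{+}}\D_{H_{0},n}^{+}(N^{+})$, which, being finite free over $\Lambda_{H_{0},n}^{+}$, injects into its base change $\widetilde{\Lambda}^{+}\otimes_{\Lambda_{H_{0},n}^{+}}P$; and the latter is canonically $M^{+}\otimes_{\widetilde{\Lambda}^{+}}N^{+}$, formally from property (2) for $M^{+}$ and for $N^{+}$. So $P$ is a $\Lambda_{H_{0},n}^{+}$-submodule of $M^{+}\otimes_{\widetilde{\Lambda}^{+}}N^{+}$, clearly $G$-stable and $H_{0}$-fixed, with $\widetilde{\Lambda}^{+}\otimes_{\Lambda_{H_{0},n}^{+}}P\xrightarrow{\sim}M^{+}\otimes_{\widetilde{\Lambda}^{+}}N^{+}$; the only computation is that in the basis $(e_{i}\otimes f_{j})$ coming from $c_{3}$-fixed bases $(e_{i}),(f_{j})$ with transition matrices $A(\gamma),B(\gamma)$ for $\gamma\in G_{0}/H_{0}$, the action of $\gamma$ is through the Kronecker product $A(\gamma)\otimes B(\gamma)$, and $A\otimes B-1=(A-1)\otimes B+1\otimes(B-1)$ has valuation $>c_{3}$ because the entries of $A$ and $B$ lie in $\widetilde{\Lambda}^{+}$; thus this basis is $c_{3}$-fixed and uniqueness gives $P=\D_{H_{0},n}^{+}(M^{+}\otimes_{\widetilde{\Lambda}^{+}}N^{+})$. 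Naturality in $M^{+}$ and $N^{+}$ of both isomorphisms is automatic, since every arrow in sight is assembled from structure maps and from the functoriality of $\D_{H_{0},n}^{+}$ recorded before Proposition 5.11.

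For (ii), the inclusion $\D_{H_{0},n}^{+}(M^{+})\subseteq\D_{H_{0},n}^{+}(N^{+})\cap M^{+}$ is immediate: functoriality turns $M^{+}\hookrightarrow N^{+}$ into an injection $\D_{H_{0},n}^{+}(M^{+})\hookrightarrow\D_{H_{0},n}^{+}(N^{+})$ realized inside $M^{+}\hookrightarrow N^{+}$, so its image lies in $\D_{H_{0},n}^{+}(N^{+})\cap M^{+}$. For the reverse inclusion, put $Q=\D_{H_{0},n}^{+}(N^{+})\cap M^{+}$; it is $G$-stable, $H_{0}$-fixed, sits inside both $\D_{H_{0},n}^{+}(N^{+})$ and $M^{+}$, and contains $\D_{H_{0},n}^{+}(M^{+})$. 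Using that $\widetilde{\Lambda}^{+}$ is flat over $\Lambda_{H_{0},n}^{+}$, the inclusion $Q\subseteq\D_{H_{0},n}^{+}(N^{+})$ embeds $\widetilde{\Lambda}^{+}\otimes_{\Lambda_{H_{0},n}^{+}}Q$ into $\widetilde{\Lambda}^{+}\otimes_{\Lambda_{H_{0},n}^{+}}\D_{H_{0},n}^{+}(N^{+})\cong N^{+}$ compatibly with $M^{+}\hookrightarrow N^{+}$, whence $\widetilde{\Lambda}^{+}\otimes_{\Lambda_{H_{0},n}^{+}}Q\to M^{+}$ is injective; it is also surjective, since the composite $\widetilde{\Lambda}^{+}\otimes_{\Lambda_{H_{0},n}^{+}}\D_{H_{0},n}^{+}(M^{+})\to\widetilde{\Lambda}^{+}\otimes_{\Lambda_{H_{0},n}^{+}}Q\to M^{+}$ is the isomorphism of property (2). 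Thus $\widetilde{\Lambda}^{+}\otimes_{\Lambda_{H_{0},n}^{+}}Q\xrightarrow{\sim}M^{+}$; comparing with the analogous isomorphism for $\D_{H_{0},n}^{+}(M^{+})$ forces $\widetilde{\Lambda}^{+}\otimes_{\Lambda_{H_{0},n}^{+}}(Q/\D_{H_{0},n}^{+}(M^{+}))=0$, and faithful flatness of $\widetilde{\Lambda}^{+}$ over $\Lambda_{H_{0},n}^{+}$ gives $Q=\D_{H_{0},n}^{+}(M^{+})$.

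The step I expect to be the main obstacle is precisely the use of the (faithful) flatness of $\widetilde{\Lambda}^{+}$ over $\Lambda_{H_{0},n}^{+}$ in part (ii): it is harmless in the situations of Corollary 5.4, where $\Lambda_{H_{0},n}^{+}$ is a discrete valuation ring (case 1) or a Bézout domain of bounded analytic functions (case 2) and $\widetilde{\Lambda}^{+}$ is a faithfully flat extension, but in the bare generality of (TS1)--(TS3) it is not among the axioms and needs a small separate argument. If one wants to avoid flatness and stay strictly within (TS1)--(TS3), the alternative I would pursue is to argue directly from Proposition 5.11, showing that $\D_{H_{0},n}^{+}(N^{+})\cap M^{+}$ is a finitely generated, $G$-stable, $H_{0}$-fixed, $c_{3}$-generated $\Lambda_{H_{0},n}^{+}$-submodule of $M^{+}$ — the finite generation being the one delicate point, to be handled along the lines of the proof of Proposition 5.11 via Lemmas 5.9 and 5.10.
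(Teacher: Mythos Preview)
Your argument for (i) is correct and is exactly the intended one: the paper leaves the corollary without proof because it follows immediately from the uniqueness in Proposition~5.7 together with the characterization in Proposition~5.11, precisely along the lines you wrote.

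For (ii), the easy inclusion and the functoriality step are fine, but your main argument for the reverse inclusion genuinely relies on faithful flatness of $\widetilde{\Lambda}^{+}$ over $\Lambda_{H_{0},n}^{+}$, which, as you yourself note, is not part of (TS1)--(TS3). Your proposed fallback via Proposition~5.11 also runs into trouble: $Q=\D_{H_{0},n}^{+}(N^{+})\cap M^{+}$ need not be generated by $c_{3}$-fixed elements (an arbitrary element of $\D_{H_{0},n}^{+}(N^{+})$ is a $\Lambda_{H_{0},n}^{+}$-combination of $c_{3}$-fixed basis vectors but is not itself $c_{3}$-fixed in the absence of (TS4)), and finite generation is, as you say, unclear without Noetherianity.

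There is, however, a short argument using only (TS2) that bypasses both issues. Let $(e_{i})$ be a $c_{3}$-fixed basis of $\D_{H_{0},n}^{+}(M^{+})$ and $(f_{j})$ one of $\D_{H_{0},n}^{+}(N^{+})$. By functoriality (equivalently, by Proposition~5.11 applied inside $N^{+}$) one has $e_{i}=\sum_{j}c_{ij}f_{j}$ with $c_{ij}\in\Lambda_{H_{0},n}^{+}$. Given $x\in\D_{H_{0},n}^{+}(N^{+})\cap M^{+}$, write $x=\sum_{i}a_{i}e_{i}$ with $a_{i}\in\widetilde{\Lambda}^{+}$; since $x$ and the $e_{i}$ are $H_{0}$-fixed and the $e_{i}$ form a basis, in fact $a_{i}\in\widetilde{\Lambda}^{H_{0},+}$. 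Writing also $x=\sum_{j}b_{j}f_{j}$ with $b_{j}\in\Lambda_{H_{0},n}^{+}$ and comparing in the $\widetilde{\Lambda}^{+}$-basis $(f_{j})$ of $N^{+}$ yields $b_{j}=\sum_{i}a_{i}c_{ij}$. Now apply the $\Lambda_{H_{0},n}$-linear projector $R_{H_{0},n}$ from (TS2): since $b_{j},c_{ij}\in\Lambda_{H_{0},n}$ one obtains $b_{j}=\sum_{i}R_{H_{0},n}(a_{i})c_{ij}$, whence $\sum_{i}\bigl(a_{i}-R_{H_{0},n}(a_{i})\bigr)e_{i}=0$ in $N$. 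The $e_{i}$ being linearly independent over $\widetilde{\Lambda}$, this forces $a_{i}=R_{H_{0},n}(a_{i})\in\Lambda_{H_{0},n}$, hence $a_{i}\in\Lambda_{H_{0},n}\cap\widetilde{\Lambda}^{+}=\Lambda_{H_{0},n}^{+}$ and $x\in\D_{H_{0},n}^{+}(M^{+})$.
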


\subsection{Descent of $\mathcal{C^{\protect\an}}(G_{0},M)$}

From here on $G$ is a compact $p$-adic Lie group and $G_{0}\subset G$
is a small subgroup, as in $\mathsection2$. We continue to assume
$G$ and $\WL$ satisfy the axioms (TS1), (TS2) and (TS3). The reader
may also to recall our notations and conventions of $\mathsection1.2$
regarding Banach spaces, completions and tensor products.

By Proposition \ref{2.3}, we have for $V_{l}^{+}=V_{l}(G_{0})\cap\mathcal{C}^{\an}(G_{0},\Q_{p})^{+}$
an equality
\[
\widehat{\varinjlim_{l\in\mathbb{N}}V_{l}^{+}}=\mathcal{C}^{\an}(G_{0},\Q_{p})^{+}.
\]
For $M\in\Mod_{\widetilde{\Lambda}^{+}}^{k}(G,G_{0})$ we have
\[
(\varinjlim_{l\in\mathbb{N}}M^{+}\otimes_{\Z_{p}}V_{l}^{+})^{\wedge}\cong M^{+}\widehat{\otimes}_{\Z_{p}}\mathcal{C}^{\an}(G_{0},\Q_{p})^{+}.
\]
Each $M^{+}\otimes_{\Z_{p}}V_{l}^{+}$ is a finite free $\widetilde{\Lambda}^{+}$-semilinear
representation of $G_{0}$. The action of $G_{k}$ on each of the
$V_{l}^{+}$ is trivial mod $p^{k}$ by Lemma \ref{2.2}, and hence
its action on $M^{+}\otimes V_{l}^{+}$ is trivial mod $p^{k}$. So
if $n\geq n(G_{k})$, we may define using Proposition \ref{5.8} a
$\Lambda_{H_{k},n}^{+}$-submodule of $M^{+}\widehat{\otimes}_{\Z_{p}}\mathcal{C}^{\an}(G_{0},\Q_{p})^{+}$
given by
\[
\D_{H_{k},n,\infty}^{+}(M^{+}):=(\varinjlim_{l\in\mathbb{N}}\D_{H_{k},n}^{+}(M^{+}\otimes V_{l}^{+}))^{\wedge}.
\]
The module $\D_{H_{k},n,\infty}^{+}(M^{+})$ is then $G_{0}$-stable
and fixed by $H_{k}$. By Proposition \ref{5.8} we have natural isomorphisms
\[
\widetilde{\Lambda}^{+}\otimes_{\Lambda_{H_{k},n}^{+}}\D_{H_{k},n}^{+}(M^{+}\otimes V_{l}^{+})\xrightarrow{\sim}M^{+}\otimes V_{l}^{+}.
\]
 This shows that $\D_{H_{k},n,\infty}^{+}\left(M^{+}\right)$ is
generated by $c_{3}$-fixed elements which give it the sup norm, and
there is an isometry 
\[
\widetilde{\Lambda}^{+}\widehat{\otimes}_{\Lambda_{H_{k},n}^{+}}\D_{H_{k},n,\infty}^{+}(M^{+})\xrightarrow{\sim}M^{+}\widehat{\otimes}_{\Z_{p}}\mathcal{C}^{\an}(G_{0},\Q_{p})^{+}.
\]

The next Proposition follows from Proposition 5.12.
\begin{prop}
\label{5.14}A finitely generated $\Lambda_{H_{k},n}^{+}$-submodule
of $M^{+}\widehat{\otimes}_{\Z_{p}}\mathcal{C}^{\an}(G_{0},\Q_{p})^{+}$
which is stable by $G_{0}$, fixed by $H_{k}$ and is generated by
a $c_{3}$-fixed set of elements is contained in $\D_{H_{k},n,\infty}^{+}(M^{+})$.
\end{prop}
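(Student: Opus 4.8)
The plan is to mimic the argument of Proposition 5.11 essentially verbatim, transporting it through the colimit $M^{+}\widehat{\otimes}_{\Z_{p}}\mathcal{C}^{\an}(G_{0},\Q_{p})^{+}=\widehat{\varinjlim_{l}M^{+}\otimes_{\Z_{p}}V_{l}^{+}}$. The key observation is that finitely generated submodules and $c_{3}$-fixed finite generating sets interact well with the filtered colimit: if $N$ is a finitely generated $\Lambda_{H_{k},n}^{+}$-submodule of $M^{+}\widehat{\otimes}_{\Z_{p}}\mathcal{C}^{\an}(G_{0},\Q_{p})^{+}$, generated by $c_{3}$-fixed elements $f_{1},\dots,f_{l}$ which are $G_{0}$-stable and $H_{k}$-fixed, then each $f_{i}$ lies (up to an arbitrarily small error, which we can absorb) in some $M^{+}\otimes_{\Z_{p}}V_{l'}^{+}$; taking a common index and enlarging slightly so as to remain $G_{0}$-stable, we reduce to the case where $N$ sits inside a single finite free term $M^{+}\otimes_{\Z_{p}}V_{l'}^{+}\in\Mod_{\widetilde{\Lambda}^{+}}^{k}(G_{0},G_{k})$.

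Once inside a single finite free term, I would simply invoke Proposition 5.11 (applied to the group $G_{0}$, its subgroup $H_{k}$, and the representation $M^{+}\otimes_{\Z_{p}}V_{l'}^{+}$, which by Lemma 2.2 has $G_{k}$ acting trivially mod $p^{k}$, hence belongs to $\Mod_{\widetilde{\Lambda}^{+}}^{k}$): it gives that $N\subset\D_{H_{k},n}^{+}(M^{+}\otimes V_{l'}^{+})$. Since $\D_{H_{k},n}^{+}(M^{+}\otimes V_{l'}^{+})\subset\D_{H_{k},n,\infty}^{+}(M^{+})$ by the very definition of the latter as $\widehat{\varinjlim_{l}\D_{H_{k},n}^{+}(M^{+}\otimes V_{l}^{+})}$ together with Corollary 5.12(ii) (compatibility of $\D_{H_{0},n}^{+}$ with inclusions along the transition maps $V_{l'}^{+}\hookrightarrow V_{l''}^{+}$), we conclude $N\subset\D_{H_{k},n,\infty}^{+}(M^{+})$, as desired. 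Alternatively, and perhaps more cleanly, one runs the proof of Proposition 5.11 directly: writing $f_{i}=Be_{i}$ for $B\in\mathrm{M}_{l\times\infty}(\widetilde{\Lambda}^{H_{k},+})$ in terms of a $c_{3}$-fixed topological basis $\{e_{j}\}$ of $M^{+}\widehat{\otimes}_{\Z_{p}}\mathcal{C}^{\an}(G_{0})^{+}$ (which exists by the isometry displayed before Proposition 5.13), the relation $\Mat_{f_{i}}(\gamma)B=\gamma(B)\Mat_{e_{j}}(\gamma)$ holds, both conjugating matrices are $1+(\text{val}>c_{3})$ hence invertible by Lemma 3.1.1 of \cite{BC08}, and Lemma 5.10 (the $d=\infty$ version of Lemma 5.9) forces $B\in\mathrm{M}_{l\times\infty}(\Lambda_{H_{k},n}^{+})$, so $N$ is generated inside $\D_{H_{k},n,\infty}^{+}(M^{+})$.

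The main technical point to be careful about — the "hard part", such as it is — is the passage to the colimit: the generators $f_{i}$ of $N$ need not literally lie in any finite stage $M^{+}\otimes V_{l}^{+}$ but only in the $p$-adic completion of the union, so one must either (a) perturb them by an element of small valuation to land in a finite stage while preserving the $c_{3}$-fixed, $G_{0}$-stable, $H_{k}$-fixed conditions, or (b) run the infinite-matrix argument of Lemma 5.10 directly against the topological basis $\{e_{j}\}_{j}$ coming from the tensor-product decomposition, never leaving the completed module. Option (b) is the safer route and is exactly what Lemma 5.10 was recorded for; then the proof is word-for-word that of Proposition 5.11 with "finite basis" replaced by "topological basis with coefficients tending to zero", the only extra bookkeeping being to note that $\gamma$ preserves the valuation on $\widetilde{\Lambda}^{H_{k},+}$ (so that $\gamma(B)$ still has entries tending to zero) and that the matrix products converge. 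Nothing genuinely new is needed beyond what is already in §5.3.
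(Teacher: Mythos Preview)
Your option (b) is correct and is exactly the paper's approach: the paper says only that the proposition ``is proved in the same way as Proposition 5.11'', and Lemma 5.10 (the $d=\infty$ version of Lemma 5.9) was recorded expressly for this step. The $c_{3}$-fixed topological generating set you invoke is precisely what the isometry displayed just before Proposition 5.13 provides.

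Your option (a), however, has a gap you underplay. Perturbing the $f_{i}$ by small elements to land in a finite stage $M^{+}\otimes V_{l'}^{+}$ need not preserve the $G_{0}$-stability of the $\Lambda_{H_{k},n}^{+}$-submodule they generate: stability is a condition on the entire submodule, not on the individual generators, and a small perturbation of generators can change the generated submodule in a way that is not small. There is no obvious mechanism to ``absorb'' this. So the reduction to a single finite stage does not go through cleanly, and you are right to prefer (b).
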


In particular, we have the function $\log$ defined, by abuse of notation
as the composition of 
\[
\chi:G_{0}\twoheadrightarrow G_{0}/H_{0}\hookrightarrow\Z_{p}^{\times}
\]
and $\log:\Z_{p}^{\times}\rightarrow\Q_{p}$. It lies in $\mathcal{C}^{\an}(G_{0},\Q_{p})^{+}$.
Note that for $g\in G_{0}$, we have
\[
g(\log)=\log+\log(g^{-1})=\log-\log(g).
\]

\begin{lem}
\label{5.15}The elements $1$ and $\log$ of $\widetilde{\Lambda}^{+}\widehat{\otimes}\mathcal{C}^{\an}(G_{0},\Q_{p})^{+}$
lie in $\D_{H_{k},n,\infty}^{+}(\widetilde{\Lambda}^{+})$.
\end{lem}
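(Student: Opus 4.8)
The plan is to apply Proposition 5.13 with $M^{+}=\widetilde{\Lambda}^{+}$ to a single, explicitly chosen finitely generated $\Lambda_{H_{k},n}^{+}$-submodule of $\widetilde{\Lambda}^{+}\widehat{\otimes}_{\Z_{p}}\mathcal{C}^{\an}(G_{0},\Q_{p})^{+}$ containing both $1$ and $\log$. The natural candidate is
\[
N:=\Lambda_{H_{k},n}^{+}\cdot1+\Lambda_{H_{k},n}^{+}\cdot\log,
\]
which makes sense because $1\in\Lambda_{H_{k},n}^{+}$ and $\log\in\mathcal{C}^{\an}(G_{0},\Q_{p})^{+}$. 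It then remains to verify the three hypotheses of Proposition 5.13: that $N$ is stable by $G_{0}$, fixed by $H_{k}$, and generated by a set of elements which is $c_{3}$-fixed for the action of $G_{k}/H_{k}$.

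Stability and invariance are formal. First observe that $H_{k}=H\cap G_{k}$ is normal in $G_{0}$, being the intersection of the normal subgroups $H\cap G_{0}$ and $G_{k}=G_{0}^{p^{k}}$; hence axiom (TS2)(3) gives $g(\Lambda_{H_{k},n})=\Lambda_{H_{k},n}$, and so $g(\Lambda_{H_{k},n}^{+})=\Lambda_{H_{k},n}^{+}$, for every $g\in G_{0}$. Using $g(1)=1$ and $g(\log)=\log+\log(g)\cdot1$ together with $\log(g)\in p\Z_{p}\subseteq\Lambda_{H_{k},n}^{+}$, one gets $g(N)\subseteq N$. For $H_{k}$-invariance, any $h\in H_{k}\subseteq H_{0}$ satisfies $\chi(h)=1$, hence $\log(h)=0$ and $h(\log)=\log$; combined with $\Lambda_{H_{k},n}\subseteq\widetilde{\Lambda}^{H_{k}}$ this shows that $H_{k}$ acts trivially on $N$.

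The substantive point is the $c_{3}$-fixedness of the generating set $\{1,\log\}$, and this is the only place where the hypothesis $k>c_{1}+2c_{2}+2c_{3}$ enters. In the ordered tuple $(1,\log)$ an element $\gamma\in G_{k}/H_{k}$ acts by a unipotent matrix whose only nonzero off-diagonal entry equals $\log(\gamma)$, so $\val(\Mat(\gamma)-1)=\val(\log(\gamma))$. Now $G_{0}$ is small, hence pro-$p$, so $\chi(G_{0})\subseteq1+p\Z_{p}$ and therefore $\chi(G_{k})=\chi(G_{0})^{p^{k}}\subseteq1+p^{k+1}\Z_{p}$, which forces $\val(\log(\gamma))\geq k+1$ for every $\gamma\in G_{k}/H_{k}$. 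Since $k>c_{1}+2c_{2}+2c_{3}>c_{3}$, we conclude $\val(\Mat(\gamma)-1)>c_{3}$, as required. Proposition 5.13 now gives $N\subseteq\D_{H_{k},n,\infty}^{+}(\widetilde{\Lambda}^{+})$, and in particular $1,\log\in\D_{H_{k},n,\infty}^{+}(\widetilde{\Lambda}^{+})$.

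I expect the main (and essentially the only) difficulty to be bookkeeping rather than a genuine obstacle: one must use the correct group at each step — $G_{0}$ for the stability of the coefficient ring $\Lambda_{H_{k},n}^{+}$ (via the normality $H_{k}\trianglelefteq G_{0}$ and (TS2)(3)), $H_{k}$ for the pointwise invariance, and $G_{k}/H_{k}$ rather than $G_{0}/H_{0}$ for the $c_{3}$-fixedness estimate. It is precisely the latter choice that makes the argument work, since only on $G_{k}$ does $\log$ take values of valuation large enough to beat $c_{3}$, and this in turn relies on the largeness of $k$ built into the hypotheses of Theorem 5.1.
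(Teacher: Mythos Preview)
Your proof is correct and follows essentially the same route as the paper's: both take the $\Lambda_{H_{k},n}^{+}$-submodule generated by $1$ and $\log$, verify the three hypotheses of Proposition~5.13, and use the inequality $k>c_{1}+2c_{2}+2c_{3}>c_{3}$ to get the $c_{3}$-fixedness of $\log$ under $G_{k}/H_{k}$. You supply more detail on the $G_{0}$-stability of the coefficient ring via normality of $H_{k}$ and (TS2)(3), which the paper simply asserts; otherwise the arguments are the same (your valuation bound $k+1$ versus the paper's $k$ is immaterial).
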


\begin{proof}
The $\Lambda_{H_{k},n}^{+}$-submodule generated by $1$ and $\log$
in $\widetilde{\Lambda}^{+}\widehat{\otimes}\mathcal{C}^{\an}(G_{0},\Q_{p})^{+}$
is stable under the $G_{0}$ action and fixed by $H_{k}$. Furthermore,
we claim the elements $1$ and $\log$ are $c_{3}$-fixed by the action
of $G_{k}/H_{k}$. This is clear for 1. To show this for $\log$,
notice that if $g^{p^{k}}\in G_{k}/H_{k}$ (recalling that $G_{k}=G_{0}^{p^{k}})$
then 
\[
\val(g^{p^{k}}-1)(\log)\geq k>c_{1}+2c_{2}+2c_{3}>c_{3}.
\]

We conclude by Proposition \ref{5.14}.
\end{proof}
\begin{prop}
\label{5.16}(i) $\D_{H_{k},n,\infty}^{+}(\widetilde{\Lambda}^{+})$
is a subring of $\widetilde{\Lambda}^{+}\widehat{\otimes}\mathcal{C}^{\an}(G_{0},\Q_{p})^{+}$.

(ii) The module structure of $M^{+}\widehat{\otimes}\mathcal{C}^{\an}(G_{0},\Q_{p})^{+}$
over $\widetilde{\Lambda}^{+}\widehat{\otimes}\mathcal{C}^{\an}(G_{0},\Q_{p})^{+}$
restricts to a module structure of $\D_{H_{k},n,\infty}^{+}(M^{+})$
over $\D_{H_{k},n,\infty}^{+}(\widetilde{\Lambda}^{+})$.
\end{prop}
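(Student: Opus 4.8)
The plan is to reduce both parts to an inclusion at the finite levels $V_{l}^{+}$ out of which $\D_{H_{k},n,\infty}^{+}$ is built, and then pass to the limit. Part (i) asks that $\D_{H_{k},n,\infty}^{+}(\widetilde{\Lambda}^{+})$ be closed under multiplication (it already contains $1$ by Lemma 5.14), and part (ii) asks that $\D_{H_{k},n,\infty}^{+}(\widetilde{\Lambda}^{+})\cdot\D_{H_{k},n,\infty}^{+}(M^{+})\subseteq\D_{H_{k},n,\infty}^{+}(M^{+})$. From the isometry
\[
\widetilde{\Lambda}^{+}\widehat{\otimes}_{\Lambda_{H_{k},n}^{+}}\D_{H_{k},n,\infty}^{+}(M^{+})\xrightarrow{\sim}M^{+}\widehat{\otimes}_{\Z_{p}}\mathcal{C}^{\an}(G_{0},\Q_{p})^{+}
\]
recalled just above, $\D_{H_{k},n,\infty}^{+}(M^{+})$ is $p$-adically complete, hence closed in $M^{+}\widehat{\otimes}_{\Z_{p}}\mathcal{C}^{\an}(G_{0},\Q_{p})^{+}$; and the module action of $\widetilde{\Lambda}^{+}\widehat{\otimes}_{\Z_{p}}\mathcal{C}^{\an}(G_{0},\Q_{p})^{+}$ is contractive, hence continuous. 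Since $\bigcup_{l}\D_{H_{k},n}^{+}(\widetilde{\Lambda}^{+}\otimes V_{l}^{+})$ and $\bigcup_{l}\D_{H_{k},n}^{+}(M^{+}\otimes V_{l}^{+})$ are dense in $\D_{H_{k},n,\infty}^{+}(\widetilde{\Lambda}^{+})$ and $\D_{H_{k},n,\infty}^{+}(M^{+})$, it suffices to prove, for each $l$, that $x\cdot y\in\D_{H_{k},n,\infty}^{+}(M^{+})$ whenever $x\in\D_{H_{k},n}^{+}(\widetilde{\Lambda}^{+}\otimes V_{l}^{+})$ and $y\in\D_{H_{k},n}^{+}(M^{+}\otimes V_{l}^{+})$, and likewise with $M^{+}$ replaced by $\widetilde{\Lambda}^{+}$.

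For this finite-level statement I would invoke Corollary 5.12. Multiplication of analytic functions provides, using $V_{l}\cdot V_{l}\subseteq V_{2l}$ (Proposition 2.3) and the fact that a product of power-bounded functions is power-bounded, a $\Z_{p}$-linear, $G_{0}$-equivariant map $V_{l}^{+}\otimes_{\Z_{p}}V_{l}^{+}\to V_{2l}^{+}$. Base-changing along $\Z_{p}\to\widetilde{\Lambda}^{+}$ and tensoring with $M^{+}$ over $\widetilde{\Lambda}^{+}$ yields a morphism
\[
(\widetilde{\Lambda}^{+}\otimes V_{l}^{+})\otimes_{\widetilde{\Lambda}^{+}}(M^{+}\otimes V_{l}^{+})\;=\;M^{+}\otimes_{\Z_{p}}V_{l}^{+}\otimes_{\Z_{p}}V_{l}^{+}\;\longrightarrow\;M^{+}\otimes V_{2l}^{+}
\]
in $\Mod_{\widetilde{\Lambda}^{+}}^{k}(G,G_{k})$ — the membership in that category using that $G_{k}$ acts trivially mod $p^{k}$ on every $V_{m}^{+}$ by Lemma 2.2. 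By Corollary 5.12(i), $\D_{H_{k},n}^{+}$ of the left-hand side is identified with $\D_{H_{k},n}^{+}(\widetilde{\Lambda}^{+}\otimes V_{l}^{+})\otimes_{\Lambda_{H_{k},n}^{+}}\D_{H_{k},n}^{+}(M^{+}\otimes V_{l}^{+})$, and unwinding the identifications the displayed morphism restricts on it to $(x,y)\mapsto x\cdot y$, the module multiplication in $M^{+}\widehat{\otimes}_{\Z_{p}}\mathcal{C}^{\an}(G_{0},\Q_{p})^{+}$. Since $\D_{H_{k},n}^{+}$ is functorial in the representation (Proposition 5.11), this morphism carries $\D_{H_{k},n}^{+}$ of the source into $\D_{H_{k},n}^{+}(M^{+}\otimes V_{2l}^{+})\subseteq\D_{H_{k},n,\infty}^{+}(M^{+})$, which is exactly the finite-level claim; the case $M^{+}=\widetilde{\Lambda}^{+}$ is identical.

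Passing to the completed colimit via the density and continuity observations of the first paragraph then yields (ii), and (i) follows by taking $M^{+}=\widetilde{\Lambda}^{+}$ and recalling $1\in\D_{H_{k},n,\infty}^{+}(\widetilde{\Lambda}^{+})$. I expect the only inputs requiring real work to be Corollary 5.12 and the functoriality of $\D_{H_{k},n}^{+}$; both rest, as in the proof of Proposition 5.11, on recognizing the relevant a priori non-free $\Lambda_{H_{k},n}^{+}$-submodules via Lemma 3.1.1 of \cite{BC08}, and since they are already available the remaining argument is essentially formal. (Alternatively, one can bypass Corollary 5.12 and argue directly: choose $c_{3}$-fixed $\Lambda_{H_{k},n}^{+}$-bases $\{e_{i}\}$, $\{f_{j}\}$ of the two finite-level modules via Proposition 5.7, check that the $\Lambda_{H_{k},n}^{+}$-module spanned by the products $e_{i}f_{j}$ inside $M^{+}\otimes V_{2l}^{+}$ is $G_{0}$-stable, $H_{k}$-fixed and generated by a $c_{3}$-fixed set — the last via the estimate $A\otimes C-1=B\otimes1+1\otimes B'+B\otimes B'$ for the Kronecker product of the action matrices $A=1+B$, $C=1+B'$ — and apply Proposition 5.13.)
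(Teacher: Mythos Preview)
Your proof is correct and follows essentially the same approach as the paper. The paper's argument is very terse---it simply notes that $1\in\D_{H_{k},n,\infty}^{+}(\widetilde{\Lambda}^{+})$ by Lemma 5.14, then invokes ``functoriality and Proposition 5.13'' applied to the multiplication maps $\widetilde{\Lambda}^{+}\otimes\widetilde{\Lambda}^{+}\to\widetilde{\Lambda}^{+}$ and $\widetilde{\Lambda}^{+}\otimes M^{+}\to M^{+}$---and you have correctly unpacked what this means: reduce to finite $V_{l}^{+}$-levels by density and continuity, use $V_{l}^{+}\cdot V_{l}^{+}\subset V_{2l}^{+}$ together with the tensor compatibility of $\D_{H_{k},n}^{+}$ (Corollary 5.12) and its functoriality (Proposition 5.11), then pass to the completion. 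Your alternative route via Proposition 5.13 and the Kronecker-product estimate is also exactly in the spirit of how the paper's functoriality statements are proved.
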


\begin{proof}
$\D_{H_{k},n,\infty}^{+}(\widetilde{\Lambda}^{+})$ contains $1$
by Proposition \ref{5.14}. Next, one has the ring and module structure
maps 
\[
\widetilde{\Lambda}^{+}\otimes\widetilde{\Lambda}^{+}\rightarrow\widetilde{\Lambda}^{+},\widetilde{\Lambda}^{+}\otimes M^{+}\rightarrow M^{+}.
\]
Applying Proposition 5.12, taking the inductive limit and then taking
completions, we get natural maps 
\[
\D_{H_{k},n,\infty}^{+}(\widetilde{\Lambda}^{+})\otimes\D_{H_{k},n,\infty}^{+}(\widetilde{\Lambda}^{+})\rightarrow\D_{H_{k},n,\infty}^{+}(\widetilde{\Lambda}^{+})
\]
and
\[
\D_{H_{k},n,\infty}^{+}(\widetilde{\Lambda}^{+})\otimes\D_{H_{k},n,\infty}^{+}(M^{+})\rightarrow\D_{H_{k},n,\infty}^{+}(M^{+}),
\]
giving the desired ring and module structures.
\end{proof}

\subsection{Computation of higher locally analytic vectors I}

Let $M^{+}\in\Mod_{\widetilde{\Lambda}^{+}}^{k}(G,G_{0})$ and $M=M^{+}\otimes_{\WL^{+}}\WL$.
In this subsection we shall do a first simplifcation towards the computation
of of the groups $\R_{G\hla}^{i}(M)$ for $i\geq1$. 

If $G_{0}$ is any open subgroup of $G$, we have $\R_{G\hla}^{i}(M)=\R_{G_{0}\hla}^{i}(M)$
so that if $G_{n}=G_{0}^{p^{n}}$we have
\[
\R_{G\hla}^{i}(M)=\varinjlim_{n}\H^{i}(G_{n},M\widehat{\otimes}_{\Q_{p}}\mathcal{C}^{\an}(G_{n},\Q_{p})).
\]
Upon possibly making $G_{0}$ smaller, we may assume that $G_{0}$
is small and that $\chi:G_{0}/H_{0}\rightarrow\Z_{p}^{\times}$ has
image isomorphic to $\Z_{p}$. Write $\Gamma_{n}=G_{n}/H_{n}$.
\begin{lem}
\label{5.17}For $i\geq1$, 
\[
\H^{i}(G_{n},M\widehat{\otimes}_{\Q_{p}}\mathcal{C}^{\an}(G_{n},\Q_{p}))\cong\H^{i}(\Gamma_{n+k},(M\widehat{\otimes}_{\Q_{p}}\mathcal{C}^{\an}(G_{n},\Q_{p}))^{H_{n+k}})
\]
\end{lem}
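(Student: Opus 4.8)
Write $W_{n}:=M\,\widehat{\otimes}_{\Q_{p}}\mathcal{C}^{\an}(G_{n},\Q_{p})$, with natural lattice $W_{n}^{+}:=M^{+}\,\widehat{\otimes}_{\Z_{p}}\mathcal{C}^{\an}(G_{n},\Q_{p})^{+}$, so that $W_{n}=W_{n}^{+}[1/p]$. The plan is to reduce the statement, via two applications of the Hochschild--Serre spectral sequence, to the vanishing $\H^{q}(H_{n+k},W_{n})=0$ for $q\ge1$. This vanishing is where essentially all the content lies, and it is also what forces the index $n+k$ (rather than $n$) to appear.

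To prove $\H^{q}(H_{n+k},W_{n})=0$ for $q\ge1$, I would first use Proposition 2.3 for the small group $G_{n}$ to identify $W_{n}^{+}$ with the $p$-adic completion of the increasing union $\bigcup_{l}\bigl(M^{+}\otimes_{\Z_{p}}V_{l}^{+}(G_{n})\bigr)$, each term being a finite free $\widetilde{\Lambda}^{+}$-semilinear representation of $G_{n}$. By Lemma 2.2 the subgroup $G_{n+k}=(G_{n})^{p^{k}}$ acts trivially on $\mathcal{C}^{\an}(G_{n},\Q_{p})^{+}/p^{k}$, and since $G_{n+k}\subset G_{0}$ and $M^{+}\in\Mod_{\widetilde{\Lambda}^{+}}^{k}(G,G_{0})$, each $M^{+}\otimes_{\Z_{p}}V_{l}^{+}(G_{n})$, regarded as a $G_{n}$-representation, lies in $\Mod_{\widetilde{\Lambda}^{+}}^{k}(G_{n},G_{n+k})$. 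Then Proposition 5.7, applied with ambient group $G_{n}$, with $G_{n+k}$ in the role of $G_{0}$, and with $m$ sufficiently large, produces for each $l$ a finite free $\Lambda_{H_{n+k},m}^{+}$-module $\D_{H_{n+k},m}^{+}(M^{+}\otimes_{\Z_{p}}V_{l}^{+}(G_{n}))$, fixed by $H_{n+k}$, whose base change to $\widetilde{\Lambda}^{+}$ recovers $M^{+}\otimes_{\Z_{p}}V_{l}^{+}(G_{n})$; in particular the latter has an $H_{n+k}$-fixed $\widetilde{\Lambda}^{+}$-basis. Hence $W_{n}^{+}$ is of the shape considered in Proposition 5.6(iii), which gives that $\H^{q}(H_{n+k},W_{n}^{+})\to\H^{q}(H_{n+k},p^{-2c_{1}}W_{n}^{+})$ is zero for $q\ge1$, and therefore $\H^{q}(H_{n+k},W_{n})=0$ for $q\ge1$.

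Granting this, I would run Hochschild--Serre twice. First, $H_{n+k}=H\cap G_{n+k}$ is normal in $G_{n}$, since $H=\ker\chi$ is normal in $G$ and $G_{n+k}$ is normal in $G_{n}$ (see \cite[\S26]{Sch11}); the spectral sequence for $1\to H_{n+k}\to G_{n}\to G_{n}/H_{n+k}\to1$ with coefficients $W_{n}$ then degenerates by the previous paragraph, giving $\H^{i}(G_{n},W_{n})\cong\H^{i}(G_{n}/H_{n+k},W_{n}^{H_{n+k}})$ for all $i$. Second, inside $G_{n}/H_{n+k}$ the subgroup $\Gamma_{n+k}=G_{n+k}/H_{n+k}$ is normal, and in fact central: $[G_{n},G_{n+k}]\subset G_{n+k}$ and $[G_{n},G_{n+k}]\subset\ker\chi$ (the target $\Z_{p}^{\times}$ being abelian), so $[G_{n},G_{n+k}]\subset H_{n+k}$; the quotient $G_{n}/G_{n+k}$ is finite of $p$-power order. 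Applying Hochschild--Serre to $1\to\Gamma_{n+k}\to G_{n}/H_{n+k}\to G_{n}/G_{n+k}\to1$ with the $\Q_{p}$-Banach coefficients $W_{n}^{H_{n+k}}$, the terms $\H^{p}(G_{n}/G_{n+k},-)$ vanish for $p\ge1$, so the spectral sequence collapses and identifies $\H^{i}(G_{n}/H_{n+k},W_{n}^{H_{n+k}})$ with $\H^{i}(\Gamma_{n+k},W_{n}^{H_{n+k}})$ for $i\ge1$ (for $i\ge2$ both groups vanish, since $\Gamma_{n+k}\cong\Z_{p}$ has continuous cohomology vanishing in degrees $\ge2$ on $\Q_{p}$-Banach modules). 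Composing the two isomorphisms gives the lemma.

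The main obstacle is the vanishing $\H^{q}(H_{n+k},W_{n})=0$ for $q\ge1$: it bundles together the descent machinery of \S5.3 and \S5.4 (the existence and functoriality of the modules $\D_{H_{n+k},m}^{+}$) with the cohomological input of Proposition 5.6(iii), and it is exactly the index shift $n\rightsquigarrow n+k$ imposed by Lemma 2.2 that makes $H_{n+k}$ and $\Gamma_{n+k}$ --- rather than $H_{n}$ and $\Gamma_{n}$ --- the relevant groups. The two Hochschild--Serre reductions afterwards are routine bookkeeping.
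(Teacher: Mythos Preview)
Your approach is exactly the paper's: first Hochschild--Serre for $H_{n+k}\triangleleft G_{n}$, using Proposition~5.6(iii) (with the $H_{n+k}$-fixed bases on each $M^{+}\otimes V_{l}^{+}$ supplied, as you correctly spell out, by Lemma~2.2 and Proposition~5.7), then Hochschild--Serre for $\Gamma_{n+k}\triangleleft G_{n}/H_{n+k}$ with finite quotient $G_{n}/G_{n+k}$ and rational coefficients. Your write-up is in fact more careful than the paper's on both the normality/centrality of $\Gamma_{n+k}$ and on why the $H_{n+k}$-cohomology vanishes.

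One small point, present in the paper as well: the second spectral sequence degenerates to
\[
\H^{i}(G_{n}/H_{n+k},W_{n}^{H_{n+k}})\;\cong\;\H^{0}\!\bigl(G_{n}/G_{n+k},\,\H^{i}(\Gamma_{n+k},W_{n}^{H_{n+k}})\bigr),
\]
i.e.\ to the $G_{n}/G_{n+k}$-invariants of $\H^{i}(\Gamma_{n+k},W_{n}^{H_{n+k}})$, not \emph{a priori} to the full group. For $i\ge 2$ this is moot since both sides vanish; for $i=1$ you only get an injection of $\H^{1}(G_{n},W_{n})$ into $\H^{1}(\Gamma_{n+k},W_{n}^{H_{n+k}})$. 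This is entirely harmless for everything that follows (Corollary~5.17 uses only $i\ge 2$, and the later vanishing argument in \S5.6--\S5.7 shows that the colimit of the larger groups is zero, hence so is the colimit of the subgroups), but strictly speaking neither your argument nor the paper's establishes the displayed isomorphism for $i=1$ without the extra invariants.
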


\begin{proof}
By the Hochshild-Serre spectral sequence and the vanishing of $H_{n+k}$
cohomologies in (iii) of Proposition \ref{5.8} (taking the inductive
system $M_{k+k'}^{+}=M^{+}\otimes V_{k+k'}^{+}$ for $k'\geq0$),
we have
\[
\H^{i}(G_{n},M\widehat{\otimes}_{\Q_{p}}\mathcal{C}^{\an}(G_{n},\Q_{p}))\cong\H^{i}(G_{n}/H_{n+k},(M\widehat{\otimes}_{\Q_{p}}\mathcal{C}^{\an}(G_{n},\Q_{p}))^{H_{n+k}}).
\]
Now the inclusion $\Gamma_{n+k}\hookrightarrow G_{n}/H_{n+k}$ induces
an isomorphism 
\[
\H^{i}(G_{n}/H_{n+k},(M\widehat{\otimes}_{\Q_{p}}\mathcal{C}^{\an}(G_{n},\Q_{p}))^{H_{n+k}})\cong\H^{i}(\Gamma_{n+k},(M\widehat{\otimes}_{\Q_{p}}\mathcal{C}^{\an}(G_{n},\Q_{p}))^{H_{n+k}}).
\]
This again follows from Hochshild-Serre, once we notice all the higher
cohomologies of $G_{n}/G_{n+k}$ appearing vanish. This is because
$G_{n}/G_{n+k}$ is finite and the coefficients are rational.
\end{proof}
\begin{cor}
\label{5.18}$\R_{G_{n}\han}^{i}(M)=0$ for $i\geq2$ and $n\geq0$.
\end{cor}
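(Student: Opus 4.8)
The plan is to feed the preceding lemma into the elementary fact that the residual groups $\Gamma_m$ appearing there have cohomological dimension one. Recall that, having shrunk $G_0$ if necessary, we arranged that $\chi\colon G_0/H_0\to\Z_p^{\times}$ has image isomorphic to $\Z_p$. First I would observe that for every $m\geq0$ the map $\chi$ carries $\Gamma_m=G_m/H_m=G_m/(G_m\cap H)$ isomorphically onto $\chi(G_m)$, which has finite index in $\chi(G_0)\cong\Z_p$ and is therefore itself isomorphic to $\Z_p$. Consequently, for any $\Q_p$-Banach space $N$ carrying a continuous $\Gamma_m$-action and any topological generator $\gamma$ of $\Gamma_m$, the continuous cohomology $\H^{\ast}(\Gamma_m,N)$ is computed by the two-term Koszul complex $[\,N\xrightarrow{\gamma-1}N\,]$, so $\H^i(\Gamma_m,N)=0$ for all $i\geq2$.

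Next I would fix $i\geq2$ and $n\geq0$, and for each $m\geq n$ consider
\[
N_m:=\bigl(M\widehat{\otimes}_{\Q_p}\mathcal{C}^{\an}(G_m,\Q_p)\bigr)^{H_{m+k}},
\]
a closed subspace of the $\Q_p$-Banach space $M\widehat{\otimes}_{\Q_p}\mathcal{C}^{\an}(G_m,\Q_p)$, hence again a $\Q_p$-Banach space, with a continuous action of $\Gamma_{m+k}$. Lemma 5.16, applied with $m$ in place of $n$, gives an isomorphism
\[
\H^i\bigl(G_m,M\widehat{\otimes}_{\Q_p}\mathcal{C}^{\an}(G_m,\Q_p)\bigr)\cong\H^i(\Gamma_{m+k},N_m),
\]
whose right-hand side vanishes by the first paragraph, since $i\geq2$ and $\Gamma_{m+k}\cong\Z_p$. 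Since the small open subgroups $G_m=G_0^{p^m}$ with $m\geq n$ form a fundamental system of neighbourhoods of the identity in $G_n$, passing to the colimit yields
\[
\R_{G_n\hla}^i(M)=\varinjlim_{m\geq n}\H^i\bigl(G_m,M\widehat{\otimes}_{\Q_p}\mathcal{C}^{\an}(G_m,\Q_p)\bigr)=0
\]
for all $i\geq2$ and all $n\geq0$, as desired.

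I do not expect a genuine obstacle here: the substance of the statement is already carried by the decompletion performed in Lemma 5.16, which replaces the large coefficient ring by its $H_{m+k}$-invariants at the cost of passing from $G_m$ to the smaller quotient $\Gamma_{m+k}$; the corollary only records that this residual quotient is an open subgroup of $\Z_p$ and so has cohomological dimension one. The single point meriting a little attention is that the cohomology throughout is taken in the continuous sense and that $N_m$ be of a type for which the Koszul-complex computation of $\H^{\ast}(\Z_p,-)$ applies — this holds here simply because $N_m$, being a closed subspace of a completed tensor product of two $\Q_p$-Banach spaces, is itself a $\Q_p$-Banach space.
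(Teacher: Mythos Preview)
Your argument is correct and follows the paper's approach: apply Lemma~5.16 to reduce to $\H^{i}(\Gamma_{m+k},-)$ and then invoke that $\Gamma_{m+k}\cong\Z_{p}$ has cohomological dimension one. The paper's proof is literally the one line ``Because $\Gamma_{n+k}\cong\Z_{p}$''; you have merely unpacked this, and your final passage to the colimit over $m\geq n$ is in fact unnecessary since you already show each term $\R_{G_{m}\han}^{i}(M)=\H^{i}(G_{m},M\widehat{\otimes}_{\Q_{p}}\mathcal{C}^{\an}(G_{m},\Q_{p}))$ vanishes individually---this is the stronger ``in fact'' clause of Theorem~5.1(i).
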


\begin{proof}
Because $\Gamma_{n+k}\cong\Z_{p}$.
\end{proof}
This proves the first part of Theorem \ref{5.1}. It remains to study
the 1st derived group
\[
\R_{G\hla}^{1}(M)=\varinjlim_{n}\H^{1}(\Gamma_{n+k},(M\widehat{\otimes}_{\Q_{p}}\mathcal{C}^{\an}(G_{n},\Q_{p}))^{H_{n+k}}).
\]

Now for $m\geq n(G_{n+k})$, we have by Proposition \ref{5.9} a natural
isomorphism
\[
\widetilde{\Lambda}^{+}\otimes\varinjlim_{\ell}\mathbf{D}_{H_{k},n}^{+}(M^{+}\otimes V_{\ell}^{+})\cong M^{+}\otimes\varinjlim_{\ell\in\mathbb{N}}V_{\ell}^{+}.
\]
Taking the $p$-adic completion, we obtain a natural isomorphism
\[
\widetilde{\Lambda}^{+}\widehat{\otimes}_{\Lambda_{H_{n+k},m}^{+}}\D_{H_{n+k},m,\infty}^{+}(M^{+})\xrightarrow{\sim}M^{+}\widehat{\otimes}\mathcal{C}^{\an}(G_{n},\Q_{p})^{+}
\]

and thus
\[
\widetilde{\Lambda}^{+,H_{n+k}}\widehat{\otimes}_{\Lambda_{H_{n+k},m}^{+}}\D_{H_{n+k},m,\infty}^{+}(M^{+})\xrightarrow{\sim}(M^{+}\widehat{\otimes}\mathcal{C}^{\an}(G_{n},\Q_{p})^{+})^{H_{n+k}}.
\]
On the other hand, recall we have the trace maps 
\[
\R_{H_{n+k},m}:\widetilde{\Lambda}^{H_{n+k}}\rightarrow\Lambda_{H_{n+k},m}
\]
which induce for $\mathrm{X}_{H_{n+k},m}=\ker\R_{H_{n+k},m}$ a decomposition
\[
\widetilde{\Lambda}^{H_{n+k}}=\Lambda_{H_{n+k},m}\oplus X_{H_{n+k},m}.
\]
Therefore, we can decompose
\[
\widetilde{\Lambda}^{H_{n+k}}\widehat{\otimes}_{\Lambda_{H_{n+k},m}}\D_{H_{n+k},m,\infty}\left(M\right)\cong\D_{H_{n+k},m,\infty}\left(M\right)\oplus(\mathrm{X}_{H_{n+k},m}\widehat{\otimes}_{\Lambda_{H_{n+k},m}}\D_{H_{n+k},m,\infty}\left(M\right)),
\]
and so we get the description
\[
\R_{G\hla}^{1}(M)=\varinjlim_{n}\H^{1}(\Gamma_{n+k},\D_{H_{n+k},m,\infty}\left(M\right))\oplus\H^{1}(\Gamma_{n+k},\mathrm{X}_{H_{n+k},m}\widehat{\otimes}_{\Lambda_{H_{n+k},m}}\D_{H_{n+k},m,\infty}^{+}\left(M\right)).
\]
where in each object of the direct limit, we take $m\geq n(G_{n+k})$.

\subsection{Computation of higher locally analytic vectors II}

If $m\geq0$ is an integer and $\gamma$ is an element of a group,
write $\gamma_{m}$ for $\gamma^{p^{m}}$. The following simple lemma
will be used to compare the behaviour of $(\gamma-1)^{m}$ and $\gamma_{m}-1$.
\begin{lem}
\label{5.19}Let $\ell\geq0$. The element $X^{p^{\ell}}-1$ of the
ring $\Z_{p}[X]$ is in the ideal generated by the elements $p^{i}(X-1)^{\ell+1-i}$
for $0\leq i\leq\ell$.
\end{lem}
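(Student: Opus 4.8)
The plan is to pass to the variable $Y=X-1$, where the statement becomes transparent. Writing $X=1+Y$, the ideal in question is $J=(p^{i}Y^{l+1-i}:0\le i\le l)\subseteq\Z_{p}[Y]$, and we must show $(1+Y)^{p^{l}}-1\in J$. By the binomial theorem $(1+Y)^{p^{l}}-1=\sum_{j=1}^{p^{l}}\binom{p^{l}}{j}Y^{j}$, so it suffices to check that each term $\binom{p^{l}}{j}Y^{j}$ lies in $J$.

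For $j\ge l+1$ this is immediate, since $Y^{j}=Y^{l+1}\cdot Y^{j-l-1}$ and $Y^{l+1}$ is the $i=0$ generator of $J$. For $1\le j\le l$ the monomial $p^{l+1-j}Y^{j}$ is the $i=l+1-j$ generator of $J$ (and indeed $1\le l+1-j\le l$), so it is enough to prove the valuation bound $\val_{p}\binom{p^{l}}{j}\ge l+1-j$ in this range.

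To get this bound I would first record the standard fact that $\val_{p}\binom{p^{l}}{j}=l-\val_{p}(j)$ for $1\le j<p^{l}$: write $\binom{p^{l}}{j}=\frac{p^{l}}{j}\binom{p^{l}-1}{j-1}$ and note that $\binom{p^{l}-1}{j-1}=\prod_{i=1}^{j-1}\frac{p^{l}-i}{i}$ is a $p$-adic unit because $\val_{p}(p^{l}-i)=\val_{p}(i)$ for $0<i<p^{l}$. Then $\val_{p}\binom{p^{l}}{j}\ge l+1-j$ is equivalent to $j\ge\val_{p}(j)+1$, which holds since $j\ge p^{\val_{p}(j)}\ge\val_{p}(j)+1$ (as $p\ge2$). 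This finishes the proof.

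There is no real obstacle here; the only ingredients are the elementary computation of $\val_{p}\binom{p^{l}}{j}$ and the trivial inequality $p^{m}\ge m+1$. As an alternative one could induct on $l$: setting $Z=X^{p^{l}}$ one has the identity $Z^{p}-1=p(Z-1)+(Z-1)^{2}h(Z)$ for a suitable $h\in\Z_{p}[Z]$, and substituting the inductive hypothesis $X^{p^{l}}-1\in(p^{i}(X-1)^{l+1-i}:0\le i\le l)$ one checks that $p$ times that ideal, and its square, both land inside $(p^{a}(X-1)^{l+2-a}:0\le a\le l+1)$ — again only monomial bookkeeping.
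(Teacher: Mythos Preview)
Your proof is correct. The paper does not supply a proof for this lemma at all, simply introducing it as a ``simple lemma'' and moving on; your binomial expansion together with the exact computation $\val_{p}\binom{p^{l}}{j}=l-\val_{p}(j)$ and the inequality $j\ge p^{\val_{p}(j)}\ge \val_{p}(j)+1$ fills in the details cleanly.
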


\begin{proof}
For $\ell\geq1$ we have
\[
\begin{aligned}\begin{aligned}X^{p^{\ell}}-1 & =(X^{p^{\ell-1}}-1)(\sum_{i=1}^{p-1}X^{ip^{\ell-1}})\\
 & =(X^{p^{\ell-1}}-1)(\sum_{i=1}^{p-1}1+(X^{ip^{\ell-1}}-1))\\
 & =(X^{p^{\ell-1}}-1)(p+\sum_{i=1}^{p-1}(X^{ip^{\ell-1}}-1)),
\end{aligned}
\end{aligned}
\]
so that $X^{p^{\ell}}-1$ lies in the ideal 
\[
(X^{p^{\ell-1}}-1)(p,(X^{p^{\ell-1}}-1))=(p(X^{p^{\ell-1}}-1),(X^{p^{\ell-1}}-1)^{2}).
\]

Let $I_{\ell}$ be the ideal generated by the elements $p^{i}(X-1)^{\ell+1-i}$
for $0\leq i\leq\ell$. It is easy to check that $(pI_{\ell-1},I_{\ell-1}^{2})$
is contained in $I_{\ell}$. Hence, induction on $\ell$ shows that
$X^{p^{\ell}}-1$ belong to $I_{\ell}$.
\end{proof}
So far we have only used the axioms (TS1), (TS2) and (TS3). We shall
now use the final axiom (TS4), which proves us with a positive number
$t>0$.
\begin{prop}
\label{5.20}If (TS4) holds, then 

(i) $\Lambda_{H,n}$ is $\Gamma_{t}$-analytic for an open subgroup
of $\Gamma$ depending on $t$.

(ii) There exists an element $s=s(t,c_{3})=s(n,m,G_{0},c_{3})$ such
that for $\gamma\in G_{n+k}/H_{n+k}$ we have
\[
(\gamma-1)\D_{H_{n+k},m,\infty}^{+}(M^{+})\subset p^{s}\D_{H_{n+k},m,\infty}^{+}(M^{+}).
\]
(iii) $\D_{H_{n+k},m,\infty}(M)$ is $\Gamma$-analytic for some open
subgroup $\Gamma$ of $\Gamma_{n+k}$ which depends on $n,m,G$ and
$c_{3}$.
\end{prop}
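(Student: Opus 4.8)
The plan is to prove the three assertions in sequence, each building on the previous. For part (i), I would start from axiom (TS4): it gives, for each open $H_0\subset H$ and $n\geq n(H_0)$, a constant $t=t(H_0,n)>0$ such that $\val((\gamma-1)(x))\geq \val(x)+t$ for all $x\in\Lambda_{H_0,n}$ and $\gamma\in G_0/H_0$. Fix a $c_3$-fixed basis of $\Lambda_{H,n}$ as a module over a suitable smaller ring — or more simply, argue directly with the topological generators of $\Lambda_{H,n}$ as an algebra. For $\gamma$ in an open subgroup of $\Gamma$ small enough that $\val_p(\chi(\gamma)-1)$ is large relative to $t$, the operator $\gamma$ acts on $\Lambda_{H,n}$ with $\val((\gamma-1)(\cdot))$ growing linearly: iterating (TS4) and using Lemma 5.20 (the statement that $X^{p^l}-1$ lies in the ideal generated by the $p^i(X-1)^{l+1-i}$) to compare $\gamma_m-1$ with $(\gamma-1)^{m}$, one sees $(\gamma-1)^m \to 0$ fast enough that the orbit map $\gamma\mapsto \gamma(x)$ is given by a convergent power series in $\log\chi(\gamma)$ with the right rate of decay. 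Hence $\Lambda_{H,n}\subset (\widetilde\Lambda)^{\Gamma'\text{-an}}$ for $\Gamma'$ depending on $t$, which is what (i) asserts.

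For part (ii), I would combine (i) with the structure of $\D^+_{H_{n+k},m,\infty}(M^+)$ established in §5.4. Recall that this module has a basis of $c_3$-fixed elements giving it the sup norm, and that it is a module over the ring $\D^+_{H_{n+k},m,\infty}(\widetilde\Lambda^+)$, which by Lemma 5.14 contains both $1$ and $\log$. Writing the $G_{n+k}/H_{n+k}$-action in the $c_3$-fixed basis, we have $\Mat(\gamma)\in 1+p^{c_3'}\mathrm{M}_d$ with $c_3'>c_3$; the entries of $\Mat(\gamma)-1$ lie in $\Lambda^+_{H_{n+k},m}$, on which (TS4), equivalently part (i) applied at finite level, gives $\val((\gamma-1)(\cdot))\geq \val(\cdot)+t$. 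Now $(\gamma-1)$ applied to a basis vector $e_i$ produces $\sum_j (\Mat(\gamma)-1)_{ji} e_j$, and $(\gamma-1)$ applied to a general element $\sum a_i e_i$ with $a_i\in\Lambda^+_{H_{n+k},m}$ is controlled by the Leibniz-type expansion $(\gamma-1)(a_i e_i) = (\gamma a_i)(\gamma-1)(e_i) + (\gamma-1)(a_i) e_i$; both terms gain valuation at least $s:=\min(c_3, t)$ — more precisely one gets an explicit $s=s(t,c_3)=s(n,m,G_0,c_3)$. This yields $(\gamma-1)\D^+_{H_{n+k},m,\infty}(M^+)\subset p^s \D^+_{H_{n+k},m,\infty}(M^+)$.

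Part (iii) then follows formally: part (ii) says that on $\D_{H_{n+k},m,\infty}(M)$ the operator $\gamma-1$ is topologically nilpotent with uniform rate $p^s$ as $\gamma$ ranges over $G_{n+k}/H_{n+k}=\Gamma_{n+k}$, and combining with the nilpotence estimate from Lemma 5.20 (to pass from powers of a fixed generator to the subgroups $\Gamma_m$), the orbit map $\Gamma'\to \D_{H_{n+k},m,\infty}(M)$, $\gamma\mapsto \gamma(x)$, is given for suitable open $\Gamma'\subset\Gamma_{n+k}$ by a convergent power series $\sum_{\mathbf j} c(\gamma)^{\mathbf j} x_{\mathbf j}$ with $x_{\mathbf j}\to 0$; that is precisely the definition of $\D_{H_{n+k},m,\infty}(M)$ being $\Gamma'$-analytic, with $\Gamma'$ depending on $n,m,G$ and $c_3$ through $s$. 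The main obstacle I anticipate is bookkeeping in part (ii): one must check that the valuation gains from (TS4) on the \emph{scalars} $\Lambda^+_{H_{n+k},m}$ and from $c_3$-fixedness on the \emph{matrix entries} combine correctly under the semilinear (not linear) action, and that the resulting $s$ is genuinely positive and uniform in $\gamma\in\Gamma_{n+k}$ — this is where the hypothesis $k>c_1+2c_2+2c_3$ and the precise normalization of the $c_3$-fixed basis are used, and it is easy to lose a constant if one is not careful with which level $m$ the trace maps are taken at.
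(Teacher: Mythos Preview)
Your proposal is correct and follows essentially the same approach as the paper: the Leibniz identity $(\gamma-1)(ab)=(\gamma-1)(a)b+\gamma(a)(\gamma-1)(b)$ together with (TS4) on the scalars $\Lambda_{H_{n+k},m}^{+}$ and $c_{3}$-fixedness of the generators gives $s=\min(c_{3},t)$ in (ii), and then (i) and (iii) follow from (TS4) (resp.\ from (ii)) via Lemma~5.18 and the binomial-series criterion $\gamma_{l}^{x}(b)=\sum_{n\geq 0}\binom{x}{n}(\gamma_{l}-1)^{n}(b)$. The only organizational difference is that the paper proves (ii) first and deduces (i) and (iii) from it, whereas you go in order; and your worry that the hypothesis $k>c_{1}+2c_{2}+2c_{3}$ is needed in the bookkeeping of (ii) is misplaced --- that hypothesis is what guarantees the existence of the $c_{3}$-fixed generators (via \S5.3--5.4), but once those are in hand the estimate $s=\min(c_{3},t)$ is immediate and uniform in $\gamma$.
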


\begin{proof}
Once (ii) is established, we claim parts (i) and (iii) follow from
\cite[Example 2.1.9]{Pa21}. Let us elaborate a little bit. Take $\ell$
large enough so that 
\[
(\ell-i)+(i+1)t=\ell+t+(t-1)i\geq2
\]
for each $0\leq i\leq\ell$. Then for such $\ell$ (which only depends
on $t)$ we have by Lemma \ref{5.19}
\[
(\gamma_{\ell}-1)(\Lambda_{H,n}^{+})\subset p^{2}\Lambda_{H,n}^{+},
\]
So that if $b\in\Lambda_{H,n}$, the series
\[
\gamma_{\ell}^{x}(b)=\sum_{n\geq0}{x \choose n}(\gamma_{\ell}-1)^{n}(b)
\]
converges. This shows $b$ is analytic for the subgroup generated
by $\gamma_{\ell}$. The argument for (iii) given (ii) is similar.

To show part (ii), recall the identity
\[
(\gamma-1)\left(ab\right)=(\gamma-1)(a)b+\gamma(a)(\gamma-1)(b).
\]
(TS4) implies that if $a\in\Lambda_{H,m}^{+}$ and $b\in\D_{H_{n+k},m,\infty}^{+}(M^{+})$
is $c_{3}$-fixed, then $ab$ is $\min(c_{3},t)$-fixed. Since the
$c_{3}$-fixed elements topologically generate $\D_{H_{n+k},m,\infty}^{+}(M^{+})$,
it follows that every element of $\D_{H_{n+k},m,\infty}^{+}(M^{+})$
is $s=\min(c_{3},t)$-fixed.
\end{proof}
Using this we can show
\begin{lem}
\label{5.21}Given $n$ there is $m$ sufficiently large depending
only on $n$ (and not on $M$) such that $\H^{1}(\Gamma_{n+k},\mathrm{X}_{H_{n+k},m}\widehat{\otimes}_{\Lambda_{H_{n+k},m}}\D_{H_{n+k},m,\infty}\left(M\right))=0$.
\end{lem}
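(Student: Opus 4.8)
The plan is to choose $m$ large enough --- depending only on $n$ (through the Tate--Sen constants and the fixed data $G,\chi,k$), not on $M$ --- that the constructions of the previous two subsections apply at level $m$ for the pair $H_{n+k}\subset G_{n+k}$, and then to show that a topological generator $\gamma$ of $\Gamma_{n+k}$ acts surjectively on
\[
N:=X_{H_{n+k},m}\widehat{\otimes}_{\Lambda_{H_{n+k},m}}\D_{H_{n+k},m,\infty}(M).
\]
Since $G_{0}$ was chosen so that $\chi$ identifies $G_{0}/H_{0}$ with $\Z_{p}$, the group $\Gamma_{n+k}=G_{n+k}/H_{n+k}$ is procyclic, so for a (nice) topological $\Gamma_{n+k}$-module the continuous cohomology is computed by $\bigl[\,\cdot\xrightarrow{\gamma-1}\cdot\,\bigr]$ and $\H^{1}(\Gamma_{n+k},N)=\mathrm{coker}(\gamma-1)$; thus surjectivity of $\gamma-1$ on $N$ suffices. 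Concretely one needs $m\geq n(G_{n+k})$ (so that $\D^{+}_{H_{n+k},m,\infty}(M^{+})$ is defined with its $c_{3}$-fixed basis and (TS3) holds at level $m$) and $m\geq n(\gamma)=\val_{p}(\chi(\gamma)-1)$ for the chosen generator $\gamma$ (so that (TS3) applies to $\gamma$), and both of these are bounds in $n$ alone.

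To analyse $\gamma-1$, write $\D_{H_{n+k},m,\infty}(M)=\varinjlim_{l}\D_{H_{n+k},m}(M^{+}\otimes V_{l}^{+})[1/p]$, so that $N=\varinjlim_{l}N_{l}$ with $N_{l}:=X_{H_{n+k},m}\widehat{\otimes}_{\Lambda_{H_{n+k},m}}\D_{H_{n+k},m}(M^{+}\otimes V_{l}^{+})[1/p]$; each $\D_{H_{n+k},m}(M^{+}\otimes V_{l}^{+})$ is finite free over $\Lambda_{H_{n+k},m}$ with a $c_{3}$-fixed basis $\{e_{i}\}$ (Proposition 5.7), is $G_{n+k}$-stable and $H_{n+k}$-fixed, so $N_{l}\cong X_{H_{n+k},m}^{\oplus r_{l}}$ carries the restricted $\Gamma_{n+k}$-action. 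It suffices to prove $\gamma-1$ is bijective on each $N_{l}$ with inverse of operator norm bounded independently of $l$. Writing $\gamma(e_{i})=\sum_{j}a_{ji}e_{j}$ with $a_{ji}\in\Lambda^{+}_{H_{n+k},m}$ and $\val(a_{ji}-\delta_{ij})>c_{3}$, I would factor $\gamma=A_{\gamma}\circ D_{\gamma}$ on $N_{l}$, where $D_{\gamma}$ applies $\gamma$ to the $X^{+}_{H_{n+k},m}$-coordinates --- this uses that $\R_{H_{n+k},m}$ commutes with $\gamma$ by (TS2)(3) (as $g\in G_{n+k}$ normalises $H_{n+k}$), so $\gamma$ preserves $X_{H_{n+k},m}$ and acts isometrically on its unit ball --- while $A_{\gamma}$ applies the matrix $(a_{ji})$ $\Lambda^{+}_{H_{n+k},m}$-linearly to the coordinates. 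Then
\[
\gamma-1=(D_{\gamma}-1)+(A_{\gamma}-1)\circ D_{\gamma}=(D_{\gamma}-1)\circ\bigl(1+(D_{\gamma}-1)^{-1}\circ(A_{\gamma}-1)\circ D_{\gamma}\bigr).
\]
By (TS3), $\gamma-1$ is invertible on $X_{H_{n+k},m}$ with $\val((\gamma-1)^{-1}x)\geq\val(x)-c_{3}$, so $D_{\gamma}-1$ is invertible with $\|(D_{\gamma}-1)^{-1}\|\leq p^{c_{3}}$; and $D_{\gamma}$ is an isometry while $\|A_{\gamma}-1\|\leq p^{-c_{3}-\delta}$, where $\delta>0$ is the uniform amount by which the $c_{3}$-fixed basis of Proposition 5.7 beats $c_{3}$ (this gap is visible in the construction, and is exactly where the strict inequality $k>c_{1}+2c_{2}+2c_{3}$ of Theorem 5.1 is used). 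Hence $\|(D_{\gamma}-1)^{-1}\circ(A_{\gamma}-1)\circ D_{\gamma}\|\leq p^{-\delta}<1$, the bracketed factor is invertible via its convergent Neumann series, and $\gamma-1$ is a topological automorphism of $N_{l}$ with $\|(\gamma-1)^{-1}\|\leq p^{c_{3}}$ uniformly in $l$. Therefore $\gamma-1$ is surjective on $N=\varinjlim_{l}N_{l}$ and $\H^{1}(\Gamma_{n+k},N)=0$.

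The main point requiring care is the factorisation $\gamma=A_{\gamma}\circ D_{\gamma}$ together with the two estimates feeding the Neumann series: one must check that in the chosen basis the matrix of $\gamma$ has entries in the small ring $\Lambda_{H_{n+k},m}$ rather than merely in $\widetilde{\Lambda}^{H_{n+k}}$ --- which is precisely what the $c_{3}$-fixed $\Lambda^{+}_{H_{n+k},m}$-basis of Proposition 5.7 provides --- and that the contraction from $A_{\gamma}-1$ strictly dominates the expansion $p^{c_{3}}$ coming from $(D_{\gamma}-1)^{-1}$, which forces one to track that the $c_{3}$-fixedness holds with a uniform positive gap. Everything else --- procyclicity of $\Gamma_{n+k}$, the $G_{n+k}$-equivariance of $\R_{H_{n+k},m}$, and the description of $\H^{1}$ of a procyclic group via $\gamma-1$ --- is routine, and the construction makes the independence of $m$ from $M$ manifest, since only $n$, $k$ and the Tate--Sen constants enter the choice of $m$ and the estimates.
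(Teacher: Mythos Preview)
Your argument is correct and takes a genuinely different route from the paper's. The paper's proof passes through axiom (TS4): via Proposition~5.19 one obtains a contraction $(\gamma-1)\D^{+}_{H_{n+k},m_{0},\infty}(M^{+})\subset p^{s}\D^{+}_{H_{n+k},m_{0},\infty}(M^{+})$ on the $\D$-side, boosts it with Lemma~5.18 by passing to a high power $\gamma_{l}$ so that $(\gamma_{l}-1)$ contracts by $p^{2c_{3}}$, enlarges $m$ so that $n(\gamma_{l})\leq m$, and then writes down an explicit preimage as the telescoping series $\sum_{i\geq 0}(\gamma_{l}^{-1}-1)^{-i}(c)\otimes(\gamma_{l}-1)^{i}(b)$. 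Your factorisation $\gamma=A_{\gamma}\circ D_{\gamma}$ together with the Neumann series instead plays the expansion $p^{c_{3}}$ of $(D_{\gamma}-1)^{-1}$ (from (TS3)) directly against the contraction of $A_{\gamma}-1$ coming from the $c_{3}$-fixed basis, so (TS4) is never invoked for this lemma and the conclusion already holds for the minimal admissible $m$ rather than for one inflated through the passage to $\gamma_{l}$. What the paper's route buys is a contraction estimate on all of $\D^{+}_{\infty}$ (not merely on the matrix of $\gamma$ in a chosen basis), and this is exactly the estimate reused in \S5.7; your approach would require reproving such an estimate separately there.

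Two small points deserve to be made explicit. First, $N$ is the Banach completion of $\varinjlim_{l}N_{l}$, not the bare colimit; but since your inverse of $\gamma-1$ on each $N_{l}$ has operator norm $\leq p^{c_{3}}$ uniformly in $l$, it extends to the completion, so surjectivity on $N$ follows. Second, the uniform gap $\delta>0$ you invoke is available but is not literally stated in Proposition~5.7, which only records the strict inequality $\val(\Mat(\gamma)-1)>c_{3}$. To extract a uniform $\delta$: since $k$ is a fixed integer with $k>c_{1}+2c_{2}+2c_{3}$, choose any $c_{3}'\in\bigl(c_{3},\tfrac{1}{2}(k-c_{1}-2c_{2})\bigr)$; then (TS3) with constant $c_{3}$ a fortiori gives (TS3) with constant $c_{3}'$ and the same $n(G_{0})$, the hypothesis $k>c_{1}+2c_{2}+2c_{3}'$ still holds, and running the construction of Proposition~5.7 with $c_{3}'$ in place of $c_{3}$ produces a basis with $\val(\Mat(\gamma)-1)>c_{3}'$, hence $\delta=c_{3}'-c_{3}>0$ uniformly in $l$.
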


\begin{proof}
(This argument is adapted from \cite[Lemma 3.6.6]{Pa21}) Fix $m_{0}\geq n(G_{n+k})$.
From the discussion after Corollary \ref{5.18}, for $m\geq m_{0}$
we have a natural isomorphism
\[
\widetilde{\Lambda}^{H_{n+k}}\widehat{\otimes}_{\Lambda_{H_{n+k},m}}\D_{H_{n+k},m,\infty}(M)\cong\D_{H_{n+k},m,\infty}(M)\oplus(\mathrm{X}_{H_{n+k},m}\widehat{\otimes}_{\Lambda_{H_{n+k},m}}\D_{H_{n+k},m,\infty}(M)).
\]

By Proposition \ref{5.12}, we have an isomorphism 
\[
\Lambda_{H_{n+k},m}\widehat{\otimes}\D_{H_{n+k},m_{0},\infty}(M)\cong\D_{H_{n+k},m,\infty}(M).
\]
Let $\mathrm{X}_{H_{n+k},m}^{+}=\mathrm{X}_{H_{n+k},m}\cap\widetilde{\Lambda}^{+}$.
We get an induced isomorphism
\[
\mathrm{X}_{H_{n+k},m}^{+}\widehat{\otimes}_{\Lambda_{H_{n+k},m}^{+}}\D_{H_{n+k},m,\infty}^{+}(M^{+})\cong\mathrm{X}_{H_{n+k},m}^{+}\widehat{\otimes}_{\Lambda_{H_{n+k},m_{0}}^{+}}\D_{H_{n+k},m_{0},\infty}^{+}(M).
\]
Let $\gamma$ be a generator of $\Gamma_{n+k}$. By Proposition \ref{5.20},
there is some $s$ such that
\[
(\gamma-1)\D_{H_{n+k},m_{0},\infty}^{+}(M^{+})\subset p^{s}\D_{H_{n+k},m_{0},\infty}^{+}(M^{+}).
\]
If $\ell$ is sufficiently large Proposition \ref{5.20} implies that
\[
(\gamma_{\ell}-1)\D_{H_{n+k},m_{0},\infty}^{+}(M^{+})\subset p^{2c_{3}}\D_{H_{n+k},m_{0},\infty}^{+}\left(M^{+}\right)
\]
(we take $2c_{3}$ rather than $c_{3}$ to take of convergence later
in this argument). Choose such an $\ell$, and take $m$ large enough
so that $n(\gamma_{\ell})\leq m$. Then by (TS3) we have $\val((\gamma_{\ell}-1)^{-1}\left(x\right))\geq\val(x)-c_{3}$
for $x\in\mathrm{X}_{H_{n+k},m}^{+}$.

We will now show that any element of $\mathrm{X}_{H_{n+k},m}\widehat{\otimes}_{\Lambda_{H_{n+k},m}}\D_{H_{n+k},m,\infty}\left(M\right)$
is in the image of $\gamma_{\ell}-1$. This will also imply any element
is in the image of $\gamma-1$, since $\gamma_{\ell}-1$ is divisible
by $\gamma-1$, and hence it will further imply that the cohomology
\[
\H^{1}(\Gamma_{n+k},\mathrm{X}_{H_{n+k},m}\widehat{\otimes}_{\Lambda_{H_{n+k},m}}\D_{H_{n+k},m,\infty}\left(M\right))\cong\mathrm{X}_{H_{n+k},m}\widehat{\otimes}_{\Lambda_{H_{n+k},m}}\D_{H_{n+k},m,\infty}\left(M\right)/(\gamma-1)
\]
is 0. 

To do this last step, it suffices to show that each simple tensor
\[
a\otimes b\in\mathrm{X}_{H_{n+k},m}^{+}\widehat{\otimes}_{\Lambda_{H_{n+k},m_{0}}^{+}}\D_{H_{n+k},m_{0},\infty}^{+}(M^{+})\cong\mathrm{X}_{H_{n+k},m}^{+}\widehat{\otimes}_{\Lambda_{H_{n+k},m}^{+}}\D_{H_{n+k},m,\infty}^{+}(M^{+})
\]
is in the image of $\gamma_{\ell}-1$. Choose an integer $r$ so that
$p^{r}a$ is in the image of $(\gamma_{l}-1)^{-1}$ restricted to
$\mathrm{X}_{H_{n+k},m}^{+}$ (choose any $r\geq c_{3}$). It suffices
to show $p^{r}a\otimes b$ is in the image of $\gamma_{\ell}-1$.
So write $p^{r}a=(\gamma_{\ell}-1)^{-1}(c)$ for $c\in\mathrm{X}_{H_{n+k},m}^{+}$,
and consider the series
\[
y=\sum_{i=0}^{+\infty}(\gamma_{l}^{-1}-1)^{-i}(c)\otimes(\gamma_{l}-1)^{i}(b)=\sum_{i=0}^{+\infty}\gamma_{l}^{i}(1-\gamma_{l})^{-i}(c)\otimes(\gamma_{l}-1)^{i}(b).
\]
This series converges, because by our choices $\val((\gamma_{\ell}-1)^{-1}\left(x\right))\geq\val(x)-c_{3}$
on $X_{H_{n+k},m}^{+}$ and $(\gamma_{\ell}-1)(x)\geq\val(x)+2c_{3}$
on $\D_{H_{n+k},m_{0},\infty}^{+}(M^{+})$! A direct computation then
gives 
\[
(\gamma_{\ell}-1)(y)=(\gamma_{\ell}-1)\left(c\right)\otimes b=p^{r}a\otimes b,
\]
so $p^{r}a\otimes b$ is in the image of $\gamma_{\ell}-1$, as required.
\end{proof}
Combing Lemma \ref{5.21} with the discussion after Corollary \ref{5.18},
we get the following description of $\R_{G\hla}^{1}(M)$.
\begin{prop}
\label{5.22}We have
\[
\R_{G\hla}^{1}(M)=\varinjlim_{n,m}\H^{1}(\Gamma_{n+k},\D_{H_{n+k},m,\infty}\left(M\right))
\]

where the direct limit is taken over pairs $n,m$.
\end{prop}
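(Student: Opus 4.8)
The plan is to kill the second summand in the direct sum description of $\R_{G\hla}^{1}(M)$ obtained at the end of $\mathsection5.5$, using Lemma 5.20. Recall that that description reads
\[
\R_{G\hla}^{1}(M)=\varinjlim_{n}\Big(\H^{1}\big(\Gamma_{n+k},\D_{H_{n+k},m,\infty}(M)\big)\,\oplus\,\H^{1}\big(\Gamma_{n+k},X_{H_{n+k},m}\widehat{\otimes}_{\Lambda_{H_{n+k},m}}\D_{H_{n+k},m,\infty}(M)\big)\Big),
\]
where $m$ may be taken to be any integer $\geq m(G_{n})$ in the $n$-th term and where the splitting comes from the trace map decomposition $\widetilde{\Lambda}^{H_{n+k}}=\Lambda_{H_{n+k},m}\oplus X_{H_{n+k},m}$.

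First I would invoke Lemma 5.20 to choose, for each $n$, an integer $m=m(n)\geq m(G_{n})$ --- which we may and do take nondecreasing and unbounded in $n$ --- for which the second summand $\H^{1}(\Gamma_{n+k},X_{H_{n+k},m}\widehat{\otimes}_{\Lambda_{H_{n+k},m}}\D_{H_{n+k},m,\infty}(M))$ vanishes. For that choice the $n$-th term collapses to $\H^{1}(\Gamma_{n+k},\D_{H_{n+k},m(n),\infty}(M))$, so that
\[
\R_{G\hla}^{1}(M)=\varinjlim_{n}\H^{1}\big(\Gamma_{n+k},\D_{H_{n+k},m(n),\infty}(M)\big).
\]
It then remains only to recognize the right-hand side as the colimit $\varinjlim_{n,m}\H^{1}(\Gamma_{n+k},\D_{H_{n+k},m,\infty}(M))$ over all pairs $(n,m)$ with $m\geq m(G_{n})$: this is a cofinality statement, the pairs $\{(n,m(n))\}_{n}$ forming a cofinal subsystem, for which one checks that the transition maps of the $(n,m)$-system are compatible with the trace map splittings. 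That compatibility follows from the functoriality of the modules $\D_{H_{k},n,\infty}^{+}$ established in $\mathsection5.3$--$\mathsection5.4$ together with the compatibilities of the normalized trace maps $\R_{H_{0},m}$ built into (TS2).

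I do not expect any real obstacle: Lemma 5.20 supplies all the genuine content, and what remains is the routine rewriting of a filtered colimit. If there is a point requiring care, it is exactly this colimit bookkeeping --- the compatibility of the trace map splittings as $n$ and $m$ vary --- which, as indicated, is provided by the functoriality results of $\mathsection5.3$--$\mathsection5.4$.
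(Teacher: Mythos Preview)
Your proposal is correct and follows exactly the approach the paper takes: the paper simply states that the proposition follows by ``combining this lemma with the results of the previous subsection,'' meaning one plugs Lemma 5.20 into the direct-sum description at the end of \S5.5 to kill the $X_{H_{n+k},m}$-summand and then reindexes the colimit. Your write-up spells out the cofinality and compatibility-of-splittings bookkeeping that the paper leaves implicit, but there is no difference in strategy.
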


\subsection{Computation of higher locally analytic vectors III}

We are now almost ready to prove our theorem. First we prove a lemma
that will be used.
\begin{lem}
\label{5.23}Let $\Gamma=\gamma^{\Z_{p}}$ and let $B$ be a Banach
representation of $\Gamma$. Suppose $B=B^{\Gamma\han}$, and that
\[
\left|\left|\gamma-1\right|\right|<p^{-\frac{1}{p-1}}.
\]
Then $\left|\left|b\right|\right|=\left|\left|b\right|\right|_{\Gamma\han}$
for any $b\in B$.
\end{lem}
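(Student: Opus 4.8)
The plan is to identify the $\Gamma$-analytic expansion of a vector with the exponential series of $\nabla:=\log\gamma$, an operator whose norm is controlled by the hypothesis. Since $\Gamma=\gamma^{\Z_p}$ carries the coordinate $c(\gamma^x)=x$, every $b\in B=B^{\Gamma\han}$ has a unique convergent expansion $\gamma^x(b)=\sum_{k\ge0}x^k b_k$ with $b_k\to 0$, and $\|b\|_{\Gamma\han}=\sup_k\|b_k\|$. Putting $x=0$ gives $b_0=b$, so $\|b\|=\|b_0\|\le\sup_k\|b_k\|=\|b\|_{\Gamma\han}$; the content of the lemma is therefore the reverse inequality $\sup_{k\ge1}\|b_k\|\le\|b\|$.

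To obtain it I would introduce $\nabla:=\log\gamma=\sum_{j\ge1}\tfrac{(-1)^{j-1}}{j}(\gamma-1)^j$. The $j$-th term has operator norm $p^{\val_p(j)}\|\gamma-1\|^j$, and using the elementary bound $\val_p(j)\le(j-1)/(p-1)$ together with $\|\gamma-1\|<p^{-1/(p-1)}$ one checks this is $\le\|\gamma-1\|$, strictly so for $j\ge2$; hence the series converges to a bounded operator with $\|\nabla\|=\|\gamma-1\|$. Symmetrically, Legendre's formula gives $\val_p(k!)\le k/(p-1)$, so $\|\tfrac{x^k}{k!}\nabla^k\|\le\bigl(p^{1/(p-1)}\|\gamma-1\|\bigr)^k\to0$ and $\exp(x\nabla)=\sum_{k\ge0}\tfrac{x^k}{k!}\nabla^k$ is a bounded operator for every $x\in\Z_p$. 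Next I would check that $\gamma^x=\exp(x\nabla)$: for $x\in\Z$ this follows from $\exp(\log\gamma)=\gamma$ (valid since $\|\gamma-1\|<p^{-1/(p-1)}$) and $\exp(x\nabla)=\exp(\nabla)^x$, and since both $x\mapsto\gamma^x(b)$ and $x\mapsto\exp(x\nabla)(b)$ are continuous on $\Z_p$ and agree on the dense subset $\Z$, they agree identically. Matching the two power-series expansions in $x$ and using uniqueness of coefficients gives $b_k=\tfrac1{k!}\nabla^k(b)$, whence
\[
\|b_k\|=|k!|_p^{-1}\|\nabla^k(b)\|\le p^{\val_p(k!)}\|\nabla\|^k\|b\|\le\bigl(p^{1/(p-1)}\|\gamma-1\|\bigr)^k\|b\|\le\|b\|,
\]
the last step because $p^{1/(p-1)}\|\gamma-1\|<1$. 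Taking the supremum over $k$ gives $\|b\|_{\Gamma\han}\le\|b\|$, which with the first step yields the asserted equality.

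The only point requiring a genuine (if short) argument rather than a routine estimate is the identification $\gamma^x=\exp(x\log\gamma)$ on all of $\Z_p$ --- that the abstractly given analytic expansion of $b$ is really the exponential series of $\nabla$ --- which I would settle by density of $\Z$ in $\Z_p$ together with continuity, as above; everything else reduces to the two standard valuation bounds on $\val_p(j)$ and $\val_p(k!)$. (The same computation incidentally shows $B=B^{\Gamma\han}$ is automatic under the norm hypothesis.)
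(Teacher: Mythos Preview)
Your proof is correct and follows essentially the same route as the paper: both introduce $\nabla=\log\gamma$, bound $\|\nabla\|\le\|\gamma-1\|$ via the logarithm series and the estimate $\val_p(j)\le(j-1)/(p-1)$, identify the analytic expansion $b_k=\nabla^k(b)/k!$, and conclude using Legendre's bound $\val_p(k!)\le k/(p-1)$. You are slightly more explicit than the paper in justifying the identity $\gamma^x=\exp(x\nabla)$ by density of $\Z$ in $\Z_p$, whereas the paper simply asserts the exponential expansion of $\gamma^x(b)$; otherwise the two arguments are the same.
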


\begin{proof}
We have for $x\in\Z_{p}$ that
\[
\gamma^{x}(b)=\sum\frac{\nabla_{\gamma}^{k}(b)}{k!}x^{k}
\]
where $\nabla_{\gamma}=\log(\gamma)$. By definition
\[
\left|\left|b\right|\right|_{\Gamma\han}=\sup_{k\geq0}\left\{ \left|\left|\nabla_{\gamma}^{k}(b)/k!\right|\right|\right\} .
\]
Now recall we have
\[
\nabla_{\gamma}=\left(\gamma-1\right)\sum_{m\geq0}\left(-1\right)^{m}\frac{\left(\gamma-1\right)^{m}}{m+1},
\]
so $\left|\left|\nabla_{\gamma}\left(b\right)\right|\right|\leq\left|\left|\gamma-1\right|\right|\left|\left|b\right|\right|$,
and more generally
\[
\left|\left|\nabla_{\gamma}^{k}\left(b\right)\right|\right|\leq\left|\left|\gamma-1\right|\right|^{k}\left|\left|b\right|\right|.
\]
It follows that for $k\geq1$ we have
\[
\left|\left|\nabla_{\gamma}^{k}(b)/k!\right|\right|\leq p^{-\frac{k}{p-1}}\left|\left|\gamma-1\right|\right|^{k}\left|\left|b\right|\right|<\left|\left|b\right|\right|,
\]
so that $\left|\left|b\right|\right|_{\Gamma\han}=\left|\left|b\right|\right|$.
\end{proof}
\emph{Proof of Theorem }\ref{5.1}\emph{.} By the Proposition $\R_{G\hla}^{1}(M)=\varinjlim_{n,m}\H^{1}(\Gamma_{n+k},\D_{H_{n+k},m,\infty}(M))$.
Fix $n$ and $m$. Given $b\in\D_{H_{n+k},m,\infty}(M)$ we shall
show it becomes zero in some $\H^{1}(\Gamma_{l+k},\D_{H_{l+k},m',\infty}(M))$
for some $\ell\geq n$, $m'\geq m$ - this will show the direct limit
is zero. By Proposition \ref{5.20} we know there is an open subgroup
$\Gamma\subset\Gamma_{n+k}$ such that $\D_{H_{n+k},m,\infty}(M)$
is $\Gamma$-analytic. Writing $\gamma$ for a generator of $\Gamma$,
we may take $\Gamma$ small enough so that $\left|\left|\gamma-1\right|\right|<p^{-\frac{1}{p-1}}$,
and hence Lemma \ref{5.23} applies. Thus, writing $\left|\left|\cdot\right|\right|_{n}$
for the norm on $\D_{H_{n+k},m,\infty}(M)$ induced from its inclusion
into $M\widehat{\otimes}\mathcal{C}^{\an}(G_{n},\Q_{p})$, we have
$\left|\left|b\right|\right|_{n}=\left|\left|b\right|\right|_{\Gamma\han}$
for $b\in\D_{H_{n+k},m,\infty}\left(M\right)$. We know there is a
real number $D>0$ such that if $b\in\D_{H_{n+k},m,\infty}\left(M\right)$
then
\[
\left|\left|\nabla_{\gamma}\left(b\right)\right|\right|_{n}=\left|\left|\nabla_{\gamma}\left(b\right)\right|\right|_{\Gamma\han}\leq D\left|\left|b\right|\right|_{\Gamma\han}=D\left|\left|b\right|\right|_{n}.
\]
Now choose $\ell\geq n$ such that $\Gamma_{l}$ has index $p^{t}$
in $\Gamma$, where $t$ is taken large enough so that
\[
2p^{\frac{1}{p-1}}D\leq p^{t}.
\]
Let $\gamma_{t}=\gamma^{p^{t}}$ be the generator of $\Gamma_{\ell}$,
and let $\log_{\ell}\in\mathcal{C}^{\an}(G_{\ell},\Q_{p}):G_{\ell}\twoheadrightarrow G_{\ell}/H_{\ell}\rightarrow\Z_{p}$
be the logarithm so that $\log_{\ell}(\gamma_{t})=1$. Now let $m'\geq m$
be large enough so that $\D_{H_{\ell+k},m',\infty}\left(M\right)$
is defined. Recall that by Lemma \ref{5.15}, $\log_{\ell}\in\D_{H_{\ell+k},m',\infty}(\widetilde{\Lambda}^{+})$.
Let $\Gamma'\subset\Gamma_{\ell+k}$ be an open subgroup so that $\D_{H_{l+k},m',\infty}\left(M\right)$
is $\Gamma'$-analytic and write $p^{q}$ for the index of $\Gamma'$
in $\Gamma_{\ell+k}$. Finally, write $\gamma'$ for the generator
of $\Gamma'$. Again by making $\Gamma'$ smaller we may assume $\left|\left|\gamma'-1\right|\right|<p^{\frac{-1}{p-1}}$
on $\D_{H_{\ell+k},m',\infty}\left(M\right)$. We have 
\[
\gamma'=(\gamma_{t}^{p^{k}})^{p^{q}}=\gamma^{p^{t+k+q}}.
\]
Let $z_{\ell}=\log_{\ell}/p^{k+q}\in\D_{H_{\ell+k},m',\infty}(\widetilde{\Lambda})$,
the one computes that $\gamma'(z_{\ell})=z_{\ell}+1$. Therefore,
$\nabla_{\gamma'}(z_{\ell})=1$. Now consider the series
\[
bz_{\ell}-\nabla_{\gamma'}(b)\frac{z_{\ell}^{2}}{2!}+\nabla_{\gamma'}^{2}(b)\frac{z_{\ell}^{3}}{3!}-...
\]
in $\D_{H_{\ell+k},m',\infty}\left(M\right)$. We claim first it converges
with respect to the norm $\left|\left|\cdot\right|\right|_{\ell}$
of $\D_{H_{\ell+k},m',\infty}\left(M\right)$. Indeed, we have
\[
\left|\left|z_{\ell}\right|\right|_{\ell}=p^{k+q}
\]
and (noting that $\nabla_{\gamma'}^{i}=p^{i\left(t+k+q\right)}\nabla_{\gamma}^{i})$
\[
\left|\left|\nabla_{\gamma'}^{i}(b)\right|\right|_{\ell}=p^{-i\left(t+k+q\right)}\left|\left|\nabla_{\gamma}^{i}(b)\right|\right|_{\ell}\leq p^{-i\left(t+k+q\right)}\left|\left|\nabla_{\gamma}^{i}(b)\right|\right|_{n}
\]
\[
\leq p^{-i\left(t+k+q\right)}D^{i}\left|\left|b\right|\right|_{n},
\]
so the general term of series has size 
\[
\left|\left|\nabla_{\gamma'}^{i}(b)/\left(i+1\right)!\cdot z_{\ell}^{i+1}\right|\right|_{\ell}\ll p^{-i\left(t+k+q\right)}D^{i}p^{i(k+q)}p^{\frac{i}{p-1}}
\]
\[
=(p^{-t}Dp^{\frac{1}{p-1}})^{i}\leq2^{-i},
\]
so the series converges in the in the $\left|\left|\cdot\right|\right|_{\ell}$
norm. But then the series must also converge with respect to $\left|\left|\cdot\right|\right|_{\Gamma'\han}$
because of Lemma \ref{5.23}. So if we write $y$ for the sum of the
series, it makes sense to speak of the derivative $\nabla_{\gamma'}(y)$,
and one computes that $\nabla_{\gamma'}(y)=b$. So $b$ is in the
image of $\nabla_{\gamma'}:\D_{H_{\ell+k},m',\infty}\left(M\right)\rightarrow\D_{H_{\ell+k},m',\infty}\left(M\right)$,
hence also in the image of $\gamma'-1$, which divides $\nabla_{\gamma'}$.
But $\gamma'=\gamma_{t+k}^{p^{q}}$ so $\gamma_{t+k}-1$ divides $\gamma'-1$.
It follows that $b$ is also in the image of $\gamma_{t+k}-1$. This
means that $b$ is 0 in 
\[
\D_{H_{\ell+k},m',\infty}\left(M\right)/(\gamma_{t+k}-1)\cong\H^{1}(\Gamma_{\ell+k},\D_{H_{\ell+k},m',\infty}\left(M\right))
\]
and we are done! $\square$
\begin{rem}
\label{5.24}1. Since the choices of $\ell$ and $m'$ did not depend
on $b$, each $\D_{H_{n+k},m,\infty}\left(M\right)$ maps in its entirety
to 0 in some $\D_{H_{l+k},m',\infty}(M)$. This shows that $M$ is
strongly $\mathfrak{LA}$-acyclic in the sense of \cite[2.2]{Pa21}.
After this work was completed, Pan proved that strong $\mathfrak{LA}$-acyclicity
is in fact automatic in this setting, see \cite[Proposition 2.3.6]{Pa22}.

2. The proof of Theorem \ref{5.1} shows the vanishing of $\varinjlim_{n,m}\H^{1}(\Lie(\Gamma_{n+k}),\D_{H_{n+k},m,\infty}(M))$,
which is a priori stronger than the vanishing of $\varinjlim_{n,m}\H^{1}(\Gamma_{n+k},\D_{H_{n+k},m,\infty}(M))$.
\end{rem}

\section{Descent to locally analytic vectors}

Work again in the setting of $\mathsection3-\mathsection4$. We shall
assume in this section that $K_{\infty}$ contains an unramified twist
of the cyclotomic extension. The purpose of this section is to prove
the following theorem. 
\begin{thm}
\label{6.1}The functor $\mathcal{E}\mapsto\mathcal{O}_{\mathcal{X}}\otimes_{\mathcal{O}_{\mathcal{X}}^{\la}}\mathcal{E}$
gives rise to an equivalence of categories 
\[
\left\{ \text{locally analytic vector bundles on }\mathcal{X}\right\} \cong\left\{ \text{\ensuremath{\Gamma}-vector bundles on }\mathcal{X}\right\} .
\]
The inverse functor is given by $\widetilde{\mathcal{E}}\mapsto\mathcal{\widetilde{E}}^{\la}$.
\end{thm}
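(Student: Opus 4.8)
The plan is to take $\widetilde{\mathcal{E}}\mapsto\widetilde{\mathcal{E}}^{\la}$ as the candidate quasi-inverse and to reduce the whole statement to the single assertion that, for every $\Gamma$-vector bundle $\widetilde{\mathcal{E}}$ on $\mathcal{X}$, the sheaf $\widetilde{\mathcal{E}}^{\la}$ is locally free over $\mathcal{O}_{\mathcal{X}}^{\la}$ and the natural morphism
\[
\alpha_{\widetilde{\mathcal{E}}}\colon\mathcal{O}_{\mathcal{X}}\otimes_{\mathcal{O}_{\mathcal{X}}^{\la}}\widetilde{\mathcal{E}}^{\la}\longrightarrow\widetilde{\mathcal{E}}
\]
is an isomorphism. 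Indeed, once this is known, Example 4.4 gives a natural identification $(\mathcal{O}_{\mathcal{X}}\otimes_{\mathcal{O}_{\mathcal{X}}^{\la}}\mathcal{E})^{\la}=\mathcal{E}$ for every locally analytic vector bundle $\mathcal{E}$, while the family $\{\alpha_{\widetilde{\mathcal{E}}}\}$ is a natural isomorphism $\mathcal{O}_{\mathcal{X}}\otimes_{\mathcal{O}_{\mathcal{X}}^{\la}}(-)^{\la}\xrightarrow{\sim}\mathrm{id}$; so the two functors are mutually quasi-inverse and the asserted equivalence, with the stated inverse, follows. The proof of the assertion itself then splits into an analytic part (the local structure of $\widetilde{\mathcal{E}}^{\la}$, plus the behaviour of $\alpha_{\widetilde{\mathcal{E}}}$ at the point at infinity) and a geometric part (propagating this over all of $\mathcal{X}$).

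For the analytic part I would first show $\widetilde{\mathcal{E}}^{\la}$ is locally free of rank $d=\rank\widetilde{\mathcal{E}}$. On an affinoid $\mathcal{X}_I$ with $I\subset(0,\infty)$ compact, $\widetilde{\mathcal{E}}$ is given by a finite free $\widetilde{\B}_I(\widehat{K}_{\infty})$-semilinear representation $M$ of $\Gamma$; translating $I$ by a power of $\varphi$ into the range where $(\Gamma,\widetilde{\B}_I(\widehat{K}_{\infty}))$ satisfies the Tate--Sen axioms (case~2 after Corollary 5.4, using $\mathcal{X}_I=\mathcal{X}_{pI}$ inside $\mathcal{X}$), I apply the descent of Proposition 5.7 to the lattices $M^{+}\otimes V_l^{+}$ and pass to the colimit to exhibit $M^{\la}=\H^0(\mathcal{X}_I,\widetilde{\mathcal{E}}^{\la})$ as finite free over $\widetilde{\B}_I(\widehat{K}_{\infty})^{\la}=\H^0(\mathcal{X}_I,\mathcal{O}_{\mathcal{X}}^{\la})$; since $\mathcal{X}$ is connected its rank is the constant $d$, so $\mathcal{O}_{\mathcal{X}}\otimes_{\mathcal{O}_{\mathcal{X}}^{\la}}\widetilde{\mathcal{E}}^{\la}$ is a vector bundle of rank $d$, and it is torsion-free because $\mathcal{X}$ is covered by $\Spa$'s of principal ideal domains (Proposition 3.1). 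Next, at the completed stalk at $x_{\infty}$, where $\mathcal{O}_{\mathcal{X},x_{\infty}}^{\wedge,+}$ is (the $H$-invariants of) $\B_{\dR}^{+}$ and $\widetilde{\mathcal{E}}_{x_{\infty}}^{\wedge,+}$ is finite free over it with semilinear $\Gamma$-action, I claim $\alpha_{\widetilde{\mathcal{E}}}$ becomes an isomorphism: working modulo $t^{n}$, the quotient $\widetilde{\mathcal{E}}_{x_{\infty}}^{\wedge,+}/t^{n}$ is a successive extension of Tate twists of the finite-dimensional $\widehat{K}_{\infty}$-semilinear $\Gamma$-representation $\widetilde{\mathcal{E}}_{x_{\infty}}^{\wedge,+}/t$, on which, by Corollary 5.4(i), the functor of locally analytic vectors is exact, and whose graded pieces are descended by Proposition 5.7 for $(\Gamma,\widehat{K}_{\infty})$; dévissage along the $t$-adic filtration descends $\widetilde{\mathcal{E}}_{x_{\infty}}^{\wedge,+}/t^{n}$, and then $\varprojlim_{n}$ finishes. (This is the sheaf-theoretic form of Fontaine's $\B_{\dR}^{+}$-analogue of Sen theory; Sen's theorem is the case $n=1$.) In particular $\det\alpha_{\widetilde{\mathcal{E}}}$ is a section of a line bundle on $\mathcal{X}$ that does not vanish at $x_{\infty}$, hence is nonzero, so $\alpha_{\widetilde{\mathcal{E}}}$ is injective.

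For the geometric part, $\mathrm{coker}\,\alpha_{\widetilde{\mathcal{E}}}$ is a coherent $\mathcal{O}_{\mathcal{X}}$-module with a $\Gamma$-action, supported on the vanishing locus of $\det\alpha_{\widetilde{\mathcal{E}}}$, which is a finite set of closed points (locally on the $\Spa$ of a principal ideal domain a nonzero function has only finitely many zeros). This support is $\Gamma$-stable because $\alpha_{\widetilde{\mathcal{E}}}$ is a morphism of $\Gamma$-vector bundles, and it avoids $x_{\infty}$ by the stalk computation. But a finite $\Gamma$-stable subset of $\mathcal{X}$ is a finite union of finite $\Gamma$-orbits, and by Proposition 10.1.1 of \cite{FF18} the unique point of $\mathcal{X}$ with finite $\Gamma$-orbit is $x_{\infty}$; hence the support is empty, $\mathrm{coker}\,\alpha_{\widetilde{\mathcal{E}}}=0$, and $\alpha_{\widetilde{\mathcal{E}}}$ is an isomorphism. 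Together with the first paragraph this proves the theorem.

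The step I expect to be the main obstacle is the computation at $x_{\infty}$: the natural topology there is $t$-adic rather than $p$-adic, so Theorem 5.1 does not apply to $\B_{\dR}^{+}(\widehat{K}_{\infty})$ directly and one must route through the $t$-adic filtration and the $\widehat{K}_{\infty}$-coefficient case — which is exactly why the full vanishing of higher locally analytic vectors of Section~5, and not merely a decompletion statement, is what is needed here. A secondary subtlety, again handled by Proposition 5.7 together with the principal ideal domain property of $\widetilde{\B}_I$, is knowing that $\widetilde{\mathcal{E}}^{\la}$ is genuinely locally free and that $\alpha_{\widetilde{\mathcal{E}}}$ is injective before one knows it is an isomorphism.
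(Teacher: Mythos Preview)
Your overall architecture matches the paper's: reduce everything to showing $\alpha_{\widetilde{\mathcal{E}}}$ is an isomorphism, verify it on the $I_\theta$-adic completion at $x_\infty$ by d\'evissage using the vanishing of higher locally analytic vectors (this is the paper's Theorem 6.2/Proposition 6.3), and then kill the cokernel by the observation that its support would be a finite $\Gamma$-orbit avoiding $x_\infty$, contradicting \cite[10.1.1]{FF18}. That part is fine.

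The gap is your local freeness step. Applying Proposition 5.7 to the lattices $M^{+}\otimes V_l^{+}$ and passing to the colimit does \emph{not} exhibit $M^{\la}$ as finite free over $\widetilde{\B}_I^{\la}$: that colimit is precisely the paper's $\D_{H_{k},n,\infty}^{+}(M^{+})$, an infinite-rank $\Lambda_{H_k,n}^{+}$-module sitting inside $M\widehat{\otimes}\mathcal{C}^{\an}(G_0)$, not inside $M$; taking $G_0$-invariants of it does not obviously produce something finite free over $\widetilde{\B}_I^{\la}$. So as written you have not justified that $\mathcal{O}_{\mathcal{X}}\otimes_{\mathcal{O}_{\mathcal{X}}^{\la}}\widetilde{\mathcal{E}}^{\la}$ is a vector bundle, and hence your determinant argument for injectivity has no footing.

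The paper does \emph{not} attempt to prove local freeness of $\widetilde{\mathcal{E}}^{\la}$ before proving that $\alpha_{\widetilde{\mathcal{E}}}$ is an isomorphism; local freeness is deduced afterwards. Instead the paper first reduces to the cyclotomic case and then invokes Proposition 4.1: Berger's description $\widetilde{\B}_{I,\cyc}^{\la}=\bigcup_n\varphi^{-n}(\B_{p^nI,\cyc,K})$ shows $\widetilde{\B}_{I,\cyc}^{\la}$ is a Pr\"ufer domain, so $\widetilde{\B}_{I,\cyc}$ is flat over it and the tensor product $\widetilde{\B}_{I,\cyc}\otimes_{\widetilde{\B}_{I,\cyc}^{\la}}\widetilde{M}_{I,\cyc}^{\la}$ is torsion-free over the PID $\widetilde{\B}_{I,\cyc}$. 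This torsion-freeness (rather than local freeness) is what makes the injectivity and finite-support-of-cokernel argument go through. If you want to repair your argument along your own lines, the correct move is to apply Proposition 5.7 to $M^{+}$ itself, obtaining an $H_0$-fixed, $c_3$-fixed $\Lambda_{H_0,n}$-basis of $M$, and then use (TS4) together with the estimates of Proposition 5.19 to show this basis lies in $M^{\la}$; Proposition 2.1 then gives $M^{\la}=\bigoplus\widetilde{\B}_I^{\la}e_i$ and the isomorphism at once---in which case your stalk and cokernel arguments become superfluous.
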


In the rest of this section, we shall prove that given a $\Gamma$-vector
bundle $\widetilde{\mathcal{E}}$ on $\mathcal{X}$, the natural map
\[
\mathcal{O}_{\mathcal{X}}\otimes_{\mathcal{O}_{\mathcal{X}}^{\la}}\mathcal{\widetilde{\mathcal{E}}}^{\la}\rightarrow\widetilde{\mathcal{E}}
\]
 is an isomorphism. This is enough for proving Theorem \ref{6.1}.
Indeed, if this isomorphism is granted, then in particular it follows
from Proposition \ref{2.1} that $\mathcal{\widetilde{\mathcal{E}}}^{\la}$
is locally free over $\mathcal{O}_{\mathcal{X}}^{\la}$, so that the
functor $\widetilde{\mathcal{E}}\mapsto\mathcal{\widetilde{E}}^{\la}$
is valued in the correct category and is fully faithful. On the other
hand, it follows from Example \ref{4.5}(2) that it is also essentially
surjective.

\subsection{Computations at the stalk}

In this section, w let $\widetilde{\mathcal{E}}$ be a $\Gamma$-vector
bundle. We have the fiber $\mathcal{\widetilde{E}}_{k(x_{\infty})}$
at $x_{\infty}$, a finite dimensional $\widehat{K}_{\infty}$-semilinear
representation of $\Gamma$, and the completed stalk $\widetilde{\mathcal{E}}_{x_{\infty}}^{\wedge,+}$,
a finite free $\B_{\dR}^{+}(\widehat{K}_{\infty})=\B_{\dR}^{+,H}$-module.
We define
\[
\D_{\mathrm{Sen}}(\widetilde{\mathcal{E}})=(\mathcal{\widetilde{E}}_{k(x_{\infty})})^{\la}
\]
and
\[
\D_{\dif}^{+}(\mathcal{\widetilde{E}})=(\widetilde{\mathcal{E}}_{x_{\infty}}^{\wedge,+})^{\pa}.
\]
If $V$ is a $p$-adic representation and $\mathcal{\widetilde{\mathcal{E}}}=\widetilde{\mathcal{E}}(V)$
as in Example \ref{3.4}, and if $\Gamma=\Gamma_{\cyc}$, then we
recover the classical invariant $\D_{\Sen}(V)$ according to \cite[Théorème 3.2]{BC16}.
The invariant $\D_{\dif}^{+}\left(V\right)$ is also recovered, see
\cite[Proposition 3.3.]{Po20}. It is therefore natural to extend
these definitions to arbitrary $\widetilde{\mathcal{E}}$ and $\Gamma$
as we have done here.

There is the following decompletion result.
\begin{thm}
\label{6.2}(i) The natural map $\widehat{K}_{\infty}\otimes_{\widehat{K}_{\infty}^{\la}}\D_{\mathrm{Sen}}(\widetilde{\mathcal{E}})\rightarrow\widetilde{\mathcal{E}}_{k(x_{\infty})}$
is an isomorphism.

(ii) The natural map $\B_{\dR}^{+}(\widehat{K}_{\infty})\otimes_{\B_{\dR}^{+}(\widehat{K}_{\infty})^{\pa}}\D_{\dif}^{+}(\mathcal{\widetilde{E}})\rightarrow\widetilde{\mathcal{E}}_{x_{\infty}}^{\wedge,+}$
is an isomorphism.
\end{thm}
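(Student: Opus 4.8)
The plan is to deduce Theorem 6.3 from the acyclicity results of §5 (Corollary 5.5), in essentially the same way that classical Sen theory and Fontaine's $\B_{\dR}^+$-analogue are proved, but phrased via locally/pro analytic vectors. The two statements (i) and (ii) are instances of the same mechanism: in both cases we have a finite free semilinear representation of $\Gamma$ over a Banach (resp. Fréchet, via $t$-adic filtration) ring $\widetilde{\Lambda}$ satisfying the Tate-Sen axioms, and we want to show the locally (resp. pro) analytic vectors span after base change. Since $K_\infty$ contains an unramified twist of $K_{\cyc}$, the pair $(\Gamma,\widehat{K}_\infty)$ satisfies (TS1)--(TS4) by case 1 after the statement of Corollary 5.5, and likewise $(\Gamma,\widetilde{\B}_I(\widehat{K}_\infty))$ does for suitable $I$; these feed Theorem 5.1.

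For part (i): $M=\widetilde{\mathcal{E}}_{k(x_\infty)}$ is a finite-dimensional $\widehat{K}_\infty$-semilinear representation of $\Gamma$. First I would observe that after replacing $\Gamma$ by a small open subgroup $\Gamma_0$ and choosing a basis, $M$ admits a $G$-stable $\widehat{K}_\infty^+$-lattice with $\Mat(g)\in 1+p^k\Mat_d$ for $g\in\Gamma_0$, with $k$ as large as we like (the topology is $p$-adic, so Remark 5.2 applies). Then Proposition 5.7 (or rather its $H=\{1\}$ degenerate form, i.e. the descent over the $\Lambda_{n}=L(\zeta_{p^n})$-layers) produces the canonical $\widehat{K}_\infty^{\la}$-submodule: concretely $\D_{\Sen}(\widetilde{\mathcal{E}})=M^{\la}$ is free over $\widehat{K}_\infty^{\la}$ of rank $d$, with a basis on which $\gamma-1$ is small, and $\widehat{K}_\infty\otimes_{\widehat{K}_\infty^{\la}}M^{\la}\to M$ is an isomorphism. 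The key inputs are: (a) $M^{\la}$ is $\Gamma$-analytic over $\widehat{K}_\infty^{\la}$ with the right rank (this is Proposition 5.17/5.19 applied here, or directly the Tate-Sen descent of \cite{BC08}), and (b) the base-change map is an isomorphism because the descent data from $\widehat{K}_\infty$ down to the $\Lambda_{n}$-layers, then up to $\widehat{K}_\infty^{\la}=\bigcup_n \Lambda_n$, is an equivalence on lattices. One can also phrase this purely cohomologically: $\R_{\la}^i(M)=0$ for $i\ge 1$ by Corollary 5.5, and $M^{\la}$ is faithfully flat-ish over $\widehat{K}_\infty^{\la}$ so that $\widehat{K}_\infty\otimes_{\widehat{K}_\infty^{\la}}M^{\la}\cong M$; I would present whichever is cleanest, probably the explicit descent.

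For part (ii): $M^+=\widetilde{\mathcal{E}}_{x_\infty}^{\wedge,+}$ is a finite free $\B_{\dR}^+(\widehat{K}_\infty)$-module, and $\B_{\dR}^+(\widehat{K}_\infty)=\varprojlim_n \B_{\dR}^+(\widehat{K}_\infty)/t^n$ is a Fréchet ring with each quotient $\B_{\dR}^+(\widehat{K}_\infty)/t^n$ a Banach $\widehat{K}_\infty$-algebra that is a successive extension of Tate twists of $\widehat{K}_\infty$. The plan is to run part (i) on each graded/filtered piece and then pass to the limit: $M^+/t^n M^+$ is a finite free $(\B_{\dR}^+(\widehat{K}_\infty)/t^n)$-semilinear representation, and an induction on $n$ using the short exact sequences $0\to t^{n}M^+/t^{n+1}M^+\to M^+/t^{n+1}M^+\to M^+/t^n M^+\to 0$ (whose outer terms are twists of the Sen case, handled by (i), so have vanishing higher $\R_{\la}$ and surjective-on-$\la$ behaviour) shows $(M^+/t^nM^+)^{\la}$ is free over $(\B_{\dR}^+(\widehat{K}_\infty)/t^n)^{\la}$ of the right rank with isomorphic base change. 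Taking $\varprojlim_n$ gives $\D_{\dif}^+(\widetilde{\mathcal{E}})=(M^+)^{\pa}$ free over $\B_{\dR}^+(\widehat{K}_\infty)^{\pa}$ and $\B_{\dR}^+(\widehat{K}_\infty)\otimes_{\B_{\dR}^+(\widehat{K}_\infty)^{\pa}}\D_{\dif}^+(\widetilde{\mathcal{E}})\xrightarrow{\sim}M^+$; one must check the inverse limit of the $\R_{\la}^1$-vanishing statements is again a vanishing/surjectivity statement (no $\varprojlim^1$ obstruction), which follows because at each finite level the relevant modules are finite free and the transition maps are surjective on the descended lattices — here Theorem 5.1's \emph{effective} vanishing (each $\D_{H_{n+k},m,\infty}(M)$ maps to $0$ in a later one, Remark 5.24) is what gives the needed Mittag-Leffler control.

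The main obstacle, I expect, is part (ii) and specifically the passage to the limit: one needs the decompletion at each finite level $\B_{\dR}^+/t^n$ to be \emph{uniform} enough in $n$ — the $\Gamma$-analyticity radius of $(M^+/t^nM^+)^{\la}$ and the size of $\gamma-1$ on a good basis could a priori degrade with $n$, which would prevent the limit from being pro analytic of full rank. Controlling this is exactly where one wants the quantitative form of the Tate-Sen machinery (the constants $c_1,c_2,c_3$ and the axiom (TS4)) rather than a soft cohomological vanishing; concretely I would either cite Fontaine's original argument for $\B_{\dR}^+$ in \cite{Fo04} adapted to general $K_\infty$, or, preferably for a self-contained treatment, apply Corollary 5.5 directly to $M^+$ viewed as a representation over $\widetilde{\B}_I(\widehat{K}_\infty)$ for a small closed interval $I\ni (p-1)p^{\Z}$ containing $x_\infty$ — using Proposition 4.9(i)-type identifications $\H^0(\mathcal{X}_I,\widetilde{\mathcal{E}}^{\la})\cong\H^0(\mathcal{X}_I,\widetilde{\mathcal{E}})^{\la}$ and then completing along $\xi=\ker\theta$ at the end. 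I would organize the write-up so that the $\widetilde{\B}_I$-decompletion does the real work and the $\B_{\dR}^+$-statement is extracted by localizing-and-completing, so that the uniformity is inherited from the single ring $\widetilde{\B}_I$ rather than managed level-by-level.
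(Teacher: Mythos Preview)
Your proposal is correct and follows essentially the same route as the paper: part (i) is the Tate--Sen descent of \cite{BC16} (the paper simply cites Th\'eor\`eme~3.4 there), and part (ii) is the d\'evissage along the $I_\theta$-adic filtration using $\R_{\la}^1$-vanishing on the graded pieces, exactly as you outline.

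The one place you overcomplicate matters is the passage to the limit in (ii). Your worry about degradation of analyticity radius with $n$ is unfounded, and the detour through $\widetilde{\B}_I$ is unnecessary here. The paper's argument is cleaner: once $\R_{\la}^1(I_\theta^{n-1}\widetilde{\mathcal{E}}_{x_\infty}/I_\theta^n)=0$ gives surjectivity of $(\widetilde{\mathcal{E}}_{x_\infty}/I_\theta^n)^{\la}\twoheadrightarrow(\widetilde{\mathcal{E}}_{x_\infty}/I_\theta^{n-1})^{\la}$ for every $n$, you can lift a fixed basis of $\D_{\Sen}(\widetilde{\mathcal{E}})$ to $(\widetilde{\mathcal{E}}_{x_\infty}/I_\theta^n)^{\la}$; Nakayama over the local ring $\B_{\dR}^+(\widehat{K}_\infty)/I_\theta^n$ then gives surjectivity of the base-change map at level $n$, and injectivity is an elementary induction on $n$ (divide by a generator $\xi$ of $I_\theta$ and reduce to level $n-1$). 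No uniformity in $n$ is needed because pro-analytic vectors are by definition just compatible systems of locally analytic vectors, and the surjectivity of transition maps already furnishes the compatible lifted basis. The $\widetilde{\B}_I$-decompletion you propose as an alternative is in fact a separate statement in the paper (Proposition~6.3), proved \emph{using} Theorem~6.2 rather than the other way around.
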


\begin{proof}
The fiber $\mathcal{\widetilde{\mathcal{E}}}_{k(x_{\infty})}$ is
a finite dimensional $\widehat{K}_{\infty}$-semilinear representation
of $\Gamma$. So (i) follows from \cite[Théorème 3.4]{BC16}. For
(ii), write $I_{\theta}$ for the maximal ideal of $\B_{\dR}^{+}(\widehat{K}_{\infty})$.
It suffices to prove that for $n\geq1$ the natural map
\[
\left(*\right)\ \ \B_{\dR}^{+}(\widehat{K}_{\infty})/I_{\theta}^{n}\otimes_{(\B_{\dR}^{+}/I_{\theta}^{n})^{\la}}(\mathcal{\widetilde{\mathcal{E}}}_{x_{\infty}}/I_{\theta}^{n})^{\la}\rightarrow\widetilde{\mathcal{E}}_{x_{\infty}}/I_{\theta}^{n}
\]
is an isomorphism. 

By Theorem \ref{5.1} (more precisely, Corollary \ref{5.6}(i)), we
have $\mathrm{R}_{\la}^{1}(I_{\theta}^{n-1}\widetilde{\mathcal{E}}_{x_{\infty}}/I_{\theta}^{n})=0$,
so by devissage the map 
\[
(\mathcal{\widetilde{\mathcal{E}}}_{x_{\infty}}/I_{\theta}^{n})^{\la}\rightarrow(\mathcal{\widetilde{\mathcal{E}}}_{x_{\infty}}/I_{\theta})^{\la}=\D_{\mathrm{Sen}}(\widetilde{\mathcal{E}})
\]
 is surjective. It follows from the case $n=1$ and Nakayama's lemma
that $\left(*\right)$ is surjective too. 

For injectivity, we argue as follows. Let $\overline{e}_{1},...,\overline{e}_{d}$
be a basis of $\D_{\mathrm{Sen}}(\widetilde{\mathcal{E}})$ over the
field $\widehat{K}_{\infty}^{\la}$. By what was just proved, we may
choose a lifting $e_{1},...,e_{d}$ of this basis to $(\mathcal{\widetilde{\mathcal{E}}}_{x_{\infty}}/I_{\theta}^{n})^{\la}$.
Then $1\otimes e_{1},...,1\otimes e_{d}$ generate 
\[
\B_{\dR}^{+}(\widehat{K}_{\infty})/I_{\theta}^{n}\otimes_{(\B_{\dR}^{+}/I_{\theta}^{n})^{\la}}(\mathcal{\widetilde{\mathcal{E}}}_{x_{\infty}}/I_{\theta}^{n})^{\la}
\]
according to Nakayama's lemma. 

Now suppose that 
\[
\sum x_{i}\otimes e_{i}\in\B_{\dR}^{+}(\widehat{K}_{\infty})/I_{\theta}^{n}\otimes_{(\B_{\dR}^{+}/I_{\theta}^{n})^{\la}}(\mathcal{\widetilde{\mathcal{E}}}_{x_{\infty}}/I_{\theta}^{n})^{\la}
\]
is in the kernel of $(*)$, so its image is $0$ mod $I_{\theta}^{n}$.
Choose a generator $\xi$ of $I_{\theta}$. Reducing mod $I_{\theta}$
and using the injectivity of $(*)$ for $n=1$, we get the relation
$\sum\overline{x}_{i}\otimes\overline{e}_{i}=0$. As the $\overline{e}_{i}$
form a basis, each $x_{i}$ must be divisible by $\xi$. Writing $x_{i}=\xi x_{i}'$,
we have 
\[
\sum x_{i}\otimes e_{i}=\sum\xi x_{i}'\otimes e_{i}=\xi\sum x_{i}'\otimes e_{i},
\]
so the image of 
\[
\sum x_{i}'\otimes y_{i}\in\B_{\dR}^{+}(\widehat{K}_{\infty})/I_{\theta}^{n-1}\otimes_{(\B_{\dR}^{+}/I_{\theta}^{n-1})^{\la}}(\mathcal{\widetilde{\mathcal{E}}}_{x_{\infty}}/I_{\theta}^{n-1})^{\la}
\]
 in $\mathcal{\widetilde{\mathcal{E}}}_{x_{\infty}}/I_{\theta}^{n-1}$
is $0$. The injectivity now follows from induction.\textbf{}
\end{proof}
Let $I$ be a closed interval with $\left|\log\left(I\right)\right|<\log(p)$
and let 
\[
\widetilde{M}_{I}=\H^{0}(\mathcal{X}_{I},\widetilde{\mathcal{E}}).
\]
Theorem \ref{5.1} of $\mathsection5$ allows us to prove the following
proposition. In \ref{6.5} below we shall prove a stronger statement.
\begin{prop}
\label{6.3}There are natural isomorphisms $\D_{\mathrm{Sen}}(\mathcal{\widetilde{\mathcal{E}}})\cong\widetilde{M}_{I}^{\la}/(I_{\theta}\widetilde{M}_{I})^{\la}$
and $\D_{\dif}^{+}(\mathcal{\widetilde{\mathcal{E}}})\cong\varprojlim_{n}\widetilde{M}_{I}^{\la}/(I_{\theta}^{n}\widetilde{M}_{I})^{\la}$.
\end{prop}

\begin{proof}
As $I_{\theta}$ is principal, $I_{\theta}\widetilde{M}_{I}$ is finite
free over $\widetilde{\B}_{I}$. By Corollary \ref{5.6}(ii), the
cohomology $\mathrm{R}_{\la}^{1}(I_{\theta}\widetilde{M}_{I})$ vanishes.
Applying $\la$ to the short exact sequence 
\[
0\rightarrow I_{\theta}\widetilde{M}_{I}\rightarrow\widetilde{M}_{I}\rightarrow\widetilde{M}_{I}I_{\theta}/\widetilde{M}_{I}\rightarrow0
\]
we get $\widetilde{M}_{I}^{\la}/(I_{\theta}\widetilde{M}_{I})^{\la}\xrightarrow{\sim}(\widetilde{M}_{I}/I_{\theta}\widetilde{M}_{I})^{\la}=\D_{\Sen}(\mathcal{\mathcal{\widetilde{\mathcal{E}}}})$,
which gives the first isomorphism. By the same argument $\widetilde{M}_{I}^{\la}/(I_{\theta}^{n}\widetilde{M}_{I})^{\la}\xrightarrow{\sim}(\widetilde{M}_{I}/I_{\theta}^{n}\widetilde{M}_{I})^{\la}$
for $n\geq1$. To get the second isomorphism, take the limit over
$n$.
\end{proof}

\subsection{Descent to locally analytic vectors }

In this subsection we will give a proof of Theorem \ref{6.1}. We
continue with the notation of $\mathsection6.1$.

We start with the following key proposition, which builds upon all
of the work done in $\mathsection4$, $\mathsection5$ and the previous
subsections of $\mathsection6$.
\begin{prop}
\label{6.4}Let $I=[r,(p-1)p^{n}]$ be an interval with $n\geq1$
and $\left|\log(I)\right|<\log(p)$. Then the natural map 
\begin{equation}
\widetilde{\B}_{I}\otimes_{\widetilde{\B}_{I}^{\la}}\widetilde{M}_{I}^{\la}\rightarrow\widetilde{M}_{I}
\end{equation}
 is an isomorphism.
\end{prop}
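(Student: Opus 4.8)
The plan is to prove surjectivity and injectivity of $(*)$ separately, using the decompletion results already obtained at the stalk $x_\infty$ together with the flatness result of Proposition 4.1 in the cyclotomic case (and its expected analogue, or a workaround, in general). First I would reduce to the cyclotomic situation: by the Tate--Sen formalism and Lemma 2.4, $\widetilde{M}_I^{\la}$ only depends on an open subgroup of $\Gamma$, and since $K_\infty$ contains an unramified twist of $K_{\cyc}$, after replacing $K$ by a finite extension and twisting we may assume $K_\infty = K_{\cyc}$; here $\widetilde{\B}_I^{\la}$ is a Pr\"ufer domain and $\widetilde{\B}_I^{\la}\to\widetilde{\B}_I$ is flat by Proposition 4.1. (One must check the descent statement is insensitive to the unramified twist, which should follow formally since it does not affect the underlying topological rings and only changes the semilinear action by a smooth character.)

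For surjectivity, the idea is to work at $x_\infty$. Choose a $\widetilde{\B}_I^{\la}$-submodule $N\subset\widetilde{M}_I^{\la}$ generated by finitely many elements lifting a basis of $\D_{\Sen}(\widetilde{\mathcal{E}})\cong\widetilde{M}_I^{\la}/(I_\theta\widetilde{M}_I)^{\la}$ from Proposition 6.4; I want to show $\widetilde{\B}_I\otimes_{\widetilde{\B}_I^{\la}}N\to\widetilde{M}_I$ is surjective. Let $Q$ be its cokernel, a coherent $\widetilde{\B}_I$-module. Completing at the maximal ideal $I_\theta$ (which corresponds to $x_\infty\in\mathcal{X}_I$) and invoking Theorem 6.3(ii) — the decompletion isomorphism $\B_{\dR}^+(\widehat{K}_\infty)\otimes_{\B_{\dR}^+(\widehat{K}_\infty)^{\pa}}\D_{\dif}^+(\widetilde{\mathcal{E}})\xrightarrow{\sim}\widetilde{\mathcal{E}}_{x_\infty}^{\wedge,+}$, together with the identification of $\D_{\dif}^+$ as $\varprojlim_n\widetilde{M}_I^{\la}/(I_\theta^n\widetilde{M}_I)^{\la}$ from Proposition 6.4 — shows $Q$ vanishes after $I_\theta$-adic completion. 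Since $\widetilde{\B}_I$ is a PID, $Q$ is supported at finitely many closed points; but the construction is $\Gamma'$-equivariant for a finite-index $\Gamma'\le\Gamma$, so this finite support is $\Gamma'$-stable, hence a finite $\Gamma$-orbit, and by Proposition 10.1.1 of \cite{FF18} the only such point in $\mathcal{X}$ is $x_\infty$, where we have already shown $Q$ dies. Therefore $Q=0$ and $(*)$ is surjective. (Here one also uses Theorem 5.1 via the devissage in the proof of Theorem 6.3 to see the relevant locally analytic vectors behave well under the filtration by powers of $I_\theta$.)

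For injectivity, I would argue that the surjection $(*)$ is between finite free $\widetilde{\B}_I$-modules of the same rank. The source is free of rank $d=\dim_{\widehat{K}_\infty}\widetilde{\mathcal{E}}_{k(x_\infty)}$ once we know $\widetilde{M}_I^{\la}$ is finite free over $\widetilde{\B}_I^{\la}$ of rank $d$ — which is where flatness of $\widetilde{\B}_I^{\la}\to\widetilde{\B}_I$ enters, allowing us to detect the rank and the freeness after base change along the faithfully flat-on-a-suitable-locus map, combined with Proposition 2.2 applied to a basis of analytic vectors (this requires producing a basis of $\widetilde{M}_I$ in which $\Mat(g)$ is locally analytic, which follows from the decompletion $\D_{H_0,n}^+$-machinery of \S5 transported from the stalk). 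A surjection of finite free modules of equal rank over the domain $\widetilde{\B}_I$ is automatically an isomorphism, which gives injectivity. The main obstacle I anticipate is precisely establishing that $\widetilde{M}_I^{\la}$ is finite free of the expected rank over $\widetilde{\B}_I^{\la}$ — the flatness result Proposition 4.1(iii) is available only in the cyclotomic case and only for intervals $I=[r,(p-1)p^{k-1}]$, which is exactly the shape of interval in the statement, so the reduction to $K_\infty=K_{\cyc}$ in the first step is essential and must be done carefully; without it, one would need the open Question following Proposition 4.1 to be resolved.
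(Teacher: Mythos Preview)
Your overall architecture matches the paper's: reduce to the cyclotomic case, then argue at $x_\infty$ using the completion result (Theorem~6.2, Proposition~6.3) together with the fact that $x_\infty$ is the only point with finite $\Gamma$-orbit. The surjectivity half is essentially the paper's argument, with one caveat: the finitely generated submodule $N$ you pick by lifting a basis of $\D_{\Sen}(\widetilde{\mathcal{E}})$ has no reason to be $\Gamma'$-stable, so the cokernel of $\widetilde{\B}_I\cdot N\hookrightarrow\widetilde{M}_I$ need not be $\Gamma'$-equivariant and the finite-orbit argument does not apply to it. The paper sidesteps this by working with the full map $(*)$ itself: its source $\widetilde{\B}_I\otimes_{\widetilde{\B}_I^{\la}}\widetilde{M}_I^{\la}$ is automatically $\Gamma$-stable, and its cokernel is still finitely generated (as a quotient of the finite free $\widetilde{\B}_I$-module $\widetilde{M}_I$), so the orbit argument goes through.

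The real divergence is in injectivity. You want to show $\widetilde{M}_I^{\la}$ is finite free of rank $d$ over $\widetilde{\B}_I^{\la}$ and then invoke ``surjection of free modules of equal rank over a domain is an isomorphism''. But your justification is circular: knowing $\widetilde{M}_I^{\la}$ is free of rank $d$ is essentially equivalent (via Proposition~2.1) to knowing $\widetilde{M}_I$ has a locally analytic basis, which is exactly surjectivity of $(*)$ --- and flatness does \emph{not} let you descend freeness or detect rank from the base change. Your appeal to the $\D_{H_0,n}^+$-machinery ``transported from the stalk'' is vague; if you could apply Proposition~5.7 directly to $\widetilde{\Lambda}=\widetilde{\B}_I$ to produce a locally analytic basis, you would get the whole proposition in one stroke without ever mentioning $x_\infty$, which is not what you wrote.

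The paper's injectivity argument is both simpler and genuinely different: it does not attempt to prove $\widetilde{M}_I^{\la}$ is free. Instead, $\widetilde{M}_I^{\la}$ is obviously torsionfree over the domain $\widetilde{\B}_I^{\la}$ (being a submodule of $\widetilde{M}_I$), and flatness of $\widetilde{\B}_I^{\la}\to\widetilde{\B}_I$ (Proposition~4.1(iii)) then forces the source of $(*)$ to be torsionfree over the PID $\widetilde{\B}_I$. Combined with the $I_\theta$-adic completion of $(*)$ being an isomorphism, this gives injectivity directly. The role of flatness is thus to propagate \emph{torsionfreeness}, not freeness, to the tensor product --- a much weaker and immediately available conclusion.
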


\begin{proof}
First let us explain how to reduce to the cyclotomic case. After an
unramified twist, which causes no obstructions to descent, we may
assume $K_{\cyc}\subset K_{\infty}$. Set
\[
\widetilde{M}_{I,\cyc}:=\widetilde{M}_{I}^{\Gal(K_{\infty}/K_{\cyc})}
\]
We then have 
\[
\widetilde{M}_{I}\cong\widetilde{\B}_{I}\otimes_{\widetilde{\B}_{I,\cyc}}\widetilde{M}_{I,\cyc}
\]
(see for example \cite[Corollarie 3.2.2]{BC08}), and if the conclusion
of the Proposition holds for the cyclotomic case, we have 
\[
\widetilde{M}_{I,\cyc}\cong\widetilde{\B}_{I,\cyc}\otimes_{\widetilde{\B}_{I,\cyc}^{\la}}\widetilde{M}_{I,\cyc}^{\la}
\]
and hence
\[
\widetilde{M}_{I}\cong\widetilde{\B}_{I}\otimes_{\widetilde{\B}_{I,\cyc}^{\la}}\widetilde{M}_{I,\cyc}^{\la}.
\]
This shows that $\widetilde{M}_{I}$ has a basis of locally analytic
vectors and by Proposition \ref{2.1} the map $(6.1)$ is an isomorphism. 

It remains to establish the proposition in the cyclotomic case where
$\widetilde{\B}_{I}=\widetilde{\B}_{I,\cyc}$. By Proposition \ref{4.2},
$\widetilde{\B}_{I,\cyc}$ is flat as a $\widetilde{\B}_{I,\cyc}^{\la}$-module.
Since $\widetilde{M}_{I,\cyc}^{\la}$ is torsionfree as a $\widetilde{\B}_{I,\cyc}^{\la}$-module,
it follows from \cite[0AXM]{Sta} that $\widetilde{\B}_{I,\cyc}\otimes_{\widetilde{\B}_{I,\cyc}^{\la}}\widetilde{M}_{I,\cyc}^{\la}$
is also torsionfree. By Proposition \ref{6.3}, the completion at
$I_{\theta}\subset\widetilde{\B}_{I,\cyc}$ of $(6.1)$ is nothing
but the map
\[
\B_{\dR}^{+}\otimes_{\B_{\dR}^{+,\pa}}\D_{\dif}^{+}(\widetilde{\mathcal{E}})\rightarrow\widetilde{\mathcal{E}}_{x_{\infty}}^{\wedge,+},
\]
so by Theorem \ref{6.2}, the map $(6.1)$ is an isomorphism at least
after taking this completion. As $\widetilde{\B}_{I,\cyc}$ is a PID
(cf. Proposition \ref{3.1}), it follows that $(6.1)$ is injective
with cokernel supported at finitely many maximal ideals. These maximal
ideals correspond to a finite set of points on $\mathcal{X}$, and
this set must form a finite orbit under the action of $\Gamma$. But
by \cite[Proposition 10.1.1]{FF18}, the only point with finite orbit
under the $\Gamma$-action is $x_{\infty}$! Thus the cokernel of
$(6.1)$ is supported at $I_{\theta}$. But then it must be $0$,
as we have just shown the completion at $I_{\theta}$ is an isomorphism. 
\end{proof}
\emph{Proof of Theorem }\ref{6.1}. Let $U$ be an open subaffinoid
of $\mathcal{X}_{I}$ for $I=[r,(p-1)p^{n}]$. Then we claim that
the natural map
\[
\mathcal{O}_{\mathcal{X}}\left(U\right)\otimes_{\mathcal{O}_{\mathcal{X}}^{\la}\left(U\right)}\H^{0}(U,\mathcal{\widetilde{E}}^{\la})\rightarrow\H^{0}(U,\widetilde{\mathcal{E}})
\]
is an isomorphism. Indeed, we have
\[
\H^{0}(U,\widetilde{\mathcal{E}})\cong\mathcal{O}_{\mathcal{X}}\left(U\right)\otimes_{\widetilde{\B}_{I,\cyc}}\widetilde{M}_{I,\cyc}\cong\mathcal{O}_{\mathcal{X}}\left(U\right)\otimes_{\widetilde{\B}_{I,\cyc}^{\la}}\widetilde{M}_{I,\cyc}^{\la}.
\]

Thus $\H^{0}(U,\widetilde{\mathcal{E}})$ has a basis of locally analytic
elements. By Proposition \ref{2.1}, we have an isomorphism
\[
\mathcal{O}_{\mathcal{X}}\left(U\right)\otimes_{\mathcal{O}_{\mathcal{X}}(U)^{\la}}\H^{0}(U,\widetilde{\mathcal{E}})^{\la}\rightarrow\H^{0}(U,\widetilde{\mathcal{E}}),
\]
from which the claim follows.

Now let $(\mathcal{O}_{\mathcal{X}}\otimes_{\mathcal{O}_{\mathcal{X}}^{\la}}\mathcal{\widetilde{\mathcal{E}}}^{\la})^{\circ}$
be the presheaf on $\mathcal{X}$ sending 
\[
U\mapsto\mathcal{O}_{\mathcal{X}}\left(U\right)\otimes_{\mathcal{O}_{\mathcal{X}}^{\la}\left(U\right)}\mathcal{\widetilde{\mathcal{E}}}^{\la}\left(U\right).
\]
The $\mathcal{X}_{I}$ for various $I$'s of the form $I=[r,(p-1)p^{n}]$
with $|\log(I)|<\log(p)$ give a covering of $\mathcal{X}$, so the
claim shows that the natural map
\[
(\mathcal{O}_{\mathcal{X}}\otimes_{\mathcal{O}_{\mathcal{X}}^{\la}}\mathcal{\widetilde{\mathcal{E}}}^{\la})^{\circ}\rightarrow\widetilde{\mathcal{E}}
\]
is an isomorphism on stalks. Theorem \ref{6.1} follows. $\square$

The proof of Theorem \ref{6.1} essentially shows that $\mathcal{E}$
is quasi-coherent. This leads to a simple interpertation of $\D_{\Sen}$
and $\D_{\dif}^{+}$ in terms of $\mathcal{E}$ as follows. Given
a locally analytic vector bundle define 
\[
\D_{\Sen}\left(\mathcal{E}\right)=\mathcal{E}_{k(x_{\infty})},
\]
the fiber of $\mathcal{E}$ at $x_{\infty}$, and
\[
\D_{\dif}^{+}\left(\mathcal{E}\right)=\widehat{\mathcal{E}}_{x_{\infty}}^{+},
\]
the completed stalk of $\mathcal{E}$ at $x_{\infty}$. These would
not \emph{a priori }be the same as $\D_{\mathrm{Sen}}(\mathcal{\widetilde{E}})$
and $\D_{\dif}^{+}(\mathcal{\widetilde{E}})$, because quotients in
general do not commute with locally analytic vectors, but they do
in this case.
\begin{thm}
\label{6.5}Let $\widetilde{\mathcal{E}}=\mathcal{O}_{\mathcal{X}}\otimes_{\mathcal{O}_{\mathcal{X}}^{\la}}\mathcal{E}$.
There are natural isomorphisms $\D_{\mathrm{Sen}}(\mathcal{\widetilde{E}})\cong\D_{\mathrm{Sen}}(\mathcal{E})$
and $\D_{\dif}^{+}(\mathcal{\widetilde{E}})\cong\D_{\dif}^{+}\left(\mathcal{E}\right)$.
\end{thm}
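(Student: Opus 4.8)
The plan is to combine the local descriptions of $\D_{\Sen}(\widetilde{\mathcal{E}})$ and $\D_{\dif}^{+}(\widetilde{\mathcal{E}})$ from Proposition 6.3 with the descent isomorphism of Theorem 6.1. Fix an interval $I=[r,(p-1)p^{m}]$ with $m\geq1$ and $(p-1)p^{m-1}<r$, so that $x_{\infty}\in\mathcal{X}_{I}$ and $|\log(I)|<\log(p)$, and put $\widetilde{M}_{I}=\H^{0}(\mathcal{X}_{I},\widetilde{\mathcal{E}})$, $M_{I}=\H^{0}(\mathcal{X}_{I},\mathcal{E})$. By Theorem 6.1 we have $\mathcal{E}=\widetilde{\mathcal{E}}^{\la}$, hence $M_{I}=\widetilde{M}_{I}^{\la}$, and the natural map $\widetilde{\B}_{I}\otimes_{\widetilde{\B}_{I}^{\la}}M_{I}\rightarrow\widetilde{M}_{I}$ is an isomorphism; in particular $M_{I}$ is finite free over $\widetilde{\B}_{I}^{\la}$.

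By Proposition 6.3 we have $\D_{\Sen}(\widetilde{\mathcal{E}})\cong M_{I}/(I_{\theta}\widetilde{M}_{I})^{\la}$ and $\D_{\dif}^{+}(\widetilde{\mathcal{E}})\cong\varprojlim_{n}M_{I}/(I_{\theta}^{n}\widetilde{M}_{I})^{\la}$. Since $I_{\theta}^{n}\widetilde{M}_{I}$ is a $\Gamma$-stable $\widetilde{\B}_{I}$-submodule of $\widetilde{M}_{I}$, its locally analytic vectors are $(I_{\theta}^{n}\widetilde{M}_{I})^{\la}=I_{\theta}^{n}\widetilde{M}_{I}\cap M_{I}$, and since $M_{I}$ is free with $\widetilde{M}_{I}=\widetilde{\B}_{I}\otimes_{\widetilde{\B}_{I}^{\la}}M_{I}$, this intersection equals $(I_{\theta}^{n}\widetilde{\B}_{I}\cap\widetilde{\B}_{I}^{\la})M_{I}$. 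Writing $\mathfrak{p}=I_{\theta}\widetilde{\B}_{I}\cap\widetilde{\B}_{I}^{\la}$ and using that $M_{I}$ is finite free to pull the tensor product through the projective limit, we obtain
\[
\D_{\Sen}(\widetilde{\mathcal{E}})\cong M_{I}\otimes_{\widetilde{\B}_{I}^{\la}}(\widetilde{\B}_{I}^{\la}/\mathfrak{p}),\qquad\D_{\dif}^{+}(\widetilde{\mathcal{E}})\cong M_{I}\otimes_{\widetilde{\B}_{I}^{\la}}\varprojlim_{n}(\widetilde{\B}_{I}^{\la}/(I_{\theta}^{n}\widetilde{\B}_{I}\cap\widetilde{\B}_{I}^{\la})).
\]

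Now I would identify the two coefficient rings appearing here as the residue field and the completed local ring of $\mathcal{O}_{\mathcal{X}}^{\la}$ at $x_{\infty}$. Both are Proposition 6.3 applied to $\widetilde{\mathcal{E}}=\mathcal{O}_{\mathcal{X}}$: since then $\widetilde{M}_{I}=\widetilde{\B}_{I}$, it gives $\widetilde{\B}_{I}^{\la}/\mathfrak{p}\cong\D_{\Sen}(\mathcal{O}_{\mathcal{X}})=\widehat{K}_{\infty}^{\la}$ and $\varprojlim_{n}\widetilde{\B}_{I}^{\la}/(I_{\theta}^{n}\widetilde{\B}_{I}\cap\widetilde{\B}_{I}^{\la})\cong\D_{\dif}^{+}(\mathcal{O}_{\mathcal{X}})=\B_{\dR}^{+}(\widehat{K}_{\infty})^{\pa}$, compatibly with the inclusions into $\widehat{K}_{\infty}$ and $\B_{\dR}^{+}(\widehat{K}_{\infty})$. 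As $\widehat{K}_{\infty}^{\la}$ is a field, $\mathfrak{p}$ is the maximal ideal of $\widetilde{\B}_{I}^{\la}$ corresponding to $x_{\infty}$, with residue field $\widehat{K}_{\infty}^{\la}$, and the completed local ring of $\mathcal{O}_{\mathcal{X}}^{\la}$ at $x_{\infty}$ is $\B_{\dR}^{+}(\widehat{K}_{\infty})^{\pa}$. Since $\mathcal{E}$ is locally free over $\mathcal{O}_{\mathcal{X}}^{\la}$ and restricts on $\mathcal{X}_{I}$ to $M_{I}$, its fiber at $x_{\infty}$ is $\D_{\Sen}(\mathcal{E})=\mathcal{E}_{k(x_{\infty})}=M_{I}\otimes_{\widetilde{\B}_{I}^{\la}}\widehat{K}_{\infty}^{\la}$ and its completed stalk is $\D_{\dif}^{+}(\mathcal{E})=\widehat{\mathcal{E}}_{x_{\infty}}^{+}=M_{I}\otimes_{\widetilde{\B}_{I}^{\la}}\B_{\dR}^{+}(\widehat{K}_{\infty})^{\pa}$ (no completion being needed, $M_{I}$ being finite free). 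Comparing with the display above gives the two natural isomorphisms, and functoriality in $\mathcal{E}$ is immediate since every step is.

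The step demanding the most care is this last identification: pinning down what the residue field and completed local ring of the abstract sheaf $\mathcal{O}_{\mathcal{X}}^{\la}$ at $x_{\infty}$ are, verifying that $\mathfrak{p}$ is maximal with residue field $\widehat{K}_{\infty}^{\la}$ (so that forming the fiber and completed stalk of the locally free module $\mathcal{E}$ amounts to the base changes written above), and checking the intersection identity $(I_{\theta}^{n}\widetilde{M}_{I})^{\la}=(I_{\theta}^{n}\widetilde{\B}_{I}\cap\widetilde{\B}_{I}^{\la})M_{I}$; once this bookkeeping is settled the rest is formal. One could instead argue via a basis: a $\widetilde{\B}_{I}^{\la}$-basis $e_{1},\dots,e_{d}$ of $M_{I}$ maps to a basis of $\widetilde{\mathcal{E}}_{k(x_{\infty})}$ over $\widehat{K}_{\infty}$ and of $\widetilde{\mathcal{E}}_{x_{\infty}}^{\wedge,+}$ over $\B_{\dR}^{+}(\widehat{K}_{\infty})$ consisting of locally analytic, resp. pro analytic, vectors in which the matrix $\Mat(g)$ of $\Gamma$ already has entries in $\widetilde{\B}_{I}^{\la}$, so that $g\mapsto\Mat(g)$ is locally analytic, resp. pro analytic; Proposition 2.1 then computes $(\widetilde{\mathcal{E}}_{k(x_{\infty})})^{\la}=\bigoplus_{i}\widehat{K}_{\infty}^{\la}e_{i}=\D_{\Sen}(\mathcal{E})$ and $(\widetilde{\mathcal{E}}_{x_{\infty}}^{\wedge,+})^{\pa}=\bigoplus_{i}\B_{\dR}^{+}(\widehat{K}_{\infty})^{\pa}e_{i}=\D_{\dif}^{+}(\mathcal{E})$.
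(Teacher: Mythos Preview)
Your proof is correct and follows essentially the same approach as the paper: both invoke Proposition~6.3 to express $\D_{\Sen}(\widetilde{\mathcal{E}})$ and $\D_{\dif}^{+}(\widetilde{\mathcal{E}})$ as the quotients $\widetilde{M}_{I}^{\la}/(I_{\theta}^{n}\widetilde{M}_{I})^{\la}$, and then use the freeness of $M_{I}=\widetilde{M}_{I}^{\la}$ (equivalently, the quasicoherence established in the proof of Theorem~6.1) to identify these with the fiber and completed stalk of $\mathcal{E}$ at $x_{\infty}$. Your alternative route via a locally analytic basis and Proposition~2.1 is a clean repackaging of the same ingredients and in fact sidesteps some of the bookkeeping you flag as delicate.
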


\begin{proof}
For $I=[r,(p-1)p^{n}]$ with $|\log(I)|<\log(p)$ write $\widetilde{M}_{I}=\H^{0}(\mathcal{X}_{I},\mathcal{\widetilde{E}})$.
For any sufficiently small $U$ containing $x_{\infty}$, the proof
of Theorem \ref{6.1} shows that 
\[
\H^{0}\left(U,\mathcal{E}\right)\cong\mathcal{O}_{\mathcal{X}}\left(U\right)^{\la}\otimes_{\widetilde{\B}_{I}^{\la}}\widetilde{M}_{I}^{\la}.
\]
It follows that the quotient $\mathcal{E}_{x_{\infty}}/m_{x_{\infty}}^{n}\mathcal{E}_{x_{\infty}}$
of the stalk $\mathcal{E}_{x_{\infty}}$ by the $n$'th power of the
maximal idea $m_{x_{\infty}}\subset\mathcal{O}_{\mathcal{X},x_{\infty}}^{\la}$
is identified with the quotient $\widetilde{M}_{I}^{\la}/(I_{\theta}^{n}\widetilde{M}_{I})^{\la}$.
Now use Proposition \ref{6.3}.
\end{proof}

\section{The comparison with $(\varphi,\Gamma)$-modules}

In this section, we give reminders on $(\varphi,\Gamma)$-modules
and compare them to locally analytic vector bundles. We keep the notation
from $\mathsection6$ and the assumption that $K_{\cyc}^{\eta}\subset K_{\infty}$
for some $\eta$.

\subsection{Galois representations and $(\varphi,\Gamma)$-modules}

Recall the notations from $\mathsection3$ and let 
\[
\widetilde{\B}_{\rig}^{\dagger}=\widetilde{\B}_{\rig}^{\dagger}(\widehat{K}_{\infty})=\varinjlim_{r}\H^{0}(\mathcal{Y}_{[r,\infty)},\mathcal{O}_{\mathcal{Y}})=\varinjlim_{r}\varprojlim_{s\geq r}\H^{0}(\mathcal{Y}_{[r,s]},\mathcal{O}_{\mathcal{Y}})
\]
 be the extended Robba ring. The $(\varphi,\Gamma)$-actions on $\mathcal{Y}$
induce actions on $\widetilde{\B}_{\rig}^{\dagger}$.
\begin{defn}
\label{7.1}A $(\varphi,\Gamma)$-module over $\widetilde{\B}_{\rig}^{\dagger}$
is a finite free $\widetilde{\B}_{\rig}^{\dagger}$-module with commuting
semilinear $(\varphi,\Gamma)$-actions such that in some basis $\Mat\left(\varphi\right)\in\GL_{d}(\widetilde{\B}_{\rig}^{\dagger})$.
\end{defn}

We can compare these objects to $\left(\varphi,\Gamma\right)$-vector
bundles using two functors. On the one hand, if $\widetilde{\mathcal{M}}$
is a $(\varphi,\Gamma)$-vector bundle, then $\widetilde{\M}_{\rig}^{\dagger}=\varinjlim_{r}\H^{0}(\mathcal{Y}_{[r,\infty)},\mathcal{\widetilde{M}})$
is a $(\varphi,\Gamma)$-module. Here, the nontrivial thing one needs
to check is that $\widetilde{\M}_{\rig}^{\dagger}$ is free, and this
follows from $\widetilde{\B}_{\rig}^{\dagger}$ being Bézout (\cite[Theorem 3.20]{Ke04}). 

One the other hand, given a $(\varphi,\Gamma)$-module $\widetilde{\M}_{\rig}^{\dagger}$
we define a $(\varphi,\Gamma)$-vector bundle $\FT(\widetilde{\M}_{\rig}^{\dagger})$
as follows. If $\widetilde{\M}_{\rig}^{\dagger}$ is a $(\varphi,\Gamma)$-module
then for every $r\gg0$ we have a finite free $\widetilde{\B}_{[r,\infty)}$-semilinear
$\Gamma$-representation $\widetilde{\M}_{[r,\infty)}$ together with
isomorphisms
\[
\varphi^{*}\widetilde{\B}_{[r,\infty)}\otimes_{\widetilde{\B}_{[r/p,\infty)}}\widetilde{\M}_{[r/p,\infty)}\xrightarrow{\sim}\widetilde{\M}_{[r,\infty)}
\]
as well as identifications
\[
\widetilde{\B}_{\rig}^{\dagger}\otimes_{\widetilde{\B}_{[r,\infty)}}\widetilde{\M}_{[r,\infty)}\xrightarrow{\sim}\widetilde{\M}_{\rig}^{\dagger}.
\]
Using the isomorphisms $\varphi:\widetilde{\B}_{[r,\infty)}\xrightarrow{\sim}\widetilde{\B}_{[r/p,\infty)}$
we can then uniquely extend this to all $r>0$ by inductivey defining
$\widetilde{\M}_{[r/p^{n},\infty)}$ through the isomorphisms
\[
\varphi^{*}\widetilde{\B}_{[r/p^{n-1},\infty)}\otimes_{\widetilde{\B}_{[r/p^{n},\infty)}}\widetilde{\M}_{[r/p^{n},\infty)}\xrightarrow{\sim}\widetilde{\M}_{[r/p^{n-1},\infty)}.
\]
Setting for every $r>0$
\[
\H^{0}(\mathcal{Y}_{[r,\infty)},\FT(\widetilde{\M}_{\rig}^{\dagger})):=\widetilde{\M}_{[r,\infty)}
\]
and for every $s\geq r$
\[
\H^{0}(\mathcal{Y}_{[r,s]},\FT(\widetilde{\M}_{\rig}^{\dagger})):=\widetilde{\M}_{[r,\infty)}\otimes_{\widetilde{\B}_{[r,\infty)}}\widetilde{\B}_{[r,s]}
\]
we obtain a $(\varphi,\Gamma)$-vector bundle $\FT(\widetilde{\M}_{\rig}^{\dagger})$.
\begin{prop}
\label{7.2}The functors $\widetilde{\mathcal{M}}\mapsto\varinjlim_{r}\H^{0}(\mathcal{Y}_{[r,\infty)},\mathcal{\widetilde{M}})$
and $\FT$ induce an equivalence of categories
\[
\left\{ \text{\ensuremath{\left(\varphi,\Gamma\right)}-vector bundles on \ensuremath{\mathcal{Y}_{(0,\infty)}}}\right\} \cong\left\{ \text{\ensuremath{\left(\varphi,\Gamma\right)}-modules over \ensuremath{\widetilde{\B}_{\rig}^{\dagger}}}\right\} .
\]
\end{prop}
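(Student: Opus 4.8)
The plan is to show the two functors are mutually quasi-inverse by checking that the natural transformations relating $\widetilde{\mathcal{M}}$ to $\FT(\varinjlim_r \H^0(\mathcal{Y}_{[r,\infty)},\widetilde{\mathcal{M}}))$ and $\widetilde{\M}_{\rig}^{\dagger}$ to $\varinjlim_r \H^0(\mathcal{Y}_{[r,\infty)},\FT(\widetilde{\M}_{\rig}^{\dagger}))$ are isomorphisms. The second composition is essentially a tautology by construction: $\FT$ was \emph{defined} so that $\H^0(\mathcal{Y}_{[r,\infty)},\FT(\widetilde{\M}_{\rig}^{\dagger})) = \widetilde{\M}_{[r,\infty)}$ for $r\gg 0$, and $\widetilde{\B}_{\rig}^{\dagger}\otimes_{\widetilde{\B}_{[r,\infty)}}\widetilde{\M}_{[r,\infty)} \xrightarrow{\sim} \widetilde{\M}_{\rig}^{\dagger}$, so taking the colimit recovers $\widetilde{\M}_{\rig}^{\dagger}$ on the nose. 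The only subtlety is checking this identification is compatible with the $\varphi$- and $\Gamma$-structures, which follows from the compatibility built into the glueing data $\varphi^{*}\widetilde{\B}_{[r/p^{n-1},\infty)}\otimes\widetilde{\M}_{[r/p^n,\infty)}\xrightarrow{\sim}\widetilde{\M}_{[r/p^{n-1},\infty)}$.

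For the other direction, the main point is that the functor $\widetilde{\mathcal{M}}\mapsto\widetilde{\M}_{\rig}^{\dagger}$ loses no information: given a $(\varphi,\Gamma)$-vector bundle $\widetilde{\mathcal{M}}$ on $\mathcal{Y}_{(0,\infty)}$, I would first show that the modules $\widetilde{\M}_{[r,\infty)}=\H^0(\mathcal{Y}_{[r,\infty)},\widetilde{\mathcal{M}})$ for $r\gg 0$ can be reconstructed from $\widetilde{\M}_{\rig}^{\dagger}$. This is because $\widetilde{\M}_{[r,\infty)}$ is a finite free $\widetilde{\B}_{[r,\infty)}$-lattice inside $\widetilde{\M}_{\rig}^{\dagger}$ that is stable under $\varphi$ in the appropriate sense, and the theory of Frobenius modules over the extended Robba ring (Kedlaya's slope theory, Theorem 3.20 of \cite{Ke04} and its consequences) shows that such a lattice is unique once it is required to be $\varphi$-stable and to generate $\widetilde{\M}_{\rig}^{\dagger}$ after base change. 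Then for $r' < r$ the module $\widetilde{\M}_{[r',\infty)}$ is recovered by the Frobenius descent $\varphi^{*}\widetilde{\B}_{[r,\infty)}\otimes_{\widetilde{\B}_{[r/p,\infty)}}\widetilde{\M}_{[r/p,\infty)}\xrightarrow{\sim}\widetilde{\M}_{[r,\infty)}$, exactly as in the construction of $\FT$. Since a vector bundle on $\mathcal{Y}_{(0,\infty)}$ is determined by its sections on the cofinal family $\{\mathcal{Y}_{[r,\infty)}\}_{r>0}$ together with the transition maps (each $\mathcal{Y}_{[r,\infty)}$ being affinoid and vector bundles there corresponding to finite free modules by Proposition 3.3(iii)), this shows $\widetilde{\mathcal{M}}\cong\FT(\widetilde{\M}_{\rig}^{\dagger})$ naturally, compatibly with all structures.

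The step I expect to be the main obstacle is the uniqueness of the $\varphi$-stable lattice $\widetilde{\M}_{[r,\infty)}$ inside $\widetilde{\M}_{\rig}^{\dagger}$, i.e.\ showing that the functor $\widetilde{\mathcal{M}}\mapsto\widetilde{\M}_{\rig}^{\dagger}$ is faithful on objects and that $\FT$ really is inverse to it rather than merely right inverse. Concretely: if $N_{[r,\infty)}$ and $N'_{[r,\infty)}$ are two finite free $\widetilde{\B}_{[r,\infty)}$-modules with compatible $\varphi$-structure both giving $\widetilde{\M}_{\rig}^{\dagger}$ after $\otimes\widetilde{\B}_{\rig}^{\dagger}$, one must show the identity on $\widetilde{\M}_{\rig}^{\dagger}$ carries one isomorphically to the other. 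This is where one invokes that $\widetilde{\B}_{[r,\infty)}$ is Bézout (a PID for compact intervals, Bézout in general) together with the Frobenius-equivariance: an element of $\widetilde{\M}_{\rig}^{\dagger}$ lies in $\widetilde{\M}_{[r,\infty)}$ iff its images under $\varphi^{-n}$ are bounded in a suitable sense, which pins the lattice down. I would cite Kedlaya \cite{Ke04} and the discussion of $\widetilde{\B}_{\rig}^{\dagger}$ being Bézout, and otherwise the argument is a formal glueing/descent verification.

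\begin{proof}
By construction, for a $(\varphi,\Gamma)$-module $\widetilde{\M}_{\rig}^{\dagger}$ we have $\H^0(\mathcal{Y}_{[r,\infty)},\FT(\widetilde{\M}_{\rig}^{\dagger}))=\widetilde{\M}_{[r,\infty)}$ for $r\gg 0$, and the identifications $\widetilde{\B}_{\rig}^{\dagger}\otimes_{\widetilde{\B}_{[r,\infty)}}\widetilde{\M}_{[r,\infty)}\xrightarrow{\sim}\widetilde{\M}_{\rig}^{\dagger}$ used to define $\FT$ show, upon passing to the colimit over $r$, that $\varinjlim_{r}\H^0(\mathcal{Y}_{[r,\infty)},\FT(\widetilde{\M}_{\rig}^{\dagger}))\cong\widetilde{\M}_{\rig}^{\dagger}$. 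The Frobenius descent isomorphisms $\varphi^{*}\widetilde{\B}_{[r/p^{n-1},\infty)}\otimes_{\widetilde{\B}_{[r/p^{n},\infty)}}\widetilde{\M}_{[r/p^{n},\infty)}\xrightarrow{\sim}\widetilde{\M}_{[r/p^{n-1},\infty)}$ that are part of the data of $\FT(\widetilde{\M}_{\rig}^{\dagger})$ are, at the level of the colimit, induced from the given $\varphi$-action on $\widetilde{\M}_{\rig}^{\dagger}$; hence this isomorphism is $(\varphi,\Gamma)$-equivariant.

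Conversely, let $\widetilde{\mathcal{M}}$ be a $(\varphi,\Gamma)$-vector bundle on $\mathcal{Y}_{(0,\infty)}$ and set $\widetilde{\M}_{[r,\infty)}=\H^0(\mathcal{Y}_{[r,\infty)},\widetilde{\mathcal{M}})$ and $\widetilde{\M}_{\rig}^{\dagger}=\varinjlim_r\widetilde{\M}_{[r,\infty)}$. By Theorem 3.20 of \cite{Ke04} the ring $\widetilde{\B}_{\rig}^{\dagger}$ is B\'ezout, so $\widetilde{\M}_{\rig}^{\dagger}$ is a finite free module, and the $\varphi$-structure on $\widetilde{\mathcal{M}}$ makes it a $(\varphi,\Gamma)$-module. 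For $r\gg 0$ the module $\widetilde{\M}_{[r,\infty)}$ is a finite free $\widetilde{\B}_{[r,\infty)}$-submodule of $\widetilde{\M}_{\rig}^{\dagger}$ which is $\varphi$-stable (via the isomorphisms $\varphi^{*}\widetilde{\B}_{[r,\infty)}\otimes_{\widetilde{\B}_{[r/p,\infty)}}\widetilde{\M}_{[r/p,\infty)}\xrightarrow{\sim}\widetilde{\M}_{[r,\infty)}$) and generates $\widetilde{\M}_{\rig}^{\dagger}$ over $\widetilde{\B}_{\rig}^{\dagger}$. Such a lattice is uniquely determined inside $\widetilde{\M}_{\rig}^{\dagger}$: since each $\widetilde{\B}_{[r,\infty)}$ is B\'ezout and the Frobenius structure is fixed, two $\varphi$-stable lattices generating $\widetilde{\M}_{\rig}^{\dagger}$ coincide after possibly shrinking $r$, by the slope theory of \cite{Ke04}. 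Therefore $\widetilde{\M}_{[r,\infty)}=\widetilde{\M}_{[r,\infty)}\big(\FT(\widetilde{\M}_{\rig}^{\dagger})\big)$ for $r\gg 0$, and for smaller $r$ both sides are obtained from the large-$r$ values by the same Frobenius descent, so $\widetilde{\M}_{[r,\infty)}\cong\H^0(\mathcal{Y}_{[r,\infty)},\FT(\widetilde{\M}_{\rig}^{\dagger}))$ for all $r>0$, compatibly with the transition maps and the $(\varphi,\Gamma)$-structures.

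Since each $\mathcal{Y}_{[r,\infty)}$ is affinoid and the family $\{\mathcal{Y}_{[r,\infty)}\}_{r>0}$ is cofinal among the affinoids exhausting $\mathcal{Y}_{(0,\infty)}$, a vector bundle on $\mathcal{Y}_{(0,\infty)}$ is determined by the compatible system of its finite free modules of sections on the $\mathcal{Y}_{[r,\infty)}$ (Proposition 3.3(iii)). Hence the isomorphisms just produced assemble into a natural isomorphism $\widetilde{\mathcal{M}}\cong\FT(\widetilde{\M}_{\rig}^{\dagger})$ of $(\varphi,\Gamma)$-vector bundles. This shows the two functors are mutually quasi-inverse, establishing the asserted equivalence of categories.
\end{proof}
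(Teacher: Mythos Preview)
Your overall strategy is the standard one (and indeed the paper's own proof is simply a reference to \cite{SW20}, noting that the $\Gamma$-action causes no extra difficulty). However, there are two technical inaccuracies you should correct.

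First, the spaces $\mathcal{Y}_{[r,\infty)}$ are \emph{not} affinoid: their rings of functions $\widetilde{\B}_{[r,\infty)}=\varprojlim_{s}\widetilde{\B}_{[r,s]}$ are Fr\'echet, not Banach, and Proposition~3.1(iii) in the paper (which you cite as 3.3(iii)) is stated only for compact intervals. What you actually need is that $\mathcal{Y}_{(0,\infty)}$ is exhausted by the affinoids $\mathcal{Y}_{[r,s]}$ and that a vector bundle is determined by the compatible system of finite free $\widetilde{\B}_{[r,s]}$-modules of sections on these (equivalently, use that each $\mathcal{Y}_{[r,\infty)}$ is quasi-Stein). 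This is a routine fix, but as written the justification is incorrect.

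Second, the uniqueness step does not require slope theory. If $N_{[r,\infty)}$ and $N'_{[r,\infty)}$ are two finite free $\widetilde{\B}_{[r,\infty)}$-lattices in $\widetilde{\M}_{\rig}^{\dagger}$ both spanning it over $\widetilde{\B}_{\rig}^{\dagger}$, then the change-of-basis matrix lies in $\GL_d(\widetilde{\B}_{\rig}^{\dagger})=\varinjlim_{r}\GL_d(\widetilde{\B}_{[r,\infty)})$, hence in $\GL_d(\widetilde{\B}_{[r_0,\infty)})$ for some $r_0$; so the two lattices coincide for all $r\geq r_0$, and then the Frobenius-descent relations force agreement for all $r>0$. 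Invoking \cite{Ke04} for the B\'ezout property of $\widetilde{\B}_{\rig}^{\dagger}$ (to get freeness of $\widetilde{\M}_{\rig}^{\dagger}$) is appropriate, but the phrase ``by the slope theory of \cite{Ke04}'' for the lattice uniqueness is misleading and should be replaced by this elementary argument.
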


\begin{proof}
This is well known. See for example the discussion appearing directly
after \cite[Definition 13.4.3]{SW20}. The treatment there is given
in the situation where there is no $\Gamma$-action present, but the
same proof works in our setting.
\end{proof}
The following theorem due to Fontaine and Kedlaya gives the relation
of these objects with Galois representations. To formulate it, we
need to introduce some terminology. Let $y$ be the point of $\mathcal{Y}$
corresponding to $p=0$. A $(\varphi,\Gamma)$-module over over $\widetilde{\B}_{\rig}^{\dagger}$
is called étale if it has a basis for which $\Mat(\varphi)\in\GL_{d}(\mathcal{O}_{\mathcal{Y},y})$.
We also have the notion of a semistable slope 0 vector bundle on $\mathcal{X}$
- we refer the reader to \cite[Définition 5.5.1, Exemple 5.5.2.1]{FF18}.
\begin{thm}
\label{7.3}The following categories are equivalent.

1. Finite dimensional $\Q_{p}$-representations of $G_{K}$.

2. Étale $(\varphi,\Gamma)$-modules over $\widetilde{\B}_{\rig}^{\dagger}$.

3. $\Gamma$-vector bundles on $\mathcal{X}$ which are semistable
of slope $0$.
\end{thm}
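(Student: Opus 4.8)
The plan is to prove the equivalence $1 \Leftrightarrow 2 \Leftrightarrow 3$ by exhibiting explicit functors in both directions and citing the relevant parts of the literature, since each individual equivalence is essentially known. For $1 \Leftrightarrow 2$, I would recall the classical construction of Fontaine: to a $\Q_p$-representation $V$ of $G_K$ one associates $\widetilde{\D}_{\rig}^{\dagger}(V) = (\widetilde{\B}_{\rig}^{\dagger}(\widehat{\overline{K}}) \otimes_{\Q_p} V)^{H_K}$, which is an \'etale $(\varphi,\Gamma)$-module over $\widetilde{\B}_{\rig}^{\dagger}$; the inverse sends a $(\varphi,\Gamma)$-module $\widetilde{\M}$ to $(\widetilde{\B}_{\rig}^{\dagger}(\widehat{\overline{K}}) \otimes_{\widetilde{\B}_{\rig}^{\dagger}} \widetilde{\M})^{\varphi=1}$. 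That these are mutually inverse and that \'etaleness cuts out exactly the image is the content of the theory of $(\varphi,\Gamma)$-modules over the Robba ring, and I would cite Kedlaya (\cite{Ke04}, and the $p$-adic local monodromy circle of ideas) together with the comparison between the Robba ring $\B_{\rig}^{\dagger}$ and the extended Robba ring $\widetilde{\B}_{\rig}^{\dagger}$ given by the overconvergence theorem of Cherbonnier--Colmez (\cite{CC98}); over $\widetilde{\B}_{\rig}^{\dagger}$ the \'etale objects are automatically overconvergent, which streamlines the statement.

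For $2 \Leftrightarrow 3$, I would invoke Proposition 7.3 of this section, which already gives an equivalence between $(\varphi,\Gamma)$-vector bundles on $\mathcal{Y}_{(0,\infty)}$ and $(\varphi,\Gamma)$-modules over $\widetilde{\B}_{\rig}^{\dagger}$, composed with the descent-along-$\varphi$ equivalence of Proposition 3.3 between $\Gamma$-vector bundles on $\mathcal{X}$ and $(\varphi,\Gamma)$-vector bundles on $\mathcal{Y}_{(0,\infty)}$. The only thing to check is that this equivalence matches the \emph{\'etale} $(\varphi,\Gamma)$-modules with the \emph{semistable slope $0$} $\Gamma$-vector bundles. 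This is exactly the classification of $\varphi$-modules / vector bundles on the Fargues--Fontaine curve: a $\varphi$-module over $\widetilde{\B}_{\rig}^{\dagger}$ being \'etale (i.e. admitting a $\varphi$-stable lattice over $\mathcal{O}_{\mathcal{Y},y}$, equivalently being pure of slope $0$ in the sense of the slope filtration) corresponds under $\FT$ to a vector bundle on $\mathcal{X}$ all of whose Harder--Narasimhan slopes are $0$, which is precisely the definition of semistable of slope $0$. I would cite Kedlaya's slope theory and Chapter 9--10 of \cite{FF18}, noting that the dictionary is slope-preserving so that purity on one side translates to semistability on the other, and that the $\Gamma$-action is carried along formally since all the functors involved are $\Gamma$-equivariant.

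Finally I would remark that composing everything gives the stated three-way equivalence, and that under it a representation $V$ goes to the $(\varphi,\Gamma)$-module $\widetilde{\D}_{\rig}^{\dagger}(V)$ and to the vector bundle $\widetilde{\mathcal{E}}(V) = (V \otimes_{\Q_p} \mathcal{O}_{\mathcal{X}(\C_p)})^{H}$ of Example 3.4, consistent with the earlier discussion. The main obstacle is really bookkeeping rather than mathematics: one must be careful that ``\'etale'' is being used in the sense compatible with the extended Robba ring $\widetilde{\B}_{\rig}^{\dagger}$ (as opposed to the classical Robba ring $\B_{\rig}^{\dagger}$ over which one additionally needs the overconvergence input), and that the slope normalizations on $\mathcal{Y}$, on $\widetilde{\B}_{\rig}^{\dagger}$, and on $\mathcal{X}$ all line up so that ``slope $0$'' means the same thing in each incarnation. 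Once the definitions are pinned down, the proof is a citation of \cite{CC98}, \cite{Ke04}, and \cite{FF18} glued together by Propositions 3.3 and 7.3.
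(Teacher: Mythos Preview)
Your proposal is correct and structurally matches the paper's proof: both obtain $2 \Leftrightarrow 3$ by composing Proposition 3.3 with the equivalence between $(\varphi,\Gamma)$-vector bundles on $\mathcal{Y}_{(0,\infty)}$ and $(\varphi,\Gamma)$-modules over $\widetilde{\B}_{\rig}^{\dagger}$ (which is Proposition 7.2, not 7.3 as you wrote), and both treat $1 \Leftrightarrow 2$ as a citation to the literature.

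The only substantive difference is the route taken through the literature for $1 \Leftrightarrow 2$. You go directly via the functor $V \mapsto (\widetilde{\B}_{\rig}^{\dagger}(\widehat{\overline{K}}) \otimes V)^{H_K}$ and invoke Cherbonnier--Colmez together with Kedlaya. The paper instead starts from Fontaine's original equivalence over $\widetilde{\B}$ \cite{Fo90}, then passes to $\widetilde{\B}^{\dagger}$ by what it calls ``a relatively elementary argument'' (citing \cite{Ke15} or \cite{dSP19}), and finally from $\widetilde{\B}^{\dagger}$ to $\widetilde{\B}_{\rig}^{\dagger}$ via \cite{Ke04}. The point the paper is implicitly making is that over the \emph{perfected} rings one does not need the Cherbonnier--Colmez theorem at all---overconvergence for $\widetilde{\B}$ is much easier than for $\B$---so your invocation of \cite{CC98} here is not wrong but is heavier machinery than required. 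Your treatment of the slope-matching (\'etale $\leftrightarrow$ semistable of slope $0$) is more explicit than the paper's, which simply folds this into the reference to \cite[11.2.24]{FF18}.
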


\begin{proof}
The equivalence of 2 and 3 follows from Proposition \ref{7.2} and
Proposition \ref{3.3}. The category in 1 is equivalent to $(\varphi,\Gamma)$-modules
over $\widetilde{\B}=\widehat{\mathcal{O}}_{\mathcal{Y},y}[1/p]$,
where $\widehat{\mathcal{O}}_{\mathcal{Y},y}$ is the $p$-adic completion
of $\mathcal{O}_{\mathcal{Y},y}$, by the theorem of Fontaine \cite[Théorème 3.4.3 and Remarque 3.44 (c)]{Fo90}.
Next, by a relatively elementary argument, this category is equivalent
to the category of $(\varphi,\Gamma)$-modules over $\widetilde{\B}^{\dagger}$,
see for example \cite[Theorem 2.4.5]{Ke15} or \cite[Theorem 4.3]{dSP19}.
Finally, one can replace $\widetilde{\B}^{\dagger}$ by $\widetilde{\B}_{\rig}^{\dagger}$
by \cite[Proposition 5.11, Corollary 5.12]{Ke04}. See also \cite[Proposition 11.2.24]{FF18}.
\end{proof}

\subsection{The comparison with locally analytic vector bundles}

Let $\widetilde{\B}_{\rig}^{\dagger,\pa}$ be the subring of pro-analytic
vectors in $\widetilde{\B}_{\rig}^{\dagger}$ for the action of $\Gamma$.
We have a corresponding version of $(\varphi,\Gamma)$-modules.
\begin{defn}
\label{7.4}A $(\varphi,\Gamma)$-module $\M_{\rig}^{\dagger}$ over
$\widetilde{\B}_{\rig}^{\dagger,\pa}$ is a finite free $\widetilde{\B}_{\rig}^{\dagger,\pa}$-module
with commuting semilinear $(\varphi,\Gamma)$-actions such that in
some basis $\Mat(\varphi)\in\GL_{d}(\widetilde{\B}_{\rig}^{\dagger,\pa})$,
and such that the action of $\Gamma$ is pro-analytic. It is étale
if $\widetilde{\B}_{\rig}^{\dagger}\otimes_{\widetilde{\B}_{\rig}^{\dagger,\pa}}\mathscr{\M_{\rig}^{\dagger}}$
is so.
\end{defn}

The following theorem explains the relationship between $(\varphi,\Gamma)$-modules
and locally analytic vector bundles.
\begin{thm}
\label{7.5}The following categories are all equivalent.

1. $(\varphi,\Gamma)$-modules over $\widetilde{\B}_{\rig}^{\dagger}$.

2. $(\varphi,\Gamma)$-modules over $\widetilde{\B}_{\rig}^{\dagger,\pa}$.

3. $(\varphi,\Gamma)$-vector bundles over $\mathcal{Y}_{(0,\infty)}$.

4. Locally analytic $\varphi$-vector bundles on $\mathcal{Y}_{(0,\infty)}$.

5. $\Gamma$-vector bundles on $\mathcal{X}$.

6. Locally analytic vector bundles on $\mathcal{X}$.

\end{thm}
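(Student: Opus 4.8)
The plan is to prove Theorem 7.7 by assembling equivalences already established in the paper plus one new decompletion statement at the level of the Robba ring. The equivalences $3 \cong 5$ (Proposition 3.3) and $5 \cong 6$ (Theorem 6.1) are already in hand, and $1 \cong 3$ is Proposition 7.3. Similarly $3 \cong 4$ should follow by the same descent argument as in Theorem 6.1, carried out on $\mathcal{Y}_{(0,\infty)}$ rather than $\mathcal{X}$: the sheaf $\mathcal{O}_{\mathcal{Y}_{(0,\infty)}}^{\la}$ was constructed in $\mathsection4.1$ together with its $c_\varphi$, and the only point of $\mathcal{Y}_{(0,\infty)}$ with finite $\Gamma$-orbit is still $x_\infty$, so the geometric argument transplants verbatim; Proposition 4.6(i)--(ii) already records that $\H^0(\mathcal{Y}_I, \widetilde{\mathcal{M}}^{\la})$ computes the expected (pro-)analytic vectors. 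So the genuinely new content is $1 \cong 2$, i.e. that a $(\varphi,\Gamma)$-module over $\widetilde{\B}_{\rig}^{\dagger}$ descends uniquely to one over the pro-analytic subring $\widetilde{\B}_{\rig}^{\dagger,\pa}$.

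First I would set up the functors for $1 \cong 2$: in one direction, base change $\M_{\rig}^{\dagger} \mapsto \widetilde{\B}_{\rig}^{\dagger} \otimes_{\widetilde{\B}_{\rig}^{\dagger,\pa}} \M_{\rig}^{\dagger}$; in the other, $\widetilde{\M}_{\rig}^{\dagger} \mapsto (\widetilde{\M}_{\rig}^{\dagger})^{\pa}$, the pro-analytic vectors for the $\Gamma$-action (the $\varphi$-structure is preserved since $\varphi$ commutes with $\Gamma$). To see that the second functor lands in finite free $\widetilde{\B}_{\rig}^{\dagger,\pa}$-modules and that the adjunction maps are isomorphisms, I would work interval by interval: for a closed interval $I = [r,(p-1)p^{n}]$ with $|\log I| < \log p$, set $\widetilde{M}_I = \widetilde{\B}_I \otimes_{\widetilde{\B}_{\rig}^{\dagger}} \widetilde{\M}_{\rig}^{\dagger}$, and invoke Proposition 6.5, which says exactly that $\widetilde{\B}_I \otimes_{\widetilde{\B}_I^{\la}} \widetilde{M}_I^{\la} \xrightarrow{\sim} \widetilde{M}_I$; hence $\widetilde{M}_I$ has a basis of locally analytic vectors, and by Proposition 2.1 so does its $\la$-part as a $\widetilde{\B}_I^{\la}$-module. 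Taking the inverse limit over $s \geq r$ and then the colimit over $r \to 0$, and using Proposition 4.6 to identify $\H^0$ of $\widetilde{\mathcal{M}}^{\la}$ over the relevant opens with pro-analytic vectors, gives that $(\widetilde{\M}_{\rig}^{\dagger})^{\pa}$ is finite free over $\widetilde{\B}_{\rig}^{\dagger,\pa}$ with $\widetilde{\B}_{\rig}^{\dagger} \otimes_{\widetilde{\B}_{\rig}^{\dagger,\pa}} (\widetilde{\M}_{\rig}^{\dagger})^{\pa} \xrightarrow{\sim} \widetilde{\M}_{\rig}^{\dagger}$. Fully faithfulness in both directions then follows from Proposition 2.1 (morphisms of free modules with analytic transition matrices are determined by their action on the analytic parts), and one checks the $\varphi$-semilinear maps match up; compatibility of étaleness is built into Definition 7.6.

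The main obstacle I expect is the passage from a fixed compact interval to the full Robba ring, i.e. compatibility of the decompletion $\widetilde{M}_I \mapsto \widetilde{M}_I^{\la}$ with restriction to sub-intervals and with the Frobenius identifications $\varphi: \widetilde{\B}_{pI} \xrightarrow{\sim} \widetilde{\B}_I$, so that the pieces $\widetilde{M}_I^{\la}$ glue to a finite free module over $\widetilde{\B}_{\rig}^{\dagger,\pa}$ rather than merely an increasing union with no coherent basis. This is really the statement that pro-analytic vectors are compatible with the colimit-of-limits defining $\widetilde{\B}_{\rig}^{\dagger}$, which is where one needs the uniformity in Proposition 6.5 (it holds for every admissible $I$ with the same $\widetilde{\B}_I$-flatness input from Proposition 4.1) together with the observation that $\varphi$ intertwines the $\Gamma$-actions and hence matches $\la$-vectors on $\mathcal{Y}_{pI}$ with $\la$-vectors on $\mathcal{Y}_I$. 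Once the gluing is done, uniqueness of the descent follows formally: any two finite free $\widetilde{\B}_{\rig}^{\dagger,\pa}$-lattices inside $\widetilde{\M}_{\rig}^{\dagger}$ on which $\Gamma$ acts pro-analytically must both equal $(\widetilde{\M}_{\rig}^{\dagger})^{\pa}$.

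Having the chain $1 \cong 2$, $1 \cong 3$, $3 \cong 4$, $3 \cong 5$, $5 \cong 6$, I would conclude that all six categories are equivalent, with the composite equivalence $6 \cong 2$ given explicitly by $\mathcal{E} \mapsto \varinjlim_r \H^0(\mathcal{Y}_{[r,\infty)}, \pi^* \mathcal{E})$ where $\pi: \mathcal{Y}_{(0,\infty)} \to \mathcal{X}$ is the projection — i.e. pull back along $\pi$, which is a $\Gamma$-equivariant local isomorphism compatible with the $\la$-structures by the discussion in $\mathsection4.1$, then take global sections over the affinoid exhaustion.
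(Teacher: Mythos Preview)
Your overall plan matches the paper's: the equivalences $1 \Leftrightarrow 3 \Leftrightarrow 5$ come from Propositions 7.2 and 3.3, $5 \Leftrightarrow 6$ is Theorem 6.1, $3 \Leftrightarrow 4$ is its $\mathcal{Y}_{(0,\infty)}$-analogue, and $4 \Leftrightarrow 6$ is handled like Proposition 3.3. The remaining link to category 2 is where your route diverges from the paper's.

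You go for $1 \Leftrightarrow 2$ directly, by taking pro-analytic vectors interval-by-interval via Proposition 6.4 and then pushing through the colimit-of-inverse-limits defining $\widetilde{\B}_{\rig}^{\dagger}$; the paper instead proves $2 \Leftrightarrow 4$, using the Frobenius-trick functor $\FT$ and its inverse $\mathcal{M} \mapsto \mathcal{M}_{\rig}^{\dagger} = \varinjlim_r \H^0(\mathcal{Y}_{[r,\infty)},\mathcal{M})$. Either way the crux is the same claim: the resulting $\widetilde{\B}_{\rig}^{\dagger,\pa}$-module is finite free. You correctly flag the obstacle --- gluing the free pieces $\widetilde{M}_I^{\la}$ into a module with a coherent global basis --- but you do not actually resolve it: the uniformity of Proposition 6.4 and the $\varphi$-equivariance you invoke do not by themselves produce compatible bases across the inverse system of intervals $[r,s]$ with $s \to \infty$. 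The paper bypasses gluing entirely. After reducing to the cyclotomic case, it invokes Berger's Theorem B from \cite{Be13}, which identifies $\widetilde{\B}_{\rig}^{\dagger,\pa}$ explicitly as the increasing union $\bigcup_{n\geq 0} \varphi^{-n}(\B_{\rig,\cyc}^{\dagger})$; since each $\varphi^{-n}(\B_{\rig,\cyc}^{\dagger})$ is a B\'ezout domain, freeness follows from the structure theory of finitely generated torsion-free modules. This ring-theoretic identification is exactly the input your gluing argument lacks, and it is what makes the paper's route cleaner.
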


\begin{proof}
The equivalences $1\Leftrightarrow3\Leftrightarrow5$ are Proposition
\ref{7.2} and Proposition \ref{3.3}. $4\Leftrightarrow6$ is similar
to Proposition \ref{3.3}. The proof of $5\Leftrightarrow6$ was given
in Theorem \ref{6.1}, and $3\Leftrightarrow4$ can be proved in a
similar way. It remains to give an equivalence between 2 and 4. The
Frobenius trick functor of $\mathsection7.1$ induces a functor
\[
\FT:\{(\varphi,\Gamma)\text{-modules over }\widetilde{\B}_{\rig}^{\dagger,\pa}\}\rightarrow\{\text{Locally analytic \ensuremath{\varphi}-vector bundles on \ensuremath{\mathcal{Y}_{(0,\infty)}}}\}.
\]
In the other direction we map a locally analytic $\varphi$-vector
bundle $\mathcal{M}$ to $\mathcal{M}_{\rig}^{\dagger}=\varinjlim_{r}\H^{0}(\mathcal{Y}_{[r,\infty)},\mathcal{M})$.
It is easy to check from the definitions these two are inverses to
each other once we know that $\mathcal{M}\mapsto\mathcal{M}_{\rig}^{\dagger}$
is valued in the correct category. So it remains to prove the following.

\textbf{Claim. $\mathcal{M}_{\rig}^{\dagger}$ }is $(\varphi,\Gamma)$-module
over $\widetilde{\B}_{\rig}^{\dagger,\pa}$.

\emph{Proof of the claim. }We only need to explain why $\mathcal{M}_{\rig}^{\dagger}$
is a free $\widetilde{\B}_{\rig}^{\dagger,\pa}$-module. Since we
can always descend along unramified extensions, we may assume $K_{\cyc}\subset K_{\infty}$.
Then $\mathcal{M}$ and $\mathcal{M}_{\rig}^{\dagger}$ are both base
changed from their cyclotomic counterparts $\mathcal{M}^{\Gal(K_{\infty}/K_{\cyc})}$
and $\mathcal{M}_{\rig}^{\dagger,\Gal(K_{\infty}/K_{\cyc})}$, so
we reduce to the cyclotomic case. 

To deal with this case, recall the rings $\B_{I,\cyc}$ from $\mathsection4$.
The (cyclotomic) Robba ring is defined as 
\[
\B_{\rig,\cyc}^{\dagger}=\varinjlim_{r}\varprojlim_{s\geq r}\B_{[r,s],\cyc}.
\]
The maps $\B_{[r,s],\cyc}\hookrightarrow\widetilde{\B}_{I,\cyc}$
of $\mathsection4$ induce an embedding $\B_{\rig,\cyc}^{\dagger}\hookrightarrow\widetilde{\B}_{\rig,\cyc}^{\dagger}=\widetilde{\B}_{\rig}^{\dagger}(\widehat{K}_{\cyc})$.
By \cite[Theorem B]{Be16} we have 
\[
\widetilde{\B}_{\rig}^{\dagger,\pa}=\bigcup_{n\geq0}\varphi^{-n}(\B_{\rig,\cyc}^{\dagger}),
\]
and since each $\varphi^{-n}(\B_{\rig,\cyc}^{\dagger})$ is a Bézout
domain (\cite{La62}), the conclusion follows. 
\end{proof}
In particular, we recover a decompletion result entirely phrased in
terms of $(\varphi,\Gamma)$-modules:
\[
\{(\varphi,\Gamma)\text{-modules over }\widetilde{\B}_{\rig}^{\dagger}\}\cong\{(\varphi,\Gamma)\text{-modules over }\widetilde{\B}_{\rig}^{\dagger,\pa}\}.
\]

This result recovers the decompletion theorem of Cherbonnier-Colmez
\cite{CC98} and Kedlaya \cite{Ke04}.
\begin{thm}
\label{7.6}If $K_{\infty}=K_{\cyc}$, base extension induces an equivalence
of categories
\[
\{(\varphi,\Gamma)\text{-modules over }\B_{\rig,\cyc}^{\dagger}\}\cong\{(\varphi,\Gamma)\text{-modules over }\widetilde{\B}_{\rig,\cyc}^{\dagger}\}.
\]
\end{thm}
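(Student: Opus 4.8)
The plan is to obtain the statement from Theorem 7.5 by one further step of Frobenius descent. Specializing Theorem 7.5 to $K_{\infty}=K_{\cyc}$, the equivalence between categories $1$ and $2$ there identifies (by base extension) $(\varphi,\Gamma)$-modules over $\widetilde{\B}_{\rig}^{\dagger,\pa}$ with $(\varphi,\Gamma)$-modules over $\widetilde{\B}_{\rig,\cyc}^{\dagger}$; since a composite of base-extension functors is again a base-extension functor, it suffices to show that base extension induces an equivalence
\[
\{(\varphi,\Gamma)\text{-modules over }\B_{\rig,\cyc}^{\dagger}\}\cong\{(\varphi,\Gamma)\text{-modules over }\widetilde{\B}_{\rig}^{\dagger,\pa}\}.
\]
By Berger's Theorem B (recalled in the proof of Theorem 7.5) one has $\widetilde{\B}_{\rig}^{\dagger,\pa}=\bigcup_{n\geq0}\varphi^{-n}(\B_{\rig,\cyc}^{\dagger})$, an increasing union of $\Gamma$-stable B\'ezout domains $R_{n}:=\varphi^{-n}(\B_{\rig,\cyc}^{\dagger})$, with $R_{0}=\B_{\rig,\cyc}^{\dagger}$, on which $\varphi$ restricts to an isomorphism $R_{n+1}\xrightarrow{\sim}R_{n}$; in particular each inclusion $R_{0}\hookrightarrow R_{n}$ is finite free of rank $p^{n}$.

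The first ingredient is ordinary Frobenius descent along the tower $R_{0}\subset R_{1}\subset\cdots$: for every $n$, base extension is an equivalence between $(\varphi,\Gamma)$-modules over $R_{0}$ and those over $R_{n}$, with quasi-inverse $\mathcal{M}\mapsto\varphi^{n}(\mathcal{M})$. Concretely, if $\mathcal{M}$ is a $(\varphi,\Gamma)$-module over $R_{n}$ with basis $e_{1},\dots,e_{d}$ and $G:=\Mat(\varphi)\in\GL_{d}(R_{n})$, then $\varphi^{n}(\mathcal{M})$ is the $R_{0}$-submodule of $\mathcal{M}$ with basis $\varphi^{n}(e_{1}),\dots,\varphi^{n}(e_{d})$; using $\varphi^{n}\colon R_{n}\xrightarrow{\sim}R_{0}$ and the invertibility of $G$ one checks that $\varphi^{n}(\mathcal{M})$ is stable under $\varphi$ (with matrix $\varphi^{n}(G)\in\GL_{d}(R_{0})$) and under $\Gamma$ (since $\Gamma$ commutes with $\varphi$, preserves $\mathcal{M}$, and $R_{0}$ is $\Gamma$-stable), and that the natural map $R_{n}\otimes_{R_{0}}\varphi^{n}(\mathcal{M})\to\mathcal{M}$ is an isomorphism (surjective because $G$ is invertible, injective after passing to the fraction field of $R_{n}$). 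The opposite composite $\varphi^{n}(R_{n}\otimes_{R_{0}}D)\cong D$ is checked the same way.

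Granting this, essential surjectivity follows. Given a $(\varphi,\Gamma)$-module $\mathcal{M}$ over $\widetilde{\B}_{\rig}^{\dagger,\pa}=\bigcup_{n}R_{n}$, fix a basis; the finitely many entries of $\Mat(\varphi)$, of $\Mat(\varphi)^{-1}$ and of $\Mat(\gamma)$ for $\gamma$ in a finite topological generating set of $\Gamma$ all lie in a single $R_{N}$, so the $R_{N}$-span $M_{N}$ of this basis is a $(\varphi,\Gamma)$-module over $R_{N}$ with $\widetilde{\B}_{\rig}^{\dagger,\pa}\otimes_{R_{N}}M_{N}=\mathcal{M}$ (the $\Gamma$-stability of $M_{N}$ uses that it is a closed submodule stable under a dense subgroup of $\Gamma$); descending $M_{N}$ to a $(\varphi,\Gamma)$-module $D$ over $R_{0}=\B_{\rig,\cyc}^{\dagger}$ by the previous paragraph yields $\widetilde{\B}_{\rig}^{\dagger,\pa}\otimes_{R_{0}}D\cong\mathcal{M}$. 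For full faithfulness it again suffices, composing with Theorem 7.5, to treat the inclusion $R_{0}\hookrightarrow\widetilde{\B}_{\rig}^{\dagger,\pa}$: a morphism $f$ between the base changes of $(\varphi,\Gamma)$-modules $D_{1},D_{2}$ over $R_{0}$ has a matrix $[f]$ with entries in some $R_{N}$, and $\varphi$-equivariance gives $\varphi([f])=G_{2}^{-1}[f]G_{1}$ with $G_{i}=\Mat(\varphi)\in\GL_{d}(R_{0})$; since $\varphi([f])\in\mathrm{M}_{d}(R_{N-1})$ this forces $[f]=G_{2}\,\varphi([f])\,G_{1}^{-1}\in\mathrm{M}_{d}(R_{N-1})$, and iterating down to $n=0$ gives $[f]\in\mathrm{M}_{d}(R_{0})$, so $f$ descends; faithfulness is clear from $R_{0}\hookrightarrow\widetilde{\B}_{\rig}^{\dagger,\pa}$. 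The only genuine work is in making the Frobenius-descent step precise and in checking that all the relevant matrix entries lie in one $R_{N}$ — in particular the $\Gamma$-stability of $M_{N}$ — but no idea beyond Theorem 7.5 and Berger's description of $\widetilde{\B}_{\rig}^{\dagger,\pa}$ enters.
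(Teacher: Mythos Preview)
Your proof is correct and follows essentially the same route as the paper: reduce via Theorem 7.5 to the inclusion $\B_{\rig,\cyc}^{\dagger}\hookrightarrow\widetilde{\B}_{\rig}^{\dagger,\pa}=\bigcup_{n}\varphi^{-n}(\B_{\rig,\cyc}^{\dagger})$, then observe that any $(\varphi,\Gamma)$-module over the union is defined over some $R_{N}$ and that applying $\varphi^{N}$ to a basis descends it to $\B_{\rig,\cyc}^{\dagger}$. The paper states this more tersely and does not spell out full faithfulness or the $\Gamma$-stability of $M_{N}$, so your version is simply a more detailed rendering of the same argument.
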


\begin{proof}
If $M$ is a $\left(\varphi,\Gamma\right)$-module over $\widetilde{\B}_{\rig,\cyc}^{\dagger,\pa}=\bigcup_{n}\varphi^{-n}(\B_{\rig,\cyc}^{\dagger})$
then there exists $n\gg0$ such that $M$ is defined over $\text{\ensuremath{\varphi^{-n}}(\ensuremath{\B_{\rig,\cyc}^{\dagger}})}$.
If $e_{1},...,e_{d}$ is a basis of $M$ then $\varphi^{n}(e_{1}),...,\varphi^{n}(e_{d})$
is a basis defined over $\B_{\rig,\cyc}^{\dagger}$. Therefore the
category of $\left(\varphi,\Gamma\right)$-modules over $\B_{\rig,\cyc}^{\dagger}$
is equivalent to the category of $\left(\varphi,\Gamma\right)$-modules
over $\widetilde{\B}_{\rig,\cyc}^{\dagger,\pa}$. But this latter
category is equivalent to $(\varphi,\Gamma)$-modules over $\widetilde{\B}_{\rig,\cyc}^{\dagger}$
by Theorem \ref{7.5}.
\end{proof}

\section{Locally analytic vector bundles and $p$-adic differential equations}

\subsection{Modifications of locally analytic vector bundles}

We first introduce the following category. It is the locally analytic
version of Berger's category of $\B$-pairs, see \cite{Be08A}.
\begin{defn}
\label{8.1}A locally analytic $\B$-pair is a pair $\mathcal{W}=(\mathcal{W}_{e},W_{\dR}^{+})$
where $\mathcal{W}_{e}$ is a locally free $\mathcal{O}_{\mathcal{X}\text{-}\left\{ \infty\right\} }^{\la}=\mathcal{O}_{\mathcal{X}}^{\la}|_{\mathcal{\mathcal{X}\text{-}\left\{ \infty\right\} }}$-module
with a semilinear $\Gamma$-action and $W_{\dR}^{+}\subset\B_{\dR}^{\pa}\otimes_{\mathcal{O}_{\mathcal{X}\text{-}\left\{ \infty\right\} }^{\la}}\mathcal{W}_{e}$
is a $\Gamma$-stable $\B_{\dR}^{+,\pa}$-lattice.
\end{defn}

\begin{prop}
\label{8.2}The functor from locally analytic vector bundles to locally
analytic $\B$-pairs mapping $\mathcal{E}$ to $(\mathscr{\mathcal{E}}|_{\mathcal{X}\text{-}\left\{ \infty\right\} },\D_{\dif}^{+}(\mathcal{E}))$
is an equivalence of categories.
\end{prop}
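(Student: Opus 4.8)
The plan is to build an explicit quasi-inverse by gluing, along the point $x_{\infty}$, the $\mathcal{O}_{\mathcal{X}-x_{\infty}}^{\la}$-module $\mathcal{W}_{e}$ with the lattice $W_{\dR}^{+}$ in the completed stalk, in the spirit of the Beauville--Laszlo lemma. Concretely, given a locally analytic pair $\mathcal{W}=(\mathcal{W}_{e},W_{\dR}^{+})$, I would cover $\mathcal{X}$ by $\mathcal{X}-x_{\infty}$ and a small closed $\mathcal{X}_{I}$ with $x_{\infty}\in\mathcal{X}_{I}$ (so $I$ meets $(p-1)p^{\Z}$ and $|\log(I)|<\log(p)$), and let $I_{\theta}\subset\widetilde{\B}_{I}^{\la}$ be the maximal ideal cutting out $x_{\infty}$. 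Applying Proposition 6.3 to $\mathcal{O}_{\mathcal{X}}$ identifies $\varprojlim_{n}\widetilde{\B}_{I}^{\la}/(I_{\theta}^{n})^{\la}$ with $\B_{\dR}^{+,\pa}$, so there is a natural localization-and-completion map $\widetilde{\B}_{I}^{\la}\rightarrow\B_{\dR}^{+,\pa}$. Then define a sheaf $\mathcal{E}(\mathcal{W})$ on $\mathcal{X}$ by $\mathcal{E}(\mathcal{W})|_{\mathcal{X}-x_{\infty}}=\mathcal{W}_{e}$ and by letting $\H^{0}(\mathcal{X}_{I},\mathcal{E}(\mathcal{W}))$ be the fiber product of $\H^{0}(\mathcal{X}_{I}-x_{\infty},\mathcal{W}_{e})$ with $W_{\dR}^{+}$ over $\B_{\dR}^{\pa}\otimes_{\mathcal{O}_{\mathcal{X}-x_{\infty}}^{\la}}\mathcal{W}_{e}$; these agree on $\mathcal{X}_{I}-x_{\infty}$, where both give $\mathcal{W}_{e}$, so they glue. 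The $\Gamma$-action is transported along, and continuity in the locally analytic topology is automatic since every object here is assembled from locally and pro analytic vectors.

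The substantive point is to show that $\mathcal{E}(\mathcal{W})$ is locally free over $\mathcal{O}_{\mathcal{X}}^{\la}$ of the same rank as $\mathcal{W}_{e}$; this is a Beauville--Laszlo descent over the non-Noetherian ring $\widetilde{\B}_{I}^{\la}$. I would argue exactly as in the proof of Proposition 6.5: after an unramified twist one may assume $K_{\cyc}\subset K_{\infty}$ and reduce to the cyclotomic case. There $\widetilde{\B}_{I,\cyc}^{\la}$ is a Pr\"ufer domain by Proposition 4.1 whose completed stalk at $x_{\infty}$ is the discrete valuation ring $\B_{\dR}^{+,\pa}(\widehat{K}_{\cyc})$; consequently the fiber product above is a finitely generated torsionfree, hence locally free, $\widetilde{\B}_{I,\cyc}^{\la}$-module of the correct rank, whose completion at $I_{\theta}$ is $W_{\dR}^{+}$. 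I expect this to be the main obstacle: it is exactly here that possible non-flatness of the map $\widetilde{\B}_{I}^{\la}\rightarrow\B_{\dR}^{+,\pa}$ could otherwise block the gluing, and the way around it is precisely the Pr\"ufer/PID input of Proposition 4.1 together with the computations at $x_{\infty}$ from \S6, just as for Theorem 6.1.

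It then remains to verify that $\mathcal{W}\mapsto\mathcal{E}(\mathcal{W})$ is a quasi-inverse to the functor of the statement. Starting from a locally analytic vector bundle $\mathcal{E}$, its restriction to $\mathcal{X}-x_{\infty}$ is unchanged, while by the last theorem of \S6 its completed stalk at $x_{\infty}$ is $\D_{\dif}^{+}(\mathcal{E})$; so reassembling recovers $\mathcal{E}$, using that, by the quasi-coherence observed after the proof of Theorem 6.1, a locally analytic vector bundle is determined by its restriction to $\mathcal{X}-x_{\infty}$ together with its completed stalk at $x_{\infty}$. Starting from a locally analytic pair $\mathcal{W}$, one has $\mathcal{E}(\mathcal{W})|_{\mathcal{X}-x_{\infty}}=\mathcal{W}_{e}$ by construction and $\D_{\dif}^{+}(\mathcal{E}(\mathcal{W}))=W_{\dR}^{+}$ by the identification of the completion above, Theorem 6.2 guaranteeing that the $\B_{\dR}^{+,\pa}$-lattice bookkeeping is compatible. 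Functoriality and $\Gamma$-equivariance of these identifications are formal.

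Alternatively -- and with the same non-formal content -- one may deduce the proposition by combining Theorem 6.1 with Berger's equivalence between $\Gamma$-equivariant $\B$-pairs and $\Gamma$-vector bundles on $\mathcal{X}$ (\cite{Be08A}, \cite{FF18}), after checking that the evident square relating these four categories commutes and that passing to pro analytic vectors in each slot matches the functor of the statement. Either way, the crux is the Beauville--Laszlo freeness of the second paragraph.
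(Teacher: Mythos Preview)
Your alternative at the end is exactly what the paper does, and the paper finds it the more economical route. The proof there runs as follows: form the commutative square you describe, with locally analytic vector bundles and locally analytic pairs on top, $\Gamma$-vector bundles and $\B$-pairs on the bottom. The left vertical is an equivalence by Theorem 6.1 and the bottom horizontal by \cite[\S10.1.2]{FF18}. The one nontrivial remaining step---which you allude to but do not quite isolate---is that the \emph{right} vertical (locally analytic pairs to $\B$-pairs, given by base change) is itself an equivalence: essential surjectivity comes for free from the other arrows, and full faithfulness is an immediate application of Proposition 2.1, since a $\B$-pair equipped with a basis of pro analytic vectors has its morphisms already computed at the pro analytic level. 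With three sides equivalences and the square commuting, the top arrow is forced to be one too. No explicit quasi-inverse is ever constructed.

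Your primary Beauville--Laszlo construction is a genuinely different route and has the appeal of producing the quasi-inverse directly, but there is a gap you should address: you assert that the fiber product of $\H^{0}(\mathcal{X}_{I}-x_{\infty},\mathcal{W}_{e})$ with $W_{\dR}^{+}$ is \emph{finitely generated} over $\widetilde{\B}_{I,\cyc}^{\la}$, without justification. Torsion-freeness over a Pr\"ufer domain yields flatness, not finite generation; and $\widetilde{\B}_{I,\cyc}^{\la}=\bigcup_{n}\varphi^{-n}(\B_{p^{n}I,\cyc})$ is only an increasing union of PIDs, not itself a PID, so the usual structure-theorem argument does not apply. You would need either to show the gluing data descend to one of the PID layers (e.g.\ by first finding a basis of $\mathcal{W}_{e}|_{\mathcal{X}_{I}-x_{\infty}}$ over some $\varphi^{-n}(\B_{p^{n}I,\cyc})$, which is not automatic for an arbitrary locally analytic pair), or to invoke a Beauville--Laszlo statement valid in this non-Noetherian setting, which you have not supplied. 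The paper's square argument sidesteps all of this by outsourcing the gluing to the already-known $\B$-pair equivalence over the larger rings $\mathcal{O}_{\mathcal{X}}$ and $\B_{\dR}^{+}$, where classical Beauville--Laszlo holds, and then descending via Proposition 2.1.
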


\begin{proof}
There is an obvious functor from the category of locally analytic
$\B$-pairs to the category of $\B$-pairs. This leads to a commutative
diagram
\[
\xymatrix{\left\{ \text{locally analytic vector bundles}\right\} \ar[r]\ar[d]^{\cong} & \left\{ \text{locally analytic \ensuremath{\B}-pairs}\right\} \ar[d]\\
\left\{ \text{\ensuremath{\Gamma}-vector bundles}\right\} \ar[r]^{\cong} & \left\{ \B\text{-pairs}\right\} 
}
.
\]
The left vertical arrow is an equivalence by Theorem \ref{6.1}. The
lower horizontal arrow is also an equivalence, as explained in $\mathsection10.1.2$
of \cite{FF18}. It follows that the functor from locally analytic
$\B$-pairs to $\B$-pairs is essentially surjective, so every $\B$-pair
comes from a locally analytic $\B$-pair by extending scalars. It
now follows from Proposition \ref{2.1} that such a locally analytic
$\B$-pair is unique. This allows us to define a functor from $\B$-pairs
to locally analytic $\B$-pairs, which gives a quasi inverse to right
vertical morphism. It therefore has to be an equivalence. By commutativity
of the diagram, the upper horizontal arrow is also an equivalence,
as required.
\end{proof}
\begin{defn}
\label{8.3}Given two locally analytic vector bundles $\mathcal{E}_{1}$
and $\mathcal{E}_{2}$ we say that $\mathcal{E}_{2}$ is a modification
of $\mathscr{\mathcal{E}}_{1}$ if $\mathcal{E}_{1}|_{\mathcal{X}\text{-}\left\{ \infty\right\} }\cong\mathscr{\mathcal{E}}_{2}|_{\mathcal{X}\text{-}\left\{ \infty\right\} }$.
\end{defn}

Note that in particular any $\Gamma$-stable $\B_{\dR}^{+,\pa}$-lattice
$N\subset\D_{\dif}(\mathcal{E})$ defines a modification of $\mathcal{E}$
by taking the pair $(\mathcal{E}|_{\mathcal{X}\text{-}\left\{ \infty\right\} },N)$.
\begin{rem}
We could have also defined this notion of modification in terms of
usual $\B$-pairs. Our choice of presentation is meant to illustrate
that one can speak of modifications without leaving the locally analytic
realm.
\end{rem}

\subsection{de Rham and $\protect\C_{p}$-admissible locally analytic vector
bundles}

Let $\mathcal{E}$ be a locally analytic vector bundle. We say that
\begin{itemize}
\item $\mathcal{E}$ is $\C_{p}$-admissible if $\dim_{K}\mathcal{E}_{x_{\infty}}^{\Gamma=1}=\rank\left(\mathcal{E}\right)$.
\item $\mathcal{E}$ is de Rham if $\D_{\dR}(\mathcal{E}):=\dim_{K}\widehat{\mathscr{\mathcal{E}}}_{x_{\infty}}^{\Gamma=1}=\rank(\mathcal{E})$. 
\end{itemize}
If $V$ is a $p$-adic representation and $\mathcal{E}=\widetilde{\mathcal{E}}(V)^{\la}$
then $\mathcal{E}_{x_{\infty}}^{\Gamma=1}=(\C_{p}\otimes V)^{G_{K}}$
and $\D_{\dR}\left(\mathcal{E}\right)=\D_{\dR}\left(V\right)$, so
this extends the usual definitions.

In what follows, note that $\D_{\dR}\left(\mathcal{E}\right)$ has
a natural filtration induced from the $I_{\theta}$ filtration on
$\mathscr{\widehat{\mathcal{E}}}_{x_{\infty}}$.
\begin{defn}
\label{8.5}Suppose $\mathcal{E}$ is de Rham.

1. $\mathrm{\mathcal{N}}_{\dR}\left(\mathcal{E}\right)$ is the modification
of $\mathcal{E}$ given by the lattice $\D_{\dR}\left(\mathcal{E}\right)\otimes_{K}\B_{\dR}^{+,\pa}\subset\D_{\dif}\left(\mathcal{E}\right)$.
It is $\C_{p}$-admissible.

2. $\mathcal{M}_{\dR}\left(\mathcal{E}\right)$ is the locally analytic
$\varphi$-vector bundle corresponding to $\mathrm{\mathcal{N}}_{\dR}\left(\mathcal{E}\right)$.
\end{defn}

\subsection{The surfaces $\mathcal{Y}_{\log,L}$ and $\mathcal{X}_{\log,L}$ }

In $\mathsection10.3.3$ of \cite{FF18}, Fargues and Fontaine define
a scheme $X_{\log}$. It is a line bundle over the schematic Fargues-Fontaine
curve $X_{\FF}=X_{\FF}(\C_{p})$ with a natural projection $\pi:X_{\log}\rightarrow X$;
further, it has a $G_{K}$-action and $\pi$ is $G_{K}$-equivariant.

We let $\mathcal{X}_{\log}$ be the analytification of $X_{\log}$.
If $L$ is a finite extension of $K$, we set
\[
\mathcal{X}_{\log,L}:=\mathcal{X}_{\log}/\Gal(\overline{K}/L_{\infty}).
\]
(Alternatively, this can be defined as the analytification of the
quotient of $X_{\log}$ by $\Gal(\overline{K}/L_{\infty})$). Similarly,
write $\mathcal{\mathcal{Y}}_{\log}=\mathcal{Y}_{(0,\infty)}\times_{\mathcal{X}}\mathcal{X}_{\log}$
and $\mathcal{\mathcal{Y}}_{\log,L}=\mathcal{\mathcal{Y}}_{\log}/\Gal(\overline{K}/L_{\infty})$;
then $\mathcal{\mathcal{Y}}_{\log,L}/\varphi=\mathcal{X}_{\log,L}$.
These spaces have an action of $\Gal(L_{\infty}/L)$, an open subgroup
of $\Gamma$.

Write $p_{L}$ (resp. $p_{\log,L}$) for the projection maps $\mathcal{\mathcal{Y}}_{L}\rightarrow\mathcal{Y}$
or $\mathcal{X}_{L}\rightarrow\mathcal{X}$ (resp. $\mathcal{Y}_{\log,L}\rightarrow\mathcal{Y}$
or $\mathcal{X}_{\log,L}\rightarrow\mathcal{X}$). If $I\subset\left(0,\infty\right)$
is closed interval, let $\mathcal{Y}_{\log,L,I}=p_{\log,L}^{-1}(\mathcal{Y}_{I})$
and similarly $\mathcal{\mathcal{X}}_{\log,L,I}=p_{\log,L}^{-1}(\mathcal{X}_{I})$
for $\mathcal{X}$ if $I$ is sufficiently small.

Define
\[
\widetilde{\B}_{\log,L,I}=\H^{0}(\mathcal{Y}_{\log,L,I},\mathcal{O}_{\mathcal{\mathcal{Y}}_{\log,L}}).
\]
As explained in loc. cit., there is a natural $G_{K}$-equivariant
morphism of sheaves
\[
d:\mathcal{O}_{X_{\log}}\rightarrow\Omega_{X_{\log}/X}^{1}\cong p_{\log}^{*}\mathcal{O}_{X}(-1)
\]
which for every vector bundle $\mathcal{E}$ over $\mathcal{X}$ induces
an $\mathcal{O}_{\mathcal{X}}$-linear morphism
\[
N:p_{\log}^{*}\mathcal{E}\rightarrow p_{\log}^{*}\mathcal{E}\otimes\Omega_{\mathcal{X}_{\log}/\mathcal{X}}^{1}.
\]
See \cite[Lemma 10.3.9]{FF18} and the subequent discussion. Similarly,
$N$ can be pulled back to $\mathcal{Y}_{\log}$. This then further
induces a $\widetilde{\B}_{L,I}$-linear differential operator $N:\widetilde{\B}_{\log,L,I}\rightarrow\widetilde{\B}_{\log,L,I}$.
If $T\in\widetilde{\B}_{\log,L,I}$ is such that $N(T)=1$ then $\widetilde{\B}_{\log,L,I}=\widetilde{\B}_{L,I}[T]$
and $N=d/dT$. Such a $T$ exists: if $\varpi$ is any nonunit $\varpi\in\widehat{L}_{\infty}^{\times}$
and $\varpi^{\flat}=(\varpi,\varpi^{1/p},...)$, take $T=\log[\varpi^{\flat}]$. 
\begin{lem}
\label{8.6}There exists $T\in\widetilde{\B}_{\log,L,I}^{\la}$ with
$N(T)=1$. Consequently, $\widetilde{\B}_{\log,L,I}^{\la}=\widetilde{\B}_{L,I}^{\la}[T]$.
\end{lem}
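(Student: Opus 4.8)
The plan is to build $T$ by a cohomological lifting argument using the vanishing of higher locally analytic vectors from Section~5, rather than by estimating the naive candidate $\log[\varpi^{\flat}]$ head-on.

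Write $\Gamma_{L}=\Gal(L_{\infty}/L)$ for the group acting on $\widetilde{\B}_{\log,L,I}$, and recall from the discussion preceding the lemma that $\widetilde{\B}_{\log,L,I}=\widetilde{\B}_{L,I}[T_{0}]$ with $T_{0}=\log[\varpi^{\flat}]$, $N=d/dT_{0}$ and $N(T_{0})=1$. Since $N$ comes from a $G_{K}$-equivariant map of sheaves it commutes with $\Gamma_{L}$, so for $\gamma\in\Gamma_{L}$ one has $N(\gamma(T_{0})-T_{0})=\gamma(1)-1=0$, hence $\gamma(T_{0})=T_{0}+c_{\gamma}$ with $c_{\gamma}\in\ker(N)=\widetilde{\B}_{L,I}$; continuity of the action together with $c_{\gamma_{1}\gamma_{2}}=c_{\gamma_{1}}+\gamma_{1}(c_{\gamma_{2}})$ make $(c_{\gamma})_{\gamma}$ a continuous $1$-cocycle. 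Consequently the $\widetilde{\B}_{L,I}$-submodule $E=\widetilde{\B}_{L,I}\oplus\widetilde{\B}_{L,I}\cdot T_{0}\subset\widetilde{\B}_{\log,L,I}$ is $\Gamma_{L}$-stable and finite free of rank $2$, and the map $q\colon E\to\widetilde{\B}_{L,I}$, $a+bT_{0}\mapsto b$, is $\Gamma_{L}$-equivariant; its kernel is the trivially embedded sub-representation $\widetilde{\B}_{L,I}$, so that $0\to\widetilde{\B}_{L,I}\to E\xrightarrow{q}\widetilde{\B}_{L,I}\to 0$ is a short exact sequence of finite free $\widetilde{\B}_{L,I}$-semilinear $\Gamma_{L}$-representations.

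Now I invoke Section~5. As $K_{\infty}$ contains an unramified twist of the cyclotomic extension, so does $L_{\infty}$, and for $I$ in the range to which Corollary~5.4(ii) applies the pair $(\Gamma_{L},\widetilde{\B}_{L,I})$ satisfies the Tate--Sen axioms (TS1)--(TS4); in particular $\R_{\la}^{1}(\widetilde{\B}_{L,I})=0$. Applying the functor of locally analytic vectors to the sequence above and using the long exact sequence from Section~2.3, the map $E^{\la}\xrightarrow{q}\widetilde{\B}_{L,I}^{\la}$ is surjective. Pick $T\in E^{\la}\subset\widetilde{\B}_{\log,L,I}^{\la}$ with $q(T)=1$. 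Then $T-T_{0}\in\ker(q)=\widetilde{\B}_{L,I}$, and since $\widetilde{\B}_{L,I}=\ker(N)$ we get $N(T)=N(T_{0})=1$; this $T$ is the required element. The point of routing through $q$ rather than perturbing $\log[\varpi^{\flat}]$ by hand is exactly that the indeterminacy $T-T_{0}$ in the lift lies in $\ker N$, so the normalization $N(T)=1$ survives for free; the alternative would be to show directly that $\gamma\mapsto c_{\gamma}$ is a locally analytic function with locally analytic values, which is possible but less economical. For the last assertion, note $T-T_{0}\in\widetilde{\B}_{L,I}$ gives $\widetilde{\B}_{L,I}[T]=\widetilde{\B}_{\log,L,I}$ and $N=d/dT$; the inclusion $\widetilde{\B}_{L,I}^{\la}[T]\subseteq\widetilde{\B}_{\log,L,I}^{\la}$ is clear since locally analytic vectors form a subring containing $\widetilde{\B}_{L,I}^{\la}$ and $T$, and conversely, $N$ being bounded and $\Gamma_{L}$-equivariant preserves $\widetilde{\B}_{\log,L,I}^{\la}$, so for $v=\sum_{j\le d}b_{j}T^{j}\in\widetilde{\B}_{\log,L,I}^{\la}$ with $b_{j}\in\widetilde{\B}_{L,I}$ the element $N^{d}(v)=d!\,b_{d}$ lies in $\widetilde{\B}_{\log,L,I}^{\la}\cap\widetilde{\B}_{L,I}=\widetilde{\B}_{L,I}^{\la}$ (using that $\widetilde{\B}_{L,I}$ is a closed $\Gamma_{L}$-stable subspace and that locally analytic vectors of a closed sub-representation are its intersection with those of the ambient space), whence $b_{d}\in\widetilde{\B}_{L,I}^{\la}$ and, by subtracting $b_{d}T^{d}$ and inducting on $d$, $v\in\widetilde{\B}_{L,I}^{\la}[T]$.

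The one step that needs to be checked with care is the applicability of Corollary~5.4 to $(\Gamma_{L},\widetilde{\B}_{L,I})$ --- chiefly that $I$ may be taken inside $(p/(p-1),\infty)$, or else that the remaining intervals are treated separately; everything else is formal (a short exact sequence, one vanishing statement, and a lift).
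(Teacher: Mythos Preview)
Your argument is correct and is essentially the paper's own proof: your module $E=\widetilde{\B}_{L,I}\oplus\widetilde{\B}_{L,I}T_{0}$ is precisely $\widetilde{\B}_{\log,L,I}^{N^{2}=0}$ and your map $q$ is $N$ restricted to it, so the short exact sequence and the appeal to $\R_{\la}^{1}(\widetilde{\B}_{L,I})=0$ coincide with the paper's. For the second assertion the paper simply invokes Proposition~2.1 (applied degree by degree to the finite free pieces $\widetilde{\B}_{\log,L,I}^{N^{k}=0}$), whereas your coefficient-extraction via iterated $N$ is an equally valid and slightly more self-contained variant; your caveat about the range of $I$ is the same implicit restriction present in the paper's proof.
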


\begin{proof}
The second claim follows the first claim, Proposition \ref{2.1} and
the fact that taking locally analytic vectors commutes with filtered
colimits. To find such an element $T$, consider the exact sequence
\[
0\rightarrow\widetilde{\B}_{L,I}\rightarrow\widetilde{\B}_{\log,L,I}^{N^{2}=0}\xrightarrow{N}\widetilde{\B}_{L,I}\rightarrow0.
\]
After taking locally analytic vectors the sequence stays exact by
Theorem \ref{5.1}. Thus the sequence
\[
0\rightarrow\widetilde{\B}_{L,I}^{\la}\rightarrow\widetilde{\B}_{\log,L,I}^{\la,N^{2}=0}\xrightarrow{N}\widetilde{\B}_{L,I}^{\la}\rightarrow0
\]
is exact. This means we can lift $1$ to an element $T$ with $N(T)=1$,
as required.
\end{proof}
\begin{prop}
\label{8.7}Suppose $\varphi^{\Z}(x_{\infty})\cap\mathcal{Y}_{I}\neq\emptyset$.
Then

(i) If $M$ is a finite extension of $L$ contained in $L_{\infty}$,
then $\widetilde{\B}_{\log,L,I}^{\Gal(L_{\infty}/M)}=M_{0}$ where
$M_{0}$ is the maximal unramified extension of $\Q_{p}$ contained
in $M$.

(ii) $\widetilde{\B}_{\log,L,I}^{\la,\Lie\Gamma=0}=L_{0}'$, the maximal
unramified extension of $\Q_{p}$ contained in $L_{\infty}$.
\end{prop}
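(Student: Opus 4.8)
The plan is to use the explicit presentation $\widetilde{\B}_{\log,L,I}=\widetilde{\B}_{L,I}[T]$ with $N=d/dT$ from Lemma 8.9, and to compute invariants polynomial coefficient by polynomial coefficient. For part (i), take $x=\sum_{j=0}^{m}b_{j}T^{j}\in\widetilde{\B}_{\log,L,I}^{\Gal(L_{\infty}/M)}$ with $b_{j}\in\widetilde{\B}_{L,I}$. The first step is to understand how $\Gal(L_{\infty}/M)$ acts on $T=\log[\varpi^{\flat}]$: for $\sigma$ in this group, $\sigma(T)=T+c_{\sigma}$ for some $c_{\sigma}\in\widetilde{\B}_{L,I}$ (in fact $c_{\sigma}$ is a ``constant'', lying in the subring fixed appropriately), because $\sigma$ commutes with $N$ and $N(\sigma(T)-T)=0$. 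Expanding $\sigma(x)=x$ and comparing leading coefficients in $T$ forces $\sigma(b_{m})=b_{m}$ for all $\sigma$, i.e. $b_{m}\in\widetilde{\B}_{L,I}^{\Gal(L_{\infty}/M)}$; then an induction on $j$ (subtracting off the top term and using that the binomial expansion of $(T+c_{\sigma})^{j}$ only involves lower powers with coefficients already controlled) shows each $b_{j}\in\widetilde{\B}_{L,I}^{\Gal(L_{\infty}/M)}$. So one reduces to computing $\widetilde{\B}_{L,I}^{\Gal(L_{\infty}/M)}$ together with the vanishing of $T$-degree; the hypothesis $\varphi^{\Z}(x_{\infty})\cap\mathcal{Y}_{I}\neq\emptyset$ is what guarantees, via the Tate/Sen-type computation of invariants in $\widetilde{\B}_{I}$ (the $I=[r,s]$ analogue of the classical fact $\widetilde{\B}_{I}^{\Gal}=$ a finite field extension, cf. the computations underlying Proposition 3.2 and $\mathsection3.2$ of \cite{Ta67}), that $\widetilde{\B}_{L,I}^{\Gal(L_{\infty}/M)}=M_{0}$ and moreover that there is no room for a nonzero $T$-linear term, giving $m=0$ and $x=b_{0}\in M_{0}$.

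For part (ii), I would argue similarly but now working inside $\widetilde{\B}_{\log,L,I}^{\la}=\widetilde{\B}_{L,I}^{\la}[T]$ and using the operator $\Lie\Gamma$, which acts on the locally analytic vectors by Example 4.4 / the discussion in $\mathsection4.3$. Write $x=\sum_{j}b_{j}T^{j}$ with $b_{j}\in\widetilde{\B}_{L,I}^{\la}$. The key point is to compute $\nabla_{v}(T)$ for $v\in\Lie\Gamma$: since $T=\log[\varpi^{\flat}]$ and $\gamma[\varpi^{\flat}]=[\gamma(\varpi^{\flat})]$, one gets $\gamma(T)=T+\log\chi(\gamma)$-type formulas (more precisely $\nabla_{v}(T)\in\widetilde{\B}_{L,I}^{\la}$ is a specific nonzero ``period''), and in particular $\nabla_{v}(T)$ is a unit times a fixed element. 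Then $\nabla_{v}(x)=0$ gives, by looking at the top-degree term, that $b_{m}\cdot m\cdot\nabla_{v}(T)=0$ modulo lower order, which (for suitable $v$) forces $m=0$ unless $b_{m}=0$; so again $x$ has $T$-degree $0$, and we are reduced to $\widetilde{\B}_{L,I}^{\la,\Lie\Gamma=0}$. Finally, $\widetilde{\B}_{L,I}^{\la,\Lie\Gamma=0}=(\widetilde{\B}_{L,I}^{\la})^{\Gamma'}$ for $\Gamma'$ a small enough open subgroup (locally analytic vectors killed by the Lie algebra are fixed by a small open subgroup), and by Proposition 4.1(i) together with the structure of $\B_{p^{n}I,\cyc,L}$ one computes this to be $L_{0}'$, the maximal unramified extension of $\Q_{p}$ in $L_{\infty}$ — this is the $T$-free part of the statement and matches the classical computation of $\varphi,\Gamma$-invariants of the Robba ring.

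The main obstacle I anticipate is the bookkeeping around the action of $\Gamma$ (and of $\Lie\Gamma$) on $T=\log[\varpi^{\flat}]$: one must pin down exactly which subring the ``constants'' $c_{\sigma}=\sigma(T)-T$ and the period $\nabla_{v}(T)$ live in, and verify these are nonzero so that the degree-reduction argument actually kills the positive powers of $T$. This requires the hypothesis $\varphi^{\Z}(x_{\infty})\cap\mathcal{Y}_{I}\neq\emptyset$ in an essential way — it is precisely what makes $t$ (hence the relevant logarithmic period) behave well on $\mathcal{Y}_{I}$ and what makes the invariant computations in $\widetilde{\B}_{I}$ come out to finite extensions of $\Q_{p}$ rather than something larger. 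Once the action on $T$ is understood, the rest is the standard Sen-style computation of invariants, which can be quoted from \cite{FF18}, \cite{Be13} and \cite{Ta67}. Part (ii) is deduced from part (i) by also using that the full $\Gamma$-invariants (not just $\Lie\Gamma$-invariants) are computed by part (i) and that $\Lie\Gamma=0$ on a locally analytic vector implies fixed by an open subgroup.
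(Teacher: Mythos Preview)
Your approach to (i) has a genuine gap in the induction step. From $\sigma(x)=x$ with $x=\sum_{j=0}^{m}b_{j}T^{j}$ and $\sigma(T)=T+c_{\sigma}$, comparing $T^{m}$-coefficients does give $\sigma(b_{m})=b_{m}$. But the $T^{m-1}$-coefficient yields
\[
\sigma(b_{m-1})-b_{m-1}=-m\,b_{m}\,c_{\sigma},
\]
which is \emph{not} invariance of $b_{m-1}$ unless $b_{m}=0$ or $c_{\sigma}=0$. So the induction you describe does not show that each $b_{j}$ is $\Gal(L_{\infty}/M)$-invariant. What it actually shows is that killing the top $T$-term requires the cocycle $\sigma\mapsto c_{\sigma}$ to \emph{not} be a coboundary in $\widetilde{\B}_{L,I}$ (equivalently, that there is no $\Gal(L_{\infty}/M)$-invariant lift of $1$ under $N$ inside $\widetilde{\B}_{\log,L,I}$). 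This is precisely the substantive content of the proposition, and your sketch does not supply it: knowing $\widetilde{\B}_{L,I}^{\Gal(L_{\infty}/M)}=M_{0}$ alone does not force the $T$-degree to vanish, and ``nonzero $c_{\sigma}$'' is not the same as ``$(c_{\sigma})$ is not a coboundary''. The obstacle you anticipate is therefore not quite the right one.

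The paper bypasses all of this by citing \cite[10.3.15]{FF18}, where the invariants of the log-structure are computed directly (ultimately via the embedding into $\B_{\st}$, available because $\varphi^{\Z}(x_{\infty})\cap\mathcal{Y}_{I}\neq\emptyset$, and the classical fact $\B_{\st}^{G_{M}}=M_{0}$). If you want a self-contained argument, that embedding is the missing ingredient, not the coefficient bookkeeping.

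For (ii), your coefficient-wise plan with $\nabla_{v}(T)$ is unnecessary and runs into the same issue. The deduction you mention at the very end is all that is needed and is exactly what the paper does: a locally analytic vector annihilated by $\Lie\Gamma$ is fixed by some open subgroup $\Gal(L_{\infty}/M)$, hence by (i) lies in $M_{0}\subset L_{0}'$; the reverse inclusion is obvious.
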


\begin{proof}
(i) follows from \cite[Proposition 10.3.15]{FF18} and (ii) follows
from (i).
\end{proof}
One way to construct de Rham locally analytic vector bundles is as
follows. Write $\Mod_{\Q_{p}^{\mathrm{un}}}^{\Fil,\varphi,N}(G_{K})$
for the category of finite dimensional vector spaces $D$ over $\Q_{p}^{\mathrm{un}}$
together with a semilinear action of $\varphi$, a monodromy operator
$N$ with $\varphi N=pN\varphi$, a filtration on $D\otimes_{\Q_{p}^{\mathrm{un}}}K^{\mathrm{un}}$
and a discrete action of $G_{K}$ on $D$ which respects the filtration.
For example, if $V$ is a potentially semistable representation then
$\D_{\mathrm{pst}}(V)$ is an object of $\Mod_{\Q_{p}^{\mathrm{un}}}^{\Fil,\varphi}(G_{K})$.

There is a functor
\[
\mathcal{E}:\Mod_{\Q_{p}^{\mathrm{un}}}^{\Fil,\varphi}(G_{K})\rightarrow\{\text{de Rham locally analytic vector bundles}\}
\]
defined as follows: given $D\in\Mod_{\Q_{p}^{\mathrm{un}}}^{\Fil,\varphi}(G_{K})$,
choose $L$ such that $D$ is defined over $L$, i.e. $D=\Q_{p}^{\mathrm{un}}\otimes_{L_{0}}D_{0}$.
Such an $L$ exists because the action of $G_{K}$ is discrete. Then
$\mathcal{E}(D)$ is defined to be the locally analytic vector bundle
corresponding to the pair
\[
((\mathcal{O}_{\mathcal{Y}_{\log,L}-p_{\log,L}^{-1}\left(\infty\right)}^{\la}\otimes_{L_{0}}D)^{\varphi=1,N=0,\Gal(L_{\infty}/K_{\infty})},\Fil^{0}(\B_{\dR}^{H_{L},\pa}\otimes_{L_{0}}D_{0})^{\Gal(L_{\infty}/K_{\infty})}).
\]
It is de Rham because 
\[
D\subset\B_{\dR}^{H_{K},\pa}\otimes\Fil^{0}(\B_{\dR}^{H_{L},\pa}\otimes_{L_{0}}D_{0})^{\Gal(L_{\infty}/K_{\infty})}
\]
is fixed by an open subgroup of $\Gamma$. If we choose any larger
$L$ we get the same pair, so the construction $D\mapsto\mathcal{E}(D)$
is independent of the choice of $L$.

\subsection{Sheaves of smooth functions}

In this subsection we introduce certain sheaves of functions on $\mathcal{X}$.
All of these can be defined equally well for $\mathcal{Y}_{(0,\infty)}$.
\begin{defn}
\label{8.8}We define the following sheaves of functions on $\mathcal{X}$.

(i) Smooth functions: $\mathcal{O}_{\mathcal{X}}^{\sm}=\mathcal{O}_{\mathcal{X}}^{\la,\Lie\Gamma=0}$.

(ii) For $[L:K]<\infty$, $L$-smooth functions: $\mathcal{O}_{\mathcal{X}}^{L\text{-}\sm}=p_{L,*}\left(p_{L}^{*}\mathcal{O}_{\mathcal{X}}^{\la}\right)^{\Lie\Gamma=0}.$

(iii) For $[L:K]<\infty$, $L$ log-smooth functions: $\mathcal{O}_{\mathcal{X}}^{L\text{-}\lsm}=p_{\log,L,*}\left(p_{\log,L}^{*}\mathcal{O}_{\mathcal{X}}^{\la}\right)^{\Lie\Gamma=0}$.

(iv) Potentially smooth functions: $\mathcal{O}_{\mathcal{X}}^{\psm}=\varinjlim_{[L:K]<\infty}\mathcal{O}_{\mathcal{X}}^{L\text{-}\sm}.$

(v) Potentially log-smooth functions: $\mathcal{O}_{\mathcal{X}}^{\mathrm{plsm}}=\varinjlim_{[L:K]<\infty}\mathcal{O}_{\mathcal{X}}^{L\text{-}\lsm}.$
\end{defn}

The following proposition has been essentially explained to us by
Kedlaya.
\begin{prop}
\label{8.9}Let $U$ be a connected open affinoid subset of $\mathcal{X}$.

(i) The sections of each of $\mathcal{O}_{\mathcal{X}}^{\sm}$, $\mathcal{O}_{\mathcal{X}}^{L\text{-}\sm}$
and $\mathcal{O}_{\mathcal{X}}^{\psm}$ at $U$ is a field which injects
(noncanonically) into $\C_{p}$.

(ii) If $x_{\infty}\in U$ then there are canonical injections $\H^{0}(U,\mathcal{O}_{\mathcal{X}}^{\sm})\hookrightarrow K_{\infty}$,
$\H^{0}(U,\mathcal{O}_{\mathcal{X}}^{L\text{-}\sm})\hookrightarrow L_{\infty}$
and $\H^{0}(U,\mathcal{O}_{\mathcal{X}}^{\psm})\hookrightarrow\overline{K}$.

(iii) If $x_{\infty}\in U$ and $U=\mathcal{X}_{I}$, we have $\H^{0}(\mathcal{X}_{I},\mathcal{O}_{\mathcal{X}}^{\sm})=K_{0}'$,
$\H^{0}(\mathcal{X}_{I},\mathcal{O}_{\mathcal{X}}^{L\text{-}\sm})=L_{0}'$
and $\H^{0}(\mathcal{X}_{I},\mathcal{O}_{\mathcal{X}}^{\psm})=K_{0}^{\mathrm{un}}$.

(iv) We have $\mathcal{O}_{\mathcal{X},x_{\infty}}^{\sm}=K_{\infty}$,
$\mathcal{O}_{\mathcal{X},x_{\infty}}^{L\text{-}\sm}=L_{\infty}$
and $\mathcal{O}_{\mathcal{X},x_{\infty}}^{\psm}=\overline{K}$.
\end{prop}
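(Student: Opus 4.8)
The strategy is to reduce everything to the cyclotomic case via unramified descent and the definition of the $L$-variants, then to compute the relevant invariants using the sheaves $\mathcal{O}_{\mathcal{X}}^{\la}$ already understood in $\mathsection4$ together with the structure of $\widetilde{\B}_{\log,L,I}$ from Proposition 8.8. Throughout, since all sheaves in question are defined by passing to pro-analytic vectors and then to the kernel of $\Lie\Gamma$, the key input is that $\H^0(\mathcal{X}_I, \mathcal{O}_{\mathcal{X}})^{\pa} = \widetilde{\B}_{I}^{\la}$ on a closed interval $I$ with $|\log(I)| < \log(p)$ (Proposition 4.6(iii)), and that analogous statements hold after base change to $L$ via $p_L$ or $p_{\log,L}$.

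\textbf{Step 1: the stalk computations (iv).} I would prove (iv) first, since it is the cleanest. By definition $\mathcal{O}_{\mathcal{X},x_{\infty}}^{\sm} = (\mathcal{O}_{\mathcal{X},x_{\infty}}^{\la})^{\Lie\Gamma = 0}$, and the stalk $\mathcal{O}_{\mathcal{X},x_{\infty}}^{\la}$ injects into the fiber $k(x_{\infty})^{\la}$-completed objects studied in $\mathsection6$; in fact by Theorem 6.6 and the identification $\D_{\Sen}(\mathcal{O}_{\mathcal{X}}) = \widehat{K}_{\infty}^{\la}$, one reduces to computing $(\widehat{K}_{\infty}^{\la})^{\Lie\Gamma = 0}$ and $(\B_{\dR}^{+,\pa})^{\Lie\Gamma=0}$-type invariants. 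Classical Sen theory gives $(\widehat{K}_{\infty})^{\la, \Lie\Gamma=0} = K_{\infty}$ (the locally analytic vectors of $\widehat{K}_{\infty}$ on which the Lie algebra acts trivially are exactly the smooth vectors, which is $K_{\infty}$ by Ax--Sen--Tate combined with decompletion). For the $L$-variant one replaces $K$ by $L$ throughout, and for the potentially smooth version one takes the colimit over $L$, using $\bigcup_L L_{\infty} = \overline{K}$. The log-smooth stalk does not appear in (iv) because $x_\infty$ has several preimages under $p_{\log,L}$, so I would simply omit it, matching the statement.

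\textbf{Step 2: the affinoid $\mathcal{X}_I$ computations (iii).} Here I would take $I$ with $x_\infty \in \mathcal{X}_I$ and $|\log(I)| < \log(p)$, so that $\H^0(\mathcal{X}_I, \mathcal{O}_{\mathcal{X}}^{\la}) = \widetilde{\B}_{I,\cyc}^{\la}$ (after an unramified twist reducing to the cyclotomic case). The point is that $(\widetilde{\B}_{I,\cyc}^{\la})^{\Lie\Gamma = 0} = K_0'$: an element killed by $\Lie\Gamma = \nabla$ is a smooth vector, and one computes — exactly as in the Claim worked out in Example 4.7, or by the five-lemma/induction argument there — that the smooth vectors of $\widetilde{\B}_{I,\cyc}$ form the field of constants $K_0'$ (there are no nonconstant $\Gamma$-smooth functions on this annulus since such a function would have to be algebraic over $\Q_p$ and bounded). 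For the $L$-smooth version, $\H^0(\mathcal{X}_I, p_L^*\mathcal{O}_{\mathcal{X}}^{\la})^{\Lie\Gamma = 0} = (\widetilde{\B}_{I}(\widehat{L}_{\infty})^{\la})^{\Lie\Gamma=0} = L_0'$ by the same reasoning over $L$; and (iv) of the colimit gives $\H^0(\mathcal{X}_I, \mathcal{O}_{\mathcal{X}}^{\psm}) = \bigcup_L L_0' = K_0^{\mathrm{un}}$.

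\textbf{Step 3: the general connected affinoid (i) and (ii).} For a general connected open affinoid $U$, the section ring $\H^0(U, \mathcal{O}_{\mathcal{X}}^{\sm})$ is an integral domain (as a subring of $\H^0(U,\mathcal{O}_{\mathcal{X}})$, which is a domain because $U$ is connected and $\mathcal{X}$ is a curve), and every element $f$ is $\Gamma'$-smooth for some open $\Gamma' \le \Gamma$, hence fixed by $\Gamma'$; therefore $f$ generates a finite extension of the constants inside $\H^0(U, \mathcal{O}_{\mathcal{X}})$, and one shows $f$ is actually a \emph{unit} (its norm is constant on each connected component, by a maximum-modulus argument on the affinoid, so $f$ is invertible), giving that $\H^0(U, \mathcal{O}_{\mathcal{X}}^{\sm})$ is a field. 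The injection into $\C_p$ comes from evaluating at any point of $U$ mapping to $\C_p$ (a classical point): a nonzero smooth function is a unit, so evaluation is injective on the field. For (ii), when $x_\infty \in U$ one evaluates at $x_\infty$; the image lands in $\mathcal{O}_{\mathcal{X}, x_\infty}^{\sm} = K_\infty$ (resp. $L_\infty$, resp. $\overline{K}$) by part (iv), and this evaluation is injective by the field property.

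\textbf{Main obstacle.} The delicate point is Step 3 — showing that sections of $\mathcal{O}_{\mathcal{X}}^{\sm}$ over a connected affinoid are units, hence that the ring is a field. This requires knowing that a $\Gamma'$-invariant function on a $\Gamma'$-stable connected affinoid $U$ has constant absolute value, which uses that $\Gamma'$ acts with dense orbits away from $x_\infty$ (so a $\Gamma'$-invariant function is ``almost constant'') together with the maximum principle; alternatively one invokes that $\H^0(U,\mathcal{O}_{\mathcal{X}})^{\Gamma'}$ is contained in the field of smooth vectors computed locally, which by Step 1/Step 2 is a finite extension of $\Q_p^{\mathrm{un}} \cap (\text{constants})$, and a finite field extension inside a domain consisting of functions is automatically a field. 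I would lean on the latter packaging. The computation in Step 2 that smooth vectors of $\widetilde{\B}_{I,\cyc}$ equal $K_0'$ is the technical heart, but it is already essentially carried out in Example 4.7 (the description of $\B_{\cris}(\widehat{K}_{\cyc})^{P_n(\gamma)=0}$ and the five-lemma induction), so I would cite and adapt that argument rather than redo it.
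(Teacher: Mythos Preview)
Your Step 3 contains the main gap. You want to show that a nonzero $\Gamma'$-fixed section $f\in\H^{0}(U,\mathcal{O}_{\mathcal{X}})$ is a unit, and you propose a maximum-modulus argument based on ``$\Gamma'$ acts with dense orbits away from $x_{\infty}$''. But Proposition~10.1.1 of \cite{FF18} only says that $x_{\infty}$ is the unique point with \emph{finite} $\Gamma$-orbit; it does not assert that the other orbits are dense, and density is what your argument would need in order to force $|f|$ to be constant. Your fallback claim that ``$f$ generates a finite extension of the constants'' is also unjustified: being $\Gamma'$-fixed does not by itself make $f$ algebraic over $\Q_{p}$. The paper's argument is different and sharper: by Theorem~8.8 of \cite{Ke16}, $\H^{0}(U,\mathcal{O}_{\mathcal{X}})$ is a Dedekind domain, so any nonzero $f$ lies in only finitely many maximal ideals (i.e.\ vanishes at only finitely many rank~$1$ points). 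Since $f$ is $\Gamma'$-fixed, this finite zero set is $\Gamma'$-stable, hence a union of finite orbits, hence contained in $\{x_{\infty}\}$ by \cite[10.1.1]{FF18}. This immediately gives that $f$ is invertible if $x_{\infty}\notin U$, and in general that smooth sections form a field mapping into the residue field at any chosen rank~$1$ point.

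Your Step 1 for (iv) also only gives one inclusion. Injecting the stalk into the fiber and computing $(\widehat{K}_{\infty}^{\la})^{\Lie\Gamma=0}=K_{\infty}$ yields $\mathcal{O}_{\mathcal{X},x_{\infty}}^{\sm}\subset K_{\infty}$, but you have not explained why every element of $K_{\infty}$ actually lifts to the local ring $\mathcal{O}_{\mathcal{X},x_{\infty}}$. The paper supplies this by invoking the henselian property of local rings of adic spaces \cite[III.6.3.7]{Mo19}: since $\mathcal{O}_{\mathcal{X},x_{\infty}}$ is henselian with residue field $\widehat{K}_{\infty}$, any finite subextension of $K_{\infty}$ lifts, giving $K_{\infty}\subset\mathcal{O}_{\mathcal{X},x_{\infty}}$ and hence $K_{\infty}\subset\mathcal{O}_{\mathcal{X},x_{\infty}}^{\sm}$. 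Finally, for (iii) the paper does not adapt Example~4.7 (which concerns $\B_{\cris}^{+}$, a different ring); it simply quotes Proposition~8.7, whose content comes from \cite[10.3.15]{FF18}.
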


\begin{proof}
Each of the assertions (i)-(iv) for $\mathcal{O}_{\mathcal{X}}^{\psm}$
follows from the corresponding assertion for $\mathcal{O}_{\mathcal{X}}^{L\text{-}\sm}$.
We shall give below arguments proving (i)-(iv) for $\mathcal{O}_{\mathcal{X}}^{\sm}$;
the proofs for $\mathcal{O}_{\mathcal{X}}^{L\text{-}\sm}$ are the
same once $K$ is replaced by $L$.

After passing to an open subgroup of $\Gamma$, we may assume $\Gamma$
stabilizes $U$. By \cite[Theorem 8.8]{Ke16}, the ring $\mathcal{O}_{\mathcal{X}}(U)$
is a Dedekind domain. Each rank 1 point $x$ of $U$ defines a maximal
ideal of $\mathcal{O}_{\mathcal{X}}(U)$, so $f\in\mathcal{O}_{\mathcal{X}}(U)$
can belong to only finitely many of these points. If $f\in\mathcal{O}_{\mathcal{X}}(U)$
is killed by $\Lie\Gamma$ then $f$ is fixed by a finite subgroup
of $\Gamma$, so these finitely many maximal ideals must form a finite
orbit under the $\Gamma$-action. But the only rank 1 point with finite
orbit is the point $x_{\infty}$, again by \cite[Proposition 10.1.1]{FF18}.
So every $f\in\mathcal{O}_{\mathcal{X}}^{\sm}(U)$ either vanishes
only at $x_{\infty}$ or is invertible.

If $x_{\infty}\notin U$, this proves that $\mathcal{O}_{\mathcal{X}}^{\sm}(U)$
is a field. In particular, it injects into the residue field of each
rank 1 point, and there is a dense subset of $\mathcal{X}$ with residue
field a subfield of $\C_{p}$. This proves (i) in this case. On the
other hand, if $x_{\infty}\in U$ then there is a $\Gamma$-equivariant
embedding of $\mathcal{O}_{\mathcal{X}}^{\la}(U)$ into $\B_{\dR}^{+}(\widehat{K}_{\infty})^{\la}$
which gives an embedding of $\mathcal{O}_{\mathcal{X}}^{\sm}(U)$
into $\B_{\dR}^{+}(\widehat{K}_{\infty})^{\la,\Lie\Gamma=0}=K_{\infty}$.
This simultaenously proves (i) and (ii) for $\mathcal{O}_{\mathcal{X}}^{\sm}$. 

Next, (iii) follows immediately from Proposition \ref{8.7}. For (iv),
we have already shown that $\mathcal{O}_{\mathcal{X}}^{\sm}(U)\subset K_{\infty}$
for each $U$ which contains $x_{\infty}$, so $\mathcal{O}_{\mathcal{X},x_{\infty}}^{\sm}\subset K_{\infty}$.
To show the converse inclusion, use the henselian property of local
rings of adic spaces \cite[III.6.3.7]{Mo19} to show first that $K_{\infty}\subset\mathcal{O}_{\mathcal{X},x_{\infty}}$.
It then follows that $K_{\infty}\subset\mathcal{O}_{\mathcal{X},x_{\infty}}^{\sm}$,
which concludes the proof.
\end{proof}
We raise a few questions to which we expect a positive answer but
have not answered in this article.
\begin{question}
\label{8.10}1. We can show that $\overline{K}\subset\mathcal{O}_{\mathcal{X}}^{\psm}$
if $x$ is any rank 1 point. Indeed, any untilt of $\C_{p}^{\flat}$
is algebraically closed, and one can use this to show that the completed
local rings $\B_{\dR,x}^{+}$ contain $\overline{K}$. This implies
by the same argument that $\overline{K}\subset\mathcal{O}_{\mathcal{X},x}$.
But every element of $\overline{K}$ has finite degree over $K_{0}$,
which is fixed by $G_{K}$. This implies that every $x\in\overline{K}$
is fixed by an open subgroup $G_{K}$ so $\overline{K}\subset\mathcal{O}_{\mathcal{X},x}^{\psm}$.

Is it true that $\overline{K}=\mathcal{O}_{\mathcal{X},x}^{\psm}$
for any rank 1 point $x$?

2. Is it true that for every connected open affinoid $U\subset\mathcal{X}$,
the field $\mathcal{O}_{\mathcal{X}}^{\psm}(U)$ is a finite extension
of $K_{0}^{\mathrm{un}}$? In particular, this would imply a positive
answer to question 1.

3. Is it true that $\mathcal{O}_{\mathcal{X}}^{L\text{-}\sm}=\mathcal{O}_{\mathcal{X}}^{L\text{-}\lsm}$
(and hence $\mathcal{O}_{\mathcal{X}}^{\psm}=\mathcal{O}_{\mathcal{X}}^{\mathrm{plsm}}$)?
If $x_{\infty}\in U$ then $\mathcal{O}_{\mathcal{X}}^{L\text{-}\sm}(U)=\mathcal{O}_{\mathcal{X}}^{L\text{-}\lsm}(U)$.
This can be seen by using the embedding into $\B_{\dR}^{+}$ as in
the proof of Proposition \ref{8.7}.
\end{question}

\subsection{The solution functor}

In this subsection, we assume $\mathcal{E}$ is a de Rham locally
analytic vector bundle. Given $L$ finite over $K$, we define the
sheaves of solutions on $\mathcal{X}$:

1. $\mathrm{Sol}_{L}(\mathcal{E}):=p_{L,*}(p_{L}^{*}\mathcal{N}_{\dR}\left(\mathcal{E}\right))^{\Lie\Gamma=0}$,
a module over $\mathcal{O}_{\mathcal{X}}^{L\text{-}\sm}$,

2. $\mathrm{Sol}_{\log,L}(\mathcal{E}):=p_{\log,L,*}(p_{\log,L}^{*}\mathcal{N}_{\dR}\left(\mathcal{E}\right))^{\Lie\Gamma=0}$,
a module over $\mathcal{O}_{\mathcal{X}}^{L\text{-}\lsm}$,

3. $\Sol\left(\mathcal{E}\right):=\varinjlim_{[L:K]<\infty}\Sol_{\log,L}\left(\mathcal{E}\right)$,
a module over $\mathcal{O}_{\mathcal{X}}^{\plsm}$.

We have similar versions of these sheaves on $\mathcal{\mathcal{Y}}_{(0,\infty)}$,
denoted by $\mathrm{Sol}_{*}^{\varphi}(\mathcal{E})$ for $*\in\{L,\left\{ \log,L\right\} ,\emptyset\}$.
Since the $\varphi$ action on $\mathcal{\mathcal{Y}}_{\log,L}$ is
$\Gamma$-equivariant, there are natural identifications $\mathrm{Sol}_{*}(\mathcal{E})=(\mathrm{Sol}_{*}^{\varphi}(\mathcal{E}))^{\varphi=1}$
and $\mathrm{Sol}_{*}^{\varphi}(\mathcal{E})\cong\mathcal{O}_{\mathcal{Y}_{(0,\infty)}}^{\bullet}\otimes_{\mathcal{O}_{\mathcal{X}}^{\bullet}}\mathrm{Sol}_{*}(\mathcal{E})$
where $(*,\bullet)=\{(L,L\text{-}\sm),(\left\{ \log,L\right\} ,L\text{-}\lsm),(\emptyset,\plsm)\}$.

To make the link with $\mathcal{E}$ clear, we shall need the following
form of the $p$-adic monodromy theorem due to André \cite{An02},
Kedlaya \cite{Ke04} and Mebkhout \cite{Me02}. 
\begin{prop}
\label{8.11}There exists a finite extension $L$ over $K$ such that
if $U$ is an open subset of $\mathcal{Y}_{[r,\infty)}$ for some
$r\gg0$ then the natural map
\[
\mathcal{O}_{\mathcal{Y}_{\log,L}}^{\la}(p_{\log,L}^{-1}U)\otimes_{\mathcal{O}_{\mathcal{Y}_{(0,\infty)}}^{L\text{-}\lsm}(U)}\mathrm{Sol}_{\log,L}^{\varphi}(\mathcal{E})(U)\rightarrow\mathcal{O}_{\mathcal{Y}_{\log,L}}^{\la}(p_{\log,L}^{-1}U)\otimes_{\mathcal{O}_{\mathcal{Y}_{(0,\infty)}}^{\la}(U)}\mathcal{\mathcal{M}}_{\dR}(\mathcal{E})(U).
\]
is an isomorphism. Consequently, if $U\subset\mathcal{X}_{I}$ for
some $I$ then
\[
\mathcal{O}_{\mathcal{X}_{\log,L}}^{\la}(p_{\log,L}^{-1}U)\otimes_{\mathcal{O}_{\mathcal{X}}^{L\text{-}\lsm}(U)}\mathrm{Sol}_{\log,L}(\mathcal{E})(U)\xrightarrow{\sim}\mathcal{O}_{\mathcal{X}_{\log,L}}^{\la}(p_{\log,L}^{-1}U)\otimes_{\mathcal{O}_{\mathcal{X}}^{\la}\left(U\right)}\mathcal{\mathcal{N}}_{\dR}(\mathcal{E})(U).
\]
\end{prop}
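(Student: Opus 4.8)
The plan is to reduce the assertion to the $p$-adic local monodromy theorem of André \cite{An02}, Kedlaya \cite{Ke04} and Mebkhout \cite{Me02}, in the $(\varphi,\Gamma)$-module formulation due to Berger \cite{Be08B}. First I would reduce to the cyclotomic case: exactly as in the cyclotomic reductions carried out in the proofs of Proposition 6.4 and Theorem 7.5, an unramified twist is harmless and the formation of all the sheaves involved commutes with base change along the covers $\mathcal{Y}_{\log,M}\to\mathcal{Y}_{\log,L}$, so we may assume $K_{\cyc}\subset K_{\infty}$ and then that $\Gamma=\Gamma_{\cyc}$. Next, via the Frobenius-trick functor of $\mathsection7.1$ together with Theorem 7.5, the locally analytic $\varphi$-vector bundle $\mathcal{M}_{\dR}(\mathcal{E})$ corresponds to a $(\varphi,\Gamma)$-module over $\widetilde{\B}_{\rig}^{\dagger,\pa}=\bigcup_{n\geq0}\varphi^{-n}(\B_{\rig,\cyc}^{\dagger})$, hence, after applying a suitable $\varphi^{-n}$, to a $(\varphi,\Gamma)$-module $D$ over the cyclotomic Robba ring $\B_{\rig,\cyc}^{\dagger}$, equipped with the connection $\nabla$ induced by the action of $\Lie\Gamma$.

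The crucial point is that the modification defining $\mathcal{N}_{\dR}(\mathcal{E})$ places us exactly in the situation where the $p$-adic local monodromy theorem applies. Indeed, $\mathcal{N}_{\dR}(\mathcal{E})$ is $\C_{p}$-admissible (Definition 8.5), so the Sen operator of $\D_{\Sen}(\mathcal{N}_{\dR}(\mathcal{E}))$ vanishes; this is precisely the condition under which $(D,\nabla)$, together with its Frobenius structure, is a $p$-adic differential equation with Frobenius over $\B_{\rig,\cyc}^{\dagger}$ of the kind to which the theorem applies --- it is Berger's object $N_{\dR}(D)$. Applying the $p$-adic local monodromy theorem to it yields a finite extension $L/K$, depending only on $\mathcal{E}$, over which $N_{\dR}(D)$ becomes unipotent: there is $r\gg0$ so that, after base change to $\B_{\rig,L}^{\dagger}[T]$ on the annulus $[r,\infty)$ --- where $T$ is the logarithmic element of Lemma 8.6, which plays the role of $\log t$ --- the resulting differential module is trivial, i.e. freely generated over its horizontal subring by its $\nabla=0$ sections, and the space of these sections has rank equal to $\rank(\mathcal{E})$.

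Unwinding the equivalences of $\mathsection7$ and the definitions of $\mathsection8.3$--$8.5$, this is exactly the assertion that for every open $U\subset\mathcal{Y}_{[r,\infty)}$ the natural inclusion of $\mathrm{Sol}_{\log,L}^{\varphi}(\mathcal{E})$ into the pullback of $\mathcal{M}_{\dR}(\mathcal{E})$ becomes, after extension of scalars to $\mathcal{O}_{\mathcal{Y}_{\log,L}}^{\la}(p_{\log,L}^{-1}U)$, the first displayed isomorphism. For the statement over $\mathcal{X}$: given $U\subset\mathcal{X}_{I}$ with $|\log(I)|<\log(p)$, lift $U$ to an open $\tilde{U}\subset\mathcal{Y}_{I}$ and, using the $\varphi$-equivariant isomorphisms $\varphi\colon\mathcal{Y}_{pJ}\xrightarrow{\sim}\mathcal{Y}_{J}$ and the $\varphi$-structure on $\mathcal{M}_{\dR}(\mathcal{E})$, replace $\tilde{U}$ by $\varphi^{m}(\tilde{U})$ for $m\gg0$ so that $\varphi^{m}(\tilde{U})\subset\mathcal{Y}_{[r,\infty)}$. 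Over $\mathcal{X}_{I}$ the sheaves $\mathcal{O}_{\mathcal{X}}^{\la}$, $\mathcal{O}_{\mathcal{X}}^{L\text{-}\lsm}$, $\mathcal{N}_{\dR}(\mathcal{E})$ and $\mathrm{Sol}_{\log,L}(\mathcal{E})$ are canonically identified with the restrictions of their $\mathcal{Y}_{(0,\infty)}$-counterparts (via $\mathrm{Sol}_{\log,L}=(\mathrm{Sol}_{\log,L}^{\varphi})^{\varphi=1}$ and the fact that $\pi$ is a local isomorphism), so the second displayed isomorphism follows by restricting the first.

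The main obstacle I anticipate is the translation in the middle step, i.e. matching the differential-module formulation in which the $p$-adic local monodromy theorem is available with the present sheaf-theoretic language: one must check carefully that $\C_{p}$-admissibility of $\mathcal{N}_{\dR}(\mathcal{E})$ is exactly the hypothesis making $(\mathcal{M}_{\dR}(\mathcal{E}),\nabla)$ pole-free, that ``unipotent after a finite extension'' in Berger's sense is the same as ``trivialized over $\mathcal{O}_{\mathcal{Y}_{\log,L}}^{\la}$ with a full-rank space of solutions'', and that the logarithmic variable produced by the theorem agrees, up to the relation between $t$ and $[\varpi^{\flat}]$, with the element $T$ of Lemma 8.6, so that the horizontal sections it produces are pro-analytic and genuinely lie in $\mathrm{Sol}_{\log,L}^{\varphi}(\mathcal{E})$. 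Once these identifications are in place --- using Theorem 5.1 (via Lemma 8.6) to know that passing to $\Lie\Gamma=0$ behaves well on the log extension --- the remainder is routine bookkeeping through the equivalences of $\mathsection6$ and $\mathsection7$.
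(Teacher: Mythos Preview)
Your proposal is correct and follows essentially the same route as the paper: pass to the cyclotomic setting, convert $\mathcal{M}_{\dR}(\mathcal{E})$ into a $(\varphi,\Gamma)$-module over the Robba ring via the equivalences of \S7, invoke Berger's formulation \cite[III.2.1]{Be08B} of the $p$-adic local monodromy theorem to trivialize after base change to $\widetilde{\B}_{\log,L}^{\dagger,\pa}$, and then transport the resulting isomorphism to arbitrary open $U$ by Frobenius pullback and base change. The paper's proof is terser---it simply cites Berger's result rather than spelling out, as you do, that $\C_{p}$-admissibility of $\mathcal{N}_{\dR}(\mathcal{E})$ is what makes the connection pole-free---but the logical skeleton is the same.
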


\begin{proof}
Let $\widetilde{\D}_{\rig}^{\dagger}$ be the $(\varphi,\Gamma)$-module
corresponding to $\mathcal{\mathcal{M}}_{\dR}\left(\mathcal{E}\right)$.
By the $p$-adic monodromy theorem, we know there is an isomorphism
\[
\widetilde{\B}_{\log,L}^{\dagger,\pa}\otimes_{L_{0}^{\prime}}(\widetilde{\B}_{\log,L}^{\dagger,\pa}\otimes_{\widetilde{\B}_{\mathrm{rig},K}^{\dagger,\pa}}\widetilde{\D}_{\rig}^{\dagger,\pa})^{\Lie\Gamma=0}\xrightarrow{\sim}\widetilde{\B}_{\log,L}^{\dagger,\pa}\otimes_{\widetilde{\B}_{\mathrm{rig},K}^{\dagger,\pa}}\widetilde{\D}_{\rig}^{\dagger,\pa}
\]
in the cyclotomic setting (see \cite[III.2.1]{Be08B}). More generally,
we may descend along unramified extensions to give it in the twisted
cyclotomic case, and by base changing we get it in our setting as
well by the usual argument. 

It follows that for $r\gg0$ we also have an isomorphism
\[
\widetilde{\B}_{\log,[r,\infty),L}^{\pa}\otimes_{L_{0}^{\prime}}(\widetilde{\B}_{\log,[r,\infty),L}^{\pa}\otimes_{\widetilde{\B}_{[r,\infty),K}^{\pa}}\widetilde{\D}_{[r,\infty)}^{\pa})^{\Lie\Gamma=0}\xrightarrow{\sim}\widetilde{\B}_{\log,[r,\infty),L}^{\pa}\otimes_{\widetilde{\B}_{[r,\infty),K}^{\pa}}\widetilde{\D}_{[r,\infty)}^{\pa}.
\]
Pulling back along Frobenius, we obtain this isomorphism for any $r$.
Then by finding $r\gg0$ so that $U\subset\mathcal{Y}_{[r,\infty)}$,
we can base change the isomorphism along the map $\widetilde{\B}_{\log,[r,\infty),L}^{\pa}\rightarrow\mathcal{O}_{\mathcal{Y}_{\log,L}}^{\la}(p_{\log,L}^{-1}U)$
to conclude.
\end{proof}
Note that whether we need to adjoin $\log$ and/or perform a finite
extension $L$ of $K$ depends exactly on whether $\mathcal{E}$ becomes
crystalline or semistable after restricting $G_{K}$ to $G_{L}$.
Applying this observation and taking $\Lie\Gamma=0$ of both sides
of the proposition, we obtain the following.
\begin{thm}
\label{8.12}The sheaf $\Sol\left(\mathcal{E}\right)$ is a locally
free $\mathcal{O}_{\mathcal{X}}^{\plsm}$-module of rank equal to
$\rank(\mathcal{E})$. More precisely:

i. If $\mathcal{E}$ becomes crystalline after restricting $G_{K}$
to $G_{L'}$ for some $L\subset L'\subset L_{\infty}$ then $\mathrm{Sol}_{L}(\mathcal{E})$
is a locally free $\mathcal{O}_{\mathcal{X}}^{L\text{-}\sm}$-module
of rank equal to $\rank(\mathcal{E})$, and there is a natural isomorphism
\[
\mathcal{O}_{\mathcal{X}_{L}}^{\la}\otimes_{\mathcal{O}_{\mathcal{X}}^{L\text{-}\sm}}\mathrm{Sol}_{L}(\mathcal{E})\xrightarrow{\sim}\mathcal{O}_{\mathcal{X}_{L}}^{\la}\otimes_{\mathcal{O}_{\mathcal{X}}^{\la}}\mathcal{N}_{\dR}\left(\mathcal{E}\right).
\]

ii. If $\mathcal{E}$ becomes semistable after restricting $G_{K}$
to $G_{L'}$ for some $L\subset L'\subset L_{\infty}$ then $\mathrm{Sol}_{\log,L}(\mathcal{E})$
is a locally free $\mathcal{O}_{\mathcal{X}}^{L\text{-}\lsm}$-module
of rank equal to $\rank(\mathcal{E})$, and there is a natural isomorphism
\[
\mathcal{O}_{\mathcal{X}_{\log,L}}^{\la}\otimes_{\mathcal{O}_{\mathcal{X}}^{L\text{-}\lsm}}\mathrm{Sol}_{\log,L}(\mathcal{E})\xrightarrow{\sim}\mathcal{O}_{\mathcal{X}_{\log,L}}^{\la}\otimes_{\mathcal{O}_{\mathcal{X}}^{\la}}\mathcal{N}_{\dR}\left(\mathcal{E}\right).
\]
\end{thm}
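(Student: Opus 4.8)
The plan is to deduce Theorem 8.10 from Proposition 8.9 by first establishing the local statements (i) and (ii) on a suitable affinoid cover, and then globalizing. The key observation is that Proposition 8.9 already gives, for open subsets $U\subset\mathcal{X}_I$ contained in the image of $\mathcal{Y}_{[r,\infty)}$ with $r\gg0$, an isomorphism
\[
\mathcal{O}_{\mathcal{X}_{\log,L}}^{\la}(p_{\log,L}^{-1}U)\otimes_{\mathcal{O}_{\mathcal{X}}^{L\text{-}\lsm}(U)}\mathrm{Sol}_{\log,L}(\mathcal{E})(U)\xrightarrow{\sim}\mathcal{O}_{\mathcal{X}_{\log,L}}^{\la}(p_{\log,L}^{-1}U)\otimes_{\mathcal{O}_{\mathcal{X}}^{\la}(U)}\mathcal{N}_{\dR}(\mathcal{E})(U).
\]
First I would fix such a $U$, choose an $L$ as in the $p$-adic monodromy theorem, and use that $\mathcal{N}_{\dR}(\mathcal{E})|_U$ is free over $\mathcal{O}_{\mathcal{X}}^{\la}(U)$ of rank $d=\rank(\mathcal{E})$ (Theorem 6.1 plus Proposition 2.2). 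Pulling back along $p_{\log,L}$, the right side is free of rank $d$ over $\mathcal{O}_{\mathcal{X}_{\log,L}}^{\la}(p_{\log,L}^{-1}U)$. Taking $\Lie\Gamma$-invariants and using Lemma 8.6 (so that $\mathcal{O}_{\mathcal{X}_{\log,L}}^{\la}=\mathcal{O}_{\mathcal{X}_L}^{\la}[T]$, resp. $=\mathcal{O}_{\mathcal{X}}^{L\text{-}\lsm}[T]$ after invariants via Proposition 8.7(ii)) one reads off that $\mathrm{Sol}_{\log,L}(\mathcal{E})(U)$ is projective of rank $d$ over $\mathcal{O}_{\mathcal{X}}^{L\text{-}\lsm}(U)$: it is a direct summand of a free module, cut out by the idempotent projection $\mathcal{N}_{\dR}\to\mathcal{N}_{\dR}^{\Lie\Gamma=0}$ transported through the trivialization. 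The $\varphi$-descent identification $\mathrm{Sol}_{\log,L}=(\mathrm{Sol}_{\log,L}^{\varphi})^{\varphi=1}$ from $\mathsection8.6$ lets one pass freely between the $\mathcal{Y}$- and $\mathcal{X}$-pictures, so the same holds over $\mathcal{X}$.

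Next I would handle the crystalline case (i): if $\mathcal{E}$ becomes crystalline over $G_{L'}$ with $L\subset L'\subset L_\infty$, then $N=0$ on the associated $(\varphi,N,G_K)$-module, so $\mathcal{M}_{\dR}(\mathcal{E})$ is already trivialized after base change to $\mathcal{O}_{\mathcal{X}_L}^{\la}$ without adjoining $T$ — i.e. the proof of Proposition 8.9 goes through with $\widetilde{\B}_{\log,L}$ replaced by $\widetilde{\B}_{L}$. Concretely, the $p$-adic monodromy isomorphism degenerates (since $N$ kills the solution space generators) to
\[
\widetilde{\B}_{[r,\infty),L}^{\pa}\otimes_{L_0'}(\widetilde{\B}_{[r,\infty),L}^{\pa}\otimes_{\widetilde{\B}_{[r,\infty),K}^{\pa}}\widetilde{\D}_{[r,\infty)}^{\pa})^{\Lie\Gamma=0}\xrightarrow{\sim}\widetilde{\B}_{[r,\infty),L}^{\pa}\otimes_{\widetilde{\B}_{[r,\infty),K}^{\pa}}\widetilde{\D}_{[r,\infty)}^{\pa},
\]
and running the same localization-and-invariants argument as above, now with $\mathcal{O}_{\mathcal{X}_L}^{\la,\Lie\Gamma=0}=\mathcal{O}_{\mathcal{X}}^{L\text{-}\sm}$ (Definition 8.11(ii) and Proposition 8.7), yields local freeness of $\mathrm{Sol}_L(\mathcal{E})$ over $\mathcal{O}_{\mathcal{X}}^{L\text{-}\sm}$ of rank $d$ together with the stated isomorphism. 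Part (ii) is literally Proposition 8.9 combined with the rank count just established. Finally, the global statement for $\Sol(\mathcal{E})=\varinjlim_{[L:K]<\infty}\mathrm{Sol}_{\log,L}(\mathcal{E})$ over $\mathcal{O}_{\mathcal{X}}^{\plsm}=\varinjlim\mathcal{O}_{\mathcal{X}}^{L\text{-}\lsm}$ follows by taking the colimit: local freeness of rank $d$ is preserved under filtered colimits along the flat transition maps $\mathcal{O}_{\mathcal{X}}^{L\text{-}\lsm}\to\mathcal{O}_{\mathcal{X}}^{L'\text{-}\lsm}$, since for $L$ large enough (so that $\mathcal{E}$ is already potentially semistable over $L_\infty$) the sheaves $\mathrm{Sol}_{\log,L'}(\mathcal{E})$ are obtained from $\mathrm{Sol}_{\log,L}(\mathcal{E})$ by base change.

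I expect the main obstacle to be the bookkeeping needed to verify that ``taking $\Lie\Gamma$-invariants'' interacts correctly with the tensor-product trivialization — that is, that
\[
\bigl(\mathcal{O}_{\mathcal{X}_{\log,L}}^{\la}\otimes_{\mathcal{O}_{\mathcal{X}}^{\la}}\mathcal{N}_{\dR}(\mathcal{E})\bigr)^{\Lie\Gamma=0}=\mathrm{Sol}_{\log,L}(\mathcal{E})
\]
and that this invariants functor really exhibits $\mathrm{Sol}_{\log,L}(\mathcal{E})$ as a direct summand of a finite free $\mathcal{O}_{\mathcal{X}}^{L\text{-}\lsm}$-module rather than merely as a $\Lie\Gamma=0$-eigenspace of an infinite-dimensional object. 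The cleanest way around this is to argue on $\mathcal{X}_I$ directly: there $\mathcal{O}_{\mathcal{X}_{\log,L,I}}^{\la}$ is a polynomial ring $\mathcal{O}_{\mathcal{X}_{L,I}}^{\la}[T]$ in a single variable with $N=d/dT$ and $\Lie\Gamma$ acting $\mathcal{O}_{\mathcal{X}}^{L\text{-}\lsm}[T]$-semilinearly with $\Lie\Gamma(T)$ a unit times $T$-independent quantity, so solving $\nabla=0$ amounts to a triangular system whose solution space is manifestly free of the expected rank. Once the rank computation is pinned down on the cover $\{\mathcal{X}_I\}$, gluing and the colimit over $L$ are formal, and the two displayed isomorphisms in (i) and (ii) are just Proposition 8.9 (and its crystalline degeneration) restated with the now-known freeness.
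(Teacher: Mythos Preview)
Your approach is essentially the same as the paper's: deduce everything from the $p$-adic monodromy proposition (what you call Proposition 8.9; in the paper it is Proposition 8.11) by taking $\Lie\Gamma=0$, with the crystalline case being the degeneration where no $\log$ is needed, and then pass to the colimit over $L$.

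The one place where you make things harder than necessary is the invariants step. You phrase it in terms of an ``idempotent projection $\mathcal{N}_{\dR}\to\mathcal{N}_{\dR}^{\Lie\Gamma=0}$'' and worry about showing $\Sol_{\log,L}(\mathcal{E})(U)$ is a direct summand of a finite free module. There is no such projection in general, and none is needed. The content of the monodromy proposition is precisely that the free module
\[
\mathcal{O}_{\mathcal{X}_{\log,L}}^{\la}(p_{\log,L}^{-1}U)\otimes_{\mathcal{O}_{\mathcal{X}}^{\la}(U)}\mathcal{N}_{\dR}(\mathcal{E})(U)
\]
admits a \emph{basis} $e_1,\dots,e_d$ of $\Lie\Gamma$-invariant vectors. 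For a free module with such a basis, $\nabla(\sum b_i e_i)=\sum \nabla(b_i)e_i$, so
\[
\Sol_{\log,L}(\mathcal{E})(U)=\bigl(\text{that module}\bigr)^{\Lie\Gamma=0}=\bigoplus_{i=1}^{d}\mathcal{O}_{\mathcal{X}_{\log,L}}^{\la}(p_{\log,L}^{-1}U)^{\Lie\Gamma=0}\,e_i=\bigoplus_{i=1}^{d}\mathcal{O}_{\mathcal{X}}^{L\text{-}\lsm}(U)\,e_i,
\]
which is visibly free of rank $d$. This is the whole argument; the ``triangular system'' analysis you sketch at the end is unnecessary. The paper's proof is accordingly a single sentence: ``take $\Lie\Gamma=0$ of both sides of the proposition,'' together with the remark that whether one must adjoin $\log$ or pass to a finite extension corresponds exactly to the semistable/crystalline dichotomy.
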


\begin{lem}
\label{8.13}For each sufficiently small open connected affinoid $U$
of $\mathcal{Y}_{\left(0,\infty\right)}$ which contains an element
of $\varphi^{\Z}(x_{\infty})$, and for $L$ large enough so that
$G_{L}$ stablizes $U$, there is a natural $G_{L}$-embedding $\H^{0}(U,\mathrm{Sol}_{\log,L}^{\varphi}(\mathcal{E}))\hookrightarrow L_{\infty}\otimes_{K}\D_{\dR}\left(\mathcal{E}\right).$
\end{lem}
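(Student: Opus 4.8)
The plan is to produce the embedding by first trivializing the solution sheaf after a mild base change, and then tracking where the de Rham datum lives. Concretely, let $U\subset\mathcal{Y}_{(0,\infty)}$ be a sufficiently small connected open affinoid containing some $\varphi^{m}(x_{\infty})$, and enlarge $L$ so that $G_L$ stabilizes $U$. First I would apply Proposition 8.13 (the $p$-adic monodromy theorem in this sheaf-theoretic form): after pulling back along $p_{\log,L}$ and base changing to $\mathcal{O}_{\mathcal{Y}_{\log,L}}^{\la}(p_{\log,L}^{-1}U)$, the sheaf $\mathrm{Sol}_{\log,L}^{\varphi}(\mathcal{E})$ becomes a free module whose extension of scalars recovers $\mathcal{M}_{\dR}(\mathcal{E})$. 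In particular there is a $G_L$-equivariant injection
\[
\H^{0}(U,\mathrm{Sol}_{\log,L}^{\varphi}(\mathcal{E}))\hookrightarrow \H^{0}(p_{\log,L}^{-1}U,\, p_{\log,L}^{*}\mathcal{M}_{\dR}(\mathcal{E})),
\]
simply because $\mathcal{O}_{\mathcal{Y}_{\log,L}}^{\la}(p_{\log,L}^{-1}U)$ is faithfully flat (indeed contains $1$) over $\mathcal{O}_{\mathcal{Y}_{(0,\infty)}}^{L\text{-}\lsm}(U)$, so the map $\H^{0}(U,\mathrm{Sol}^{\varphi}_{\log,L}(\mathcal{E}))\to\mathcal{O}_{\mathcal{Y}_{\log,L}}^{\la}(p_{\log,L}^{-1}U)\otimes_{\mathcal{O}_{\mathcal{Y}_{(0,\infty)}}^{L\text{-}\lsm}(U)}\H^{0}(U,\mathrm{Sol}^{\varphi}_{\log,L}(\mathcal{E}))$ is injective.

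Next I would localize at the point $\varphi^{m}(x_\infty)\in U$. Since $\mathcal{N}_{\dR}(\mathcal{E})$ is the modification of $\mathcal{E}$ along the lattice $\D_{\dR}(\mathcal{E})\otimes_K\B_{\dR}^{+,\pa}$, its completed stalk at $x_\infty$ (equivalently, at $\varphi^m(x_\infty)$ after applying $\varphi^{-m}$) is $\D_{\dR}(\mathcal{E})\otimes_K\B_{\dR}^{+}(\widehat{K}_\infty)^{\pa}$ by construction. Restricting the global section of the pullback of $\mathcal{M}_{\dR}(\mathcal{E})$ to this completed stalk and then to $\mathcal{N}_{\dR}(\mathcal{E})_{x_\infty}^{\wedge}$, and using that a nonzero local section of a vector bundle on the (connected, reduced) affinoid $U$ is determined by its germ at a point, I get an injection of $\H^{0}(U,\mathrm{Sol}_{\log,L}^{\varphi}(\mathcal{E}))$ into $\B_{\dR}^{+}(\widehat{L}_\infty)^{\pa}\otimes_K\D_{\dR}(\mathcal{E})$. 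But the solutions are killed by $\Lie\Gamma$, so the image lands in $(\B_{\dR}^{+}(\widehat{L}_\infty)^{\pa})^{\Lie\Gamma=0}\otimes_K\D_{\dR}(\mathcal{E})$. By Proposition 8.7 (or its $\B_{\dR}^+$-analogue used in its proof, $\B_{\dR}^{+}(\widehat{L}_\infty)^{\la,\Lie\Gamma=0}=L_\infty$), this is exactly $L_\infty\otimes_K\D_{\dR}(\mathcal{E})$, and the $G_L$-action is the evident semilinear one. Composing, I obtain the desired $G_L$-embedding.

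The main obstacle I anticipate is bookkeeping around the $\varphi$-structure and the point localization: one must be careful that $U$ meets $\varphi^{\Z}(x_\infty)$ rather than $x_\infty$ itself, so all stalk computations for $\mathcal{N}_{\dR}$ should be transported via the isomorphism $c_\varphi:\varphi^{*}\mathcal{M}_{\dR}(\mathcal{E})\xrightarrow{\sim}\mathcal{M}_{\dR}(\mathcal{E})$, and one must check the identification of $\mathcal{N}_{\dR}(\mathcal{E})_{x_\infty}^{\wedge,+}$ with $\D_{\dR}(\mathcal{E})\otimes_K\B_{\dR}^{+}(\widehat{K}_\infty)^{\pa}$ is compatible with the $\Gamma$-action — this is immediate from Definition 8.6 but worth spelling out. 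A secondary point is to justify that restriction to the germ at a single rank-$1$ point is injective on global sections over the connected affinoid $U$; this follows because $\mathcal{O}_{\mathcal{Y}_{(0,\infty)}}(U)$ is a Dedekind domain (Theorem 8.8 of \cite{Ke16}, as already invoked in the proof of Proposition 8.7) and $\mathcal{N}_{\dR}(\mathcal{E})(U)$ is torsionfree, so a section vanishing to all orders at a maximal ideal is zero. Everything else is formal manipulation with flatness and with the already-established identifications $\widetilde{\B}_{\log,L,I}^{\la}=\widetilde{\B}_{L,I}^{\la}[T]$ and Proposition 8.7.
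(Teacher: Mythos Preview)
Your proposal is correct and follows essentially the same route as the paper: use the $p$-adic monodromy isomorphism to identify $\mathrm{Sol}_{\log,L}^{\varphi}(\mathcal{E})(U)$ (after extending scalars to $\mathcal{O}_{\mathcal{Y}_{\log,L}}^{\la}(p_{\log,L}^{-1}U)$) with the corresponding extension of $\mathcal{M}_{\dR}(\mathcal{E})(U)$, then inject into the completed stalk at a $\varphi$-translate of $x_{\infty}$, and finally apply $\Lie\Gamma=0$ to land in $L_{\infty}\otimes_{K}\D_{\dR}(\mathcal{E})$. The paper phrases the target of the stalk map as $\widehat{L}_{\infty}^{\la}\otimes_{\widehat{K}_{\infty}^{\la}}\D_{\dif}(\mathcal{E})$ rather than $\B_{\dR}^{+}(\widehat{L}_{\infty})^{\pa}\otimes_{K}\D_{\dR}(\mathcal{E})$, but since $\D_{\dif}^{+}(\mathcal{N}_{\dR}(\mathcal{E}))=\B_{\dR}^{+,\pa}\otimes_{K}\D_{\dR}(\mathcal{E})$ by definition of the modification, these amount to the same thing after taking $\Lie\Gamma=0$. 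One small quibble: ``contains $1$'' is not the same as faithful flatness; the injectivity you need follows rather from $\mathrm{Sol}_{\log,L}^{\varphi}(\mathcal{E})(U)$ being free over $\mathcal{O}_{\mathcal{Y}_{(0,\infty)}}^{L\text{-}\lsm}(U)$ (a consequence of the monodromy isomorphism) together with the injectivity of the ring map.
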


\begin{proof}
Taking the completed stalk at a $\varphi$-translate of $x_{\infty}$,
we obtain an injection
\[
\mathcal{O}_{\mathcal{Y}_{\log,L}}^{\la}(p_{\log,L}^{-1}U)\otimes_{\mathcal{O}_{\mathcal{Y}_{(0,\infty)}}^{\la}(U)}\mathcal{\mathcal{M}}_{\dR}(\mathcal{E})(U)\hookrightarrow\widehat{L}_{\infty}^{\la}\otimes_{\widehat{K}_{\infty}^{\la}}\D_{\dif}\left(\mathcal{E}\right).
\]
On the other hand, Proposition \ref{8.7} gives an isomorphism
\[
\mathcal{O}_{\mathcal{Y}_{\log,L}}^{\la}(p_{\log,L}^{-1}U)\otimes_{\mathcal{O}_{\mathcal{Y}_{(0,\infty)}}^{L\text{-}\lsm}(U)}\mathrm{Sol}_{\log,L}^{\varphi}(\mathcal{E})(U)\xrightarrow{\sim}\mathcal{O}_{\mathcal{Y}_{\log,L}}^{\la}(p_{\log,L}^{-1}U)\otimes_{\mathcal{O}_{\mathcal{Y}_{(0,\infty)}}^{\la}(U)}\mathcal{\mathcal{M}}_{\dR}(\mathcal{E})(U).
\]

Applying $\Lie\Gamma=0$ to the composition of these maps gives the
desired embedding.
\end{proof}
We can now give an interpertation of the stalk at $x_{\infty}$:
\begin{prop}
\label{8.14}There following are each naturally isomorphic to each
other.

1. The stalk $\mathrm{Sol}(\mathcal{E})_{x_{\infty}}$.

2. The stalk $\mathrm{Sol}(\mathcal{E})_{y}^{\varphi}$ for any $y\in\varphi^{\Z}(x_{\infty})$.

3. $\overline{K}\otimes_{K}\D_{\dR}\left(\mathcal{E}\right)$.

In particular, $\mathrm{Sol}(\mathcal{E})_{x_{\infty}}$ is naturally
a filtered $\overline{K}$-representation of $G_{K}$ of dimension
$\rank(\mathcal{E})$ and $G_{K}$-fixed points $\D_{\dR}\left(\mathcal{E}\right)$.
\end{prop}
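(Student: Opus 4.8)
The plan is to identify the three objects by combining Theorem 8.9, the embedding from Lemma 8.10, and the computation of the stalks of the smooth function sheaves from Proposition 8.7. First I would treat the identification of items 1 and 2. The $\varphi$-action on $\mathcal{Y}_{\log,L}$ is $\Gamma$-equivariant and identifies $\mathrm{Sol}_{\log,L}^{\varphi}(\mathcal{E})$ at $y$ with $\mathrm{Sol}_{\log,L}^{\varphi}(\mathcal{E})$ at $\varphi(y)$; since $\mathrm{Sol}_{*}(\mathcal{E})=(\mathrm{Sol}_{*}^{\varphi}(\mathcal{E}))^{\varphi=1}$, passing to $\varphi$-invariants on stalks along the chain $x_{\infty}, \varphi(x_{\infty}), \varphi^{2}(x_{\infty}),\dots$ (all of which map to $x_{\infty}\in\mathcal{X}$ under $\pi$) gives a canonical isomorphism between $\mathrm{Sol}(\mathcal{E})_{x_{\infty}}$ and $\mathrm{Sol}^{\varphi}(\mathcal{E})_{y}$ for any $y\in\varphi^{\Z}(x_{\infty})$. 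Here I would also pass to the colimit over finite $L/K$, using that $\mathrm{Sol}(\mathcal{E})=\varinjlim_{L}\mathrm{Sol}_{\log,L}(\mathcal{E})$.

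Next I would identify the common stalk with $\overline{K}\otimes_{K}\D_{\dR}(\mathcal{E})$. Fix $L$ large enough that $\mathcal{E}$ becomes semistable over $G_{L'}$ for some $L\subset L'\subset L_{\infty}$, and work with $\mathrm{Sol}_{\log,L}^{\varphi}(\mathcal{E})$ near a point $y\in\varphi^{\Z}(x_{\infty})$. By Theorem 8.9(ii), after tensoring up to $\mathcal{O}_{\mathcal{X}_{\log,L}}^{\la}$ the sheaf $\mathrm{Sol}_{\log,L}(\mathcal{E})$ becomes $\mathcal{N}_{\dR}(\mathcal{E})$; taking the completed stalk at $y$ and using $\widehat{\mathcal{N}_{\dR}(\mathcal{E})}_{x_{\infty}}^{+}=\D_{\dR}(\mathcal{E})\otimes_{K}\B_{\dR}^{+,\pa}$ (the defining property of $\mathcal{N}_{\dR}$ from Definition 8.5), we get that $\mathrm{Sol}_{\log,L}^{\varphi}(\mathcal{E})_{y}\otimes_{\mathcal{O}_{\mathcal{X},x_{\infty}}^{L\text{-}\lsm}}\mathcal{O}_{\mathcal{X}_{\log,L},y}^{\la}$ has a basis consisting of horizontal sections, realized inside $\B_{\dR}^{+,\pa}\otimes_{K}\D_{\dR}(\mathcal{E})$ as $1\otimes d_{i}$ for a $K$-basis $d_{i}$ of $\D_{\dR}(\mathcal{E})$. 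Applying $\Lie\Gamma=0$ and using Proposition 8.7(iv), which gives $\mathcal{O}_{\mathcal{X},x_{\infty}}^{L\text{-}\lsm}=\mathcal{O}_{\mathcal{X},x_{\infty}}^{L\text{-}\sm}=L_{\infty}$, together with $\B_{\dR}^{+,\pa,\Lie\Gamma=0}=K_{\infty}$, I conclude that $\mathrm{Sol}_{\log,L}(\mathcal{E})_{x_{\infty}}=L_{\infty}\otimes_{K}\D_{\dR}(\mathcal{E})$, and the colimit over $L$ yields $\overline{K}\otimes_{K}\D_{\dR}(\mathcal{E})$ using $\varinjlim_{L}L_{\infty}=\overline{K}$.

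For the final sentence, the filtered $\overline{K}$-representation structure of $G_{K}$ on $\mathrm{Sol}(\mathcal{E})_{x_{\infty}}$ comes from the semilinear $\Gamma$-action on $\mathcal{N}_{\dR}(\mathcal{E})$ and the $G_{K}$-action on $\mathcal{X}_{\log}$ passing to the colimit, while the filtration is the one on $\D_{\dR}(\mathcal{E})$ induced from the $I_{\theta}$-filtration on $\widehat{\mathcal{E}}_{x_{\infty}}$ as recalled just before Definition 8.5. That the $G_{K}$-fixed points equal $\D_{\dR}(\mathcal{E})$ and that the dimension over $\overline{K}$ is $\rank(\mathcal{E})$ both follow from $\D_{\dR}(\mathcal{E})$ being a $K$-form of $\mathrm{Sol}(\mathcal{E})_{x_{\infty}}$ and Galois descent ($\overline{K}^{G_{K}}=K$). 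The main obstacle is bookkeeping: one must verify that the various completed-stalk and $\Lie\Gamma=0$ operations commute with the tensor products appearing in Theorem 8.9 and Lemma 8.10, and that the identifications of items 1, 2, 3 are compatible with both the $G_{K}$-action and the filtration; once the monodromy-theorem isomorphism of Theorem 8.9 is in hand, each individual step is essentially formal.
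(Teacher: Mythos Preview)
Your proposal uses the same ingredients as the paper --- the monodromy isomorphism (Theorem 8.12 in the paper's numbering), the embedding lemma (Lemma 8.13), and the computation of the stalks of the smooth sheaves (Proposition 8.9) --- but the middle of your argument is more circuitous than it needs to be, and has a soft spot. You assert that the horizontal basis of $\mathrm{Sol}_{\log,L}(\mathcal{E})_{x_\infty}$ is ``realized inside $\B_{\dR}^{+,\pa}\otimes_K\D_{\dR}(\mathcal{E})$ as $1\otimes d_i$'' for a $K$-basis $d_i$ of $\D_{\dR}(\mathcal{E})$. The embedding into $L_\infty\otimes_K\D_{\dR}(\mathcal{E})$ is fine (this is exactly Lemma 8.13), but the claim that the image contains the $1\otimes d_i$ is not justified: the elements $1\otimes d_i$ live in the \emph{completed} stalk $\D_{\dif}^+(\mathcal{E})$, and it is not automatic that they lift to sections of (the $\log$-$L$ pullback of) $\mathcal{N}_{\dR}(\mathcal{E})$ over an honest neighborhood of $x_\infty$. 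This is precisely the ``bookkeeping'' you flag at the end, and it is not merely formal.

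The paper sidesteps this by a dimension count. Once Lemma 8.13 gives the embedding $\mathrm{Sol}(\mathcal{E})_{x_\infty}\hookrightarrow\overline{K}\otimes_K\D_{\dR}(\mathcal{E})$, one observes that by Theorem 8.12 the source is free of rank $\mathrm{rank}(\mathcal{E})$ over $\mathcal{O}_{\mathcal{X},x_\infty}^{\plsm}$, and that $\mathcal{O}_{\mathcal{X},x_\infty}^{\plsm}=\overline{K}$ by Proposition 8.9(iv) together with the observation in Question 8.10(3) that $L\text{-}\sm$ and $L\text{-}\lsm$ agree on neighborhoods of $x_\infty$. So both sides are $\overline{K}$-vector spaces of the same dimension, and the injection is an isomorphism. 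You already have all the pieces for this; replacing your unjustified basis identification with this dimension comparison makes your argument complete and essentially identical to the paper's.
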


\begin{proof}
It is clear 1 and 2 are isomorphic. By Lemma \ref{8.13}, we have
a natural embedding of $\mathrm{Sol}(\mathcal{E})_{y}$, and hence
of $\mathrm{Sol}(\mathcal{E})_{x_{\infty}}$ into $\overline{K}\otimes_{K}\D_{\dR}\left(\mathcal{E}\right)$.
By Theorem \ref{8.12}, $\mathrm{Sol}(\mathcal{E})_{x_{\infty}}$
is a finite free module of rank equal to $\dim_{K}\D_{\dR}\left(\mathcal{E}\right)$
over $\mathcal{O}_{\mathcal{X},x_{\infty}}^{\plsm}$. But by Proposition
\ref{8.7} $\mathcal{O}_{\mathcal{X},x_{\infty}}^{\plsm}=\overline{K}$
so this embedding must be an isomorphism.
\end{proof}
Finally, we consider the global solutions to the differential equation,
namely
\[
D(\mathcal{E})=\H^{0}(\mathcal{Y}_{(0,\infty)},\mathrm{Sol}^{\varphi}(\mathcal{E}))=\H^{0}(\mathcal{Y}_{(0,\infty)},\mathcal{O}_{\mathcal{Y}_{(0,\infty)}}^{\plsm}\otimes_{\mathcal{O}_{\mathcal{X}}^{\plsm}}\mathrm{Sol}(\mathcal{E})).
\]

\begin{prop}
\label{8.15}$D(\mathcal{E})$ is naturally an object of $\Mod_{\Q_{p}^{\mathrm{un}}}^{\Fil,\varphi,N}(G_{K})$
and $\dim_{\Q_{p}^{\mathrm{un}}}D(\mathcal{E})=\rank\left(\mathcal{E}\right)$.
\end{prop}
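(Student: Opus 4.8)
The plan is to realize $D(\mathcal{E})$ as a finite-dimensional $\Q_{p}^{\mathrm{un}}$-vector space of global solutions on $\mathcal{Y}_{(0,\infty)}$ and to read off the four pieces of structure --- a semilinear $\varphi$, a monodromy operator $N$, a discrete $G_{K}$-action, and a filtration after extending scalars to $K^{\mathrm{un}}$ --- from the geometry of the log surfaces $\mathcal{Y}_{\log,L}$ and from the identification (Theorem 7.5) of $\mathcal{M}_{\dR}(\mathcal{E})$ with a $(\varphi,\Gamma)$-module over $\widetilde{\B}_{\rig}^{\dagger,\pa}$. Write $d=\rank(\mathcal{E})$.

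\emph{The underlying vector space.} Since $\mathcal{E}$ is de Rham, the $p$-adic monodromy theorem allows us to fix a finite $L/K$ such that $\mathcal{E}$ becomes semistable after restricting $G_{K}$ to $G_{L'}$ for some $L\subseteq L'\subseteq L_{\infty}$. By Theorem 7.5, $\mathcal{M}_{\dR}(\mathcal{E})$ corresponds to a $(\varphi,\Gamma)$-module $\widetilde{\D}_{\rig}^{\dagger,\pa}$, and the form of the $p$-adic monodromy theorem used in the proof of Proposition 8.11 shows that for every $r>0$ the sections $\H^{0}(\mathcal{Y}_{[r,\infty)},\mathrm{Sol}_{\log,L}^{\varphi}(\mathcal{E}))=(\widetilde{\B}_{\log,[r,\infty),L}^{\pa}\otimes_{\widetilde{\B}_{[r,\infty),K}^{\pa}}\widetilde{\D}_{[r,\infty)}^{\pa})^{\Lie\Gamma=0}$ form a free module of rank $d$ over $\widetilde{\B}_{\log,[r,\infty),L}^{\pa,\Lie\Gamma=0}=L_{0}'$ (Proposition 8.7) whose base change recovers the whole bundle; thus on each $\mathcal{Y}_{[r,\infty)}$ the sheaf $\mathrm{Sol}_{\log,L}^{\varphi}(\mathcal{E})$ is the constant sheaf attached to a $d$-dimensional $L_{0}'$-space $W_{L}$, and the restriction maps in $r$ are isomorphisms. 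Since $\mathcal{Y}_{(0,\infty)}=\bigcup_{r>0}\mathcal{Y}_{[r,\infty)}$, it follows that $\H^{0}(\mathcal{Y}_{(0,\infty)},\mathrm{Sol}_{\log,L}^{\varphi}(\mathcal{E}))=W_{L}$; and covering $\mathcal{Y}_{(0,\infty)}$ by the $\varphi$-translates of a single affinoid $\mathcal{Y}_{I}$ with $x_{\infty}\in\mathcal{Y}_{I}$ and applying Proposition 8.9 gives $\H^{0}(\mathcal{Y}_{(0,\infty)},\mathcal{O}_{\mathcal{Y}_{(0,\infty)}}^{\plsm})=\Q_{p}^{\mathrm{un}}$. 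Finally, since $\mathrm{Sol}^{\varphi}(\mathcal{E})=\varinjlim_{[L:K]<\infty}\mathrm{Sol}_{\log,L}^{\varphi}(\mathcal{E})$ and each $\mathrm{Sol}_{\log,L}^{\varphi}(\mathcal{E})$ for $L$ large is constant --- so that $\H^{0}$ reduces to the quasicompact $\mathcal{Y}_{[r,\infty)}$ and commutes with the colimit over $L$ --- we obtain $D(\mathcal{E})=\varinjlim_{L}W_{L}=\Q_{p}^{\mathrm{un}}\otimes_{L_{0}'}W_{L}$, of dimension $d$ over $\Q_{p}^{\mathrm{un}}$.

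\emph{The operators and actions.} The $\varphi$-structure of the $\varphi$-vector bundle $\mathcal{M}_{\dR}(\mathcal{E})$ preserves potentially log-smooth sections and so induces a bijective $\varphi$ on $D(\mathcal{E})$, semilinear over the Frobenius of $\Q_{p}^{\mathrm{un}}=\H^{0}(\mathcal{Y}_{(0,\infty)},\mathcal{O}_{\mathcal{Y}_{(0,\infty)}}^{\plsm})$. The $G_{K}$-equivariant derivation $d:\mathcal{O}_{X_{\log}}\to\Omega_{X_{\log}/X}^{1}$ gives on each $\mathcal{Y}_{\log,L,I}$ the operator $N=d/dT$ with $T=\log[\varpi^{\flat}]$; since $\Gamma$ translates $T$ by a section pulled back from $\mathcal{Y}_{(0,\infty)}$, which $N$ kills, $N$ commutes with the $\Gamma$- and $\Lie\Gamma$-actions, so (extending by $N\otimes\mathrm{id}$ along $p_{\log,L}^{*}\mathcal{M}_{\dR}(\mathcal{E})$) it descends to $\mathrm{Sol}^{\varphi}(\mathcal{E})$ and defines $N$ on $D(\mathcal{E})$; the commutation relation between $N$ and $\varphi$ in the definition of $\Mod_{\Q_{p}^{\mathrm{un}}}^{\Fil,\varphi,N}(G_{K})$ follows from $\varphi(T)=pT$ and forces $N$ to be nilpotent on the finite-dimensional $D(\mathcal{E})$. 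Since $G_{K}$ acts on $\mathcal{Y}_{\log}$ compatibly with $p_{\log,L}$, the full $G_{K}$ acts on $\mathrm{Sol}^{\varphi}(\mathcal{E})=\varinjlim_{L}\mathrm{Sol}_{\log,L}^{\varphi}(\mathcal{E})$ (whose $L$-th term is only $\Gal(L_{\infty}/L)$-equivariant), and the induced action on $D(\mathcal{E})$ is discrete because every element of $W_{L}$ is fixed by the open subgroup $\Gal(\overline{K}/L_{\infty})$. Finally, localization at $x_{\infty}$ gives a $G_{K}$-equivariant, $\Q_{p}^{\mathrm{un}}$-linear injection $D(\mathcal{E})\hookrightarrow\mathrm{Sol}(\mathcal{E})_{x_{\infty}}\cong\overline{K}\otimes_{K}\D_{\dR}(\mathcal{E})$ (Proposition 8.14), which after extension of scalars becomes an isomorphism $D(\mathcal{E})\otimes_{\Q_{p}^{\mathrm{un}}}K^{\mathrm{un}}\xrightarrow{\sim}K^{\mathrm{un}}\otimes_{K}\D_{\dR}(\mathcal{E})$ by the dimension count; transporting the filtration on $\D_{\dR}(\mathcal{E})$ induced by the $I_{\theta}$-filtration on $\widehat{\mathcal{E}}_{x_{\infty}}$ equips $D(\mathcal{E})\otimes_{\Q_{p}^{\mathrm{un}}}K^{\mathrm{un}}$ with a filtration, $G_{K}$-stable since $I_{\theta}$ is. The remaining compatibilities among $\varphi$, $N$, the $G_{K}$-action and the filtration are a routine check.

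\emph{The main difficulty.} The crux is the dimension count, i.e.\ controlling $\H^{0}$ over the non-quasicompact space $\mathcal{Y}_{(0,\infty)}$: one must show that $\mathrm{Sol}_{\log,L}^{\varphi}(\mathcal{E})$ is genuinely \emph{constant} --- not merely locally free over $\mathcal{O}_{\mathcal{X}}^{L\text{-}\lsm}$ --- on each $\mathcal{Y}_{[r,\infty)}$ for $L$ large, so that global solutions coincide with the local solutions near $x_{\infty}$ and $\H^{0}$ commutes with the colimit over $L$. This rests essentially on the global form of the $p$-adic monodromy theorem together with the computations of rings of (log-)smooth functions in $\mathsection8.4$--$\mathsection8.5$; a minor further point is the commutation of $N$ with $\Lie\Gamma$, which is checked on the coordinate $T$.
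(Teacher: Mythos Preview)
Your approach is essentially the same as the paper's, only more fully unpacked: you and the paper both extract the $\Q_{p}^{\mathrm{un}}$-vector space structure from Proposition 8.7/8.9, the dimension from the $p$-adic monodromy theorem (the paper cites Theorem 8.12 directly), the $\varphi$-action from the Frobenius on $\mathcal{Y}_{(0,\infty)}$, the monodromy $N$ from the relative differential on $\mathcal{Y}_{\log,L}/\mathcal{Y}_{(0,\infty)}$, and the filtration from the localization into $\mathrm{Sol}(\mathcal{E})_{x_{\infty}}\cong\overline{K}\otimes_{K}\D_{\dR}(\mathcal{E})$.

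One slip to fix: in your discreteness argument you write that every element of $W_{L}$ is fixed by ``the open subgroup $\Gal(\overline{K}/L_{\infty})$'', but this subgroup is closed and \emph{not} open in $G_{K}$ (the extension $L_{\infty}/K$ is infinite), so by itself this does not give a discrete action. The paper's argument is the right one: a section of $\mathrm{Sol}_{\log,L}^{\varphi}(\mathcal{E})$ is by definition killed by $\Lie\Gamma$, hence fixed by an open subgroup of $\Gal(L_{\infty}/L)$; combined with the automatic $\Gal(\overline{K}/L_{\infty})$-invariance this gives invariance under an open subgroup of $G_{L}$, hence of $G_{K}$. You already have the $\Lie\Gamma=0$ condition in hand, so you only need to invoke it at this step.
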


\begin{proof}
We know each $\H^{0}(\mathcal{Y}_{(0,\infty)},\mathrm{Sol}_{\log,L}^{\varphi}(\mathcal{E}))$
is an $L_{0}'$ vector space for $U$ sufficiently small (independentely
of $L)$, so $D(\mathcal{E})$ is a $\Q_{p}^{\mathrm{un}}$-vector
space. The filtration is induced from the embedding $\H^{0}\left(\mathcal{Y}_{(0,\infty)},\mathrm{Sol}^{\varphi}(\mathcal{E})\right)\hookrightarrow\mathrm{Sol}(\mathcal{E})_{x_{\infty}}\cong\overline{K}\otimes_{K}\D_{\dR}\left(\mathcal{E}\right)$.
The $\varphi$-action is induced from the map $\varphi:\mathcal{Y}_{(0,\infty)}\rightarrow\mathcal{Y}_{(0,\infty)}$.
The monodromy operator $N$ is induced from the equivariant connection
$p_{\log,L}^{*}\mathcal{\mathcal{\mathcal{M}}}_{\dR}\left(\mathcal{E}\right)\rightarrow p_{\log,L}^{*}\mathcal{\mathcal{N}}_{\dR}\left(\mathcal{E}\right)\otimes\Omega_{\mathcal{\mathcal{Y}}_{\log}/\mathcal{Y}_{(0,\infty)}}^{1}$.
Finally, $G_{K}$ acts on the smooth elements in $p_{\log,L}^{*}\mathcal{\mathcal{\mathcal{M}}}_{\dR}\left(\mathcal{E}\right)$,
and this action is discrete because every element is killed by $\Lie\Gamma$,
hence by an open subgroup of $\Gal(L_{\infty}/L)$. To compute the
dimension use Theorem \ref{8.12}. 
\end{proof}
Using this language,\emph{ }Berger's theorem (\cite[Théoréme III.2.4]{Be08B})
admits the following interpertation.
\begin{thm}
\label{8.16}The functors $D\mapsto\mathcal{E}(D)$ and $\mathcal{E}\mapsto D(\mathcal{E})$
are mutual inverses and induce an equivalence of categories
\[
\Mod_{\Q_{p}^{\mathrm{un}}}^{\Fil,\varphi,N}(G_{K})\cong\{\text{de Rham locally analytic vector bundles}\}.
\]
\end{thm}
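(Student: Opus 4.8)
The plan is to establish the equivalence by producing two quasi-inverse functors and checking the two composites are naturally isomorphic to the identity. The functor $D \mapsto \mathcal{E}(D)$ was already constructed in $\mathsection8.3$, and the functor $\mathcal{E} \mapsto D(\mathcal{E})$ has just been shown (Proposition 8.16) to land in $\Mod_{\Q_p^{\mathrm{un}}}^{\Fil,\varphi,N}(G_K)$ with the correct dimension. So the content is verifying the two natural isomorphisms $D(\mathcal{E}(D)) \cong D$ and $\mathcal{E}(D(\mathcal{E})) \cong \mathcal{E}$.

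First I would prove $D(\mathcal{E}(D)) \cong D$ by unwinding the definitions. Given $D \in \Mod_{\Q_p^{\mathrm{un}}}^{\Fil,\varphi,N}(G_K)$ defined over $L_0$ as $D = \Q_p^{\mathrm{un}} \otimes_{L_0} D_0$, the bundle $\mathcal{E}(D)$ is the locally analytic vector bundle attached to the pair
\[
\bigl((\mathcal{O}_{\mathcal{Y}_{\log,L}-p_{\log,L}^{-1}(\infty)}^{\la} \otimes_{L_0} D)^{\varphi=1,N=0,\Gal(L_\infty/K_\infty)},\ \Fil^0(\B_{\dR}^{H_L,\pa} \otimes_{L_0} D_0)^{\Gal(L_\infty/K_\infty)}\bigr).
\]
By the very construction of $\mathcal{M}_{\dR}$ and $\mathcal{N}_{\dR}$ out of the filtered module, one checks that $\mathcal{M}_{\dR}(\mathcal{E}(D))$ is the locally analytic $\varphi$-vector bundle on $\mathcal{Y}_{(0,\infty)}$ whose pullback to $\mathcal{Y}_{\log,L}$ is $\mathcal{O}_{\mathcal{Y}_{\log,L}}^{\la} \otimes_{L_0} D_0$ with its tautological $\varphi$, $N$ and $G_K$-structure. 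Taking $\H^0(\mathcal{Y}_{(0,\infty)}, \mathrm{Sol}^\varphi(\mathcal{E}(D)))$ then amounts to computing $(\mathcal{O}_{\mathcal{Y}_{\log,L}}^{\la} \otimes_{L_0} D_0)^{\Lie\Gamma=0}$ globally on $\mathcal{Y}_{(0,\infty)}$, and Proposition 8.5(ii) identifies $\mathcal{O}_{\mathcal{Y}_{\log,L}}^{\la,\Lie\Gamma=0}$ with $L_0'$ on the relevant affinoids, so the global sections recover exactly $\Q_p^{\mathrm{un}} \otimes_{L_0} D_0 = D$ as a $\Q_p^{\mathrm{un}}$-vector space. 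One then checks the four extra structures ($\varphi$, $N$, $G_K$-action, filtration) transport correctly: $\varphi$ and $N$ are tautological from the construction, the $G_K$-action is discrete by Proposition 8.16's argument, and the filtration matches because it is induced on both sides via the embedding into $\mathrm{Sol}(\mathcal{E}(D))_{x_\infty} \cong \overline{K} \otimes_K \D_{\dR}(\mathcal{E}(D))$, which by the de Rham condition equals $\overline{K} \otimes_K (\B_{\dR}^{H_K,\pa}$-filtration data$)$ and hence recovers $\Fil$ on $D_0 \otimes_{L_0} K_0^{\mathrm{un}}$.

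Next I would prove $\mathcal{E}(D(\mathcal{E})) \cong \mathcal{E}$, which is where the $p$-adic monodromy theorem does the real work. By Proposition 8.10 there is a finite extension $L/K$ and $r \gg 0$ such that on opens $U$ of $\mathcal{Y}_{[r,\infty)}$ the natural map
\[
\mathcal{O}_{\mathcal{Y}_{\log,L}}^{\la}(p_{\log,L}^{-1}U) \otimes_{\mathcal{O}_{\mathcal{Y}_{(0,\infty)}}^{L\text{-}\lsm}(U)} \mathrm{Sol}_{\log,L}^{\varphi}(\mathcal{E})(U) \xrightarrow{\ \sim\ } \mathcal{O}_{\mathcal{Y}_{\log,L}}^{\la}(p_{\log,L}^{-1}U) \otimes_{\mathcal{O}_{\mathcal{Y}_{(0,\infty)}}^{\la}(U)} \mathcal{M}_{\dR}(\mathcal{E})(U)
\]
is an isomorphism. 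This exhibits $\mathcal{M}_{\dR}(\mathcal{E})$ as obtained from its subsheaf of $\Lie\Gamma$-horizontal sections by base change along $\mathcal{O}_{\mathcal{Y}_{(0,\infty)}}^{L\text{-}\lsm} \hookrightarrow \mathcal{O}_{\mathcal{Y}_{\log,L}}^{\la}$, so in particular $D(\mathcal{E}) = \H^0(\mathcal{Y}_{(0,\infty)}, \mathrm{Sol}^\varphi(\mathcal{E}))$ recovers all the data needed to reconstruct $\mathcal{M}_{\dR}(\mathcal{E})$: applying the functor $D \mapsto \mathcal{E}(D)$ to $D(\mathcal{E})$ gives back, by construction, the pair whose associated $\varphi$-bundle is $\mathcal{M}_{\dR}(\mathcal{E})$, and whose de Rham lattice at $x_\infty$ is the $\B_{\dR}^{+,\pa}$-span of $\D_{\dR}(\mathcal{E})$. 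Thus $\mathcal{M}_{\dR}(\mathcal{E}(D(\mathcal{E}))) \cong \mathcal{M}_{\dR}(\mathcal{E})$, i.e. $\mathcal{N}_{\dR}(\mathcal{E}(D(\mathcal{E}))) \cong \mathcal{N}_{\dR}(\mathcal{E})$. To pass from $\mathcal{N}_{\dR}$ back to $\mathcal{E}$ itself, I would use that $\mathcal{N}_{\dR}$ is a modification of $\mathcal{E}$ supported at $x_\infty$ (Definition 8.6) and that the filtration on $\D_{\dR}$ determines the modification: $\mathcal{E}$ is reconstructed from $\mathcal{N}_{\dR}(\mathcal{E})$ together with the $I_\theta$-filtration on $\D_{\dR}(\mathcal{E})$ by Proposition 8.2 (the equivalence of locally analytic vector bundles with locally analytic pairs), and both of these are recorded in $D(\mathcal{E})$. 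Functoriality and compatibility of all the identifications is then routine, since every map in sight is the natural one and all the identifications of global horizontal sections are canonical.

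The main obstacle will be the careful bookkeeping in the second composite: one must check that the finite extension $L$ and the radius $r$ produced by the monodromy theorem can be handled uniformly enough that the global sections $\H^0(\mathcal{Y}_{(0,\infty)}, -)$ of the solution sheaf genuinely capture the filtered $(\varphi,N,G_K)$-module structure — in particular that passing from the local statement of Proposition 8.10 on affinoids $U \subset \mathcal{Y}_{[r,\infty)}$ to a statement about all of $\mathcal{Y}_{(0,\infty)}$ (via Frobenius pullback, as in the proof of Proposition 8.10) does not lose the $\varphi$-structure, and that the $N$-operator and the discreteness of the $G_K$-action survive this globalization. A secondary subtlety is verifying that the filtration transported through $\mathrm{Sol}(\mathcal{E})_{x_\infty} \cong \overline{K} \otimes_K \D_{\dR}(\mathcal{E})$ in Proposition 8.14 agrees on the nose with the filtration on $D(\mathcal{E})$ built in Proposition 8.16 and with the one appearing in the definition of $\mathcal{E}(D)$; this is a compatibility of three a priori different filtrations, all of which ultimately come from the $I_\theta$-adic filtration on $\B_{\dR}^+$, so it should follow by tracing definitions, but it is the kind of point that must be stated explicitly.
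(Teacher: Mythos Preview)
Your proposal is a correct outline of a direct proof, but the paper takes a much shorter route: it does not prove the theorem independently at all. The sentence immediately preceding the statement reads ``Using this language, Berger's theorem (\cite[Th\'eor\`eme III.2.4]{Be08B}) admits the following interpretation,'' and no proof is given after the statement. In other words, the paper's argument is simply that, under the equivalences of Theorem 6.1 and Theorem 7.5 between locally analytic vector bundles and $(\varphi,\Gamma)$-modules over $\widetilde{\B}_{\rig}^{\dagger,\pa}$ (hence over $\B_{\rig,\cyc}^{\dagger}$), the functors $D \mapsto \mathcal{E}(D)$ and $\mathcal{E} \mapsto D(\mathcal{E})$ translate into Berger's functors between filtered $(\varphi,N,G_K)$-modules and de Rham $(\varphi,\Gamma)$-modules, and Berger has already shown these are mutually inverse.

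Your approach, by contrast, reproves Berger's theorem inside the locally analytic vector bundle framework, using Proposition 8.11 (the monodromy theorem in this language) and the modification formalism of Proposition 8.2 in place of Berger's arguments with $\N_{\dR}$ over the Robba ring. The outline is sound and the obstacles you flag (uniformity in $L$ and $r$, compatibility of the three filtrations) are exactly the points where care is needed; they are handled implicitly here because Proposition 8.11 is itself deduced from \cite[III.2.1]{Be08B}, so you are ultimately leaning on the same input. What your approach buys is a self-contained argument in the sheaf language, at the cost of reproducing work already in \cite{Be08B}; what the paper's approach buys is brevity, at the cost of deferring the substance to the citation.
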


\begin{rem}
\label{8.17}If $\mathcal{E}$ is the locally analytic vector bundle
associated to a $p$-adic representation $V$, we see that the global-to-local
map
\[
\H^{0}(\mathcal{Y}_{(0,\infty)},\mathrm{Sol}^{\varphi}(\mathcal{E}))\hookrightarrow\mathrm{Sol}(\mathcal{E})_{x_{\infty}}
\]
is nothing but the more familiar map
\[
\D_{\mathrm{pst}}(V)\hookrightarrow\overline{K}\otimes_{K}\D_{\dR}(V).
\]
\end{rem}

\begin{question}
\label{8.18}Theorem \ref{8.16} allows us to consider objects of
$\Mod_{\Q_{p}^{\mathrm{un}}}^{\Fil,\varphi,N}(G_{K})$ as global solutions
to $p$-adic differential equations. The filtration is coming from
the behavious of orders of vanishing at $x_{\infty}=0$, while the
$(\varphi,N,G_{K})$-structure comes from some sort of monodromy of
the map $\varprojlim_{L}\mathcal{Y}_{\log,L}\rightarrow\mathcal{X}$.
In our description the space $\varprojlim_{L}\mathcal{Y}_{\log,L}$
behaves as a substitute for a universal cover of $\mathcal{X}$. It
would be interesting if it can be replaced by a more literal cover
of $\mathcal{X}$ for which the $(\varphi,N,G_{K})$-actions can be
interperted as monodromy actions. One could even speculate that in
an appropriate sense, the analytic fundamental group of $\mathcal{X}(\C_{p})_{\overline{K}}$
should be a tame Weil group with its two dimensions reflecting the
$\varphi$ and $N$ operators.
\end{question}

We conclude with an example.
\begin{example}
\label{8.19}Take $\alpha\in\Z_{p}^{\times}$, and given $g\in\Gal(\overline{\Q}_{p}/\Q_{p})$
let $\xi_{\alpha}(g)\in\Z_{p}$ be the element such that $\zeta_{p^{n}}^{\xi_{\alpha}(g)}=g(\alpha^{1/p^{n}})/\alpha^{1/p^{n}}$
for each $n\geq1$. The Kummer extension 
\[
0\rightarrow\Q_{p}(\chi_{\cyc})\rightarrow V=V_{\alpha}\rightarrow\Q_{p}\rightarrow0
\]
is given by mapping in a basis $e,f$ the element $g$ to the matrix
\[
\left(\begin{array}{cc}
\chi_{\cyc}(g) & \xi_{\alpha}(g)\\
0 & 1
\end{array}\right).
\]
The associated locally analytic vector bundle $\mathcal{E}$ sits
in an exact sequence
\[
0\rightarrow\mathcal{O}_{\mathcal{X}}^{\la}(\chi_{\cyc})\rightarrow\mathcal{E}\rightarrow\mathcal{O}_{\mathcal{X}}^{\la}\rightarrow0.
\]
We have 
\[
\mathcal{N}_{\dR}\left(\mathcal{E}\right)=\mathcal{O}_{\mathcal{X}}^{\la}x\oplus\mathcal{O}_{\mathcal{X}}^{\la}y\cong\mathcal{O}_{\mathcal{X}}^{\la}(1)\oplus\mathcal{O}_{\mathcal{X}}^{\la}
\]
where at a neighborhood of $x_{\infty}$ we have $x=t^{-1}e$ and
$y=-\log[\alpha^{\flat}]t^{-1}e+f$. Thus
\[
\H^{0}(\mathcal{Y}_{(0,\infty)},\Sol_{\Q_{p}}^{\varphi}(\mathcal{E}))=\H^{0}(\mathcal{O}_{\mathcal{\mathcal{Y}}_{(0,\infty)}}^{\sm}x\oplus\mathcal{O}_{\mathcal{\mathcal{Y}}_{(0,\infty)}}^{\sm}y)
\]
\[
=\Q_{p}x\oplus\Q_{p}y.
\]
The action of $\varphi$ is given by $\varphi(x)=p^{-1}x$ and $\varphi(y)=y$.
This gives the underlying $\varphi$-module of $\D_{\cris}(V)$.

To get the filtration, we consider the stalk of $\Sol_{\Q_{p}}(\mathcal{E})$
at $x_{\infty}$. Observe that $\Fil^{0}$ consists exactly of these
smooth sections which do not have a pole at $x_{\infty}$. As $\log[\alpha^{\flat}]\equiv\log_{p}\alpha$
mod $t$, we have $\Fil^{0}\Sol_{\Q_{p}}(\mathcal{E})_{x_{\infty}}=\Q_{p,\cyc}(x\log_{p}\alpha+y)$
and so the filtration on $\D_{\cris}(V)$ is given by
\[
\Fil^{-1}=\D_{\cris}(V)\supset\Fil^{0}=\Q_{p}(x\log_{p}\alpha+y)\supset\Fil^{1}=0.
\]
\end{example}

\end{document}